\newif\ifpersonal
\newif\ifarxiv
\definecolor{linkcolor}{HTML}{005050}
\theoremstyle{plain}
\newtheorem{thm}{Theorem}[section]
\newtheorem*{thm*}{Theorem}
\newtheorem{lem}[thm]{Lemma}
\newtheorem{prop}[thm]{Proposition}
\newtheorem{cor}[thm]{Corollary}
\theoremstyle{definition}
\newtheorem{defn}[thm]{Definition}
\newtheorem{cons}[thm]{Construction}
\newtheorem{notation}[thm]{Notation}
\newtheorem{ex}[thm]{Example}
\newtheorem{rem}[thm]{Remark}
\newtheorem{context}[thm]{Context}
\theoremstyle{remark}
\newtheorem*{rem*}{Remark}
\numberwithin{equation}{subsection}
\newcommand*{\personal}[1]{\textcolor{blue}{(COMMENT: #1)}}
\newcommand*{\todos}[1]{\textcolor{red}{#1}}
\newcommand*{\personal}[1]{\ignorespaces}
\newcommand*{\todos}[1]{\ignorespaces}
\newcommand  {\dgcat}     {\mathbf{dgcat}}
\newcommand  {\BU}   {\mathrm{B}\mathbb{U}}
\newcommand{\s}{\infty}
\newcommand{\rmF}{\mathrm{F}}
\newcommand{\rmG}{\mathrm{G}}
\newcommand{\rmSH}{\mathrm{SH}}
\newcommand{\LG}{\mathrm{LG}}
\newcommand{\rmSHNC}{\mathrm{SHnc}}
\newcommand{\K}{\mathrm{K}}
\newcommand{\Ql}{\mathbb{Q}_{\ell}}
\newcommand{\Mod}{\mathrm{Mod}}
\newcommand{\Coh}{\mathrm{Coh}^\mathrm{b}}
\newcommand{\Sch}{\mathrm{Sch}}
\newcommand{\bsch}{\mathrm{Sch}_{/\mathrm{S}}}
\tikzset{
  closed/.style = {decoration = {markings, mark = at position 0.5 with { \node[transform shape, xscale = .8, yscale=.4] {/}; } }, postaction = {decorate} },
  open/.style = {decoration = {markings, mark = at position 0.5 with { \node[transform shape, scale = .7] {$\circ$}; } }, postaction = {decorate} }
}
\newcommand{\inftyCatbig}{\mathrm{Cat}^{\mathrm{big}}_{\infty}}
\newcommand{\Prl}{\mathrm{Pr}^{\mathrm{L}}}
\newcommand{\Prlstb}{\mathrm{Pr}^{\mathrm{L}}_{\mathrm{Stb}}}
\newcommand{\rmSm}{{\mathrm{Sm}}}
\DeclareMathOperator{\CAlg}{CAlg}
\DeclareMathOperator{\Corr}{Corr}
\DeclareMathOperator{\Gal}{Gal}
\DeclareMathOperator{\Map}{Map}
\DeclareMathOperator{\Perf}{Perf}
\DeclareMathOperator{\Sp}{Sp}
\DeclareMathOperator{\Spec}{Spec}
\DeclareMathOperator{\MF}{\mathrm{MF}}
\DeclareMathOperator{\MFPairs}{\mathrm{MF}}
\DeclareMathOperator{\Sing}{\mathrm{Sing}}
\DeclareMathOperator{\Qcoh}{\mathrm{Qcoh}}
\begin{document}
\title{Motivic Realizations of Singularity Categories and Vanishing Cycles}
\date{\today}

\author{Anthony Blanc}
\address{Max Planck Institute for Mathematics, Bonn, Germany}
\email{anthony.blanc@ihes.fr}

\author{Marco ROBALO}
\address{Institut de Math\'ematiques de Jussieu - Paris Rive Gauche, UPMC, France}
\email{marco.robalo@imj-prg.fr}

\author{Bertrand TÖEN}
\address{Institut de Math\'ematiques de Toulouse, Universit\'e Paul Sabatier, France}
\email{bertrand.toen@math.univ-toulouse.fr}

\author{Gabriele Vezzosi}
\address{Dipartimento di Matematica ed Informatica, Firenze, Italy}
\email{gabriele.vezzosi@unifi.it}


\maketitle

\begin{abstract}In this paper we establish a precise comparison between vanishing cycles and the singularity category of Landau--Ginzburg models over an excellent Henselian discrete valuation ring. By using noncommutative motives, we first construct a motivic $\ell$-adic realization functor for dg-categories. Our main result, then asserts that, given a Landau--Ginzburg model over a complete discrete valuation ring with potential induced by a uniformizer, the $\ell$-adic realization of its singularity category is given by the inertia-invariant part of vanishing cohomology. 
We also prove a functorial and $\infty$-categorical lax symmetric monoidal version of Orlov's comparison theorem between the derived category of singularities and the derived category of matrix factorizations for a Landau--Ginzburg model over a noetherian regular local ring.
\end{abstract}

\tableofcontents

\section{Introduction}
The main objective of this paper is to use both derived and non-commutative geometry in order to establish a precise and fairly general relation between \emph{singularity categories} and \emph{vanishing cycles}. In a first section we work with $S= \Spec \, A$ with $A$ a commutative noetherian regular local ring (e.g. a discrete valuation ring). We will consider LG-pairs or LG-models over $S$ (LG for Landau-Ginzburg), i.e. pairs $(X/S,f)$ where $X$ is a scheme flat of finite type over $S$, and $f: X \to \mathbb{A}_S^1$ is an arbitrary map. One can associate to an LG-pair $(X/S,f)$ two, a priori different, triangulated categories: the derived category of matrix factorization $\MF(X/S,f)$, and the derived category of singularities $\Sing(X/S,f)$ (\cite{MR2101296}, \cite{efimov2015coherent}). A fundamental insight of Orlov (\cite{MR2101296}) is that, whenever $X$ is regular, and $f$ is not a zero-divisor, then there is an equivalence\footnote{Orlov works with pairs $(X/S,f)$ defined over $S$ the spectrum of a field, and  where $f$ is actually flat. This is not enough for out purposes, and we refer the reader to Section \ref{section-MFandSing} of this paper for a discussion of this point.} of triangulated categories between $\MF(X/S, f)$ and  $\Sing(X/S,f)$. 

Now, both $\MF(X/S,f)$ and $\Sing(X/S,f)$ can be naturally enhanced to dg-categories over $A$ whose associated homotopy categories are the given triangulated categories of matrix factorizations and of singularities. Moreover,  Orlov's functor can be enhanced (using derived geometry) to a functor of $A$-dg-categories, which is furthermore natural in the pair $(X/S, f)$, and shown to be an equivalence under appropriate hypotheses. This is the content of Theorem \ref{t1}, below, where we also discuss a lax monoidal strengthening.  \\ 

Once we have the $A$-dg-category $\Sing(X/S,f)$ at our disposal, following \cite{MR3281141}, we may look at it as an object in the $\infty$-category $\rmSHNC_S$ of \emph{non-commutative motives} over $S$. By \cite{MR3281141}, we have an $\infty$-functor $\mathcal{M}_S: \rmSHNC_S \to \rmSH_S$, from non-commutative motives to the ($\infty$-categorical version of) Morel and Voevodsky stable category $\rmSH_S$ of commutative motives over $S$. The functor $\mathcal{M}_S$ is the lax monoidal right adjoint to the functor  $\rmSH_S  \to  \rmSHNC_S$ canonically induced by the rule $Y \mapsto \mathrm{Perf}(Y)$, where $\mathrm{Perf}(Y)$ denotes the $A$-dg-category of perfect complexes on a smooth $S$-scheme $Y$. By \cite[Theorem 1.8]{MR3281141},  $\mathcal{M}_S$ sends the image of the tensor unit $A\in \rmSHNC_S$ to the object $\BU_S$, representing homotopy algebraic K-theory, i.e. to the commutative motive identified by the fact that $\BU_S(Y)$ is the spectrum of non-connective homotopy invariant algebraic K-theory of $Y$, for any smooth $S$-scheme $Y$. As a consequence, $\BU_S$ is endowed with the structure of a commutative algebra in the symmetric monoidal $\infty$-category $\rmSH_S$. Therefore,  $\mathcal{M}_S$ actually factors, as a lax monoidal functor,   
$\mathcal{M}_S: \rmSHNC_S \to \Mod_{\BU_S}(\rmSH_S)$ via the category of $\BU_S$-modules in $\rmSH_S$.\\

The  first main idea in this paper (see Section \ref{Section-motivedgcategoryI}) is to \emph{modify the functor} $\mathcal{M}_S$ in order to obtain different informations, better suited to our goal. Instead of $\mathcal{M}_S$, we consider a somewhat dual version $$\mathcal{M}^{\vee}_S: \rmSHNC_S \longrightarrow \Mod_{\BU_S}(\rmSH_S)$$ which is, roughly speaking, defined by sending an $A$-dg-category $T$ to the commutative motive $\mathcal{M}^{\vee}_S(T)$ that sends a smooth $S$-scheme $Y$ to the spectrum $\mathrm{KH}(\mathrm{Perf(Y)}\otimes_A T)$ of (non-connective) homotopy invariant algebraic K-theory of the $A$-dg-category $\mathrm{Perf(Y)}\otimes_A T$ (see Section \ref{Section-motivedgcategoryI} for details). In particular, for $p: X \to S$, with $X$ quasi-compact and quasi-separated, we get (Proposition \ref{prop-descriptionmotivedgcategoryviapushforward}) an equivalence $\mathcal{M}^{\vee}_S(\mathrm{Perf}(X)) \simeq p_*(\BU_{X})$ in $\Mod_{\BU_S}(\rmSH_S)$, where $\BU_X$ denotes a relative version homotopy invariant algebraic K-theory, and (Proposition \ref{prop-BUmotive2periodic})  an equivalence $\mathcal{M}_S^\vee(\Sing(S,0_S))\simeq \BU_S\oplus \BU_S[1]$ in $\Mod_{\BU_S}(\rmSH_S)$. As consequence of this, the motive $\mathcal{M}_S^\vee(\Sing(X,f))$ is a module over $\BU_S\oplus \BU_S[1]$, for any LG-pair $(X,f)$ over $S$.\\

The second main idea in this paper (see Section \ref{subsection-adicreal}) is to compose the functor $\mathcal{M}^{\vee}_S: \rmSHNC_S \to \Mod_{\BU_S}(\rmSH_S)$ with the $\ell$\emph{-adic realization functor} $\mathrm{R}_S^{\ell}: \rmSH_S \to  \mathrm{Sh}_{\mathbb{Q}_{\ell}}(S)$ with values in the $\infty$-categorical version of $\mathrm{Ind}$-constructible $\ell$-adic sheaves on $S$ with $\mathbb{Q}_{\ell}$-coefficients of \cite{MR1106899}. %

 Building on results of Cisinski-Deglise and Riou, we prove (see Section \ref{subsubsection-ladicrealizationofMF}) that one can refine $\mathrm{R}_S^{\ell}$ to a functor, still denoted by the same symbol,  $$\mathrm{R}_S^{\ell}: \Mod_{\BU_S}(\rmSH_S) \longrightarrow \Mod_{\mathbb{Q}_\ell(\beta)}(\mathrm{Sh}_{\mathbb{Q}_{\ell}}(S)),$$ where $\beta$ denotes the algebraic Bott element. We then denote by $\mathcal{R}_S^{\ell}$ the composite $$\xymatrix{\mathcal{R}_S^{\ell}: \rmSHNC_S \ar[r]^-{\mathcal{M}^{\vee}_S} & \Mod_{\BU_S}(\rmSH_S) \ar[r]^-{\mathrm{R}_S^{\ell}} &  \Mod_{\mathbb{Q}_\ell(\beta)}(\mathrm{Sh}_{\mathbb{Q}_{\ell}}(S))}.$$\\

We are now in a position to state our main theorem comparing singularity categories and vanishing cycles (see Section \ref{section-maintheo}). Let us take $S=\Spec A$ to be a henselian trait with $A$ excellent \footnote{In practice we will be working with complete discrete valuation rings. In this case excellence conditions is always verified.} and with algebraically closed residue field $k$, quotient field $K$, and let us fix a uniformizer $\pi$ in $A$, so that $A/\pi =k$. We denote by $i_{\sigma}: \sigma:= \Spec \, k \to S$ the canonical closed immersion, and by $\eta$ the generic point of $S$. \\ Given now a regular scheme $X$, together with a morphism $p:X\to S$ which is proper and flat, we consider the LG-pair $(X/S,\underline{\pi})$, where $\underline{\pi}$ is defined as the composite $$\xymatrix{
\underline{\pi}: X\ar[r]^-p & S \ar[r]^-{\pi} & \mathbb{A}^1_S 
}.$$ For a prime $\ell$ different from the characteristic of $k$, we may consider the following two objects inside $\mathrm{Sh}_{\mathbb{Q}_{\ell}}(\sigma)= \mathrm{Sh}_{\mathbb{Q}_{\ell}}(k)$:\\

\begin{itemize}
\item the homotopy invariants $\mathbb{H}_{\'et}(X_{k}, \mathcal{V}(\beta)[-1])^{\mathrm{h}I}$, where $\mathbb{H}_{\'et}$ denotes $\ell$-adic \'etale hypercohomology (ie. derived global sections),   $\mathcal{V}$ is the complex of vanishing cycles relative to the map $p$, $\mathcal{V}(\beta)[-1] := \mathcal{V}[-1]\otimes \mathbb{Q}_{\ell}(\beta)$ (see Remark \ref{remark-vanishingcyclesperiodicequaltensorusual}) and $\mathrm{I}=\mathrm{Gal}(K^{\mathrm{sep}}/K)$ is the inertia group;\\
\item the derived pullback $i_{\sigma}^* (\mathcal{R}_S^{\ell}(\Sing(X, \underline{\pi})))$ of the $\ell$-adic realization of the commutative motive given by the image under $\mathcal{M}^{\vee}_{S}$ of the dg-category of singularities for the LG-pair $(X/S,\underline{\pi})$.
\end{itemize}

One way to state our main result is then\\

\noindent \textbf{Main Theorem.} (see Theorem  \ref{theorem-maincomparisonvanishingmf}) There is a canonical equivalence $$i_{\sigma}^* (\mathcal{R}_S^{\ell}(\Sing(X, \underline{\pi}))) \simeq \mathbb{H}_{\'et}(X_{k}, \mathcal{V}(\beta)[-1])^{\mathrm{h}I}$$ inside $\mathrm{Sh}_{\mathbb{Q}_{\ell}}(k)$. Moreover, this equivalence is compatible with the actions of  $i_\sigma^\ast\mathcal{M}_S^\vee(\Sing(S,0_S))$ on the l.h.s and $ \BU_\sigma\oplus \BU_\sigma[1]$ on the r.h.s.

\vspace{1cm}

What this theorem tells us is that one can recover vanishing cohomology through the dg-category of singularities, i.e. in a purely non-commutative (and derived) geometrical setting. We think of this result as both an evidence and a first step in the application of non-commutative derived geometry to problems in arithmetic geometry, that we expect to be very fruitful. It is crucial, especially for future applications, to remark that our result holds over an arbitrary base henselian trait $S$ (with perfect residue field), so that it holds both in pure and mixed characteristics. In particular, we do not need to work over a base field. The main theorem above is also at the basis of the research announcement \cite{bloch}, where a trace formula for dg-categories is established, and then used to propose a strategy of proof of Bloch's conductor conjecture (\cite{BlochConj, KaSa}). Full details will appear in \cite{ToVez}.\\

\begin{rem}
The result of Theorem \ref{theorem-maincomparisonvanishingmf} is stated for the $\ell$-adic realization but can also be given a motivic interpretation. Indeed, one can use the formalism of motivic vanishing cycles of \cite{ayoub2, MR3205601} to produce a motivic statement that realizes to our formula. We thank an anonymous referee for his comments and suggestions regarding this motivic presentation. The proof is mutatis mudantis the one presented here for the $\ell$-adic realization and we will leave it for further works.
\end{rem}

\medskip
\noindent \textbf{Related works.} The research conducted in the second part of this work has its origins in Kashiwara's computation of vanishing cycles in terms of D-modules via the Riemann-Hilbert correspondence \cite{MR726425}. A further deep and pioneering work is undoubtedly Kapranov's influential paper \cite{kaprabook} which starts with the identification of D-modules with modules over the de Rham algebra. This relation between vanishing cycle cohomology and twisted de Rham cohomology as been fully understood by Sabbah and Saito in \cite{MR3234116, 1012.3818} esablishing proofs for the conjectures of Kontsevich-Soibelman in \cite{MR2851153}. In parallel, the works of \cite{MR3084707,MR2824483, MR3108698, MR3180736} established the first link between twisted de Rham cohomology and the Hochschild cohomology of matrix factorizations and more recently, the situation has been clarified with Efimov's  results \cite{1212.2859}. The combination of these results express a link between the theory of matrix factorizations and the formalism of vanishing cycles. The recent works of Lunts and Schn\"urer's \cite[Theorem 1.2]{LuSc} built upon Efimov's work, combined with those of \cite{MR3131490}, show that this connection between the two theories can be expressed as an equivalence of classes in a certain Grothendieck group of motives. The main result (Theorem  \ref{theorem-maincomparisonvanishingmf}) of this paper might, in some sense, be seen as categorification of Lunts and Schn\"urer's results.

\bigskip

\noindent \textbf{Acknowledgements.} We warmly thank the anonymous referee for her/his exceptionally insightful and useful corrections and suggestions that led, as we hope, to a more readable paper. MR wishes to thank Mauro Porta for very useful discussions on the subject matter of this paper. BT was Partially supported by ANR-11-LABX-0040-CIMI within the program ANR-11-IDEX-0002-02.  This project has received funding from the European Research Council (ERC) under the European Union’s Horizon 2020 research and innovation programme (grant agreement ERC-2016-ADG-741501). GV wishes to thank A. Efimov for an helpful exchange about his joint work with L. Positselski. GV would like to thank the Laboratoire IMJ-PRG, Paris 6, for a very fruitful visit in May 2016, when a part of our collaboration took place. 
We would also like to thank the anonymous referees for their comments.

\medskip
\section{Matrix factorizations and derived categories of singularities}
\label{section-MFandSing}

\begin{context}
\label{context111}
Throughout this section  $A$ will be a commutative Noetherian regular local ring, $S:=\Spec\, A$ and  $\Sch_S$  the category of schemes of finite type over $S$.
\end{context}

\begin{defn}
 We introduce the category of \emph{Landau-Ginzburg models over $S$} as the subcategory of $(\Sch_S)_{/\mathbb{A}^1_S}$ spanned by those pairs 
$$(p:X\to S, f:X\to \mathbb{A}^1_S)$$
where $p$ is a flat morphism. We will denote it by $\LG_S$. We will denote by $\LG^\mathrm{aff}_S$ its full subcategory spanned by those LG-models where $X$ is affine over $S$.\\
\end{defn}

\begin{defn}
We also introduce the category of \emph{flat Landau-Ginzburg models over $S$} as the full subcategory of $\LG_S$ consisting of $(S,0: S \to \mathbb{A}_S^1)$ (where $0$ denotes the zero section of the canonical projection $\mathbb{A}_S^1 \to S$) together with those pairs 
$$(p:X\to S, f:X\to \mathbb{A}^1_S)$$
where both $p$ and $f$  are flat morphisms. We will denote it by $\LG^{\mathrm{fl}}_S$. We will denote by $\LG^\mathrm{fl, \,aff}_S$ its full subcategory spanned by those LG-models where $X$ is affine over $S$.\\
\end{defn}

\begin{cons}
The category $\LG_S$ (resp. $\LG^{\mathrm{fl}}_S$) has a natural symmetric monoidal structure $\boxplus$  given by the fact that the additive group structure on $\mathbb{A}_S^1$ \footnote{Given, on functions, by $A[T]\mapsto A[X]\otimes_A A[Y], \; T\mapsto X\otimes 1 + 1\otimes Y$} defines a monoidal structure on the category $(\Sch_S)_{/\mathbb{A}^1_S}$, 

$$\boxplus: (\Sch_S)_{/\mathbb{A}^1_S}\times (\Sch_S)_{/\mathbb{A}^1_S} \to (\Sch_S)_{/\mathbb{A}^1_S}$$
\noindent given by 
 \begin{equation}\label{eq-def-monoidalstructureLG}
(X,f), (Y,g) \mapsto (X,f) \boxplus (Y,g):= (X\times_{S}Y,f\boxplus g)
\end{equation}

\noindent where $f\boxplus g:=p_X^{*}(f) + p_Y^{*}(g)$ for 
$p_X : X\times_{S}Y \rightarrow X$ and $p_Y : X\times_{S}Y \rightarrow Y$
the two projections. Notice that as the maps to $S$ are flat, the fiber product $X\times_S Y$ is also flat over $S$, and thus it belongs to $\LG_S$. Moreover, if $f$ and $g$ are flat, then 
$f\boxplus g$ is also flat (since flatness is stable by arbitrary base-change, and the sum map $\mathbb{A}_S^1 \times_S \mathbb{A}_S^1 \to \mathbb{A}_S^1$ is flat), and if $g= 0: S \to \mathbb{A}_S^1$, then $f\boxplus g =f$, which is again flat. In particular, the unit for this monoidal structure is $(S,0)$.\\ We will denote this monoidal structure on $\mathrm{LG}_{S}$ (resp. $\LG^{\mathrm{fl}}_S$) by $\mathrm{LG}^{\boxplus}_{S}$ (resp. $\LG^{\mathrm{fl}\, \boxplus}_S$). Obviously $\LG^{\mathrm{fl}\, \boxplus}_S$  is a full symmetric monoidal subcategory of $\LG^{\boxplus}_S$.  We use similar notations for the symmetric monoidal subcategories of (flat) affine $LG$-pairs (recall that $S$ is affine, so that also the affine versions have $(S,0)$ as the unit). \\
\end{cons}

\begin{rem}\label{nonflat}
Orlov works inside $\LG^{\mathrm{fl}}_S$ in \cite{MR2101296}, while Efimov and Positselski work in the whole $\LG_S$ in \cite{efimov2015coherent}.
\end{rem}

In this section we discuss two well known constructions, namely 
matrix factorizations and categories of singularities. 
For us, these will be defined as
 $\s$-functors with values in dg-categories

$$\MF, \Sing, :\mathrm{LG}^{\mathrm{op}}_S \longrightarrow \dgcat_A^{\mathrm{idem}},$$

\noindent from the category of LG-models to the $\s$-category of 
(small) $A$-linear idempotent complete dg-categories. Our $\MF$ will be in fact a lax symmetric monoidal $\infty$-functor $\MF: \mathrm{LG}^{\boxplus, \mathrm{op}}_S \longrightarrow \dgcat_A^{\mathrm{idem}}$. The first construction we want to describe 
sends $(X,f)$ to the dg-category $\MF(X,f)$ of
matrix factorizations of $f$. The second one sends 
$(X,f)$ to $\Sing(X,f)$, the dg-category of (relative)
singularities of the scheme $X_0$ of zeros of $f$.
We compare these two constructions by means of 
the so-called \emph{Orlov's equivalence}, which for us will be 
stated as the existence of a natural transformation of $\s$-functors.
$$
\Sing\to \MF
$$
\noindent which is an equivalence when restricted to pairs $(X,f)$ with $X$ regular.\\

The results of this section consist
mainly in $\s$-categorical enhancements of well known results in the world of triangulated categories.\\

\subsection{Review of dg-categories}
\label{notations-dg-categories}
For the discussion in this section $A$ can be any commutative ring.
In this  paragraph we fix our notations for the theory of dg-categories, by recalling the main definitions and constructions used in the rest of the paper. Our references for dg-categories will be \cite{MR2762557} and \cite[Section 6.1.1 and 6.1.2]{robalo-thesis}. \\

As an $\s$-category, $\dgcat_A^{\mathrm{idem}}$ is a Bousfield localization of the $\s$-category of 
(small) $A$-linear dg-categories with respect to Morita equivalences, namely dg-functors inducing equivalences on the
the respective derived categories of perfect dg-modules. The $\s$-category
$\dgcat_A^{\mathrm{idem}}$ is naturally identified with the full
sub-$\s$-category of $\dgcat_A$ consisting of \emph{triangulated
dg-categories}\footnote{This terminology is not standard, 
as for us \emph{triangulated} also includes being idempotent complete.} in the sense of Kontsevich. Recall 
that these are small
dg-categories $T$ such that the Yoneda embedding
$T \hookrightarrow \widehat{T}_{pe}:=\{\text{Perfect } T^{op}-\text{dg-modules}\}$, is an equivalence (ie,
any perfect $T^{op}$-dg-module is quasi-isomorphic to a representable 
dg-module). With this identification the localization
$\s$-functor
\begin{equation}\label{eq-idempotentcompletionfunctor}
\dgcat_A \longrightarrow \dgcat_A^{\mathrm{idem}} 
\end{equation}
simply sends $T$ to $\widehat{T}_{pe}$. 

The $\s$-category $\dgcat_A$ can be obtained as a localization of the  $1$-category $\mathrm{dgcat}_A^\mathrm{strict}$ of small strict $A$-dg-categories with respect to Dwyer-Kan equivalences. This localization is enhanced by the existence of a model structure on  $\mathrm{dgcat}_A^\mathrm{strict}$ \cite{tabuada-quillen}. 

Moreover, both $\dgcat_A^{\mathrm{idem}}$ and $\dgcat_A$ come canonically equipped with symmetric monoidal structures induced by the tensor product of locally flat dg-categories $\mathrm{dgcat}_A^{\mathrm{strict, loc-flat}}\subseteq \mathrm{dgcat}_A^\mathrm{strict}$ - namely, those strict dg-categories whose enriching hom-complexes are flat in the category of chain complexes. The localization functor 
\begin{equation}
\label{eq-monoidallocalizationdgcategorieslocallyflat}
\mathrm{dgcat}_A^{\mathrm{strict, loc-flat}}\to \dgcat_A
\end{equation}
\noindent is monoidal with respect to these monoidal structures.  We address the reader to \cite{MR2762557} for a complete account of the dg-categories, to \cite[Prop. 2.22]{tabuada-cisinski} for the monoidal structure and \cite[Section 6.1.1 and 6.1.2]{robalo-thesis} for an $\s$-categorical narrative of these facts \footnote{where we use locally cofibrant dg-categories instead of locally flat. The two strategies are equivalent as every locally cofibrant dg-category is  locally flat \cite[2.3(3)]{Toen-homotopytheorydgcatsandderivedmoritaequivalences} and a cofibrant replacement functor is an inverse. See also the discussion in \cite[p. 222]{robalo-thesis}.}.

\medskip

\begin{notation}
For a dg-category $T$, we will denote as $[T]$ its homotopy category.
\end{notation}

At several occasions we will need to take dg-quotients: if $T_0\to T$ is a map in $\dgcat_A^{\mathrm{idem}}$, one considers its cofiber as a pushout
$$\xymatrix{
T_0\ar[r] \ar[d] & T\ar[d] \\
\{0\} \ar[r] & T \ \coprod_{T_0}\{0\}:=T'}$$
in $\dgcat_A^{\mathrm{idem}}$.  By a result of Drinfeld \cite{drinfeld1} the homotopy category of $T'$ can be canonically identified with the classical Verdier quotient of $T$ by the image of $T_0$. This  pushout is equivalent to the idempotent completion of the pushout taken in the $\dgcat_A$, thus given by $\widehat{T'}_{\mathrm{pe}}$ and its homotopy category can be identified with the idempotent completion of the Verdier quotient of $T$ by $T_0$.

\vspace{0.5cm}

To conclude this review, let us mention that for any ring $A$, $\dgcat_A^{\mathrm{idem}}$ can also be identified with the $\s$-category of small stable idempotent complete $A$-linear $\s$-categories. The proof of \cite{dgklinear} adapts to any characteristic. We address the reader to the discussion \cite[Section 6.2]{robalo-thesis} for more helpful comments.

\medskip

\subsection{Matrix factorizations}
In this section we again work under the Context \ref{context111}. We now deal with the construction of the symmetric lax monoidal
$\s$-functor
\begin{equation}
\label{equation-MFinfinityfunctor}
\MF : \LG_S^{\mathrm{aff, op}} \longrightarrow \dgcat_{A}^{\mathrm{idem}}
\end{equation}

\medskip

To define this lax monoidal $\s$-functor we will first construct an auxiliary strict version and explain its lax structure. \\

\subsubsection{} Let $(X,f) \in \LG_{S}^{\mathrm{aff}}$ and let us write $X:=\Spec\, B$, for $B$ flat of finite type over $A$. The function $f$ is thus identified
with an element $f\in B$. We associate to the pair $(B,f)$ a strict $\mathbb{Z}/2$-graded $A$-dg-category
$\MFPairs(B,f)$ as follows. 

\begin{cons}
\label{construction-MFasfunctor}
First we construct $\MFPairs(B,f)$ as an object in the theory of small strict $\mathbb{Z}/2$-graded $B$-dg-categories, meaning, small strict dg-categories enriched in $\mathbb{Z}/2$-graded complexes of $B$-modules. Its objects are pairs $(E,\delta)$, 
consisting of the following data.

\begin{enumerate}

\item A $\mathbb{Z}/2$-graded $B$-module $E=E_0\oplus E_1$, with 
$E_0$ and $E_1$ projective and of finite rank over $B$. 

\item A $B$-linear endomorphism $\delta : E \rightarrow E$ of odd degree, 
and satisfying $\delta^2= \text{multiplication by }f$. 

\end{enumerate}

In a more explicit manner, objects in $\MFPairs(X,f)$ can be
written as 4-tuples, $(E_0,E_1,\delta_0,\delta_1)$, 
consisting of $B$-modules projective and of finite type $E_i$, 
together with $B$-linear morphisms
$$\xymatrix{
E_0 \ar^-{\delta_0}[r] &  E_1  } \qquad \xymatrix{
E_1 \ar^-{\delta_1}[r] &  E_0  }$$
such that $\delta_0 \circ \delta_1=\delta_1 \circ \delta_0=\cdot f$. 

For two objects $E=(E,\delta)$
and $F=(F,\delta)$, we define a $\mathbb{Z}/2$-graded complex of $B$-modules of morphisms
$\underline{Hom}(E,F)$ in the usual manner. As a $\mathbb{Z}/2$-graded $B$-module, 
$\underline{Hom}(E,F)$ simply is the usual decomposition of $B$-linear
morphisms $E \rightarrow F$ into odd and even parts. The differential
is itself given by the usual commutator formula: for $t\in \underline{Hom}(E,F)$
homogenous of odd or even degree, we set 
$$d(t):=[t,\delta]=t \circ \delta - (-1)^{deg(t)} \delta \circ t$$
Even though $\delta$ does not square to zero, we do have $d^2=0$. This defines
$\mathbb{Z}/2$-graded complexes of $B$-modules $\underline{Hom}(E,F)$ and sets $\MFPairs(B,f)$ as a $\mathbb{Z}/2$-graded $B$-dg-category.

Composing with the structure map $\Spec \,B \to \Spec \, A =S$, one can now understand  $\MFPairs(B,f)$ as $\mathbb{Z}/2$-graded $A$-linear dg-category. Notice that as $A$ is by hypothesis a local ring, and $B$ is flat over $A$, being projective of finite rank over $B$ implies being projective of finite rank over $A$, as flat and projective become equivalent notions as soon as the modules are finitely generated.
\end{cons}

\begin{cons}
\label{construction-pseudofunctorstrictMF}
The assignment $(X,f)\mapsto \MFPairs(X,f)$  acquires a pseudo-functorial structure 
\begin{equation}
\label{eq-MFZ2gradedversion}
\LG_S^{\mathrm{aff, \mathrm{op}}}\to \mathrm{dgcat}_{\mathbb{Z}/2, A}^{strict}\end{equation}
as any morphism of $A$-algebras $q : B \longrightarrow B'$, 
with $q(f)=f'$, defines by base change from $B$ to $B'$ a $\mathbb{Z}/2$-graded $A$-linear dg-functor
$$B'\otimes_{B} -  : \MFPairs(B,f) \longrightarrow \MFPairs(B',f')$$
\end{cons}

\begin{notation}
\label{notation-identification2periodicZ2graded}
Throughout this work we will always allow ourselves to freely interchange the notions of $\mathbb{Z}/2$-graded complexes
and $2$-periodic $\mathbb{Z}$-graded complexes via an equivalence of strict categories
\begin{equation}
\label{eq-2periodicequalAuinvertible}
\xymatrix{ \mathrm{dgMod}^{\mathbb{Z}/2}_A \ar[r]^-{\theta}_-{\sim}& A[u,u^{-1}]-\mathrm{dgMod}}
\end{equation}
where $A[u,u^{-1}]$ is the free strictly commutative differential graded algebra over A with an invertible generator $u$ sitting in cohomological degree $2$. The functor $\theta$ sends a $\mathbb{Z}/2$-graded complex complex $(E_0, E_1, \delta_0 , \delta_1)$ to the $A[u,u^{-1}]$-dg-module
$$
[...\to E_1\to E_0\to E_1\to E_0\to...]
$$
where $u$ acts via the identity. The inverse equivalence to $\theta$ sends an $A[u,u^{-1}]$-dg-module $F$ to the 2-periodic complex with $F_0$ in degree $0$ and $F_1$ in degree $1$, together with the differential $F_0\to F_1$ of $F$ and the new differential $F_1\to F_2\simeq F_0$ using the action of $u^{-1}$. Moreover, $\theta$ is symmetric monoidal: the tensor product of 2-periodic complex identifies with the tensor product over $A[u,u^{-1}]$. In particular, this induces an equivalence of 1-categories between the theory of $\mathbb{Z}/2$-graded $A$-dg-categories and that of $A[u,u^{-1}]$-dg-categories :
\begin{equation}
\label{eq-Z2gradeddgcatsVS2periodicDgcats}
\xymatrix{ \mathrm{dgcat}_{\mathbb{Z}/2, A}^{strict, \otimes} \ar[r]^-{\theta}_-{\sim}&  \mathrm{dgcat}_{ A[u, u^{-1}]}^{strict, \otimes}}
\end{equation}
Applying $\theta$ to the enriching 2-periodic hom-complexes on the l.h.s of (\ref{eq-2periodicequalAuinvertible}), the equivalence (\ref{eq-2periodicequalAuinvertible}) becomes an equivalence of strict $A[u, u^{-1}]$-dg-categories.\\
This discussion descends to a monoidal equivalence of between the (Morita) homotopy theories  dg-categories, as explained in \cite[Section 5.1]{Tobias}. Moreover, the results of \cite{Toen-homotopytheorydgcatsandderivedmoritaequivalences} describing internal-homs in the Morita theory remains valid. 
\end{notation}

\vspace{0.5cm}
For our purposes we will work with the version of $\mathrm{MF}$ obtained by the composition of (\ref{eq-MFZ2gradedversion}) with (\ref{eq-Z2gradeddgcatsVS2periodicDgcats}).

\vspace{0.5cm}

\subsubsection{} We will now give a strict version of the symmetric lax monoidal structure on $\MFPairs$:

\begin{cons}
\label{construction-strictlaxstructureMF}
Let $(X,f)$ and 
$(Y,g)$ be two objects in $\LG_{S}^{\mathrm{aff}}$, with $X=\Spec\, B$ and 
$Y=\Spec\, C$ (so $f\in B$ and 
$g\in C$). We consider the pair $(D,h)$, where $D=B\otimes_{A}C$ and
$h=f\otimes 1 + 1\otimes g \in D$. We have a natural $A$-linear dg-functor
\begin{equation}
\label{eq-strictlaxstructuremorphismMF}
\boxtimes  : \MFPairs(B,f) \otimes_{A} \MFPairs(C,g) \longrightarrow \MFPairs(D,h),
\end{equation}
obtained by the external tensor product as follows. For two
objects $E=(E,\delta) \in \MFPairs(B,f)$ and 
$F=(F,\partial) \in \MFPairs(C,g)$, we define a  
projective $D$-module of finite type
$$E\boxtimes F:=E \otimes_A F,$$
with the usual induced $\mathbb{Z}/2$-graduation: the even part of 
$E\boxtimes F$ is $(E_0\otimes_A F_0) \oplus (E_1 \otimes_A F_1)$, and
its odd part is $(E_1\otimes_A F_0) \oplus (E_0\otimes_A F_1)$.
The odd endomorphism $\delta : E\boxtimes F \longrightarrow E\boxtimes F$
is given by the usual formula on homogenuous generators
\begin{equation}
\label{eq-formulaMFproduct}
\delta(x\otimes y):=\delta(x)\otimes y + (-1)^{deg x}x\otimes \partial(y).
\end{equation}
All together, this defines an object $E\boxtimes F \in \MFPairs(D,h)$, and with a bit more work
a morphism in $\mathrm{dgcat}_{\mathbb{Z}/2,A}^{\mathrm{strict}}$ giving shape to (\ref{eq-strictlaxstructuremorphismMF}).
These are clearly symmetric and associative. Finally, 
the construction $\MFPairs$ is also lax unital with unity given by the natural $\mathbb{Z}/2$-graded $A$-linear dg-functor
\begin{equation}
\label{eq-laxunitMFstrict}
A \longrightarrow \MFPairs(A,0),
\end{equation}
sending the unique point of $A$ to $(A[0],0)$ where $A[0]$ is 
$A$ considered as a $\mathbb{Z}/2$-graded $A$-module pure of even degree (and $A$ is considered
as an $\mathbb{Z}/2$-graded $A$-linear dg-category with a unique object with $A$ as endomorphism algebra). This finishes the description of the lax symmetric structure on 
\begin{equation}
\label{eq-MFstrict22}
\MFPairs : \LG_{S}^{\mathrm{aff}, op, \boxplus} \longrightarrow \mathrm{dgcat}_{\mathbb{Z}/2, A}^\mathrm{strict, \otimes}\simeq \mathrm{dgcat}_{A[u, u^{-1}]}^\mathrm{strict, \otimes}
\end{equation}
\end{cons}
\vspace{0.5cm}
\begin{cons}
We must now explain how to use the strict lax structure of the Construction \ref{construction-strictlaxstructureMF} to produce a lax structure in the homotopy theory of dg-categories. Following the discussion in the Section \ref{notations-dg-categories}, it will be enough to show that $\MFPairs$ has values in locally-flat dg-categories. But this is indeed the case as by definition the objects of $\MFPairs(B,f)$ are pairs of $B$-modules that are finitely generated and projective, hence in particular flat over $B$, and  therefore also  over $A$ (as $B$ is assumed flat over $A$). Therefore, the enriching hom-complexes are flat over $A$. Following this discussion, the lax symmetric monoidal functor (\ref{eq-MFZ2gradedversion}) factors as 
\begin{equation}
\label{eq-MFstrictlaxmonoidal}
 \LG^{\mathrm{aff, op, \boxplus}}\to \mathrm{dgCat}^{\mathrm{strict, loc-flat, \otimes}}_{\mathbb{Z}/2, A}
 \end{equation}
and by composition with $(\ref{eq-Z2gradeddgcatsVS2periodicDgcats})$ and the restriction along $A\to A[u,u^{-1}]$ we obtain a lax symmetric monoidal functor
\begin{equation}
\label{eq-MFstrictlaxmonoidal}
 \LG^{\mathrm{aff, op, \boxplus}}\to \mathrm{dgCat}^{\mathrm{strict, loc-flat, \otimes}}_{\mathbb{Z}/2, A}\underbracket{\simeq}_{(\ref{eq-Z2gradeddgcatsVS2periodicDgcats})} \mathrm{dgCat}^{\mathrm{strict, loc-flat/A, \otimes}}_{A[u, u^{-1}]}\underbracket{\to}_{rest.}\mathrm{dgCat}^{\mathrm{strict, loc-flat/A, \otimes}}_{A}
 \end{equation}
Finally, we compose this with the monoidal localization $\infty$-functor (\ref{eq-monoidallocalizationdgcategorieslocallyflat}) followed by (\ref{eq-idempotentcompletionfunctor}), to obtain a new lax monoidal $\s$-functor 
\begin{equation}
\label{eq-laxmonoidalstructurefinal33}
\MFPairs:\LG^{\mathrm{aff, op, \boxplus}}\to \dgcat_A^{\mathrm{idem}, \otimes}\end{equation}
\end{cons}

\begin{rem}
\label{remark-comparisonMonoidalstructuresMF-2-periodic}
The restriction of scalars along $A\to A[u,u^{-1}]$ forgets the 2-periodic structure. However, it is a consequence of the construction that we can recover this 2-periodic structure from the lax monoidal structure (\ref{eq-laxmonoidalstructurefinal33}).\\
 Indeed, the lax monoidal structure endows $\MFPairs(S,0)$ with a structure of object in $\CAlg(\dgcat^{\mathrm{idem}}_A)$. At the same time, as $\MFPairs(S,0)$ admits a compact generator given by $A$ in degree 0, it follows that in $\dgcat^{\mathrm{idem}}_A$, $\MFPairs(S,0)$ is equivalent to perfect complexes over the dg-algebra $\theta(\mathrm{End}_{\MFPairs(S,0)}(A))$. But an explicit computation shows that this is a strict-dg-algebra given by $A[u, u^{-1}]$ with $u$ a generator in cohomological degree $2$.  In addition to this, the symmetric monoidal structure on $\MFPairs(S,0)$, which by construction of the lax structure in fact exists in the strict theory of strict dg-categories, produces a structure of strict commutative differential graded algebra on $A[u, u^{-1}]$ which corresponds to the standard one. It follows that in $\CAlg(\dgcat^{\mathrm{idem}}_A)$, we have a monoidal equivalence 
 \begin{equation}
\label{eq-comparisonMonoidalstructuresMF-2-periodic}
\MFPairs(S,0)^{\boxplus}\simeq \Perf(A[u,u^{-1}])^{\otimes_{A[u,u^{-1}]}}
\end{equation}
where the r.h.s is equipped with the relative tensor product of $A[u,u^{-1}]$. It follows from the lax structure and the monoidal equivalence (\ref{eq-comparisonMonoidalstructuresMF-2-periodic}) that $\MF$ extends to a lax monoidal functor
\begin{equation}
\label{eq-remark-comparisonMonoidalstructuresMF-2-periodic2}
\MF:\LG^{\mathrm{aff, op, \boxplus}}\to \Mod_{\Perf(A[u,u^{-1}])}(\dgcat^{\mathrm{idem}}_A)^{\otimes}
\end{equation}
We observe that this is precisely the 2-periodic structure of  (\ref{eq-MFstrictlaxmonoidal}) before restricting scalars along $A\to A[u,u^{-1}]$. Indeed, this follows from the commutativity of the square of lax monoidal $\infty$-functors
\begin{equation}
\xymatrix{
\ar[d]\mathrm{N}(\mathrm{dgCat}^{\mathrm{strict, loc-flat/A, \otimes}}_{A[u, u^{-1}]}) \ar[r]& \mathrm{N}(\mathrm{dgCat}^{\mathrm{strict, loc-flat/A, \otimes}}_{A})\ar[d]\\
\dgcat^{\mathrm{idem}, \otimes}_{A[u, u^{-1}]}\ar[r]& \dgcat^{\mathrm{idem}, \otimes}_{A}
}
\end{equation}
and the monadic equivalence

$$\dgcat^{\mathrm{idem}, \otimes}_{A[u, u^{-1}]}\simeq \mathrm{Mod}_{\Perf(A[u,u^{-1}])}(\dgcat_A^{\mathrm{idem}})^{\otimes}$$
\end{rem}

\vspace{0.5cm}

The construction of $\MF$ as a lax monoidal functor from affine LG-pairs to $A$-dg-categories, can be extended to all LG-pairs: one way is to interpret $\MF$ as a functor $\LG^{\mathrm{aff}}\to \dgcat_A^{\mathrm{idem}, \mathrm{op}}$ and take its monoidal left Kan extension $\mathrm{Kan}\MF$ to presheaves of spaces  $\mathcal{P}(\LG^{\mathrm{aff}})$ \cite[4.8.1.10]{lurie-ha}. Now $\LG_S$ embeds fully faithfully by Yoneda inside $\mathcal{P}(\LG^{\mathrm{aff}})$ in a monoidal way with respect to the Day product.  The restriction to this full subcategory defines an $\infty$-functor
\begin{equation}
\label{eq-MFfinalfunctor}
\mathrm{Kan}\MF: \LG_S^{\mathrm{op}} \longrightarrow \dgcat^{\mathrm{idem}}_A 
\end{equation}
\noindent of matrix factorizations over $S$, naturally equipped with a  lax symmetric monoidal enhancement 
\begin{equation}
\label{eq-MFlaxfinalfunctor}
\mathrm{Kan}\MF^\boxplus : \LG_S^{\mathrm{op}, \boxplus} \longrightarrow \dgcat^{\mathrm{idem},\otimes}_A.
\end{equation}
Alternatively, there is a definition of matrix factorizations on non-affine LG-pairs $(X,f)$ under the assumption that $X$ has enough vector bundles. Indeed, one should work with matrix factorizations $(E_0, E_1, \delta_0, \delta_1)$ where $E_0$ and $E_1$ are vector bundles on $X$. See \cite{MR2101296, MR2910782, lin2011global}. Under this assumption, this second definition agrees with the Kan extension. See \cite[2.11]{lin2011global}, \cite[Section 5]{1212.2859} or \cite[Section 3]{MR3007084}.

\vspace{0.5cm}

\subsection{dg-Categories of singularities}

\subsubsection{}
 Let $(X,f)$ be an LG-model. In this section we will study an invariant that captures the singularities of $X_0 \subset X$, the closed subscheme
of zeros of $f$. As we will not impose any condition on $f$, 
for instance $f$ can be a zero divisor, we have to allow $X_0$ to be
eventually a derived scheme. More precisely, we consider the derived fiber product
\begin{equation}
\xymatrix{
\ar[d] X_0:=S \times^\mathrm{h}_{\mathbb{A}_S^1}X \ar[r]^-{\mathfrak{i}}&\ar[d]^f X\\
S\ar[r]^0 & \mathbb{A}^1_S
}
\end{equation}
where the canonical map $\mathfrak{i}$ is an lci closed immersion\footnote{In this paper we will use the terminology "lci closed immersion" to mean a quasi-smooth closed immersion, namely, a map of derived schemes whose truncation is a closed immersion and whose relative cotangent complex is perfect and of Tor-amplitude $[-1,0]$. See for instance \cite{1210.2827,MR3300415, 1802.05702}}, as it is the base change of the lci closed immersion $0_S:S\to \mathbb{A}^1_S$.

\vspace{0.5cm}

\begin{rem}
\label{remark-flatnesshypothesismatchOrlov}
Note that if $(X,f) \in \LG_S^{\mathrm{fl}}$ (i.e. $f$ is flat), then $X_0$ is just the scheme theoretic zero locus of $f$. In particular it coincides with the $X_0$ consider by Orlov in 
\cite{MR2101296}. Therefore, in this case, the derived categories of singularities considered in this paper (see Def. \ref{d2} and Remark \ref{remark-differentdefinitionsrelativesingularities} below) is (an $\infty$- or dg-categorical version of) the derived category of singularities of \cite{MR2101296}.
\end{rem}

\vspace{0.5cm}

For an LG-model $(X,f)$, with associated  derived scheme $X_0$ of zeros of $f$, 
we consider $\Qcoh(X_0)$ the $A$-linear dg-category of 
quasi-coherent complexes on $X_0$ (see \cite[Section 3.1]{MR3285853} for a survey). We consider the following full sub-dg-categories of $\Qcoh(X_0)$:

\begin{itemize}
\item $\Perf(X_0)$: perfect objects over $X_0$, meaning, objects  $E \in \Qcoh(X_0)$ such that locally it belong to the thick sub-category of $\Qcoh(X_0)$ generated by the structure sheaf of $X_0$. These are exactly the $\otimes$-dualizable objects in $\Qcoh(X_0)$. In our case, as $X_0$ is a fiber product of quasi-compact and quasi-separated schemes it has the same property, and therefore by the results in \cite[3.6]{MR2669705} (see also \cite{toen-azumaya})  perfect complexes agree with compact objects in $\Qcoh(X_0)$.
\item $\Coh(X_0)$: cohomologically bounded objects $E \in \Qcoh(X_0)$ whose cohomology  $H^*(E)$ is a coherent $H^0(\mathcal{O}_{X_0})$-module.
\item $\mathrm{Coh}^-(X_0)$: cohomologically bounded above objects $E \in \Qcoh(X_0)$ whose cohomology  $H^*(E)$ is a coherent $H^0(\mathcal{O}_{X_0})$-module\footnote{These are also known as pseudo-perfect complexes (\cite{MR0354655}) or almost perfect complexes (\cite{lurie-ha}).}.
\item $\Coh(X_0)_{\Perf(X)}$: cohomologically bounded objects $E \in \Qcoh(X_0)$ whose cohomology  $H^*(E)$ is a coherent $H^0(\mathcal{O}_{X_0})$-module and such that the direct image of $E$ under $\mathfrak{i}_*$ is perfect over $X$;
\end{itemize}

\noindent where we always have inclusions \footnote{For a general derived scheme $Y$, the inclusion $\Perf(Y)\subseteq \Coh(Y)$ is not guaranteed. Indeed, it requires $Y$ to be eventually coconnective - see \cite[Chapter 1, Def. 1.1.6]{MR3701352}. This is automatic for $Y=X_0$ using the table in the Remark \ref{rem-functorialitiesperfandcoh}.}
$$\Perf(X_0)\subseteq \Coh(X_0) \subset   \mathrm{Coh}^-(X_0)\subset \Qcoh(X_0) $$
$$\Coh(X_0)_{\Perf(X)}\subseteq \Coh(X_0)$$
and the fact the map $X_0\to X$ is a lci closed immersion (of derived schemes), thus preserving perfect complexes under pushforward (see the Remark \ref{rem-functorialitiesperfandcoh} below), gives us another inclusion
$$
\Perf(X_0)\subseteq \Coh(X_0)_{\Perf(X)}
$$

\begin{rem}
\label{rem-cohperfareidempotentcomplete}
All $\Coh(X_0)$, $\Perf(X_0)$ and $\Coh(X_0)_{\Perf(X)}$ are idempotent complete $A$-dg-categories. This is well-known for $\Coh(X_0)$ and $\Perf(X_0)$. For $\Coh(X_0)_{\Perf(X)}$ this follows because both $\Coh(X_0)$ and $\Perf(X)$ are idempotent complete and the pushforward along $X_0\to X$ is an exact functor thus preserving all retracts that exist.
\end{rem}

\begin{rem}
\label{rem-functorialitiesperfandcoh}
The constructions of $\Coh, \Perf $ and $\mathrm{Coh}^-$ possess different $\infty$-functorial properties for maps of  derived schemes, summarized in the following table:
\medskip
\begin{center}
\resizebox{1. \hsize}{!}{
\begin{tabular}{|l|c|c|}
   \hline
    & Pullbacks & Pushforwards \\
   \hline
   $\Coh$ &  Finite Tor-amplitude \cite[Lemma 3.1.3, Chapter 4] {MR3701352} & Proper locally almost finite type \cite[Lemma 5.1.4, Chapter 4]{MR3701352}\\
   \hline
   $\Perf$ & All (dualizable objects) & Proper lci \cite{1210.2827} \\
   \hline
    $\mathrm{Coh}^-$ & All \cite[2.7.3.1]{lurie-sag}   &   Proper loc. almost of finite type \cite[5.6.0.2]{lurie-sag} \\
   \hline
\end{tabular}
}
\end{center}
\end{rem}

\personal{
For a general derived schemes one can use the characterization of almost perfect complexes as almost compact objects given in \cite[7.2.4.10, 7.2.4.17]{lurie-ha} plus the fact that for a map of connective $\mathrm{E}_\infty$-rings, base change preserves the condition of being cohomologicaly bounded above as the forgetful functor is both left and right $t$-exact. Alternatively, and more immediately, use \cite[7.2.4.11-(4) and (5)]{lurie-ha} }

\medskip

We will use this table to deduce the functorialities for the construction 
$$
(X,f)\mapsto \Coh(X_0)_{\Perf(X)}
$$
This requires some observations. We thank the anonymous referee for suggesting the following arguments, simplifying the discussion we had in previous versions of the paper:
\begin{lem}
\label{lemma-inclusiont-exact}
Let $(X,f)$ be be an $\LG$-pair over $S$. Then the functor $\mathfrak{i}_\ast: \Qcoh(X_0)\to \Qcoh(X)$ preserves perfect complexes, is t-exact and conservative.
\begin{proof}
The first claim is explained in the table in the Remark \ref{rem-functorialitiesperfandcoh} as the map $\mathfrak{i}:X_0\to X$ is a lci closed immersion. For t-exactness see \cite[2.5.1.1]{lurie-sag}. Conservativity may be check on the hearts by t-exactness, but on the hears, $\mathfrak{i}_*$ induces the classical pushfoward  on the truncations $t(X_0)\to t(X)$, which is conservative.
\end{proof}
\end{lem}

\begin{prop}
\label{prop-almostperfect}
Let $(X,f)$ be be an $\LG$-pair over $S$. Then the inclusion
\begin{equation}
\label{eq-coherentperfectboundedcoherentperfect}
\Coh(X_0)_{\Perf(X)}\longrightarrow \mathrm{Coh}^-(X_0)_{\Perf(X)}
\end{equation}
\noindent is an equivalence.
\begin{proof}
By t-exactness and conservativity of $\mathfrak{i}_*$, cohomological boundedness of an object $E\in\Qcoh(X_0)$ can be checked after applying $\mathfrak{i}_*$. But if $\mathfrak{i}_*(E)$ is a perfect complex then it is cohomologically bounded.
\end{proof}
\end{prop}

\begin{rem}
It follows from \ref{prop-almostperfect} that the category $\Coh(X_0)_{\Perf(X)}$ fits in a pullback square of  idempotent complete $A$-linear dg-categories
\begin{equation}
\label{eq-pullbackcoherentboundedperfect}
\xymatrix{
\mathrm{Coh}^-(X_0)\ar[r]^{\mathfrak{i}_*}&\mathrm{Coh}^-(X)\\
\Coh(X_0)_{\Perf(X)}\ar[r]\ar@{^{(}->}[u]&\Perf(X)\ar@{^{(}->}[u]
}
\end{equation}
Indeed, by the t-exactness and conservativity in \ref{lemma-inclusiont-exact}, the diagram
\begin{equation}
\label{eq-pullbackcoherentboundedcoherent}
\xymatrix{
\mathrm{Coh}^-(X_0)\ar[r]^{\mathfrak{i}_*}&\mathrm{Coh}^-(X)\\
\Coh(X_0)\ar[r]\ar@{^{(}->}[u]&\Coh(X)\ar@{^{(}->}[u]
}
\end{equation}
\noindent is cartesian. Combining  the equivalence \ref{eq-coherentperfectboundedcoherentperfect} with the fact the diagram
\begin{equation}
\label{eq-pullbackcoherentboundedperfect757579}
\xymatrix{
\mathrm{Coh}^-(X_0)\ar[r]^{\mathfrak{i}_*}&\mathrm{Coh}^-(X)\\
\mathrm{Coh}^-(X_0)_{\Perf(X)}\ar[r]\ar@{^{(}->}[u]&\Perf(X)\ar@{^{(}->}[u]
}
\end{equation}
\noindent  is cartesian, allows us to conclude.
\end{rem}

\begin{rem}
\label{rem-lcipreservesboundedcoherent}
The construction $(X,f)\mapsto X_0$ can be presented as an $\s$-functor. We leave this as an easy exercise to the reader. Moreover, for any map of LG-pairs $u:(X,f)\to (Y,g)$ there is a well-defined pullback functor 
\begin{equation}
\label{eq-functorialityrelativielyperfectinfty}
\Coh(Y_0)_{\Perf(Y)}\to \Coh(X_0)_{\Perf(X)}
\end{equation}
Notice that the pullback map $\Coh(Y_0)\to \Coh(X_0)$ is not necessarily defined as one would need the map $X_0\to Y_0$ to be of finite Tor-amplitude. What is true in general (following the table above) is that $\mathrm{Coh}^-(Y_0)\to \mathrm{Coh}^-(X_0)$ is defined 
and this is enough to show that the restriction (\ref{eq-functorialityrelativielyperfectinfty}) is always well-defined via the equivalence \ref{eq-coherentperfectboundedcoherentperfect}. Indeed,  the proper base change formula  \cite[6.3.4.1]{lurie-sag} applied to the cartesian diagram\footnote{The diagram is cartesian by definition of morphism of LG-pairs.}
\begin{equation}
\xymatrix{
\ar[d]^{u_0} X_0\ar[r]^{\mathfrak{i}}&\ar[d]^u X\\
Y_0\ar[r] & Y
}
\end{equation}
\noindent together with the fact the pullback of perfect complexes is always perfect, tells us that $\mathrm{Coh}^-(Y_0)\to \mathrm{Coh}^-(X_0)$ restricts to a functor $\mathrm{Coh}^-(Y_0)_{\Perf(Y)}\to \mathrm{Coh}^-(X_0)_{\Perf(X)}$. \personal{if $E\in \Coh(Y_0)_{\Perf(Y)}$ then the direct image $i_*(u_0^*(E))$ in $X$ is perfect and therefore of finite Tor-amplitude. Moreover, as $X$ is an underived scheme, finite Tor-amplitude implies finite cohomological amplitude. As the map $i:X_0\to X$ is a closed immersion, $i_*: \Qcoh(X_0)\to \Qcoh(X)$ is t-exact  and therefore conservative as the induced functor $i_\ast^\heartsuit: \Qcoh(X_0)^\heartsuit \to \Qcoh(X)^\heartsuit$ is the classical pushforward functor on the truncations $t(X_0)\to t(X)$ and therefore conservative. It follows that $u_0^*(E)$ is also of finite cohomological amplitude. As we know that $u_0^*(E)$ is also in $\mathrm{Coh}^-(X_0)$ we conclude the proof}
The construction $(X,f)\mapsto \Coh(X_0)_{\Perf(X)}$ can easily be written as part of an $\s$-functor. 
\end{rem}

\vspace{0.5cm}

\begin{cor}
\label{cor-h-descent}
The assignment $(X,f)\mapsto \Coh(X_0)_{\Perf(X)}$ has descent with respect to $h$-Cech covers for the h-topology of Voevodsky (see \cite{1402.3204}). That is, for any morphism $(Y,g)\to (X,f)$ such that $Y\to X$ is an h-covering, the pullback
$$
\Coh(X_0)_{\Perf(X)}\to \mathrm{Tot}(\Coh(Y_0)_\bullet)_{\Perf(Y_\bullet)})
$$
\noindent is an equivalence in $\dgcat_A^\mathrm{idem}$. Here $Y_\bullet$ denotes the \v{C}ech nerve of $u:Y\to X$ and $(Y_0)_\bullet$ denotes the \v{C}ech nerve of $u_0:Y_0\to X_0$ (both formed in the setting of derived schemes).
\begin{proof}
This follows from \cite[Thm 4.12]{1402.3204} as  both almost perfect complexes and perfect complexes satisfy $h$-descent for derived \v{C}ech.  covers. Alternatively, notice that $Y_0\to X_0$ is an h-cover with $X_0$ eventually coconnective, then use \cite[5.6.6.1]{lurie-sag}
\end{proof}
\end{cor}

\begin{rem}\label{rem-indcompletioncoherentperfect}Let us remark that the Ind-completion $\mathrm{Ind}(\Coh(X_0)_{\Perf(X)})$ embeds fully faithfully inside the presentable $\infty$-category $\mathrm{IndCoh}(X_0)_{\mathrm{Qcoh}(X)}$ obtained via the pullback of presentable $A$-linear dg-categories
\begin{equation}
\label{eq-pullbackIndcoherentquasicoherent}
\xymatrix{
\mathrm{IndCoh}(X_0)\ar[r]^{\mathfrak{i}_*}&\mathrm{IndCoh}(X)\\
\mathrm{IndCoh}(X_0)_{\mathrm{Qcoh}(X)}\ar[r]\ar@{^{(}->}[u]^{\theta}&\mathrm{Qcoh}(X)\ar@{^{(}->}[u]^{\phi}
}
\end{equation}and the inclusion $\mathrm{Ind}(\Coh(X_0)_{\Perf(X)})\subseteq \mathrm{IndCoh}(X_0)_{\mathrm{Qcoh}(X)} $ is closed under filtered colimits.
Let us remark first that the inclusion $\Perf(X)\subseteq \Coh(X)$ is fully-faithful, so is the inclusion $\phi$ after Ind-completion $\Qcoh(X)\subseteq \mathrm{IndCoh}(X)$ and therefore, so is the map $\theta$ by definition of pullbacks in $\Prl$ \cite[5.5.3.13]{lurie-htt} and the definition of mapping spaces in a pullback. Moreover, by the description of colimits in a pullback \cite[5.4.5.5]{lurie-htt}, $\theta$ preserves filtered colimits because the same is true for $\phi$.\\
The natural inclusions of bounded coherent inside Ind-coherent and perfect inside all quasi-coherent, give us a canonical fully faithful embedding 
\begin{equation}
\label{eq-backtowork2}
\Coh(X_0)_{\Perf(X)}\subseteq \mathrm{IndCoh}(X_0)_{\mathrm{Qcoh}(X)}\end{equation}
One can easily check using the fact that $\Coh(X_0)$ are precisely the compact object of $\mathrm{IndCoh}(X_0)$, that the image of this embeding lives in the full subcategory of the r.h.s spanned by compact objects. \personal{ Indeed, let $U:=(F,E, \alpha: \phi(E)\simeq \mathfrak{i}_*(F))$ be an object of $\mathrm{IndCoh}(X_0)_{\mathrm{Qcoh}(X)}$ living in the l.h.s. and consider a filtered system $U_j=(F_j, E_j, \alpha_j: \phi(E_j)\simeq \mathfrak{i}_*(F_j)$ in $\mathrm{IndCoh}(X_0)_{\mathrm{Qcoh}(X)}$. We want to show that the natural map 
\begin{equation}
\label{eq-backtowork}
\mathrm{colim}_j \,\Map_{\,\mathrm{IndCoh}(X_0)_{\mathrm{Qcoh}(X)}}(U, U_j)\to\Map_{\,\mathrm{IndCoh}(X_0)_{\mathrm{Qcoh}(X)}}(U, \mathrm{colim}_j \,U_j)
\end{equation}
is an equivalence.
Now, as $\theta$ is fully faithful and preserves filtered colimits, the r.h.s is equivalent to the mapping space $\Map_{\mathrm{IndCoh}(X_0)}(F, \mathrm{colim}_j \,F_i)$. By the same arguments, the l.h.s of equation (\ref{eq-backtowork}) is equivalent to $ \mathrm{colim}_i \,\Map_{\mathrm{IndCoh}(X_0)}(F,  F_i)$. Finally, as $F$ is assumed to be in $\Coh(X_0)$, we conclude that (\ref{eq-backtowork}) is indeed an equivalence. This proves the claim that (\ref{eq-backtowork2}) indeed factors through the compact objects of the r.h.s. so that after passing to Ind-completion we obtain a fully faithful map that preserves filtered colimits.}
\end{rem}

\vspace{0.5cm}

We start with an \emph{absolute} version of the definition of the derived category of singularities:

\begin{defn}
Let $Z$ be a derived scheme of finite type over $S$. The (absolute) derived category of singularities of $Z$ is the dg-quotient $\Sing(Z):=\Coh(Z)/\Perf(Z)$ taken in $\dgcat_A^{\mathrm{idem}}$.
\end{defn}

We will now consider  the derived category of singularities of an LG-pair $(X,f)$. As the derived closed immersion $\mathfrak{i}_\ast:X_0\to X$ is lci and in particular, of finite Tor-dimension, the Remark \ref{rem-functorialitiesperfandcoh} guarantees well-defined operations 
$$\mathfrak{i}_*:\Perf(X_0)\to \Perf(X) \text{  and  } \mathfrak{i}_*: \Coh(X_0)\to \Coh(X)$$
$$\mathfrak{i}^*:\Perf(X)\to \Perf(X_0) \text{  and  } \mathfrak{i}^*: \Coh(X)\to \Coh(X_0)$$
\noindent and therefore, well-defined induced operations 
$$\mathfrak{i}_*\Sing(X_0)\to \Sing(X) \text{  and  } \mathfrak{i}^*:\Sing(X)\to \Sing(X_0) $$
\noindent with $\mathfrak{i}^*$ left adjoint to $\mathfrak{i}_*$. In this paper we will use the following definition:

\begin{defn}\label{d2}The \emph{dg-category of singularities of the pair $(X,f)$} is the homotopy fiber in $\dgcat_A^{\mathrm{idem}}$
$$\Sing(X,f):= \mathrm{Ker} \, ( \mathfrak{i}_*: \Sing(X_0)\to \Sing(X)).$$
\end{defn}

\begin{rem}
\label{remark-fullyfaithfulSingrelativeSingabsolute}
The canonical dg-functor $\Sing(X,f)\to \Sing(X_0)$ is fully faithful.  Indeed, being $\Sing(X,f)$ a fiber computed in $\dgcat_A^{\mathrm{idem}}$, and as the inclusion $\dgcat_A^{\mathrm{idem}}\subseteq \dgcat_A$ commutes with limits (with left adjoint the idempotent completion), we conclude the statement from the formula of the mapping spaces in the fiber product in $\dgcat_A$ and the fact that the zero dg-category $0$ is a terminal object.
\end{rem}

\vspace{0.5cm}

\begin{prop}
\label{remark-singasaquotient}
For any $(X,f) \in \LG_{S} $ the canonical functor   
$$\Coh(X_0)_{\Perf(X)}/\Perf(X_0) \simeq \Sing(X,f)$$
\noindent is an equivalence. Here the dg-quotient on the lhs is taken in $\dgcat_A^{\mathrm{idem}}$. 
\begin{proof}
We start with the observation that as $X$ is assumed to be of finite type over $S$, it is quasi-compact and quasi-separated and in particular, $\Perf(X)$ admits a compact generator \cite{bondal-vandenbergh}. Therefore $X$ is perfect (in the sense of \cite{MR2669705}) and it is then a consequence of \cite[Prop. 1.18]{MR3281141} that the exact sequence of idempotent complete dg-categories 
\begin{equation}
\xymatrix{
\Perf(X)\ar@{^{(}->}[r]\ar[d]& \Coh(X)\ar[d]\\
\ast\ar[r]& \Sing(X)
}
\end{equation}
\noindent is also a pullback in $\dgcat_A^{\mathrm{idem}}$. This cartesian diagram together with the cartesian diagram (\ref{eq-pullbackcoherentboundedperfect}) fit together in a commutative cube 
\begin{equation}
\xymatrix{
\Sing(X_0) \ar[rr]&&\Sing(X)&\\
&\Sing(X,f)\ar[ul]\ar[rr]&&0\ar[ul]\\
\Coh(X_0)\ar[uu]\ar[rr]^-(0.4){\mathfrak{i}_*}&&\Coh(X)\ar[uu]&\\
&\Coh(X_0)_{\Perf(X)}\ar@{_{(}->}[ul]\ar[rr]\ar[uu]&&\Perf(X)\ar[uu]\ar@{_{(}->}[ul]\\
}
\end{equation}
\noindent where the right, bottom and upper faces are cartesian. In particular, it follows that the face on the left is cartesian and again by \cite[Prop. 1.18]{MR3281141} applied to $\Perf(X_0)\to \Coh(X_0)\to \Sing(X_0)$\footnote{Notice again that $X_0$ is also a quasi-compact and quasi-separated derived scheme so $\Perf(X_0)$ has a compact generator - see \cite{toen-azumaya}}, combined with  the fact the face on the left is now known to be cartesian, gives us two cartesian squares
\begin{equation}
\xymatrix{
\Perf(X_0)\ar@{^{(}->}[r]\ar[d]& \Coh(X_0)_{\Perf(X)} \ar@{^{(}->}[r]\ar[d]& \Coh(X_0) \ar[d]\\
\ast\ar[r]& \Sing(X,f) \ar[r]& \Sing(X_0)
}
\end{equation}
\noindent where the vertical middle arrow is essentially surjective (being the pullback of $\Coh(X_0)\to \Sing(X_0)$ which is essentially surjective). This shows that the canonical map induced by the universal property of the quotient
\begin{equation}
\label{eq-comparisongdefinitionsquotientfibersingularities}
\Coh(X_0)_{\Perf(X)}/\Perf(X_0)\to \Sing(X,f)
\end{equation}
\noindent is essentially surjective. It remains to check it is fully faithful. For that purpose we use the commutativity of the diagram
\begin{equation}
\xymatrix{
\Coh(X_0)_{\Perf(X)}/\Perf(X_0)\ar[r]\ar[d]_{(\ref{eq-comparisongdefinitionsquotientfibersingularities})}&\Sing(X_0)\\
\Sing(X,f)\ar[ru]& \\
}
\end{equation}
\noindent and explain that both maps to $\Sing(X_0)$ are fully faithful, thus deducing the fully faithfulness of (\ref{eq-comparisongdefinitionsquotientfibersingularities}). The fact that the diagonal arrow is fully faithful has been explained in the Remark \ref{remark-fullyfaithfulSingrelativeSingabsolute}.  It remains to show that the quotient map $\Coh(X_0)_{\Perf(X)}/\Perf(X_0)\to \Sing(X_0)=\Coh(X_0)/\Perf(X_0)$ is fully-faithful. This is true as the inclusion $\Coh(X_0)_{\Perf(X)}\to \Coh(X_0)$ is fully faithful and the map induced in the quotient corresponds to a quotient by a common subcategory $\Perf(X_0)$ with a compact generator.\footnote{For the reader's convenience we explain the argument: as both categories are stable and the functors are exact it is enough to explain fully faithfulness at the level of the classical homotopy categories. This can be done using the description of Hom-sets in terms of zig-zags in the classical Gabriel-Zisman localization. The reader can check \cite[Lemma 4.7.1]{0806.1324} using the fact that the class of morphisms being inverted consists exactly those maps $E\to F$ whose cofiber is perfect.}.
\end{proof}
\end{prop}

The description of $\Sing(X,f)$ given in Prop. \ref{remark-singasaquotient} will be recurrent in this paper.

\vspace{0.5cm}

\begin{rem}
\label{remark-differentdefinitionsrelativesingularities}
Following \cite{efimov2015coherent}, one could also define the relative derived category of singularities with respect to $X_0 \to X$, $\Sing(X_0/X)$, as the dg-quotient of $\Sing(X_0)$ by the image of $\mathfrak{i}^*$ taken in $\dgcat_A^{\mathrm{idem}}$. This differs from our Definition \ref{d2} (as explained in \cite[Remark 6.9]{MR3007084}). Nevertheless, one can understand both choices of definition as variations of the situation when $X$ is regular, where both agree with $\Sing(X_0)$. Our choice has the advantage of being always equivalent to matrix factorizations of projective modules (as it is proven by \cite[Proof of Theorem 2.7, p.47]{efimov2015coherent} and we shall revisit it in Section \ref{section-comparisonMF-SING}), contrary to the one of \cite{efimov2015coherent} where one needs to use coherent matrix factorizations. 
\end{rem}

\vspace{0.5cm}
\subsubsection{} Throughout this section by default we work under the Context \ref{context111}. For some results we can actually drop the hypothesis that $A$ is local. This hypothesis will only be necessary in the construction of a strict model for $\Coh_{\Perf}$. Our goal is now to exhibit the construction of the derived category of singularities of an LG-model as lax symmetric monoidal $\s$-functor
\begin{equation}\label{eq-laxSingversion}
\Sing^{\otimes} : \LG_S^{\mathrm{op}, \boxplus} \longrightarrow \dgcat_A^{\otimes},
\end{equation}
In what follows we will first construct $\Sing$ as an $\s$-functor defined on affine LG-pairs.  Our strategy will be to build a \emph{strict model} for $\Coh(X_0)_{\Perf(X)}$ (see below) and construct the functorialities in this strict setting, transferring them later to the homotopical setting via the localization functor of Section \ref{notations-dg-categories}.

\medskip
\begin{rem}
The reader should be aware that the construction of $\Sing$ as an $\s$-functor can be done using only $\s$-categorical methods, without any rectification step, as suggested in the Remark \ref{rem-lcipreservesboundedcoherent}.  Note however that the comparison with the construction of matrix factorizations requires some steps with strict dg-categories, as our initial definition of $\MF$ (Construction \ref{construction-MFasfunctor}) was indeed given in this setting.\\
\end{rem}

\begin{cons} (Strict model for the derived intersection $X_0$.)  
Let $(X=\Spec B,f) \in \LG_{S}^{\mathrm{aff}}$, corresponding to $f\in B$ for 
$B$ a flat and finitely presented $A$-algebra. We consider $K(B,f)$, the Koszul algebra associated to the element
$f \in B$. It is the commutative $B$-dg-algebra whose underlying
complex is $\xymatrix{B \ar[r]^-{f} & B}$, with the standard multiplicative
structure \textcolor{black}{where the elements of degree -1 square to zero}. We have maps $B \rightarrow K(B,f) \rightarrow B/(f)$. 
When $f$ is not a zero divisor, 
these maps make $K(B,f)$ into a cofibrant model for $B/(f)$ as
a commutative $B$-dg-algebra (i.e. the diagram above is a 
factorization of $B \rightarrow B/(f)$ as a cofibration followed
by a trivial fibration). More generally, even if $f$ is a zero divisor, $K(B,f)$ is always a cofibrant
commutative $B$-dg-algebra which is an algebraic model
for the derived scheme $X_0$ of zeros of $f$.
\end{cons}
\medskip

\begin{ex}
\label{ex-selfintersectionzero}
Let $B=A$ and $f=0$. Then $S_0:= S\times^\mathrm{h}_{\mathbb{A}^1_S} S$ is the derived self-intersection of zero inside $\mathbb{A}^1_S$. This is explicitly given by the commutative differential graded algebra $K(A,0)=A[\epsilon]$ with $\epsilon$ a generator in cohomological degree $-1$ with $\epsilon^2=0$, with underlying complex
\begin{equation}
\xymatrix{
0\ar[r]& A.\epsilon \ar[r]^0 & A \ar[r]&0
}
\end{equation}
\end{ex}

\medskip

\begin{rem}
\label{remark-explicitmodelsforCohandPerf}
This explicit model for the derived intersection gives us explicit models for $\Perf$, $\Coh$,  and $\Qcoh$. For instance, there is
a canonical equivalence of $A$-dg-categories between the dg-category $\Qcoh(X_0)$ of quasi-coherent complexes on $X_0$, and the $A$-dg-category of cofibrant $K(B,f)$-dg-modules, which we will denote as $\widehat{K(B,f)}$. The full subcategory $\Coh (X_0) \subset \Qcoh (X_0)$ (resp. $\Perf (X_0)$ ) identifies with the full dg-subcategory of $\widehat{K(B,f)}$ spanned by those complexes which are of bounded cohomological amplitude and with coherent cohomology (resp. the full sub dg-category of $\widehat{K(B,f)}$ spanned by cofibrant dg-modules which are homotopically finitely presented). A priori, the functor $\mathfrak{i}_*$ can be described as
$$
\xymatrix{
\widehat{K(B,f)}\ar[rr]^-{\mathfrak{i}_*:=Q_B\circ \mathrm{Forget}}&& \widehat{B}
}
$$
where $Q_B$ is a cofibrant replacement functor in $B$-dg-modules and $\mathrm{Forget}$ is the restriction of scalars along $B\to K(B,f)$. But as $K(B,f)$ is already cofibrant over $B$, any cofibrant $K(B,f)$-dg-module will also be cofibrant over $B$. Thus $Q_B$ is not necessary.
\end{rem}

\vspace{0.7cm}

We now discuss a \emph{strict model} for $\Coh(X_0)_{\Perf(X)}$, for $X=\Spec B$.

\begin{cons}
We consider the full sub dg-category $\mathrm{Coh}^{\mathrm{s}}(B,f)$
of the strict dg-category of (all) $K(B,f)$-dg-modules, spanned by those whose image along the restriction of scalars along the structure map $B\to K(B,f)$
$$
K(B,f)-\mathrm{dgMod}_A\to B-\mathrm{dgMod}
$$
 are strictly perfect as complexes
of $B$-modules (i.e. strictly bounded and degreewise projective $B$-modules of finite type). 
Notice that as  $X=\Spec(B)$ is an affine scheme, the sub dg-category $\Perf(X)\subseteq\widehat{B} $ is equivalent to its full sub-dg-category spanned by strict perfect complexes (see \cite[2.4.1]{thomasonalgebraic}).
Note also
 that we do not make the assumption that objects in 
$\mathrm{Coh}^{\mathrm{s}}(B,f)$ are cofibrant as $K(B,f)$-dg-modules, 
so there is no fully faithful embedding from
$\mathrm{Coh}^{\mathrm{s}}(B,f)$ to the dg-category 
$K(B,f)$-$\mathrm{dgMod}_A^\mathrm{cof}=\Qcoh(X_0)$ of cofibrant $K(B,f)$-dg-modules.
\end{cons}

\begin{rem}
\label{remark-strictcoherentislocallyflatdgcat}
More explicitly, an object in $\mathrm{Coh}^{\mathrm{s}}(B,f)$ is the data of a strictly bounded complex $E$ of projective $B$-modules of finite type, 
together with a morphism of graded modules \textcolor{black}{$h : E \rightarrow E[1]$ of degree $1$, with $h^2=0$}
satisfying $[d,h]=dh+hd=f$. In fact, given a $B$-dg-module $E$, the datum of a $K(B,f)$-dg-module structure on $E$, restricting to the given $B$-dg-module structure via the canonical map $B \to K(B,f)$, amounts to a pair $(m_0, m_1)$ of morphisms $m_{\alpha}: E\to E[-\alpha]$ of graded $B$-modules, where $m_0$ is forced to be the identity by the fact that the $B$-dg-module structure is assigned, and $h:=m_1$ is subject to the condition $dh+hd=f$.
Note also that, as a strict $A$-dg-category, $\mathrm{Coh}^{\mathrm{s}}(B,f)$ is locally flat. This follows because by assumption $B$ is flat over $A$ and $A$ is a regular local ring as required by the Context \ref{context111}.
\end{rem}

The dg-category $\mathrm{Coh}^{\mathrm{s}}(B,f)$  is a strict model for 
the dg-category $\Coh(X_0)_{\Perf(X)}$, as stated by the following lemma.

\begin{lem}\label{l1}
Let $\mathrm{Coh}^{\mathrm{s, acy}}(B,f)\subset \mathrm{Coh}^{\mathrm{s}}(B,f)$ be the 
full sub-dg-category consisting of $K(B,f)$-dg-modules
which are acyclic as complexes of $B$-modules. Then, the
cofibrant replacement dg-functor induces an equivalence of dg-categories
\begin{equation}
\label{eq-strictmodelcoherentperfectswedenmeatballssuck}
\mathrm{Coh}^{\mathrm{s}}(B,f)[\mathrm{q.iso}^{-1}]_{\mathrm{dg}}\simeq \mathrm{Coh}^{\mathrm{s}}(B,f)/\mathrm{Coh}^{\mathrm{s, acy}}(B,f) \simeq \Coh(X_0)_{\Perf(X)}
\end{equation}
In particular, we have a natural equivalence of dg-categories
$$\mathrm{Coh}^{\mathrm{s}}(B,f)/\mathrm{Perf}^{\mathrm{s}}(B,f) \simeq \Coh(X_0)_{\Perf(X)}/\Perf(X_0) =\Sing(X,f),$$
where $\mathrm{Perf}^{\mathrm{s}}(B,f)$ is by definition the full sub-dg-category of $\mathrm{Coh}^{\mathrm{s}}(B,f)$ consisting 
of objects which are perfect as 
$K(B,f)$-dg-modules. 
\begin{proof}
The category of $K(B,f)$-dg-modules admits a combinatorial model structure inherited by the one from complexes of $B$-modules. Therefore, it admits a functorial  cofibrant replacement
$$
Q:K(B,f)-\mathrm{dgMod}_A\to K(B,f)-\mathrm{dgMod}_A^\mathrm{cof}
$$
which is not a priori a dg-functor. In our case we are interested in applying this to the inclusion $\mathrm{Coh}^{\mathrm{s}}(B,f)\subseteq K(B,f)-\mathrm{dgMod}_A$ and it happens that for objects  $E\in \mathrm{Coh}^{\mathrm{s}}(B,f)$ we can model $Q$ by a dg-functor as follows: as $E$ is strictly perfect over $B$, in particular $E$ is cofibrant over $B$. Therefore, by definition $E\otimes_B K(B,f)$ is a cofibrant $K(B,f)$-dg-module. So are the powers $E\otimes_B B^n\otimes_B K(B,f)$. This gives us a resolution of $E\simeq E\otimes_{K(B,f)}K(B,f)$ by a simplicial diagram. Extracting its totalization we obtain a cofibrant resolution of $E$ in a functorial way.
This way we get a strict cofibrant-replacement dg-functor
\begin{equation}
\label{eq-cofibrantreplacement}
Q:\mathrm{Coh}^{\mathrm{s}}(B,f)\to \widehat{K(B,f)}
\end{equation}
which by definition, sends weak-equivalences to equivalences. By the universal property of the dg-localization we have a factorization in $\dgcat_A$
\begin{equation}
\label{eq-cofibrantreplacement2}
Q:\mathrm{Coh}^{\mathrm{s}}(B,f)[\mathrm{q.iso}^{-1}]_{\mathrm{dg}}\to \widehat{K(B,f)}
\end{equation}
Notice also that by the universal property of the quotient, this dg-localization is equivalent in $\dgcat_A$ to $\mathrm{Coh}^{\mathrm{s}}(B,f)/\mathrm{Coh}^{\mathrm{s, acy}}(B,f)$ and the map (\ref{eq-cofibrantreplacement2}) is the one induced by the fact that $Q$
 sends the full subcategory $\mathrm{Coh}^{\mathrm{s, acy}}(B,f)$ to zero.\\

We show that the dg-functor (\ref{eq-cofibrantreplacement}) is fully faithful with essential image given by $\Coh(K(B,f))_{\Perf(B)}$. More precisely, we show that:

\begin{enumerate}
\item The functor (\ref{eq-cofibrantreplacement2}) factors through the full-subcategory $\Coh(K(B,f))_{\Perf(B)}$, where it is essentially surjective;
\item (\ref{eq-cofibrantreplacement2}) is fully-faithful.
\end{enumerate}

Let us start with (1). Of course, as an object $E\in \mathrm{Coh}^{\mathrm{s}}(B,f)$ is strictly bounded, any cofibrant replacement will remain cohomologicaly bounded. The cohomology groups of $E$ carry a natural structure of $\pi_0(K(B,f))=B/f$-module. Moreover, being $E$ levelwise made of projective B-modules of finite type, these same cohomology groups are coherent when seen as $B$-modules via composition with the surjective map $B\to B/f$ and therefore are coherent as $\pi_0(K(B,f))=B/f$-modules. Therefore, its cofibrant replacement $Q(E)$ is in $\Coh(K(B,f))$ \footnote{As $A$ is Noetherian and $B$ is of finite type over $A$, it is of finite presentation as an $A$-algebra. Then it is also Noetherian and therefore coherent modules are the same as finitely generated modules.}. In fact, $Q(E)$ lives in the full sub-dg-category $\Coh(K(B,f))_{\Perf(B)}$. Indeed, notice that by definition of $\mathrm{Coh}^{\mathrm{s}}(B,f)$ the image of $E$ under composition with $B\to K(B,f)$, which we will denote as $\mathrm{Forget}(E)$, is a strict perfect complex and therefore, is perfect. As the forgetful functor along $B\to K(B,f)$ preserves all weak-equivalences of dg-modules, $\mathrm{Forget}(Q(E))$ is weak-equivalent to $\mathrm{Forget}(E)$. Finally, by definition of $\mathfrak{i}_*:= Q_B\circ \mathrm{Forget}$ (see the Remark \ref{remark-explicitmodelsforCohandPerf} for notations) we find that $i_*(E)$ is quasi-isomorphic to $\mathrm{Forget}(E)$ and therefore is perfect.

To show that (\ref{eq-cofibrantreplacement2})  is essentially surjective on $\Coh(K(B,f))_{\Perf(B)}$ we notice first that as $X$ is affine, the inclusion of strictly perfect complexes over $B$, $\Perf^s(B)$, inside $\Perf(B)$ is an equivalence. In this case so is the inclusion $\Coh(K(B,f))_{\Perf^s(B)}\subseteq \Coh(K(B,f))_{\Perf(B)}$. Suppose $M\in \Coh(K(B,f))_{\Perf^s(B)}$ is in cohomological degree $0$, a $B/f$-module of finite type. In this case, take any simplicial resolution of $M$ by free $K(B,f)$-dg-modules $E\to M$. This might be unbounded because $M$ itself is not strictly perfect over $K(B,f)$. The restriction of scalars of $E$ to $B$ is cofibrant and is degreewise projective over $B$ as $K(B,f)$ itself is strictly perfect over $B$ and $M$ is by hypothesis strictly perfect over $B$. One can now truncate the resolution $\tau_{\leq b+1}E$ for $b$ the tor-amplitude of $M$ over $B$. This new resolution is now strictly bounded as $K(B,f)$-dg-module and remains quasi-isomorphic to $M$.

Let us now show (2). As $\mathrm{Coh}^{\mathrm{s}}(B,f)$ has a canonical triangulated structure (coming from the strict dg-enrichment) to show that the map  (\ref{eq-cofibrantreplacement2}) is fully faithful it is enough to show that it is fully faithful on the homotopy categories because of the triangulated nature of the dg-localization. In this case it is enough so show that for any $E\in \mathrm{Coh}^{\mathrm{s}}(B,f)$ and for any quasi-isomorphism $P \to E$ with $P$ a $K(B,f)$-dg-module, it is possible to find an object $P'\in \mathrm{Coh}^{\mathrm{s}}(B,f)$ and a second quasi-isomorphism $P'\to P\to E$. But this follows using free resolutions like in (1) above.
\end{proof}
\end{lem}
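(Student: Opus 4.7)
The plan is to identify $\mathrm{Coh}^{\mathrm{s}}(B,f)$ as a strict presentation for $\Coh(X_0)_{\Perf(X)}$ via a cofibrant replacement, and then deduce the quotient statement by functoriality of $\mathrm{Coh}^{\mathrm{s}}(B,f)/(-)$. The second statement is then formal once the first is established: indeed, under the equivalence of the first statement, the full sub-dg-category $\mathrm{Perf}^{\mathrm{s}}(B,f)$ matches, up to quasi-isomorphism, $\Perf(X_0)\subset \Coh(X_0)_{\Perf(X)}$, and the dg-quotient in $\dgcat_A^{\mathrm{idem}}$ commutes with the equivalence established in the first statement together with Proposition \ref{remark-singasaquotient}.

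For the first statement, the first step is to produce a strict dg-functor
$$Q:\mathrm{Coh}^{\mathrm{s}}(B,f)\longrightarrow \widehat{K(B,f)}$$
lifting the general cofibrant replacement. Since every $E\in \mathrm{Coh}^{\mathrm{s}}(B,f)$ is strictly perfect (hence cofibrant) as a $B$-dg-module, the bar-style simplicial resolution with terms $E\otimes_B K(B,f)^{\otimes n+1}$ produces a functorial cofibrant replacement of $E$ as a $K(B,f)$-dg-module; totalizing gives the desired $Q$. Sending weak equivalences to equivalences, $Q$ factors through $\mathrm{Coh}^{\mathrm{s}}(B,f)[\mathrm{q.iso}^{-1}]_{\mathrm{dg}} = \mathrm{Coh}^{\mathrm{s}}(B,f)/\mathrm{Coh}^{\mathrm{s, acy}}(B,f)$.

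The second step is to show $Q$ lands in $\Coh(X_0)_{\Perf(X)}$. The cohomological boundedness is automatic from the strict boundedness of $E$, the coherence of the cohomology as a $B/(f)$-module is inherited from the fact that $E$ is degreewise projective of finite type (using that $B$ is noetherian and that cohomology is a subquotient of finitely generated $B$-modules), and the fact that $i_\ast Q(E)$ is perfect follows because the forgetful functor along $B\to K(B,f)$ preserves all quasi-isomorphisms, so $i_\ast Q(E)\simeq \mathrm{Forget}(E)$, which is strictly perfect over $B$ by the defining property of $\mathrm{Coh}^{\mathrm{s}}(B,f)$.

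The third step, and where the real work lies, is to verify essential surjectivity onto $\Coh(X_0)_{\Perf(X)}$ and fully faithfulness. For fully faithfulness we exploit the triangulated enhancement: it suffices to show that any roof $P \stackrel{\sim}{\to} E$ with $P\in\widehat{K(B,f)}$ and $E\in \mathrm{Coh}^{\mathrm{s}}(B,f)$ can be refined by some $P'\in \mathrm{Coh}^{\mathrm{s}}(B,f)$ with a quasi-isomorphism $P'\to P$. For essential surjectivity, the main obstacle is to build, starting from any $M\in\Coh(X_0)_{\Perf(X)}$, a strictly perfect over $B$, strictly bounded $K(B,f)$-dg-module quasi-isomorphic to $M$. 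The key idea is to take any free (possibly unbounded) simplicial resolution $E_\bullet\to M$ of $M$ as a $K(B,f)$-dg-module, observe that $K(B,f)$ itself is strictly perfect over $B$ so each $E_n$ restricts to a complex of projective $B$-modules of finite type, and then truncate $\tau_{\leq b+1}E_\bullet$ at the Tor-amplitude $b$ of $i_\ast M$ over $B$ (finite by our assumption $M\in\Coh(X_0)_{\Perf(X)}$). This truncation remains quasi-isomorphic to $M$ as a $K(B,f)$-dg-module since truncation past the Tor-amplitude does not change the cohomology, and the truncated object now lies in $\mathrm{Coh}^{\mathrm{s}}(B,f)$. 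This last truncation argument is the main technical point that uses the hypothesis on perfectness of $i_\ast M$ rather than merely boundedness.
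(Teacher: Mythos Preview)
Your proposal is correct and follows essentially the same approach as the paper: the bar-type cofibrant replacement, the verification that $Q$ lands in $\Coh(X_0)_{\Perf(X)}$, the truncation at the Tor-amplitude for essential surjectivity, and the roof-refinement argument for fully faithfulness all match the paper's proof closely. The only cosmetic difference is that the paper first reduces essential surjectivity to objects $M$ concentrated in a single cohomological degree before running the free-resolution-and-truncate argument, whereas you state it for general $M\in\Coh(X_0)_{\Perf(X)}$; this is immaterial.
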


\vspace{0.1cm}
\begin{cons}
\label{construction-functorialassignementrelativeperfect}
The construction $(B,f) \mapsto \mathrm{Coh}^{\mathrm{s}}(B,f)$
is functorial in the pair $(B,f)$: if $B \rightarrow B'$ is a morphism
sending $f \in B$ to $f' \in B'$, the base change along $K(B,f)\to K(B',f')$ is induced by the base change $B'\otimes_{B}-$ given by 
an $A$-linear dg-functor
$$B'\otimes_{B}- : K(B,f)-\mathrm{dgMod}_A \longrightarrow K(B',f')-\mathrm{dgMod}_A $$
This restricts to an $A$-dg-functor
\begin{equation}
\label{eq-strictbasechange}
B'\otimes_{B}- : \mathrm{Coh}^{\mathrm{s}}(B,f) \longrightarrow \mathrm{Coh}^{\mathrm{s}}(B',f')
\end{equation}
Indeed, the base change of a strictly bounded complex remains strictly bounded and if $E$ is a $K(B,f)$-dg-module whose levels $E_i$ are projective $B$-modules of finite type, then the base change $E_i\otimes_B B'$ are $B'$-modules of finite type and again projective.
As explained in the Remark \ref{remark-strictcoherentislocallyflatdgcat} (working under the Context \ref{context111}) we get this way a pseudo-functor
\begin{equation}
\label{eq: strictcoherentasastrictfunctor}
\mathrm{Coh}^{\mathrm{s}} : \LG_{S}^{\mathrm{aff,op}} \longrightarrow \mathrm{dgCat}^{\mathrm{strict}, \mathrm{loc-flat}}_A
\end{equation}
which sends $(\Spec\, B,f)$ to $\mathrm{Coh}^{\mathrm{s}}(B,f)$.\\
One can now use  (\ref{eq: strictcoherentasastrictfunctor}) combined with the Lemma \ref{l1} to exhibit the assignment $(X,f)\mapsto \Coh(X_0)_{\Perf(X)}$ as an $\infty$-functor
\begin{equation}
\label{eq: infinityfunctorialrelativeperfect}
\mathrm{Coh}^b(-)_{\Perf(-)}: \LG_{S}^{\mathrm{aff,op}} \longrightarrow \dgcat_A^{\mathrm{idem}}
\end{equation}
For this purpose we remark that the base change maps (\ref{eq-strictbasechange}) preserve quasi-isomorphisms. Indeed, if $E\to F$ is a quasi-isomorphism between objects in $\mathrm{Coh}^{\mathrm{s}}(B,f)$ then $E\to F$ is a quasi-isomorphism between the underlying strictly perfect $B$-dg-modules. As strictly perfect $B$-complexes are cofibrant as $B$-dg-modules, and every $B$-dg-module is fibrant, $E\to F$ is an homotopy equivalence so that the base change $E\otimes_B B'\to F\otimes_B B'$ remains an homotopy equivalence and therefore a quasi-isomorphism
(alternatively, use  Brown's Lemma \cite[1.1.12]{hovey1999model}). In this case the functoriality (\ref{eq: strictcoherentasastrictfunctor}) can be refined
\begin{equation}
\label{eq: strictcoherentasastrictfunctorpairs}
\mathrm{Coh}^{\mathrm{s}} : \LG_{S}^{\mathrm{aff,op}} \longrightarrow \mathrm{PairsdgCat}^\mathrm{strict, loc-flat}_A
\end{equation}
where $\mathrm{PairsdgCat}^\mathrm{strict}_A$ is the 1-category whose objects are pairs $(T,W)$ with $T$ a strict small $A$-dg-category and $W$ a class of morphisms in $T$. This encodes the fact that weak-equivalences are stable under base change and sends a pair $(B,f)$ to the pair $(\mathrm{Coh}^{\mathrm{s}}(B,f), W_{q.iso})$ with $W_{q.iso}$ the class of quasi-isomorphisms. In the 1-category $\mathrm{PairsdgCat}^{\mathrm{strict, loc-flat}}_A$ we have a natural notion of weak-equivalence, namely, those maps of pairs $(T,W)\to (T',W')$ whose underlying strict dg-functor $T\to T'$ is a Dwyer-Kan equivalence of dg-categories. This produces a map between the $\infty$-categorical localizations
\begin{equation}
\label{eq-localizationofpairsdgcats}
\mathrm{loc}_{\mathrm{dg}}:\mathrm{N}(\mathrm{PairsdgCat}^{\mathrm{strict, loc-flat}}_A)[W_{DK}^{-1}]\to \dgcat_A\simeq \mathrm{N}(\mathrm{dgCat}^{\mathrm{strict, loc-flat}}_A)[W_{DK}^{-1}]
\end{equation}
\noindent sending a pair $(T,W)$ to its dg-localization $T[W^{-1}]_{\mathrm{dg}}$ in $\dgcat_A$. To give a concrete description of  this $\infty$-functor, we remark the existence of another 1-functor
$$
\mathrm{dgCat}^{\mathrm{strict, loc-flat}}_A \to\mathrm{PairsdgCat}^{\mathrm{strict, loc-flat}}_A
$$
sending a strict small $A$-dg-category $T$ to the pair $(T, W_T)$ where $W_T$ is the class of equivalences in $T$. By definition, this functor sends weak-equivalences of dg-categories to weak-equivalences of pairs  therefore induces a functor between their $\infty$-localizations
$$\dgcat_A^\otimes \simeq \mathrm{N}(\mathrm{dgCat}^{\mathrm{strict, loc-flat}}_A)[W_{DK}^{-1}]^\otimes \to \mathrm{N}(\mathrm{PairsdgCat}^{\mathrm{strict, loc-flat}}_A)[W_{DK}^{-1}]^\otimes$$
We now claim that this functor admits a left adjoint, which will be our model of (\ref{eq-localizationofpairsdgcats}). By a dual version of  \cite[Lemma 5.2.4.9]{lurie-htt} it suffices to check that for every pair $(T,S)$, the left fibration
$$
\dgcat_A\times_{\mathrm{N}(\mathrm{PairsdgCat}^{\mathrm{strict, loc-flat}}_A)[W_{DK}^{-1}]} \mathrm{N}(\mathrm{PairsdgCat}^{\mathrm{strict, loc-flat}}_A)[W_{DK}^{-1}]_{(T,S)/.} \to \dgcat_A
$$
\noindent is co-representable. But this follows because of the existence of dg-localizations - see \cite[Corollary 8.7]{Toen-homotopytheorydgcatsandderivedmoritaequivalences} \footnote{see also the higher categorical comments in \cite[Section 6.1]{robalo-thesis}.}.
\end{cons}

\vspace{0.3cm}

\begin{cons}
\label{constructionlaxmonoidalstructurestrictperfect}
First we construct a lax symmetric monoidal structure on  $\mathrm{Coh}^{\mathrm{s}}$ (\ref{eq: strictcoherentasastrictfunctor}). Given two $\LG$-pairs $(X:=\Spec(B), f)$ and $(Y:=\Spec(C),g)$ one must specify a functor
\begin{equation}
\label{eq-laxstructurestrictrelativeperfect}
\mathrm{Coh}^{\mathrm{s}}(B,f)\otimes_A \mathrm{Coh}^{\mathrm{s}}(C,g)\to \mathrm{Coh}^{\mathrm{s}}(B\otimes_A C, f\otimes 1 + 1\otimes g)
\end{equation}
\noindent verifying the conditions of lax symmetric structure. To construct (\ref{eq-laxstructurestrictrelativeperfect}) let us start by introducing some notation. For an $\LG$-pair $(X,f)$ we denote by $Z^\mathrm{h}(f)$ the derived zero locus of $f$ so that in the affine case, with $X=\Spec(B)$, we have $Z^\mathrm{h}(f)=\Spec(K(B,f))$. By construction, given two affine $\LG$-pairs as above, one obtains a commutative diagram
\begin{equation}
\label{eq-cartesianLGlax}
\xymatrix{
Z^\mathrm{h}(f)\times_S Z^\mathrm{h}(g)\ar[d]\ar[r]^-{(1)}& Z^\mathrm{h}(f\boxplus g:=+\circ (f,g))\ar[d]\ar[r]^-{(2)}&X\times_S Y\ar[d]^{(f,g)}\\
S\ar[r]^0& \mathbb{A}^1_S \ar[r]^{(id,-id)}\ar[d]&  \mathbb{A}^1_S \times_S  \mathbb{A}^1_S \ar[d]^{+}\\
&S\ar[r]^0& \mathbb{A}^1_S 
}
\end{equation}
\noindent where each face is cartesian and all the horizontal maps are lci closed immersions (as a consequence of the same property for the zero section $S\hookrightarrow \mathbb{A}^1_S$). Moreover, we remark that the arrows (1) and (2) in the diagram can be given strict models
\begin{equation}
\label{eq-japanesehousestill1}
\xymatrix{
&B\otimes_A C\ar[dr]^{(1) \,\circ\, (2)}\ar[dl]_{(2)}&\\
K(B\otimes_A C, f\otimes1 + 1\otimes g)\ar[rr]^{(1)} && K(B,f)\otimes_A K(C,g)
}
\end{equation}
\noindent where (1) is completely determined by an element $\alpha$
of degree $-1$ in $K(B,f) \otimes_{A} K(C,g)$ satisfying
$$d(\alpha)=f\otimes 1 + 1 \otimes g \qquad \alpha^2=0.$$
We set 
\begin{equation}
\label{eq-formulaCohstrictproduct}
\alpha:= h\otimes 1 + 1 \otimes k
\end{equation}
where $h$ and $k$ are the canonical element in $K(B,f)$ and $K(C,g)$ respectively, 
of degree $-1$ with 
\begin{equation}
\label{eq-explicitactionepsilonproduct}
d(h)=f \qquad d(k)=g \qquad h^2=k^2=0.
\end{equation}
To define the lax symmetric structure (\ref{eq-laxstructurestrictrelativeperfect}) one is reduced to explain that the composition
\begin{equation}
\label{eq-laxstructurestrictrelativeperfect2}
\resizebox{1. \hsize}{!}{
\xymatrix{
\mathrm{Coh}^{\mathrm{s}}(B,f)\otimes_A \mathrm{Coh}^{\mathrm{s}}(C,g)\ar[r]^{\boxtimes}&\mathrm{Coh}^{\mathrm{s}}(K(B,f)\otimes_A K(C,g)) \ar[r]^{(1)_*} &\mathrm{Coh}^{\mathrm{s}}(B\otimes_A C, f\otimes 1 + 1\otimes g)
}
}
\end{equation}
\noindent is well-defined. $(1)_*$ is given by the forgetful functor and as such it is well-defined the level of the categories $\mathrm{Coh}^{\mathrm{s}}$: indeed, if $E$ is  strictly bounded  $K(B,f)\otimes_A K(C,g)$-dg-module whose image under  the forgetful functor $(1)\, \circ \, (2)$ is strictly perfect over $B\otimes_A C$, then by commutativity of the diagram (\ref{eq-japanesehousestill1}), $(1)_\ast(E)$ is in $\mathrm{Coh}^{\mathrm{s}}(B\otimes_A C, f\boxplus g)$. \\
It remains to provide an argument for the  box product $\boxtimes$: is defined by sending a pair $(E, F)$ to $\pi_f^\ast(E)\otimes \pi_g^\ast(F)$ with $\pi_f$ and $\pi_g$ the projections of $Z^\mathrm{h}(f)\times_S Z^\mathrm{h}(g)$ in each coordinate. Using the projection formulas and base change for affines\footnote{Notice that by definition of $\LG$-pairs, both $B$ and $C$ are flat over $A$. In particular, the derived tensor product is the usual one.}, the underlying $A$-module of $E\boxtimes F$ is just the $A$-tensor product $E\otimes_A F$. One must show that if $E$ (resp. $F$) is strictly perfect over $B$ (resp. $C$) then $E\boxtimes F$ is strictly perfect over $B\otimes_A C$. The fact that $E\boxtimes F$ is strictly bounded follows immediately from the definition of the strict tensor product of complexes and the fact both $E$ and $F$ are strictly bounded. The fact that level of the complex $E\boxtimes F$ is projective over $B\otimes_A C$ follows because each level $E^i$ (resp. $F^k$) is by assumption projective over $B$ (resp. over $C$) so that each graded piece of the tensor product $E^i\otimes_A F^k$ is projective over $B\otimes_A C$: $E^i$ (resp. $F^k$) being projective over $B$ (resp. $C$) gives us a retract via a map of $B$-modules (resp. $C$-modules) of an inclusion of $B$-modules (resp. $C$-modules) $E^i\subseteq B^{\oplus_l}$ for some $l$ (resp. $F^k\subseteq C^{\oplus_s}$). Via base change we obtain the graded piece $E^i\otimes_A F^k$ as a retract of $(B\otimes_A C)^{\oplus_{l+s}}$ via a map of $B\otimes_A C$-modules, for some $l,s$. This proves the claim. To conclude, we define the lax unit via the map
\begin{equation}
\label{eq-laxunitCohstrict}
A \longrightarrow \mathrm{Coh}^{\mathrm{s}}(K(A,0)),
\end{equation}
\noindent sending the unique point to $A$ itself (with its trivial structure
of $K(A,0)$-dg-module). The construction (\ref{eq-laxstructurestrictrelativeperfect}) is clearly symmetric and associative and this concludes the construction of a lax symmetric monoidal enhancement of (\ref{eq: strictcoherentasastrictfunctor})
\begin{equation}
\label{eq-locallyflatlaxmonoidalversionCohstrict}
\mathrm{Coh}^{\mathrm{s},\boxplus} : \LG_{S}^{\mathrm{aff,op}, \boxplus} \longrightarrow \mathrm{dgCat}^{\mathrm{strict}, \mathrm{loc-flat}, \otimes}_A
\end{equation}
\end{cons}
\medskip

\begin{rem}
\label{remsymmetristrictstructure}
In particular, we obtain a symmetric monoidal structure on $\mathrm{Coh}^{\mathrm{s}}(A,0)$.
\end{rem}

 \medskip
\begin{cons}
\label{construction-laxmonoidalstructurerelativeperfect}
One now proceeds as in the Construction \ref{construction-functorialassignementrelativeperfect} to obtain a lax symmetric monoidal structure on (\ref{eq: infinityfunctorialrelativeperfect}): one remarks that   the category of pairs $\mathrm{PairsdgCat}^\mathrm{strict}_A$ introduced in the Construction \ref{construction-functorialassignementrelativeperfect} comes naturally equipped with a tensor structure: if $(T,W)$ and $(T', W')$ are two pairs, the pair $(T, W)\otimes (T', W')$ is defined by $(T\otimes T', W\otimes W')$. 
The lax structure of (\ref{eq-locallyflatlaxmonoidalversionCohstrict}) can be lifted to pairs
\begin{equation}
\label{eq-laxmonoidalstrictCohPairs}
\mathrm{Coh}^{\mathrm{s}, \boxplus} : \LG_{S}^{\mathrm{aff, op}, \boxplus} \longrightarrow \mathrm{PairsdgCat}^{\mathrm{strict, loc-flat}, \otimes}_A
\end{equation}
Indeed, one checks that the composition (\ref{eq-laxstructurestrictrelativeperfect2}) sends the product of quasi-isomorphisms to a quasi-isomorphism. For $(1)_\ast$ this is by definition. For $\boxplus$ this follows because it is explicitly computed as a tensor product over $A$ and strictly perfect complexes are, as we have seen before, flat over $A$.
To conclude, it follows from the definition of locally-flat dg-categories that the tensor structure in $\mathrm{PairsdgCat}^{\mathrm{strict, loc-flat}}_A$ is compatible with weak-equivalences in each variable so that the localization functor along Dwyer-Kan equivalences of pairs is a monoidal $\infty$-functor
\begin{equation}
\label{eq-monoidallocalizationofpairs}
\mathrm{N}(\mathrm{PairsdgCat}^{\mathrm{strict, loc-flat}, \otimes}_A) \longrightarrow  \mathrm{N}(\mathrm{PairsdgCat}^{\mathrm{strict, loc-flat}}_A)[W_{DK}^{-1}]^\otimes
\end{equation}
 It remains to check that  (\ref{eq-localizationofpairsdgcats}) is strongly monoidal. This follows from \cite[7.3.2.12]{lurie-ha} as the required hypothesis follow from the definition of the tensor structure on pairs, together with the fact that for any two pairs $(T,S), (T', S')$ the canonical morphism
$$
(T\otimes T')[S\otimes S'^{-1}]_{\mathrm{dg}}\to  T[S^{-1}]_{\mathrm{dg}}\otimes  T'[S'^{-1}]_{\mathrm{dg}}
$$
is an equivalence in $\dgcat_A$ (this is an immediate consequence of the universal property of dg-localizations combined with the existence of internal-homs in $\dgcat_A^\otimes$).\\
 Finally, the composition of the lax monoidal $\infty$-functors (\ref{eq-laxmonoidalstrictCohPairs}), (\ref{eq-monoidallocalizationofpairs}), (\ref{eq-localizationofpairsdgcats}) and idempotent completion, combined with the result of Lemma \ref{l1}, achieve the construction of the lax monoidal structure on (\ref{eq: infinityfunctorialrelativeperfect}).
\end{cons}

\vspace{0.2cm}

\begin{rem}
\label{rem-symmonoidalstructurerelativeperfectSzero}
The lax symmetric monoidal structure the Remark (\ref{remsymmetristrictstructure}) and the Construction \ref{construction-laxmonoidalstructurerelativeperfect} produces a symmetric monoidal structure on $\Coh(K(S,0))$, which we shall denote as $\Coh(K(S,0))^{\boxplus}$. Its monoidal unit is the $K(A,0)$-dg-module $A$ in degree $0$ with zero $\epsilon$-action. Via the identification of $K(S,0)$ as a strict model for the derived tensor product $S\times_{\mathbb{A}^1_S} S$, this symmetric monoidal structure corresponds to the convolution product induced by the additive group structure on $\mathbb{A}^1_S$.
This symmetric monoidal structure has a geometric origin: in fact $S\times_{\mathbb{A}^1_S} S$ is a derived group scheme with operation induced by the additive group structure on the affine line. By unfolding the definition, given $E, F\in \Coh(K(S,0))$, $E\boxplus F$ is given by the underlying tensor  $E\otimes_A F$ equipped with an action of $K(S,0)$ via the map $K(S,0)\to K(S,0)\otimes_A K(S,0)$ of (1) in (\ref{eq-japanesehousestill1}). In the case when $A$ is a field of characteristic zero this recovers the monoidal structure described in \cite[Construction 3.1.2]{1101.5834}.\\ 
Moreover, given an LG-pair $(X,f)$, the action of $\Coh(K(S,0))^{\boxplus}$ on $\Coh(X_0)_{\Perf(X)}$ also has a geometric interpretation: indeed, the derived fiber product $X_0$ carries a canonical action of the derived group scheme $S\times_{\mathbb{A}^1_S} S$. This is obtained via the cartesian cube
\begin{equation}
\label{eq-geometricinterpretationlaxaction}
\xymatrix{
&\ar[dl]_-{pr}X_0\times_S (S\times_{\mathbb{A}^1_S} S) \ar[dd]^<<<<<{pr_{(S,0)}}\ar[rr]^-{v\,=\, \textrm{action}}&&X_0\ar[dl]^{\mathfrak{i}}\ar[dd]\\
X_0\ar[dd]\ar[rr]^(.3){\mathfrak{i}} &&X\ar[dd]^(.3){f}&\\
&S\times_{\mathbb{A}^1_S} S\ar[dl]\ar[rr]&&S\ar[dl]\\
S\ar[rr]&&\mathbb{A}^1_S&
}
\end{equation}
Let us describe this action more precisely. In the affine case this is given by the formula (\ref{eq-laxstructurestrictrelativeperfect2}). In geometric terms this is explained by the derived fiber product in the diagram (\ref{eq-geometricinterpretationlaxaction}) whose top face is the self-intersection square
\begin{equation}
\label{eq-geometricinterpretationlaxaction2}
\xymatrix{
\ar[d]^{pr} X_0\times_S (S\times_{\mathbb{A}^1_S} S) \ar[r]^-{v}&X_0\ar[d]^{\mathfrak{i}}\\
X_0\ar[r]^-{\mathfrak{i}} &X\\
}
\end{equation}
and the action of $F\in\Coh(K(A,0))$ on $M\in\Coh(X_0)_{\Perf(X)}$ is given by $F\boxplus M := v_\ast(pr^\ast(M)\otimes pr_{(S,0)}^*(F))$. In particular, by derived base change, we have 
\begin{equation}
\label{eq-explicitdescriptionaction}
K(A,0)\boxplus M\simeq \mathfrak{i}^*\mathfrak{i}_*(M)
\end{equation}
Moreover, the action of $A$ (as a $K(A,0)$-module in degree $0$ with a trivial action of $\epsilon$) is given by 
\begin{equation}
\label{eq-explicitdescriptionaction2}
A\boxplus M:= v_\ast(pr^\ast(M)\otimes pr_{(S,0)}^*(A)))\simeq M
\end{equation}
To show this last formula we remark that $A$ as a trivial $K(A,0)$-module is given by $t_\ast (A)$ where $t: S=Spec(A)\to Spec(K(A,0)= S\times_{\mathbb{A}^1_S} S$ is the inclusion of the classical truncation. Using the pullback diagram 
\begin{equation}
\xymatrix{
 X_0\simeq \ar[d]X_0\times_S (S) \ar[r]^-{Id\times t}&X_0\times_S (S\times_{\mathbb{A}_S^1}S)\ar[d]^{pr}\\
S\ar[r]^-{t} &S\times_{\mathbb{A}^1_S}S\\
}\end{equation}
\noindent and derived base-change, we get that 
$$A\boxplus M\simeq v_\ast(pr^\ast(M)\otimes pr_{(S,0)}^*t_\ast A))\simeq  v_\ast ( pr^*(M)\otimes (Id_{X_0}\times t)_\ast \mathcal{O}_{X_0})$$ 
which by the projection formula, is equivalent to
\begin{equation}
\label{eq-explicitdescriptionaction3}
v_\ast \circ (Id_{X_0}\times t)_\ast ((Id_{X_0}\times t)^\ast pr^*(M))\simeq M
\end{equation} 
\noindent We conclude that $A$, as a trivial $K(A,0)$-module, acts via the identity map on $\Coh(X_0)_{\Perf(X)}$.
\end{rem}

\vspace{0.5cm}

We now provide an explicit description of $\Coh(S\times^\mathrm{h}_{\mathbb{A}^1_S} S)_{\Perf(A)}$ with the symmetric monoidal structure of the previous remark. This is essentially the observation that for the computation performed in the proof of \cite[Prop. 3.1.4]{1101.5834} to work we don't need $A$ to be a field of characteristic zero. In fact, it works whenever $A$ is regular:

\medskip

\begin{lem}
\label{SingS0} 
\label{lemma-monoidalequivalencerelativeperfectandsing}
Let $A$ be a regular commutative ring. Then we have an equivalence in $\CAlg(\dgcat^{\mathrm{idem}}_A)$
\begin{equation} 
\label{eq-kudegree2Koszulequivalencemonoidal}
\Coh(S\times^\mathrm{h}_{\mathbb{A}^1_S} S)_{\Perf(S)}^{\boxplus}\simeq \Perf(A[u])^{\otimes_{A[u]}}
\end{equation}
\noindent where on the r.h.s we have the standard tensor product over $A[u]$ (where $u$ has degree $2$) induced by the fact $A[u]$ is naturally a commutative algebra-object in $\Mod_{\mathbb{Z}}(\Sp)^{\otimes}$.
\begin{proof}
Let us first explain the equivalence between the underlying categories. Since $A$ is regular, we have $\Coh(S\times^\mathrm{h}_{\mathbb{A}^1_S} S)_{\Perf(S)} \simeq \Coh(S\times^\mathrm{h}_{\mathbb{A}^1_S} S)$, where $S\times^\mathrm{h}_{\mathbb{A}^1_S} S$ is the derived zero locus of the zero-section $0: S \to \mathbb{A}_S^1$. Now, this derived zero-locus is the spectrum of the simplicial commutative ring $\mathrm{Sym}^s_A(A[1])$, whose normalization is the commutative differential graded ring $K(A,0)$ of Example \ref{ex-selfintersectionzero}. Therefore $\Coh(S\times^\mathrm{h}_{\mathbb{A}^1_S} S)$ is equivalent to $\Coh(K(A,0))$, i.e. to dg-modules over $K(A,0)$ which are coherent on the truncation $H^0(K(A,0))=A$. It is easy to verify that $\Coh(K(A,0))$ is generated by the $A$-dg-module $A$,  via the homotopy cofiber-sequence
\begin{equation}
\label{eq-resolutionK(A,0)}
\xymatrix{
A\ar[r]^0\ar[d]& A\ar[d]\\
0\ar[r]& K(A,0)
}
\end{equation}
so that $\mathrm{Ind}(\Coh(K(A,0)))$ is equivalent to dg-modules over $\mathbb{R}\mathrm{Hom}_{K(A,0)}(A, A)$ and $\Coh(K(A,0))$ to perfect dg-modules over $\mathbb{R}\mathrm{Hom}_{K(A,0)}(A, A)$. Now, we remark the existence of an infinite resolution
\begin{equation}
\label{eq-infiniteresolution}
\xymatrix{(\cdots \ar[r]^{\mathrm{id}} & \underbracket{A}_{-3} \ar[r]^{0} & \underbracket{A}_{-2} \ar[r]^{\mathrm{id}} & \underbracket{A}_{-1} \ar[r]^{0} & \underbracket{A}_{0}) \ar[rr]^{\mathrm{id}}  && \underbracket{A}_{0}}
\end{equation}
\noindent of $A$ as a $K(A,0)$-dg-module. This can be obtained as an homotopy colimit in $\Qcoh(K(A,0))$ induced by the multiplication by $\epsilon$ as follows: let $K(A,0)\{1\}$ denote the cofiber of $\epsilon: K(A,0)[1]\to K(A,0)$ and by induction, we construct $K(A,0)\{n+1\}$ by the cofiber
\begin{equation}
\label{equation-inductionlevelii}
\xymatrix{
\ar[d]K(A,0)[2n-1]\ar[r]& K(A,0)\{n\}\ar[d]\\
0\ar[r] & K(A,0)\{n+1\}
}
\end{equation}
and obtain the infinite resolution (\ref{eq-infiniteresolution}) as the homotopy colimit  in $\Qcoh(K(A,0))$
\begin{equation}
\label{eq-infiniteresolutionformulacolimit}
\xymatrix{\mathrm{colim}\, ( K(A,0) \ar[r]&  K(A,0)\{1\} \ar[r]&  K(A,0)\{2\} \ar[r]& \cdots)&\simeq A}
\end{equation}
Using this resolution we can directly compute 
\begin{equation}
\label{eq-explicitcomputationAdegree2}
\mathbb{R}\mathrm{Hom}_{K(A,0)}(A, A) \simeq A[u]
\end{equation}
\noindent with $\deg (u)=2$. Let us briefly describe this computation. It is clear that as $A$-modules, we get an isomorphism of complexes
\begin{equation}
\label{eq-explicitcomputationAdegree2-2}
\resizebox{1. \hsize}{!}{\xymatrix{\mathbb{R}\mathrm{Hom}_A(\cdots \ar[r]^-{\mathrm{id}} & \underbracket{A}_{-3} \ar[r]^{0} & \underbracket{A}_{-2} \ar[r]^{\mathrm{id}} & \underbracket{A}_{-1} \ar[r]^{0} & \underbracket{A}_{0}, \underbracket{A}_{0}) \ar[r]^{\sim}  & (\underbracket{A}_{0}\ar[r]^{0} & \underbracket{A}_{1}\ar[r]^{\mathrm{id}}&\underbracket{A}_{2} \ar[r]^{0} & \underbracket{A}_{3} \ar[r]^{\mathrm{id}} &\cdots }}
\end{equation}
\noindent where each degree $\underbracket{A}_{i}$ on the r.h.s is a disguise of $\mathrm{Hom}_A( \underbracket{A}_{-i}, A)$.
 The extra demand for a $K(A,0)$-linear compatibility forces every map $f$ to verify the relation $f(\epsilon. (-))=\epsilon f$ with $\epsilon$ corresponding to the unity of $A$ in degree $-1$ in $K(A,0)$. As the action of $\epsilon$ is zero on the trivial $K(A,0)$-module $A$ concentrated in degree $0$, the $K(A,0)$-linear structure gives $f(\epsilon.-)=0$ imposing that for odd $i$ only the zero map in $\mathrm{Hom}_A( \underbracket{A}_{-i}, A)$ is allowed. This shows (\ref{eq-explicitcomputationAdegree2}) as a map of dg-modules,  under which $u$ corresponds to $1\in A\simeq \mathrm{Ext}^2_{K(A,0)}(A,A)$.  \\
 To explain why (\ref{eq-explicitcomputationAdegree2}) is an equivalence of dg-algebras we argue as follows: Since $\Coh(S\times^\mathrm{h}_{\mathbb{A}^1_S} S)_{\Perf(S)}^{\boxplus}$ is symmetric monoidal with tensor unit $A$, $\mathbb{R}\mathrm{Hom}_{K(A,0)}(A, A) $ is actually a commutative algebra object (endomorphisms of the unit). The element $u$ then defines (\ref{eq-explicitcomputationAdegree2}) as map of commutative algebra objects, where $A[u]$ is endowed with its usual algebra structure. This concludes the proof of the equivalence of underlying dg-categories 
\begin{equation}\label{eq-kudegree2Koszulequivalence}
\Coh(S\times^\mathrm{h}_{\mathbb{A}^1_S} S)_{\Perf(A)}\simeq \Perf(A[u])
\end{equation}
\medskip
We now discuss the symmetric monoidal equivalence. As a preliminary step  we describe the computation of $\mathbb{R}\mathrm{Hom}_{K(A,0)}(A, E)$ for $E\in \Coh(K(A,0))$. One shows that level $n$ of $\mathbb{R}\mathrm{Hom}_{K(A,0)}(A, E)$ is the level $n$ of the complex $E\otimes_A A[u]$. However, the differential on $\mathbb{R}\mathrm{Hom}_{K(A,0)}(A, E)$ is not the naive tensor product differential. 
 Indeed, using the same infinite resolution of $A$ as a $K(A,0)$-module and from the relation $f(\epsilon. (-))=\epsilon f$ one obtains that the elements of odd degree are determined by the antecedent element even degree under multiplication by $\epsilon$. Therefore,  the level  $n$ of $\mathbb{R}\mathrm{Hom}_{K(A,0)}(A, E)$ is the direct sum $\bigoplus_{i\geq 0} E_{n-2i}$ and the differential $\bigoplus_{i\geq 0} E_{n-2i}\to \bigoplus_{i\geq 0} E_{n+1-2i}$ is given by $d+ \epsilon.(-)$ where $d$ is the native differential of $E$. It is clear now that the resulting level $n$ of $\mathbb{R}\mathrm{Hom}_{K(A,0)}(A, E)$ identifies with the resulting level $n$ of the naive tensor product $E\otimes_A A[u]$ but the differential is twisted by the action of $\epsilon$. To encode the result of this computation we will write
\begin{equation}
\label{ex-explicitcomputationmainkeytomonoidal}
\mathbb{R}\mathrm{Hom}_{K(A,0)}(A, (E,d))\simeq (E\otimes_A A[u], d+ \epsilon)
\end{equation}
This formula is the starting point to show that the equivalence (\ref{eq-kudegree2Koszulequivalence}) is monoidal.  We show that the arguments given in \cite[Prop. 3.1.4]{1101.5834} work for a general regular ring $A$ using our infinite resolution (\ref{eq-infiniteresolution}) instead of the Koszul-Tate resolution used in loc.cit. We start by showing that the strict dg-functor
\begin{equation}
\label{eq-strictdgKoszulcomputation}
\mathcal{E}:K(A,0)-\mathrm{dgMod}^{strict}_A\to A[u]-\mathrm{dgMod}^{strict}_A
\end{equation}
\noindent sending
\begin{equation}
(E,d_E)\mapsto \mathcal{E}(E, d_E):=(E\otimes_A A[u], d_E+ \epsilon)
\end{equation}
\noindent is symmetric monoidal with respect to the convolution $\boxplus$ of the Remark \ref{constructionlaxmonoidalstructurestrictperfect} on the l.h.s and the usual tensor product over $A[u]$ on the r.h.s.  This follows essential from the definition of $\boxplus$: given two pairs $(E,d_E)$ and $(F, d_F)$ in $K(A,0)-\mathrm{dgMod}_A$ their box product is given by the pair that consists of the usual tensor product over $A$,$E\otimes_A F$ equipped with the action of $K(A,0)$ given by the formula (\ref{eq-explicitactionepsilonproduct}) which in this case is explicitly given by $\epsilon\otimes Id + Id\otimes \epsilon$. In this case the natural lax structure is an equivalence as 
$$
\mathcal{E}((E, d_E)\boxplus (F, d_F))=\mathcal{E}(E\otimes_A F, d_E\otimes Id + Id\otimes d_F)\simeq
$$
$$
 ((E\otimes_A F)\otimes_A A[u]), (d_E\otimes Id + Id\otimes d_F) + (\epsilon\otimes Id + Id\otimes \epsilon))\simeq
$$
 $$
((E\otimes_A F)\otimes_A A[u]),(d_E + \epsilon)\otimes Id + Id\otimes (d_F+\epsilon) \simeq (E\otimes_A A[u], d_E+\epsilon)\otimes_{A[u]}(F\otimes_A A[u], d_F + \epsilon)
 $$
 $$
 \simeq  \mathcal{E}(E, d_E)\otimes_{A[u]} \mathcal{E}(F, d_F)
 $$
 As explained in the Remark \ref{constructionlaxmonoidalstructurestrictperfect} this restricts to a symmetric monoidal functor
 \begin{equation}
 \label{17ago1419}
 \mathrm{Coh}^s(K(A,0))^{\boxplus}\subseteq K(A,0)-\mathrm{dgMod}^{strict, \boxplus}_A\to A[u]-\mathrm{dgMod}^{strict, \otimes}_A
 \end{equation}
 As a second step we notice that $\mathcal{E}$ preserves quasi-isomorphisms: Indeed, it is clear that if $E\to F$ is a quasi-isomorphism of strictly perfect complexes over $K(A,0)$, then $E\otimes_A A[u]\to F\otimes_A A[u]$ with the standard differentials on both source and target, is a quasi-isomorphism (again, this is because $E$, resp. $F$, being strictly perfect over $K(A,0)$, it is, in particular, degreewise projective over $A$, therefore flat). We have to argue why the induces map
 $$
(E\otimes_A A[u], d_E+ \epsilon) \to (F\otimes_A A[u], d_F+ \epsilon) 
$$
\noindent is also a quasi-isomorphism for the differentials perturbed by $\epsilon$. To see this we remark that $(E\otimes_A A[u], d_E+ \epsilon)$ is obtained as the totalization of the a complex with given by differentials $d_E$ and $\epsilon$. This is obtained from the canonical filtration by degree on $A[u]$. The associated graded pieces are all given by $E$. Therefore, the convergence of the associated spectral sequences then tells us that the cohomology of these graded pieces converges to the cohomology of the total complex. In particular if $E\to F$ is a quasi-isomorphisms, the associated graded pieces of the two spectral sequences are quasi-isomorphic and therefore, so are the total complexes.
 Finally, we use the compatibility of $\mathcal{E}$ with quasi-isomorphisms to  pass to the monoidal localizations. As seen in the Construction \ref{construction-laxmonoidalstructurerelativeperfect}, the strict lax structure on $\mathrm{Coh}^s(-)$, $\mathrm{Coh}^s(K(A,0))$ is stable under the $\boxplus$ tensor product on $K(A,0)-\mathrm{dgMod}^{strict}_A$. Therefore, the dg-localization along quasi-isomorphisms is monoidal. Moreover, as $A[u]-\mathrm{dgMod}^{strict}_A$ is a symmetric monoidal model category, it follows from \cite[Thm A.7]{1707.01799} that the localization of $A[u]-\mathrm{dgMod}^{strict}_A$ along quasi-isomorphisms is also a lax symmetric monoidal functor. By the universal properties of localizations, combined with the equivalence (\ref{eq-strictmodelcoherentperfectswedenmeatballssuck}) we obtain a  lax symmetric monoidal dg-functor in $\dgcat^{big}_A$
\begin{equation}
\label{eq-swedenswedensofarfromeden}
\mathcal{E}: \Coh(K(A,0))^{\boxplus}\to  \Qcoh(A[u])^{\otimes_{A[u]}}
\end{equation}
\noindent which one can easily check to be strongly monoidal. Moreover, as seen above in the proof of the equivalence (\ref{eq-kudegree2Koszulequivalence}), every $E \in \Coh(K(A,0))$ can be obtained from the $K(A,0)$-dg-module $A$ under finite shifts and cones so that  $\mathcal{E}(E)$ will be a perfect $A[u]$-module (as $\mathcal{E}(A)$ is free as A[u]-module because the action of $\epsilon$ on $A$ is trivial.) Therefore  (\ref{eq-swedenswedensofarfromeden}) factors through $\Perf(A[u])^{\otimes_{A[u]}}$ and this factorization in $\dgcat_A^{\mathrm{idem}}$ recovers the equivalence (\ref{eq-kudegree2Koszulequivalence}).
\end{proof}
\end{lem}

\begin{rem}\label{rem-colimitformulaonlyworksforquasi-coherent}
Notice that the equivalence (\ref{eq-infiniteresolutionformulacolimit}) is valid only in $\Qcoh(K(A,0))$ and not in $\mathrm{IndCoh}(K(A,0))$. 
\end{rem}

\begin{rem}
\label{rem-comparisonmultiplicationsonAu}
As seen in the proof, it is a consequence of the symmetric monoidal equivalence (\ref{eq-kudegree2Koszulequivalencemonoidal}) that the equivalence (\ref{eq-explicitcomputationAdegree2}) identifies the algebra-structure of composition of endomorphisms with the standard multiplication on $A[u]$.
\end{rem}
\begin{rem}
As in the Remark \ref{remark-comparisonMonoidalstructuresMF-2-periodic}, using the equivalence (\ref{eq-kudegree2Koszulequivalencemonoidal}) we recover the lax symmetric monoidal structure on the $\infty$-functor (\ref{eq: infinityfunctorialrelativeperfect}) 
\begin{equation}
\label{eq-laxmonoidalAumodules}
\xymatrix{\LG^{\mathrm{aff, op}, \boxplus}\ar[rr]^-{\Coh(-)_{\Perf(-)}}&& \Mod_{\Perf(A[u])}(\dgcat_A^{\mathrm{idem}})^{\otimes}}
\end{equation}
restricted to affine LG-pair $(X,f)$ and the induced action of the small stable idempotent complete symmetric monoidal $(\infty,1)$-category $\Perf(A[u])^{\otimes_{A[u]}}$ on the $\infty$-category $\Coh(X_0)_{\Perf(X)}$.
\end{rem}

\medskip

\begin{prop}\label{prop-technicalintermediate}
Assume $A$ is a regular ring. Under the symmetric monoidal equivalence (\ref{eq-kudegree2Koszulequivalencemonoidal}):
\begin{enumerate}
\item the full subcategory $\Perf(S\times^\mathrm{h}_{\mathbb{A}^1_S} S)\subseteq  \Coh(S\times^\mathrm{h}_{\mathbb{A}^1_S} S)$ is identified with the full subcategory $\Perf(A[u])^{u-\text{Torsion}}$ of $u$-torsion modules, i.e, those perfect dg-modules $M$ over $A[u]$ such that there exists an $N\geq 0$ such that the multiplication by $u^N$, $M[-2n]\to M$ is null-homotopic. 
\medskip
\item the quotient map 
\begin{equation}
\label{19ago1149}
\Coh(S,0)\to \Sing(S,0)
\end{equation}
 identifies  with the symmetric monoidal base change map
\begin{equation}\label{eq-symmetricmonoidalstandardinvertionofu}
-\otimes^{\mathbb{L}}_{A[u]} A[u, u^{-1}]: \Perf(A[u])\to \Perf(A[u, u^{-1}]).
\end{equation}
\noindent yielding
\begin{equation}\label{eq-kudegree2Koszulequivalence2}
\Sing(S,0)\simeq \Perf(A[u, u^{-1}])
\end{equation}
\end{enumerate}
\begin{proof}
Let us start with the first claim. The argument is similar to the one of \cite[Lemma 3.1.9 ]{1101.5834}. Using the formula (\ref{ex-explicitcomputationmainkeytomonoidal}) one obtains 
\begin{equation}
\label{ex-explicitcomputationmainkeytomonoidal2}
\xymatrix{\mathbb{R}\mathrm{Hom}_{K(A,0)}(A, K(A,0)) \ar[r]^-{\sim}& (\underbracket{A}_{-1}\ar[r]^{0} & \underbracket{A}_{0}\ar[r]^{\epsilon.=\mathrm{id}}&\underbracket{A}_{1} \ar[r]^{0} & \underbracket{A}_{2} \ar[r]^{\epsilon.=\mathrm{id}} &\cdots )}
\end{equation}
\noindent and observe that the r.h.s is quasi-isomorphic to $A[1]$. In this case, the equivalence (\ref{eq-kudegree2Koszulequivalence}) maps the full subcategory $\Perf(S\times^\mathrm{h}_{\mathbb{A}^1_S} S)\subseteq  \Coh(S\times^\mathrm{h}_{\mathbb{A}^1_S} S)$, by definition, generated by $K(A,0)$ under finite colimits and retracts, to the full subcategory of 
$\Perf(A[u])$ generated by the object $A[1]$. This is equivalent to the (stable and idempotent complete) subcategory generated by $A$ as a trivial $A[u]$-module.
We remark that as an $A[u]$-dg-module, $A$ fits in a cofiber-fiber sequence
\begin{equation}
\label{eq-cofiber-fiberAasA[u]}
\xymatrix{
\ar[d] A[u][-2]\ar[r]^{u.}& A[u]\ar[d]\\
0\ar[r]& A
}
\end{equation}

\noindent which can be obtained using the explicit model for the cone of the multiplication by $u$ (given by the identity on each level). \\
We use this to conclude that the thick subcategory generated by $A$ in $\Perf(A[u])$ is exactly the full subcategory spanned by the $u$-torsion dg-modules. Indeed, $A$ is by construction $u$-torsion, as $u$ acts null-homotopically. Moreover, the full subcategory of $u$-torsion modules is by its nature a thick stable and idempotent complete subcategory of $\Perf(A[u])$. It remains to show that every $M\in \Perf(A[u])^{u-\text{Torsion}}$ can be obtained as a retract of a homotopy finite cellular object built from $A$ under shifts and cones. For this purpose we use the cofiber/fiber-sequence (\ref{eq-cofiber-fiberAasA[u]}): given $M\in \Perf(A[u])^{u-\text{Torsion}}$, using the relative tensor product over $A[u]$ (which as explained in the Remark \ref{rem-comparisonmultiplicationsonAu} carries its standard structure of $\mathrm{E}_\infty$-algebra) we obtain a cofiber-fiber sequence
\begin{equation}
\label{eq-cofiber-fiberAasA[u]3}
\xymatrix{
\ar[d] M[-2n]\simeq M\otimes_{A[u]}A[u][-2n]\ar[r]^-{u^n\sim 0.}& M\simeq M\otimes_{A[u]}A[u]\ar[d]\\
0\ar[r]&  M[-2n+1]\oplus M\simeq M\otimes_{A[u]}(\bigoplus_{0\leq i \leq n-1}A[-2i])
}
\end{equation}
The assumption that $M$ is perfect over $A[u]$ means by definition that it is obtained under finite shits and cones of $A[u]$. In particular, $M\otimes_{A[u]}(\bigoplus_{0\leq i \leq n-1}A[-2i])$ is then obtained as a finite cell-object from $A$ and has $M$ as a direct factor.\\
\vspace{0.5cm}
Let us now address the second claim. First notice that since $A$ is regular, we have $\Sing(S,0) \simeq \Sing(S\times^\mathrm{h}_{\mathbb{A}^1_S} S)$. 
Thanks to the half of the proposition already proved, to establish this identification one is reduced to present the base change $-\otimes^{\mathbb{L}}_{A[u]} A[u, u^{-1}]$ as a Verdier quotient with respect to the thick subcategory of $u$-torsion dg-modules. For this purpose we remark that at the level of the presentable $\infty$-categories of modules, the base-change in $\mathrm{Pr}^{\mathrm{L}}$
$$-\otimes^{\mathbb{L}}_{A[u]} A[u, u^{-1}]: \Mod_{A[u]}(\Sp)\to \Mod_{A[u, u^{-1}]}(\Sp)$$
admits the restriction of scalars along the map $A[u]\to A[u,u^{-1}]$ as a fully faithful right adjoint, whose image is the full subcategory of $\Mod_{A[u]}(\Sp)$ spanned by those dg-modules where the multiplication by $u$ is invertible. In other words, such objects become the local objects for the presentation of $\Mod_{A[u, u^{-1}]}(\Sp)$ as a Bousfield localization of $\Mod_{A[u]}(\Sp)$. As a consequence of this fact, $-\otimes^{\mathbb{L}}_{A[u]} A[u, u^{-1}]$ has an alternative description in terms of a colimit in (the big category of) $A[u]$-modules in spectra given by multiplication by $u$:
$$
M\otimes^{\mathbb{L}}_{A[u]} A[u, u^{-1}]\simeq \mathrm{colim}_{n}( \cdots \to M\to M[2]\to M[4]\to \cdots)
$$
This follows from the combination of the universal properties of base change, of colimits in $A[u]$-modules and fully faithfulness along restriction of scalars. The formula remains valid for perfect complexes because base-change preserves perfect complexes (the colimit being always taken in spectra). In this case, if an $A[u]$-module $M$ is $u$-torsion the colimit by multiplication by $u$ is by cofinality equivalent to the colimit of the zero diagram so that $M\otimes_{A[u]}A[u,u^{-1}]\simeq 0$. Conversely, if $M\otimes_{A[u]}A[u,u^{-1}]\simeq 0$ and $M\in \Perf(A[u])$, then $M$ is compact and we have
$$
\ast\simeq \mathrm{Map}_{A[u]}(M, \underbracket{\mathrm{colim}_n M[2n]}_{\simeq 0}))\simeq \mathrm{colim}_n\mathrm{Map}_{A[u]}(M,  M[2n]))
$$
in $\Mod_{A[u]}(\Sp)$, so that there is an $n$ such that the power $u^n$ is the zero map.
\end{proof}
\end{prop}
\vspace{0.4cm}

\begin{rem}
\label{formulauactionbasic}
Given $T\in \Perf(A[u])$ and $M\in \Coh(X_0)_{\Perf(X)}$ let us introduce the notation $T\otimes_{A[u]}M$ for the action of $T$ on $M$ via the monoidal equivalence  (\ref{eq-kudegree2Koszulequivalencemonoidal}). By definition of (\ref{eq-kudegree2Koszulequivalencemonoidal}), we get that $M\simeq A[u]\otimes_{A[u]}M\simeq A\boxtimes M$ and that $A[1]\otimes_{A[u]}M\simeq K(A,0)\boxtimes M\simeq \mathfrak{i}^*\mathfrak{i}_*M$. In particular, as the action by construction commutes with colimits in each variable, the cofiber sequence (\ref{eq-cofiber-fiberAasA[u]}) produces a cofiber sequence 
\begin{equation}\label{eq-formulauactionbasic}
\xymatrix{
M[-2]\simeq A[u][-2]\otimes_{A[u]}M\ar[r]^{u}\ar[d]& M\simeq A[u]\otimes_{A[u]}M\ar[d]\\
0\ar[r]&  A\otimes_{A[u]}M\simeq \mathfrak{i}^*\mathfrak{i}_*(M)[-1]
}
\end{equation}
\end{rem}

\vspace{1cm}
The following proposition achieves the main goal of this section of exhibiting a lax symmetric structure on the $\infty$-functor $\Sing$ (\ref{eq-laxSingversion}). In particular this extends \cite[Prop. 3.4.3]{1101.5834} to a base ring which we only require to be regular local, instead of a field of characteristic zero.

\medskip

\begin{prop} \label{Singismonoidal}
\label{prop-laxstructureSingviabasechange}
Assume the Context \ref{context111}. There is a natural equivalence between the composition
\begin{equation}
\label{eq-laxmonoidalAumodules}
\resizebox{1. \hsize}{!}{
\xymatrix{\LG^{\mathrm{aff, op}}\ar[rr]^-{\Coh(-)_{\Perf(-)}}&& \Mod_{\Perf(A[u])}(\dgcat_A^{\mathrm{idem}})\ar[rr]^-{-\otimes_{A[u]}A[u, u^{-1}]}&&\Mod_{\Perf(A[u, u^{-1}])}(\dgcat_A^{\mathrm{idem}}) \ar[rr]^-{\mathrm{Forget}} && \dgcat_A^{\mathrm{idem}}}
}
\end{equation}
\noindent and the $\infty$-functor $\Sing$. By transfer under this equivalence, the functor $\Sing$ acquires a lax symmetric monoidal enhancement. In particular, $\Sing(S,0)$ acquires a symmetric monoidal structure equivalent to the natural one on 2-periodic complexes.
\begin{proof}
The proof is similar to \cite[3.4.1]{1101.5834}. As $\mathrm{Coh}^b(K(A,0)^{\boxplus}\simeq \Perf(A[u])^{\otimes_{A[u]}}$ is a rigid symmetric monoidal idempotent-complete dg-category and 
$$
\Perf(A[u])^{u-\text{Torsion}}\subseteq \Perf(A[u])\to \Perf(A[u,u^{-1}])
$$
is an exact sequence of $\Perf(A[u])$-linear dg-categories, one can use exactly the same arguments as in \cite[Lemma 3.4.2]{1101.5834}  to deduce that for any LG-pair $(X,f)$ the base-change sequence
$$
\resizebox{1. \hsize}{!}{
\xymatrix{
\mathrm{Coh}^{b}(X_0)_{\Perf(X)}\otimes_{\Perf(A[u])} \Perf(A[u])^{u-\text{Torsion}}\subseteq \mathrm{Coh}^{b}(X_0)_{\Perf(X)}\to \mathrm{Coh}^{b}(X_0)_{\Perf(X)}\otimes_{\Perf(A[u])}\Perf(A[u,u^{-1}])}}
$$
remains a cofiber-fiber sequence in $\Mod_{\Perf(A[u])}(\dgcat_A^{\mathrm{idem}})$. It follows from the definition of the tensor product in $\dgcat_A^{\mathrm{idem}}$ that the localization functor $\mathrm{Coh}^{b}(X_0)_{\Perf(X)}\to \mathrm{Coh}^{b}(X_0)_{\Perf(X)}\otimes_{\Perf(A[u])}\Perf(A[u,u^{-1}])$ can be described as in the proof of the Prop. \ref{Singismonoidal} by the $\infty$-functor sending $M\in \mathrm{Coh}^{b}(X_0)_{\Perf(X)}$ to the colimit in $\mathrm{Ind}(\mathrm{Coh}^{b}(X_0)_{\Perf(X)}$) \footnote{Notice also that thanks to the Remark \ref{rem-indcompletioncoherentperfect}, this filtered colimit can also be taken in $\mathrm{IndCoh}(X_0)_{\Qcoh(X)}$.} of the sequence given by the action of $u$:
$$
M\mapsto \mathrm{colim}_{n}( \cdots \to M\to M[2]\to M[4]\to \cdots)
$$
Moreover, by definition, $\mathrm{Coh}^{b}(X_0)_{\Perf(X)}\otimes_{\Perf(A[u])} \Perf(A[u])^{u-\text{Torsion}}$ identifies with the full subcategory of $\mathrm{Coh}^{b}(X_0)_{\Perf(X)}$ spanned by those objects $M$ such that there exists $N\geq 0$ such that $u^N: M\to M[2n]$ is null-homotopic. In this case we are reduced to show that $M$ is in $\Perf(X_0)$ if and only there exists $N\geq 0$ such that $u^N\sim 0$. For this we follow the steps of \cite[3.4.1 (i) $\Leftrightarrow$ (ii)]{1101.5834}. We use the description of the action of $\Coh(K(A,0))$ on $\Coh(X_0)_{\Perf(X)}$ given in the remark \ref{rem-symmonoidalstructurerelativeperfectSzero}, namely the formulas (\ref{eq-explicitdescriptionaction}), (\ref{eq-explicitdescriptionaction2}).

Combining these formulas with our resolution (\ref{eq-infiniteresolutionformulacolimit}) for $A$ as a $K(A,0)$-module,  we get a diagram of natural transformations
$$
\cdots \mathfrak{i}^*\mathfrak{i}_*[n]\to  \mathfrak{i}^*\mathfrak{i}_*[n-1]\to \cdots \to  \mathfrak{i}^*\mathfrak{i}_*[1]\to  \mathfrak{i}^*\mathfrak{i}_*
$$
induced by the multiplication by $\epsilon$. As the resolution works only in $\Qcoh(K(A,0))$ (Remark \ref{rem-colimitformulaonlyworksforquasi-coherent}), the formula (\ref{eq-infiniteresolutionformulacolimit}) yields a canonical equivalence of $\infty$-functors
\begin{equation}
\label{eq-colimitresolutionidentityfunctor}
\mathrm{colim}_n \,( \mathfrak{i}^*\mathfrak{i}_* \to  \mathfrak{i}^*\mathfrak{i}_*\{1\}\to  \mathfrak{i}^*\mathfrak{i}_*\{2\}\to \cdots )\simeq \mathrm{Id}_{\Qcoh(X_0)}
\end{equation}
\noindent where the colimit is taken in $\Qcoh(X_0)$ (by definition of $\Coh_{\Perf}$, $\,\,\,\mathfrak{i}^*\mathfrak{i}_*$ has values in the quasi-coherent category). We now remark that $M$ is in $\Perf(X_0)$ if and only if there exists an $N\geq 0$ such that $M$ is an homotopy retract of some $\mathfrak{i}^*\mathfrak{i}_*\{N\}(M)$.  Indeed, suppose that $M$ is perfect. Then it is a compact object in $\Qcoh(X_0)$ and the identity map
$M\to M\simeq \mathrm{colim}_n\, \ \mathfrak{i}^*\mathfrak{i}_*\{n\}(M)$ factors through some finite stage $\mathfrak{i}^*\mathfrak{i}_*\{n\}(M)$. Conversely, suppose that $M\in \Coh(X_0)_{\Perf(X)} $ is an homotopy retract of some $\mathfrak{i}^*\mathfrak{i}_*\{n\}(M)$. Then because of the definition of $\Coh(X_0)_{\Perf(X)}$, for any $M$, $\mathfrak{i}^*\mathfrak{i}_*(M)[n]$ is always a perfect complex so that the finite colimit $\mathfrak{i}^*\mathfrak{i}_*\{n\}(M)$ is also perfect. Being a retract, $M$ will also be perfect.\\
It remains to identity modules obtained as homotopy retracts of some $\mathfrak{i}^*\mathfrak{i}_*\{N\}(M)$ exactly with those modules where the action of $u$ is torsion.

The cofiber-sequence \ref{eq-formulauactionbasic}, tells us that 
$$
\mathrm{cofib}\, u\simeq \mathfrak{i}^*\mathfrak{i}_*(M)[-1]
$$
Using it one can easily construct a cofiber-fiber sequence
$$
\xymatrix{
\mathfrak{i}^\ast \mathfrak{i}_* M[1] \ar[d] \ar[r]& 0\ar[d]\\
\mathfrak{i}^\ast \mathfrak{i}_* M\ar[r]&  \mathrm{cofib}(u^2)[3]
}
$$
By induction, one shows that 
$$
\xymatrix{
\mathrm{cofib}\,u[-1]\ar[d]\ar[r]& 0\ar[d]\\
\mathrm{cofib}\, u^n[-2]\ar[r]& \mathrm{cofib}\, u^{n+1}
}
$$
are cofiber-diagrams and more generally, using the diagrams (\ref{equation-inductionlevelii}) we get
$$\mathrm{cofib}(u^N)[2N-1]\simeq  \mathfrak{i}^*\mathfrak{i}_*\{N-1\}(M)$$
In particular, if the action is torsion, we have $u^N\sim 0$ for some $N\geq 0$, and $\textrm{cofib}(u^N)\simeq M[-2N+1]\oplus M$ and $\mathrm{Cofib}(u^N)[2N-1]\simeq M\oplus M[2N-1]$. In this case we get $M\oplus M[2N-1]\simeq  \mathfrak{i}^*\mathfrak{i}_*\{N-1\}(M)$ so that $M$ is a retract of the finite colimit. Conversely, if $M$ is a retract of the cofiber of $u^N$ then this $u^N$ is null-homotopic as the retract gives a splitting of the cofiber sequence $M[-2N]\to M\to cof\, u^N$.
\end{proof}
\end{prop}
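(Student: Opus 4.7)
The plan is to establish the equivalence pointwise on each LG-pair $(X,f)$ and then let the symmetric monoidal lax structure on $\Sing$ be \emph{defined} by transfer along this equivalence from the composition on the r.h.s., which is already lax monoidal (being a composition of such: $\Coh(-)_{\Perf(-)}$ by Construction \ref{construction-laxmonoidalstructurerelativeperfect} combined with Lemma \ref{lemma-monoidalequivalencerelativeperfectandsing}, the base change $-\otimes_{\Perf(A[u])}\Perf(A[u,u^{-1}])$ by the symmetric monoidal nature of $A[u]\to A[u,u^{-1}]$, and the forgetful functor).

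For the pointwise equivalence, I would start from the description $\Sing(X,f)\simeq \Coh(X_0)_{\Perf(X)}/\Perf(X_0)$ proved in Proposition \ref{remark-singasaquotient}, so that it suffices to identify the base change $\Coh(X_0)_{\Perf(X)} \otimes_{\Perf(A[u])}\Perf(A[u,u^{-1}])$ with this Verdier quotient. The key input is the exact sequence of $\Perf(A[u])$-linear idempotent complete dg-categories
\begin{equation*}
\Perf(A[u])^{u\text{-tors}} \hookrightarrow \Perf(A[u]) \twoheadrightarrow \Perf(A[u,u^{-1}])
\end{equation*}
provided by Proposition \ref{prop-technicalintermediate} combined with the symmetric monoidal base-change presentation (\ref{eq-symmetricmonoidalstandardinvertionofu}) appearing in its proof. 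Since $\Perf(A[u])$ is rigid symmetric monoidal, tensoring with any $\Perf(A[u])$-module in $\dgcat_A^{\mathrm{idem}}$ preserves exact sequences (by the argument of \cite[Lemma 3.4.2]{1101.5834}). Applying this to $\Coh(X_0)_{\Perf(X)}$ gives a cofiber sequence in $\Mod_{\Perf(A[u])}(\dgcat_A^{\mathrm{idem}})$
\begin{equation*}
\Coh(X_0)_{\Perf(X)}\otimes_{\Perf(A[u])} \Perf(A[u])^{u\text{-tors}} \hookrightarrow \Coh(X_0)_{\Perf(X)} \twoheadrightarrow \Coh(X_0)_{\Perf(X)} \otimes_{\Perf(A[u])}\Perf(A[u,u^{-1}]).
\end{equation*}

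The main step — and the one I expect to be the real obstacle — is to identify the left-hand term with $\Perf(X_0)$ as a full subcategory of $\Coh(X_0)_{\Perf(X)}$. Concretely one needs to prove that an object $M\in \Coh(X_0)_{\Perf(X)}$ is perfect over $X_0$ if and only if the action of $u$ on $M$ (coming from the equivalence of Lemma \ref{lemma-monoidalequivalencerelativeperfectandsing}) is nilpotent, i.e.\ some power $u^N\colon M\to M[2N]$ is null-homotopic. For this, I would use the geometric description of the action given in Remark \ref{rem-symmonoidalstructerrelativeperfectSzero}, namely $A[1]\otimes_{A[u]}M\simeq K(A,0)\boxplus M \simeq i^*i_*M$, and the infinite $K(A,0)$-resolution (\ref{eq-infiniteresolution}) of $A$. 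Writing its truncations $K(A,0)\{N\}$ as iterated cofibers under multiplication by $\epsilon$ gives, via the base-change of (\ref{eq-cofiber-fiberAasA[u]}) by $M$ along $\otimes_{A[u]}$, an identification $\mathrm{cofib}(u^N)[2N-1]\simeq i^*i_*\{N-1\}(M)$. The colimit identity $\mathrm{colim}_N i^*i_*\{N\}(M)\simeq M$ in $\Qcoh(X_0)$ then lets me characterize $\Perf(X_0)\subseteq \Coh(X_0)_{\Perf(X)}$ as objects which are retracts of some finite $i^*i_*\{N\}(M)$: one direction uses the compactness of perfect objects so that $\mathrm{id}_M$ factors at a finite stage, and the other uses that $i^*i_*\{N\}(M)$ is always perfect on the nose by the defining condition of $\Coh(X_0)_{\Perf(X)}$. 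Splitting $\mathrm{cofib}(u^N)\simeq M[-2N+1]\oplus M$ under the nilpotency hypothesis then matches the two characterizations.

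Once this identification is in place, the cofiber sequence becomes $\Perf(X_0) \hookrightarrow \Coh(X_0)_{\Perf(X)} \twoheadrightarrow \Coh(X_0)_{\Perf(X)}\otimes_{\Perf(A[u])}\Perf(A[u,u^{-1}])$, whose cofiber in $\dgcat_A^{\mathrm{idem}}$ is by definition $\Sing(X,f)$. This establishes the pointwise equivalence; its naturality in $(X,f)$ is automatic because the whole discussion is functorial (Construction \ref{construction-laxmonoidalstructurerelativeperfect}, and Proposition \ref{remark-singasaquotient} applied in families). The final assertion about $\Sing(S,0)$ follows by specializing to $(S,0)$: the composition evaluates to $\Perf(A[u])\otimes_{\Perf(A[u])}\Perf(A[u,u^{-1}])\simeq \Perf(A[u,u^{-1}])$ which, via the equivalence $\theta$ of Notation \ref{notation-identification2periodicZ2graded} and its compatibility with the monoidal structures, is exactly the standard symmetric monoidal dg-category of $2$-periodic complexes over $A$.
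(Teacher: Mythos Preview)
Your proposal is correct and follows essentially the same approach as the paper's proof: both tensor the exact sequence $\Perf(A[u])^{u\text{-tors}}\subseteq\Perf(A[u])\to\Perf(A[u,u^{-1}])$ with $\Coh(X_0)_{\Perf(X)}$ using rigidity of $\Perf(A[u])$ (via \cite[Lemma 3.4.2]{1101.5834}), reduce to identifying the $u$-torsion subcategory with $\Perf(X_0)$, and carry this out by translating the $u$-action into the geometric endofunctor $i^*i_*$ (with the key formula $\mathrm{cofib}(u^N)[2N-1]\simeq i^*i_*\{N-1\}(M)$), then invoking compactness of perfect objects against the colimit presentation $\mathrm{colim}_N\,i^*i_*\{N\}(M)\simeq M$ in $\Qcoh(X_0)$. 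Your write-up is slightly more compressed on the inductive identification of $\mathrm{cofib}(u^N)$ with the truncations $i^*i_*\{N-1\}$, but the argument and its ingredients are the same.
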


\medskip

\begin{rem}\label{rem-noneedforaffineformonoidalstructrureSIng}Notice that in the previous proof it is never used that $(X,f)$ is an affine LG-pair. In fact, the proof of the previous proposition can be used to conclude that the functor $(X,f)\mapsto \Sing(X,f)$ is lax monoidal on non-affine LG-pairs, independently of the strict model for bounded coherent on $X_0$ perfect on $X$ of \ref{l1} in the affine case. Indeed, the lax monoidal structure on $(X,f)\mapsto \Sing(X,f)$ can be obtained using the fact that $\Qcoh$ is a lax monoidal $\infty$-functor (obtained from the lax monoidal structure on the construction $A\mapsto \Mod_A(\Sp)$ - see \cite{lurie-ha}), combined with the cartesian property of the diagram (\ref{eq-pullbackcoherentboundedperfect}), the Proposition \ref{prop-technicalintermediate}, the Lemma \ref{lemma-monoidalequivalencerelativeperfectandsing}, the lax monoidality of inverting $u$ and the proof of the proposition \ref{prop-laxstructureSingviabasechange}. 
\end{rem}

\medskip

\begin{rem} The fact that $\Sing(S,0)$ is \emph{monoidal} equivalent to 2-periodic complexes has been proved in the case where $A$ is a field of characteristic zero, see for instance \cite[Prop. 3.1.9]{1101.5834}  and \cite[Section 5.1]{MR3300415}. This is an instance of Koszul duality for modules. 
\end{rem}

\medskip

\begin{rem}
One should also remark that as for $\MF$, under some hypothesis, the functor $\Sing$ on non-affine LG-pairs matches the result of the Kan extension of its restriction to affine LG-pairs. This follows from a combination of Cech descent for $\mathrm{Coh}^b_{\Perf}$ of the Cor. \ref{cor-h-descent}, together with the fact that for Noetherien schemes of finite Krull dimension the Zariski topos is hypercomplete \cite[3.7.7.3]{lurie-sag}. Knowing Zariski descent for  $\mathrm{Coh}^b_{\Perf}$ it suffices, after the Prop. \ref{prop-laxstructureSingviabasechange} and the Remark \ref{rem-noneedforaffineformonoidalstructrureSIng}, to remark that the base-change $-\otimes_{\Perf( A[u]) }\Perf( A[u, u^{-1}])$ in idempotent complete $A$-dg-categories, preserves finite limits because both $\Perf( A[u])$ and $\Perf( A[u, u^{-1}])$ have single compact generators, and the localization $A[u, u^{-1}] $ can be obtained as a filtered colimit under multiplication by $u$ in $\Sp$, and filtered colimits preserve finite limits. 
\end{rem}

\vspace{1cm}

\subsection{Comparison}
\label{section-comparisonMF-SING}
Consider the Context \ref{context111}. In this \S \, we  prove the following 
\begin{thm}\label{t1}
There is a lax symmetric natural transformation of 
$\s$-functors
$$\mathrm{Orl}^{-1,\,\otimes}:\Sing \to \MF : \LG_S^{\mathrm{op}} \longrightarrow \dgcat^{\mathrm{idem}}_A$$
\noindent with the following properties:

\begin{enumerate}
\item $\mathrm{Orl}^{-1,\,\otimes}$ identifies the  \emph{symmetric monoidal} structure of $\Sing(S,0)$ given in Prop. \ref{prop-laxstructureSingviabasechange} with the one of $\MF(S,0)$ as in  Remark \ref{remark-comparisonMonoidalstructuresMF-2-periodic}.

\item $\mathrm{Orl}^{-1,\,\otimes}$ is an equivalence when restricted to the sub-category of LG-models $(X,f)$ where $f$ is a non-zero divisor on $X$  (i.e. the induced morphism $\mathcal{O}_X \to \mathcal{O}_X$ is a monomorphism), $X/S$ is separated, and $X$ has the resolution property (i.e. every coherent $\mathcal{O}_X$-Module is a quotient of a vector bundle, e.g. $X$ regular).
\item In particular, from (1) and (2),  $\MF(X,f)$ and $\Sing(X,f)$ are then equivalent
\emph{as $A[u,u^{-1}]$-linear }idempotent complete dg-categories.
\end{enumerate}
\end{thm}

Theorem \ref{t1} provides a $\infty$-functorial, dg-categorical lax symmetric monoidal version of the so-called Orlov's comparison theorem,
comparing matrix factorizations and categories of singularities (see \cite[Theorem 2.7]{efimov2015coherent} and \cite[Thm 3.5]{MR2910782}).  

\medskip

\begin{rem}\label{monoidal-leftover}(Derived vs Classical zero locus) Note however, that our natural transformation $\mathrm{Orl}^{-1}$ is defined also for non-flat LG-models $(X,f)$, and this was made possible by considering the derived zero locus of $f$ instead of the classical scheme-theoretic zero locus in the definition of the functor $\Sing$ (while $\MF$ is defined using only non-derived ingredients).
Note that for flat LG-models $(X,f)$, $f$ is indeed a non-zero divisor on $X$.
If we restrict the functor $\Sing$ to LG-models $(X,f)$ where  $f$ is a non-zero divisor on $X$, i.e. $f_{|U}$ is a non-zero divisor for all Zariski open affine subschemes $U\subseteq X$ (this case is of particular interest for us, see Remark \ref{rem-choice-uniformizer}), then the derived fiber $X_0$ coincides with the classical scheme-theoretic fiber $X^{\textrm{cl}}_0$ (i.e. the truncation of $X_0$), and one does not need to use derived algebraic geometry at all in the definition of $\Sing$ (Definition \ref{d2}). Note, however, that if these conditions are not met, there is no way to avoid taking the derived fiber $X_0$. In fact, the push-forward along the closed immersion $X^{\textrm{cl}}_0 \to X$ does not necessarily preserve perfect complexes, so that a purely classical analogue $\Sing^{\textrm{cl}}$ of our definition of $\Sing$ is simply impossible. And this, regardless, the fact that $X$ may or may not enjoy the resolution property. Moreover, even when the pushforward along $X^{\textrm{cl}}_0 \to X$ does preserve perfect complexes, so that both our definition of $\Sing$ and its purely classical analogue $\Sing^{\textrm{cl}}$ make sense, then they might differ. As an important example, one could take $(X,f):=(S,0)$: here  $X^{\textrm{cl}}_0 = X$, so that $\Sing^{\textrm{cl}}(S,0)$ is defined and is trivial, while $\Sing(S,0)$ is equivalent to the dg-category of 2-periodic complexes $\mathrm{Perf}(A[u,u^{-1}])$ (Proposition \ref{prop-laxstructureSingviabasechange}), and is therefore equivalent to $\MF(S,0)$, as an object in $\CAlg(\dgcat^{\mathrm{idem}}_A)$. In particular there is no hope for $\MF$ to be equivalent to $\Sing^{\textrm{cl}}$, when $f$ is allowed to be a zero-dvisor.
\end{rem}

\begin{rem}
\label{rem-Orlovworksproj}
The considerations of the previous remarks lead us to believe that the $\infty$-functor $\mathrm{Orl}^{-1}$ of Theorem \ref{t1} is an equivalence even without restricting to flat or non-zero divisors LG-pairs: we think this generalization of Theorem \ref{t1} is important, and will be discussed elsewhere. Granting this fact, we can make a few more observations. First, note that flat LG-pairs $(X,f)$ where $X/S$ is separated and $X$ is \emph{regular} belong to the subcategory for which $\mathrm{Orl}^{-1}$ is an equivalence. But unfortunately, the property of being regular is not preserved under base-change, so that these regular flat LG-pairs do not form a monoidal subcategory of the category of flat LG-pairs (recall from Section \ref{section-MFandSing} that $(X,f)\boxplus (Y,g):= (X\times_S Y, f\boxplus g)$). However, if we denote by $\LG^{\textrm{fl-qproj}}_S$ the subcategory of flat LG-pairs $(X,f)$ over $S$ where $X/S$ is quasi-projective (hence separated), then $\LG^{\textrm{fl-qproj}}_S$ is a symmetric monoidal subcategory of $\LG^{\textrm{fl}}_S$, and $\mathrm{Orl}^{-1}$ will remain an equivalence when restricted to $\LG^{\textrm{fl-qproj}}_S$ (granting the validity of 
Theorem \ref{t1} without the flatness hypothesis) since any quasi-projective scheme over an affine scheme has the resolution property, see e.g. \cite[2.1]{thomasonalgebraic}).

\end{rem}

\vspace{0.5cm}
We now address the proof of the Theorem \ref{t1}. By Kan extension and descent, it is enough to perform the construction of
$\mathrm{Orl}:\Sing\to\MF$ for affine LG-models.\\
For $(\Spec\, B,f)$ an affine LG-model, we first define a strict $A$-linear dg-functor
$$\psi : \mathrm{Coh}^{\mathrm{s}}(B,f) \longrightarrow \MFPairs(B,f)$$
as follows.

\begin{cons}
\label{construction-OrlovComparison}
Recall the description of objects of $\mathrm{Coh}^{\mathrm{s}}(B,f)$ from Remark \ref{remark-strictcoherentislocallyflatdgcat}: they are pairs $(E,h)$ consisting of a strictly bounded complex $E$ of projective $B$-modules of finite type, 
together with a morphism of graded modules $h : E \rightarrow E$ of degree $-1$,
satisfying the equation $[d,h]\equiv dh+hd=f$. 
Given such a pair $(E,h)$,  we define $\psi(E)$ to be
the $\mathbb{Z}/2$-graded $B$-module associated to $E$, that is
\begin{equation}
\label{eq-definitiontwoperiodizationtoMF}\psi(E)_0 = \oplus_{n} E_{2n} \qquad \psi(E)_1 = \oplus_{n} E_{2n+1}.
\end{equation}
We endow $\psi(E)$ with the odd endomorphism 
\begin{equation}
\label{eq-definitiondifferentialOrlovfunctor}
\delta:=h+d : \psi(E) \longrightarrow \psi(E).
\end{equation}
As $h^2=0$, we have $\delta^2 = f$, so this defines an object $(\psi(E),\delta)$ in 
$\MFPairs(B,f)$. Indeed, as $E$ is strictly bounded and each $E_i$ is a projective $B$-module, each sum above is finite and remains projective over $B$. This defines an $A$-linear dg-functor
$$\psi_{(B,f)} : \mathrm{Coh}^{\mathrm{s}}(B,f) \longrightarrow \MFPairs(B,f).$$

The $\psi_{(B,f)}$ are part of a natural transformation between the pseudo-functors
$$\psi: (\ref{eq-locallyflatlaxmonoidalversionCohstrict})\to (\ref{eq-MFstrict22})$$  
This is clear from the pseudo-functorial structure on (\ref{eq-laxmonoidalstrictCohPairs}) described in the beginning of the Construction \ref{construction-functorialassignementrelativeperfect} and the pseudo-functorial behavior of (\ref{eq-MFstrictlaxmonoidal}) described in the Construction \ref{construction-pseudofunctorstrictMF}. Moreover, $\psi$ has a lax symmetric monoidal enhancement $\psi^\otimes$ with respect to the lax monoidal enhancements (\ref{eq-locallyflatlaxmonoidalversionCohstrict}) and (\ref{eq-MFstrictlaxmonoidal}). Indeed, given affine LG-pairs $(B,f)$ and $(C,g)$ the commutativity of the diagram in $\mathrm{dgCat}_A^{\mathrm{strict, loc-flat}}$
\begin{equation}
\xymatrix{
\mathrm{Coh}^{\mathrm{s}}(B,f)\otimes_A \mathrm{Coh}^{\mathrm{s}}(C,g)\ar[d]^{(\ref{eq-laxstructurestrictrelativeperfect})}\ar[rrr]^{\psi_{(B,f)}\otimes \psi_{(C,g)}}&&&\MFPairs(B,f)\otimes_A \MFPairs(C,g)\ar[d]^{(\ref{eq-strictlaxstructuremorphismMF})}\\
\mathrm{Coh}^{\mathrm{s}}(B\otimes_A C,f\otimes 1 +1 \otimes g)\ar[rrr]^{\psi_{(B\otimes_A C,f\otimes 1 +1 \otimes g)}}&&&\MFPairs(B\otimes_A C,f\otimes 1+ 1\otimes g)
}
\end{equation}
\noindent comes from the explicit descriptions of each composition: if $E\in \mathrm{Coh}^{\mathrm{s}}(B,f)$ and $F\in \mathrm{Coh}^{\mathrm{s}}(C,g)$, the composition $\psi_{(B\otimes_A C,f\otimes 1 +1 \otimes g)}\circ (\ref{eq-laxstructurestrictrelativeperfect})(E,F)$ gives a 2-periodic complex
\begin{equation}
\label{valis1}
\xymatrix{\bigoplus_{(\alpha, \beta): \alpha + \beta \, \text{even}}E_\alpha\otimes_A F_\beta \ar@<1ex>[r]& \ar@<1ex>[l]\bigoplus_{(\alpha, \beta): \alpha + \beta \, \text{odd}}E_\alpha\otimes_A F_{\beta}}
\end{equation}
If $h$ (resp. $k$) denotes the element of degree $-1$ in $K(B,f)$ (resp. $K(C,g)$) explained in the Remark \ref{remark-strictcoherentislocallyflatdgcat}, then the formula (\ref{eq-formulaCohstrictproduct}) and the formula for the differential of the tensor product of complexes combined, describe the differential $\delta$ of (\ref{valis1}) (defined by (\ref{eq-definitiondifferentialOrlovfunctor}))  as
\begin{equation}
\label{eq-comparisondifferentialsproductlax}
\delta_E\otimes \mathrm{Id}_F + \mathrm{Id}_F\otimes \delta_F
\end{equation}
By re-indexing (\ref{valis1}) 
$$
\resizebox{1. \hsize}{!}{
\xymatrix{(\bigoplus_{\alpha \, \text{even},\beta \, \text{even} }E_\alpha\otimes_A F_\beta) \bigoplus (\bigoplus_{\alpha \, \text{odd},\beta \, \text{odd} }E_\alpha\otimes_A F_\beta) \ar@<1ex>[r]& \ar@<1ex>[l](\bigoplus_{\alpha \, \text{even},\beta \, \text{odd} }E_\alpha\otimes_A F_\beta) \bigoplus (\bigoplus_{\alpha \, \text{odd},\beta \, \text{even} }E_\alpha\otimes_A F_\beta)}}
$$
\noindent we recover the composition $(\ref{eq-strictlaxstructuremorphismMF})\circ \psi_{(B,f)}\otimes \psi_{(C,g)}(E,F)$, as the definition of the product differential  (\ref{eq-formulaMFproduct}) also gives (\ref{eq-comparisondifferentialsproductlax}). To conclude we have to check that $\psi$ is compatible with the lax units, meaning, that it makes the diagram 
$$
\xymatrix{
A\ar[r]^-{(\ref{eq-laxunitCohstrict})} \ar[dr]_-{(\ref{eq-laxunitMFstrict})}& \mathrm{Coh}^{\mathrm{s}}(A,0)\ar[d]^{\psi_{(A,0)}}\\
& \MFPairs(A,0)
}
$$
\noindent commute. This is immediate from the definitions.

\end{cons}
\begin{lem}\label{l2}
The dg-functor defined above
$$\psi_{(B,f)} : \mathrm{Coh}^{\mathrm{s}}(B,f) \longrightarrow \MFPairs(B,f)$$
sends quasi-isomorphisms to equivalences.
\begin{proof}
We will prove the equivalent statement that $\psi$
sends the full sub-dg-category $\mathrm{Coh}^{\mathrm{s, acy}}(B,f)$
of acyclic complexes, to zero. Again, recall from the Remark \ref{remark-strictcoherentislocallyflatdgcat} the description of objects in $\mathrm{Coh}^{\mathrm{s}}(K(B,f))$ as pairs $(E,h)$. Such a pair $(E,h)$ sits in $\mathrm{Coh}^{\mathrm{s,acy}}(K(B,f))$ if and only if there exists a degree $-1$ endomorphism $k$ of 
$E$, with $kd+dk=id$. The endomorphism $k$ defines 
an odd degree endomorphism of $\psi(E)$ as a $\mathbb{Z}/2$-graded 
$B$-module, so an element $\psi(k)$ of degree -1 in the complex of endomorphism
$\underline{End}_{\MF(B,f)}(\psi(E))$ (i.e. $\psi(k) \in \underline{End}_{\MF(B,f)}(\psi(E))_{-1}$).
By construction this element is a homotopy between $0$ and $id+hk+kh$. 
The endomorphism $u=hk+hk$ is of degree $-2$ and thus, because 
$E$ is bounded, we have $u^n=0$ for some integer $n$. We see in particular
that the identity of $\psi(E)$ becomes a nilpotent 
endomorphism in the homotopy category $[\MF(B,f)]$. This implies that 
$\psi(E)\simeq 0$ in $[\MF(B,f)]$ as stated.
\end{proof}
\end{lem}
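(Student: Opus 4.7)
The plan is to reduce the statement to showing that $\psi_{(B,f)}$ annihilates the full sub-dg-category $\mathrm{Coh}^{\mathrm{s, acy}}(B,f)$ of objects that are acyclic as complexes of $B$-modules. This reduction is legitimate because inverting quasi-isomorphisms in $\mathrm{Coh}^{\mathrm{s}}(B,f)$ amounts to quotienting by the thick sub-dg-category of $B$-acyclic objects, and $\psi_{(B,f)}$ is additive and triangulated. Once this is done, the task becomes concrete: show that for any $B$-acyclic $(E,h)$, the identity of $\psi(E)$ is null-homotopic in $\MFPairs(B,f)$.

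First I would produce a contracting homotopy on $E$. Since $E$ is strictly bounded with each $E_i$ a finitely generated projective $B$-module, and since its cohomology vanishes, a standard inductive argument using projectivity to split the short exact sequences of cycles at each level yields a $B$-linear endomorphism $k$ of $\mathbb{Z}$-degree $-1$ satisfying $dk + kd = \mathrm{id}_E$. Transferring $k$ to an odd endomorphism $\psi(k)$ of the $\mathbb{Z}/2$-folded module $\psi(E)$ and computing its commutator with the matrix factorization differential $\delta = d + h$ defined in \eqref{eq-definitiondifferentialOrlovfunctor} gives
\begin{equation*}
[\delta, \psi(k)] \;=\; (d+h)\,k + k\,(d+h) \;=\; (dk + kd) + (hk + kh) \;=\; \mathrm{id}_{\psi(E)} + u,
\end{equation*}
with $u := hk + kh$. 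In the homotopy category $[\MFPairs(B,f)]$ this exhibits $\mathrm{id}_{\psi(E)}$ and $-u$ as equal morphisms.

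The decisive step, which I expect to be the only genuine subtlety, is the nilpotence of $u$. After folding to a $\mathbb{Z}/2$-graded object, $u$ is merely an even endomorphism of $\psi(E)$ and there is no a priori reason for it to be nilpotent. One has to retain the underlying $\mathbb{Z}$-grading of $E$: the maps $h$ and $k$ both have $\mathbb{Z}$-degree $-1$, so $u$ has $\mathbb{Z}$-degree $-2$, and because $E$ is strictly bounded the iterates $u^n$ must vanish for $n$ large enough. Therefore $\mathrm{id}_{\psi(E)}$ is nilpotent in the additive homotopy category $[\MFPairs(B,f)]$, which forces $\mathrm{id}_{\psi(E)} = 0$ and hence $\psi(E) \simeq 0$. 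This completes the plan: construct $k$, pass to $\psi(E)$, read off the homotopy between $\mathrm{id}$ and $-u$, and close with the $\mathbb{Z}$-degree nilpotence argument.
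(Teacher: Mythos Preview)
Your proposal is correct and follows essentially the same route as the paper: reduce to acyclics, produce a contracting homotopy $k$, compute $[\delta,\psi(k)]=\mathrm{id}+u$ with $u=hk+kh$, and conclude by nilpotence of $u$ from its $\mathbb{Z}$-degree $-2$ on a bounded complex. Your write-up is in fact a bit more explicit than the paper's (you justify the existence of $k$ via projectivity and spell out the commutator), but the argument is the same.
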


\medskip

A consequence of the Lemma \ref{l2} is that $\psi^\otimes$ has an enhancement as a lax symmetric monoidal natural transformation between
\begin{equation}
\label{eq-OrlovlaxmonoidalstrictPairs}
\UseTwocells  
\xymatrix{\LG_{S}^{\mathrm{aff, op}, \boxplus} \rrtwocell^{(\ref{eq-laxmonoidalstrictCohPairs})}_{\MFPairs^{\otimes}}{_} && \mathrm{PairsdgCat}^{\mathrm{strict, loc-flat}, \otimes}_A}
\end{equation}
\noindent where $\MFPairs$ is seen as an object in $\mathrm{PairsdgCat}^{\mathrm{strict, loc-flat}}_A$ taking the equivalences as the distinguish class of morphisms. 
Finally, composing with the (symmetric monoidal) functors (\ref{eq-monoidallocalizationofpairs}) and (\ref{eq-localizationofpairsdgcats}) we obtain a lax symmetric monoidal transformation
\begin{equation}
\label{eq-Orlovlaxmonoidalrelativeperfect}
\UseTwocells  
\xymatrix{\LG_{S}^{\mathrm{aff, op}, \boxplus} \rrtwocell^{\mathrm{Coh}(-)_{\Perf(-)}^{\boxplus}}_{\MFPairs^{\boxplus}}{_} && \dgcat^{\mathrm{idem}, \otimes}_A}
\end{equation}

\begin{rem}
\label{rem-symmetricmonoidalSzerorelativeperfect}
As part of the lax symmetric monoidal enhancement (\ref{eq-Orlovlaxmonoidalrelativeperfect}),  we obtain a symmetric monoidal $\infty$-functor
\begin{equation}
\label{eq-symmetricmonoidalSzerorelativeperfect}
\psi_{(A,0)}^\otimes: \Coh(K(A,0))^\boxplus \to \MFPairs(A,0)^\boxplus
\end{equation}
It is now immediate to check, by just unraveling the definitions, that under the symmetric monoidal equivalences (\ref{eq-kudegree2Koszulequivalencemonoidal}) and (\ref{eq-comparisonMonoidalstructuresMF-2-periodic}) , $\psi_{(A,0)}^\otimes$ identifies with the symmetric monoidal base change functor (\ref{eq-symmetricmonoidalstandardinvertionofu}). 
\end{rem}

\vspace{0.7cm}

It is a consequence of the Remark \ref{rem-symmetricmonoidalSzerorelativeperfect} that the lax natural transformation (\ref{eq-Orlovlaxmonoidalrelativeperfect}) has in fact values in $\Mod_{\Perf(A[u])}(\dgcat^{\mathrm{idem}}_A)^\otimes$. Moreover, as the lax symmetric monoidal $\infty$-functor $\MFPairs^{\boxplus}$ has values in $\Mod_{\Perf(A[u, u^{-1}])}(\dgcat^{\mathrm{idem}}_A)^\otimes$ (because of the equivalence (\ref{eq-comparisonMonoidalstructuresMF-2-periodic})), by base-change, (\ref{eq-Orlovlaxmonoidalrelativeperfect}) is in fact equivalent to the data of a lax symmetric monoidal transformation
\begin{equation}
\label{eq-Orlovlaxmonoidalrelativeperfect2periodic}  
\xymatrix{\mathrm{Coh}(-)_{\Perf(-)}^{\boxplus}\otimes_{A[u]}A[u, u^{-1}]\ar[r]&\MFPairs^{\boxplus}}
\end{equation}
Finally, composing with the equivalence of the Prop. \ref{prop-laxstructureSingviabasechange} we obtain a lax symmetric monoidal transformation
\begin{equation}
\label{eq-Orlovlaxmonoidalfinal}
\psi^\otimes:\Sing^{\otimes}\to \MFPairs^\boxplus
\end{equation}

\vspace{0.5cm}
\textit{Proof of Theorem \ref{t1}.}
We set $\mathrm{Orl}^{-1,\otimes}:=\psi^{\otimes}  : \Sing^{\otimes} \longrightarrow \MF^{\boxplus}.$ The property (1) is now a consequence of the Remark \ref{rem-symmetricmonoidalSzerorelativeperfect}. Let us prove (2). The explicit description of $\psi$ given in the Construction \ref{construction-OrlovComparison} is all we need to conclude. Indeed, as observed in \cite[p. 47]{efimov2015coherent}, for each fixed $(B,f)$, the induced triangulated functor
$$[\mathrm{Orl}^{-1}] : [\Sing(B,f)] \longrightarrow [\MF(B,f)]$$
(denoted as $\Delta$ in loc. cit.) is an inverse to the functor $\Sigma$ described in \cite[Theorem 2.7]{efimov2015coherent}  (which is an analogue of Orlov's ``Cok'' functor in \cite[Thm 3.5]{MR2910782}), and thus is an equivalence (by \cite[Theorem 2.7]{efimov2015coherent}) on those LG-pairs $(X,f)$ where $f$ is flat (so that the derived fiber $X_0$ coincides with the scheme theoretic fiber considered in \cite[Theorem 2.7]{efimov2015coherent}), $X/S$ is separated (hence $X$ is), and $X$ has the resolution property, so that the standing hypotheses of \cite{efimov2015coherent}) are met. See also \cite[Theorem 6.8.]{MR3007084}.

As all the categories involved are stable, this implies the equivalence of the dg-enrichments.  

\medskip
(3) is a consequence of (1) and (2).
\hfill  $\Box$
\vspace{0.5cm}

\section{Motivic Realizations of dg-categories}
\label{section-realizationdgcategories}

In this section we explain how to associate to every dg-category $T$ a motivic $\BU$-module, where $\BU$ is the motivic ring-object representing algebraic $K$-theory. At first we describe some general features of this motivic incarnation of $T$ and then we will study several of its realizations. If $R$ is any realization of motives (e.g. $\ell$-adic, étale, Hodge, de Rham, etc), the realization $R(T)$ will carry a structure of $R(\BU)$-module.

\subsection{Motives, $\BU$-modules and noncommutative motives}
\label{Section:motives}

\begin{context}
\label{context3333}
Throughout this section $S:=\Spec\, A$ is any affine scheme
\end{context}

By \cite{ MR3281141} we have
a symmetric monoidal $\s$-category $\rmSH_S^\otimes$, 
which is an $\s$-categorical version of Morel-Voevodsky's
stable homotopy category of schemes over $S$ \cite{voevodsky-morel}. We let $\rmSm_S$ be the category of 
smooth  schemes over $S$. It is a symmetric monoidal
category for the cartesian product. By definition, 
$\rmSH_S$ is a presentable stable symmetric monoidal $\s$-category together with a
symmetric monoidal $\s$-functor
$$\Sigma_+^\infty: \rmSm_S^\times \longrightarrow \rmSH_S^\otimes$$
universal with respect to the following properties (see 
\cite[Cor. 1.2]{MR3281141}) :

\begin{enumerate}

\item The image of an elementary Nisnevich square in $\rmSm_S$ is a pushout square in $\rmSH_S$.\footnote{\label{17ago1242} One could also start with smooth and affine schemes over $S$ instead of $\rmSm_S$. The Nisnevich topology defined only for affine schemes agrees with the usual Nisnevich topology. See \cite[2.3.2]{MR3748687}, \cite[A.2]{1711.03061}}

\item (Homotopy invariance) The natural projection
$\mathbb{A}^{1}_S \longrightarrow S$ is sent to an equivalence.

\item (Stability) Let $S \longrightarrow \mathbb{P}^{1}_S$ be the point at infinity and consider its image in $\rmSH_S$. The cofiber of this map in $\rmSH_S$, denoted as $(\mathbf{P}^1_S, \infty)$, is $\otimes$-invertible.

\end{enumerate}

\begin{notation}
In the following, we will denote by $1_{S} \in \rmSH_S$ the unit of the tensor structure in $\rmSH_S^\otimes$. As an object in $\rmSH_S$, this is equivalent to the tensor product of the topological circle $S^1$ and the algebraic circle $\mathbb{G}_{m,S}$. We will also be using the standard notation $1_S(1):= (\mathbf{P}^1_S, \infty)[-2]= \Omega \, \mathbb{G}_{m,S}$, and $(-)(d):= (-)\otimes 1_S(1)^{\otimes d}$ for the motivic Tate $d$-twist, $d\in \mathbb{Z}$, where, as usual, we denote by $[1]$ the shift given by smashing with the topological circle $S^1$.
\end{notation}

\subsubsection{$\BU_S$ and non-commutative motives}
There exists an object  $\BU_S\in \rmSH_S$ representing homotopy invariant algebraic $\K$-theory of Weibel \cite{weibel-homotopyinvariantktheory, cisinski-descentpar}, in the sense that for any smooth scheme $Y$ over $S$,  the hom-spectrum
$$
\Map_{\rmSH_S}(\Sigma^\infty_+ Y, \BU_S)\simeq \mathrm{KH}(Y)$$
\noindent is the (non-connective) spectrum
of homotopy invariant algebraic $\K$-theory of $Y$ \footnote{\label{17ago1244}This motive is usually denoted as $\mathrm{KGL}_S$ but we will denote it here as $\BU_S$, inspired by the topological analogy.} . The relation between the motive $\BU_S$ and the theory of non-commutative motives was studied in \cite{MR3281141}. Let us briefly recall it. First of all, to every $Y\in \rmSm_S$  we can assign a \emph{dg-category over $S$}, $\Perf(Y)$, of perfect complexes on $Y$. This dg-category is of \emph{finite type} in the sense of \cite{toen-vaquie}. This assignment can be organized into a symmetric monoidal  $\infty$-functor
$$\Perf: \rmSm_S^\times \to \dgcat^\mathrm{idem, ft, op, \otimes}_S$$ where $\dgcat^\mathrm{idem, ft, op, \otimes}_S$ denotes the monoidal full $\infty$-subcategory of $\dgcat^\mathrm{idem, op, \otimes}_S$ consisting of $S$-dg-categories of finite type.
One can mimic the construction of motives starting from the theory of dg-categories. More precisely, one constructs a presentable stable symmetric monoidal $\s$-category $\rmSHNC_S^\otimes$ together with a symmetric monoidal functor 
$$\iota: \dgcat^\mathrm{idem, ft, op, \otimes}_S\to \rmSHNC_S^\otimes$$
satisfying a universal property analogous to (1),(2), (3) above for $\rmSH$, namely:
\begin{enumerate}[(1')]
\item every \emph{Nisnevich square of dg-categories} (see \cite[Section 33.1]{MR3281141}) is sent to a pushout diagram;
\item  the pullback along the canonical projection $\Perf(S)\to\Perf(\mathbb{A}^1_S)$ is sent to an equivalence;
\item the image of the cofiber $\Perf((\mathbb{P}_S^1, \infty))$ is $\otimes$-invertible. 
\end{enumerate}
More concretely, the objects of $\rmSHNC_S$ can be identified with functors $\dgcat^\mathrm{idem, ft, \otimes}_S\to \Sp$ satisfying the conditions (1'),(2'), (3'). Moreover, from the universal property of $\rmSH_S^\otimes$, one then obtains a symmetric monoidal $\s$-functor
$$
\mathrm{R}_{\Perf}: \rmSH_S^\otimes\to \rmSHNC_S^\otimes
$$
informally defined by sending a motive $Y$ to the motive of its dg-category $\Perf(Y)$. 
For formal reasons, this admits a lax monoidal right adjoint $\mathcal{M}_S^\otimes$. By \cite[Theorem 1.8]{MR3281141} this adjoint sends the image of the tensor unit in $\rmSHNC_S^\otimes$ to the object $\BU_S$, thus endowing it with a structure of commutative algebra in the $\infty$-category $\rmSH_S$ \footnote{See also \cite[Section 5.2]{gepner2009motivic}), \cite{1711.03061}, \cite{MR3385689} for the discussion on $\mathrm{E}_\infty$-algebra structures on $\BU_S$.}. Formal reasons then imply that $\mathcal{M}_S^\otimes$ factors as a lax monoidal functor via the theory of $\BU$-modules
$$\rmSHNC_S^\otimes \to \Mod_{\BU_S}(\rmSH_S)^\otimes$$
which we will again denote as $\mathcal{M}_S^\otimes$.\\

\subsubsection{Algebraic Bott periodicity}
\label{rem-algebraicbott}
The object $\BU_S$ reflects the projective bundle theorem in algebraic K-theory in the form of a periodicity given by the Bott isomorphism
\begin{equation}
\label{eq-bottfirstequivalence}
\xymatrix{
\ar[r]^-{\sim} \BU_S & \mathbf{R}\mathrm{Hom}_{\rmSH_S}((\mathbb{P}_S^1,\infty), \BU_S)\simeq  \BU_S(-1)[-2] 
}
\end{equation}
One can find this as a consequence of the fact that the non-commutative motive of $(\mathbb{P}_S^1,\infty)$ is a tensor unit (see \cite[Lemma 3.25]{MR3281141}):
$$
\xymatrix{
\BU_S\simeq \mathcal{M}_S(\mathbf{R}\mathrm{Hom}_{\rmSHNC_S}( 1_S^{\mathrm{nc}} , 1_S^{\mathrm{nc}}))\simeq \mathcal{M}_S(\mathbf{R}\mathrm{Hom}_{\rmSHNC_S}(\mathrm{R}_{\Perf}(\mathbb{P}_S^1,\infty), 1_S^{\mathrm{nc}}))\ar[r]^-{\sim}&
}
$$
$$\xymatrix{
\mathbf{R}\mathrm{Hom}_{\rmSH_S}((\mathbb{P}_S^1,\infty), \mathcal{M}_S(1_S^{\mathrm{nc}})) \simeq 
\BU_S(-1)[-2] 
}
$$
Notice that as all the functors used here are lax monoidal and the tensor unit in non-commutative motives has a unique structure of commutative algebra object, the map in the equivalence (\ref{eq-bottfirstequivalence}) is in fact $\BU_S$-linear. Therefore, it is completely determined by a map in $\rmSH_S $
\begin{equation}
\label{eq-uinverseofBott}
\nu:1_{S}(1)[2]\to \BU_S
\end{equation}
\noindent which, unwinding the argument in the proof of \cite[Lemma 3.25]{MR3281141} one sees, corresponds to the element $\nu=:[\mathcal{O}(1)]-[\mathcal{O}]$ in $\tilde{K}_0(\mathbb{P}^1_S)$. 
Moreover, as $(\mathbb{P}_S^1,\infty)\simeq 1_{S}(1)[2]$ is $\otimes$-invertible, we can tensor (\ref{eq-bottfirstequivalence}) on both sides by $1_{S}(1)[2]$ and obtain
\begin{equation}
\label{eq-bottperiodicity}
\xymatrix{\BU_S(1)[2] \ar[r]^-{\sim} &\BU_S}
\end{equation}
 The map (\ref{eq-bottperiodicity}) corresponds to the composition
$$
\xymatrix{
\ar[r]^-{Id\otimes \nu}\BU_S(1)[2] & \BU_S\otimes \BU_S\ar[r]& \BU_S
}
$$ 
where the last map is the multiplication map of the commutative algebra structure on $\BU_S$. The element $\nu$ is invertible with inverse corresponding to a map
\begin{equation}
\label{eq-bottelement}
\nu^{-1}=\beta: 1_S(-1)[-2] \to \BU_S
\end{equation}
To conclude, let us remark that the Bott periodicity of \ref{eq-bottfirstequivalence} can now be extended to any $\BU_S$-modules $\mathrm{M}$; indeed, we have equivalences of $\BU_S$-modules
\begin{equation}
\mathbf{R}\mathrm{Hom}_{\rmSH_S}((\mathbb{P}_S^1,\infty), \mathrm{M})\simeq  \mathbf{R}\mathrm{Hom}_{\BU_S}((\mathbb{P}_S^1,\infty)\otimes \BU_S, \mathrm{M})\simeq \mathbf{R}\mathrm{Hom}_{\BU_S}( \BU_S, \mathrm{M})\simeq \mathrm{M}
\end{equation}
\noindent or, by duality
\begin{equation}
\label{eq-bottfirstequivalenceanymodule}
\mathrm{M}\simeq \mathrm{M}(1)[2]
\end{equation}

\medskip

\begin{rem}
\label{rem-compactgeneratorsBUmodules}
Recall (for instance, from \cite[Prop. 5.3.3]{robalo-thesis}) that  $\rmSH_S$ is compactly generated by the family of objects $(\mathbb{P}_S^1,\infty)^{-n}\otimes\Sigma^\infty_+(Y)$ with $Y$ smooth over $S$ and $n\in \mathbb{N}$. Furthermore, $\Mod_{\BU_S}(\rmSH_S)$ is compactly generated by the objects of the form $(\mathbb{P}_S^1,\infty)^{-n}\otimes\Sigma^\infty_+(Y)\otimes \BU_S$ which by (\ref{eq-bottperiodicity}) are  equivalent to $\Sigma^\infty_+(Y)\otimes \BU_S$.
\end{rem}

\vspace{0.5cm}

\subsection{The realization of dg-categories as $\BU$-modules}
\label{Section-motivedgcategoryI}

Throughout this section we work under the Context \ref{context3333}. The construction $\mathcal{M}_S^\otimes$ gives us a way to assign a motive to a dg-category of finite type via the composition with the universal map $\dgcat^{\mathrm{idem, ft, op,\otimes}}_S\to \rmSHNC_S^\otimes$. We will use it to produce a more interesting assignment. By construction, $\rmSHNC_S^\otimes$ is a stable presentable symmetric monoidal $\infty$-category. As such, it admits internal-hom objects $\mathbf{R}\mathrm{Hom}_{\rmSHNC}$ and in particular, there exists an $\infty$-functor
$$\mathbf{R}\mathrm{Hom}_{\rmSHNC}(-,1^\mathrm{nc}_S):\rmSHNC^{\mathrm{op}}\to \rmSHNC$$
where $1^\mathrm{nc}_S$ is the tensor unit. Of most importance to us is the fact that this functor can be endowed with a lax monoidal structure \cite[5.2.2.25, 5.2.5.10, 5.2.5.27 ]{lurie-ha}
$$\mathbf{R}\mathrm{Hom}_{\rmSHNC}(-,1^\mathrm{nc}_S):\rmSHNC_S^{\mathrm{op},\otimes}\to \rmSHNC_S^{\otimes}$$
The composition 
$$\xymatrix{\dgcat^{\mathrm{idem, ft, \otimes}}_S\ar[r]^-{\iota} & \rmSHNC_S^{\mathrm{op},\otimes}\ar[rrr]^-{\mathbf{R}\mathrm{Hom}_{\rmSHNC}(-,1^\mathrm{nc}_S)} & & &\rmSHNC_S^\otimes }$$
is lax monoidal. We now recall that dg-categories of finite type generate all the Morita theory of dg-categories under filtered colimits i.e. $\mathrm{Ind}(\dgcat^{\mathrm{idem, ft}}_S)^\otimes\simeq \dgcat^{\mathrm{idem, \otimes}}_S$ \footnote{See \cite{toen-vaquie} and the $\infty$-categorical narrative in \cite[6.1.27]{robalo-thesis}}. As $\rmSHNC_S$ is presentable,  we obtain via the universal property of the convolution product in $\mathrm{Ind}$-objects \cite[4.8.1.10]{lurie-ha}, an induced lax symmetric monoidal functor 

\begin{equation}
\label{eq-dualityfunctor}
\xymatrix{
\dgcat^{\mathrm{idem, ft, \otimes}}_S \ar[r]\ar@{^{(}->}[d]& \rmSHNC_S^{\otimes}\\
\dgcat^{\mathrm{idem, \otimes}}_S \ar@{-->}[ru]_{\mu_S^\otimes}&
}
\end{equation}

Informally, if $T$ is an $S$-dg-category of finite type,  and we write again $T$ for its image in $\rmSHNC_S$, the object $\mu^{\otimes}_S(T)=\mathbf{R}\mathrm{Hom}_{\rmSHNC}(T,1^\mathrm{nc}_S)$ can be described \footnote{It is helpful to remind the reader that $\rmSHNC_S$ can be constructed as a localization of the of presheaves on $(\dgcat_A^\mathrm{idem, ft})^{\textrm{op}}$ with values in the $\infty$-category of spectra $\Sp$, by forcing Nisnevich descent and $\Perf(\mathbb{A}^1_S)$-invariance - see \cite[Section 33]{MR3281141}} as the $\infty$-functor sending a dg-category of finite type $T'$ over $S$ to the mapping spectrum 
$$\Map_{\rmSHNC_S}(T', \mathbf{R}\mathrm{Hom}_{\rmSHNC}(T,1^\mathrm{nc}_S)) \simeq \Map_{\rmSHNC_S}(T'\otimes_S T, 1^\mathrm{nc}_S)$$
which following \cite[Theorem 1.8 (ii) and Cor 4.8]{MR3281141} is the spectrum of homotopy invariant $\mathrm{K}$-theory 
$$\mathrm{KH}(T'\otimes_S T)$$
More generally, for $T\in \dgcat_S^\mathrm{idem}$ we can write $T$ as a filtered colimit of dg-categories of finite type and as $\mathrm{KH}$ commutes with filtered colimits (this is well known but see \cite[Prop. 2.8]{Anthony-thesis}), and the same holds for tensor product of dg-categories, we conclude that $T$ is sent to the object in $\rmSHNC_S$, defined by the $\infty$-functor $\mathrm{KH}(-\otimes_S T)$.\\

We will denote by $\mathcal{M}^\vee_S$ the composition of the lax monoidal functors 
\begin{equation}
\label{equation-definition-Msdual}
\xymatrix{\dgcat^{\mathrm{idem, \otimes}}_S \ar[r]^-{\mu_S^\otimes} &\rmSHNC_S^{\otimes}\ar[r]^-{\mathcal{M}_S^\otimes}& \Mod_{\BU_S}(\rmSH_S)^\otimes}
\end{equation}

\begin{rem}
Notice that as $\mathcal{M}_S$ commutes with filtered colimits, this composition is also the functor obtained by the monoidal universal property of the $\mathrm{Ind}$-completion. To see that $\mathcal{M}_S$ commutes with filtered colimits it is enough to test on compact generators \cite[Prop. 5.3.3 and 6.4.24]{robalo-thesis} and use the fact $\mathrm{R}_{\Perf}$ preserve compact generators.
\end{rem}

By the definition of $\mathcal{M}_S$ as a right adjoint to $\mathrm{R}_{\Perf}$ and following the previous discussion, the motive $\mathcal{M}_S^\vee(T)$ in $\rmSH_S$ represents the $\infty$-functor sending a smooth scheme $X$ over $S$ to the spectrum $\mathrm{KH}(\Perf(X)\otimes_S T)$.

\begin{cor}
\label{cor-Mpreservesexactsequences}
$\mathcal{M}_S^{\vee}$ sends exact sequences of dg-categories  to cofiber-fiber sequences in the stable $\infty$-category of $\BU_S$-modules. 
\begin{proof}
Indeed, this follows because cofiber sequences of dg-categories are stable under tensor products and because homotopy K-theory sends exact sequences of dg-categories (see the discussion in \cite[Section 1.5.4]{MR3281141}) to cofiber-fiber sequences in spectra (see the details in \cite[Prop. 417 and Prop. 3.19]{MR3281141}).
\end{proof}
\end{cor}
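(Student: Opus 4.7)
The plan is to reduce the statement to the two facts that (i) exact sequences of dg-categories are stable under tensor product with a fixed dg-category, and (ii) non-connective homotopy invariant K-theory $\mathrm{KH}$ sends exact sequences of dg-categories to cofiber-fiber sequences of spectra.

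More precisely, let $T_0 \to T \to T/T_0$ be an exact sequence in $\dgcat_A^{\mathrm{idem}}$. I would first recall from the discussion preceding the statement that the object $\mathcal{M}_S^\vee(T)$ of $\Mod_{\BU_S}(\rmSH_S)$ is determined by its values on smooth $S$-schemes $X$, which are given by the spectra $\mathrm{KH}(\Perf(X)\otimes_S T)$; since $\rmSH_S$ is stable and a morphism in $\rmSH_S$ is an equivalence if and only if it induces equivalences on all such evaluations, it suffices to check that the induced sequence
\[
\mathrm{KH}(\Perf(X)\otimes_S T_0) \longrightarrow \mathrm{KH}(\Perf(X)\otimes_S T) \longrightarrow \mathrm{KH}(\Perf(X)\otimes_S T/T_0)
\]
is a cofiber-fiber sequence of spectra, naturally in $X \in \rmSm_S$.

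Next I would invoke fact (i): the tensor product in $\dgcat_A^{\mathrm{idem}}$ preserves exact sequences in each variable. This is a standard property of the Morita theory of dg-categories and can be found in the references cited in the paper (e.g.\ \cite[\S1.5]{MR3281141}); it reduces the problem to showing that $\mathrm{KH}$ takes exact sequences of dg-categories to cofiber-fiber sequences. For this I would cite fact (ii), namely the localization theorem for non-connective homotopy invariant K-theory of dg-categories, as recalled in \cite[Prop.~4.17 and Prop.~3.19]{MR3281141}: any exact sequence $T_0' \to T' \to T'/T_0'$ in $\dgcat_A^{\mathrm{idem}}$ yields a cofiber-fiber sequence $\mathrm{KH}(T_0')\to\mathrm{KH}(T')\to\mathrm{KH}(T'/T_0')$ in $\Sp$. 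Applying this with $T_0':=\Perf(X)\otimes_S T_0$ etc.\ gives what we need.

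There is essentially no hard step; the only thing to be careful about is the passage between limits/colimits in $\Mod_{\BU_S}(\rmSH_S)$ and $\rmSH_S$, and between these and the pointwise description on $\rmSm_S$. This is handled by the fact that both the forgetful functor $\Mod_{\BU_S}(\rmSH_S)\to \rmSH_S$ and the representability of $\mathcal{M}_S^\vee(T)$ are compatible with the stable structure, so that cofiber-fiber sequences can be tested on evaluation at smooth $S$-schemes. Once this is stated, the corollary follows immediately from (i) and (ii).
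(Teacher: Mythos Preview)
Your proposal is correct and follows essentially the same approach as the paper's proof: both reduce to the two facts that exact sequences of dg-categories are stable under tensor products and that $\mathrm{KH}$ sends exact sequences to cofiber-fiber sequences of spectra, citing the same results from \cite{MR3281141}. You have simply spelled out in more detail the reduction to pointwise evaluation on smooth $S$-schemes, which the paper leaves implicit.
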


\begin{rem}
In \cite{MR2822869,tabuada-higherktheory, tabuada-cisinski} Cisinski and Tabuada introduced an alternative category of non-commutative motives $\mathcal{M}_{Tab}^\mathrm{Loc, \otimes}$ which is dual to the one used here.  Indeed, there is a duality blocking a direct comparison between $\mathcal{M}_{Tab}^\mathrm{Loc, \otimes}$ and $\rmSH^\otimes$. The category $\rmSHNC^\otimes$ was designed to avoid this obstruction (see \cite[Appendix A]{MR3281141} for the comparison between the two approaches). It is exactly this duality that we encode in our construction of motivic realizations of dg-categories via the functor $\mu^\otimes$. As in \cite[Appendix A]{MR3281141}, let $\mathcal{M}_{Tab}^\mathrm{Nis, \otimes}$ be the Nisnevich version of the construction of Tabuada-Cisinski. Then, by definition of $\mathcal{M}_{Tab}^\mathrm{Nis, \otimes}$, the functor $\mu^\otimes$ of the diagram (\ref{eq-dualityfunctor}) factors in a unique way as a lax monoidal functor 
\begin{equation}
\label{eq-compTabuada}
\xymatrix{
\dgcat^{\mathrm{idem, ft, \otimes}}_S \ar[r]^-{\mu_S^\otimes}\ar[d]& \rmSHNC_S^{\otimes}\\
 \mathcal{M}_{Tab}^\mathrm{Nis, \otimes} \ar@{-->}[ru]&
}
\end{equation}
This exhibits $\mathcal{M}_{Tab}^\mathrm{Nis, \otimes}$ as a universal motivic realization of dg-categories (see next section for more on realizations).
\end{rem}

\subsection{The six operations in $\BU$-modules and realizations}
\label{section-sixopBUmod}

\begin{context}
\label{context2222}
Throughout this section $S:=\Spec\, A$  is an henselian trait and  $\bsch$  denotes the category of Noetherian $S$-schemes of finite Krull dimension. See \cite[2.0]{cisinski-tcmm}  and \cite[2.0, footnote 35]{cisinski-tcmm}.
\end{context}

Before continuing towards our main goals we will need to discuss some functorial aspects. In the last two section we worked with motives over a fixed base $S$. It is possible to work in a relative setting. For every $X\in \bsch$ we can construct a stable presentable symmetric monoidal $\s$-category $\rmSH_X^\otimes$ encoding the motivic homotopy theory of Morel-Voevodsky over $X$. Moreover, we can make the assignment $X\mapsto \rmSH_X^\otimes$ functorial in $X$, given by an $\s$-functor
$$\rmSH^\otimes:\bsch^{\mathrm{op}}\to \CAlg(\Prlstb)$$ 
where $\Prlstb$ denotes the $\infty$.category of stable presentable $\infty$-categories with colimit preserving functors, and $\CAlg(\Prlstb)$ is the $\infty$-category of  \cite[4.8]{lurie-ha} symmetric monoidal stable presentable $\infty$-categories such that the tensor product preserves colimits in each argument. This is done in \cite[Section 9.1]{robalo-thesis}. The $\s$-functor $\rmSH_X^\otimes$ comes together with a more complex system of functorialities encoding the six operations of Grothendieck (see Appendix \ref{Section:formalismsixoperations}).

We will be interested in several \emph{motivic realizations}. For us, a motivic realization consists of an $\infty$-functor 
$$\mathrm{D}^\otimes:\bsch^{\mathrm{op}}\to \CAlg(\Prlstb)$$
enriched with a system of six operations, plus the data of a monoidal natural transformation 
$$\rmSH^\otimes \to \mathrm{D}^\otimes$$
and a system of compatibilities between the systems of six operations on $\rmSH_X^\otimes$ and  $\mathrm{D}^\otimes$ (see  Prop. \ref{prop-compsixoperations}).  Of major importance to us are the \emph{étale} and the \emph{$\ell$-adic} realizations which we will explore later in this section.

\begin{cons}\label{17ago1246}(Operations $f^\ast, f_\ast$ on $\rmSHNC$)
In \cite[Chapter 9]{robalo-thesis} it is shown that the theory of non-commutative motives  admits relative versions encoded by an $\infty$-functor
$$\rmSHNC^\otimes: \bsch^\mathrm{op}\to \CAlg(\Prlstb)$$
In the affine case, given a map $f: \Spec( R)\to \Spec(R')$, $f^\ast$ is induced by the map $f^*(T):= T\otimes_R \Perf(R')$ from $R$ to $R'$-dg-categories. In the general non-affine case, it is obtained by Kan extension from  $\rmSHNC^\otimes$ defined for (underived) affine schemes, namely by the formula
$$
\rmSHNC^\otimes(X):=\lim_{\Spec(A)\to X}\rmSHNC^\otimes(A)
$$
This way, for any map of schemes $f:X\to Y$ we get functorialities $(f^\ast, f_\ast) $, where $f^\ast$ is obtained by by Kan extension, and $f_\ast$ by the Adjoint functor theorem. Moreover, $(\rmSHNC^\otimes, (-)^\ast)$ is known to be a Zariski sheaf \cite[Chapter 9]{robalo-thesis}. Also by Kan extension, we get a natural transformation
$$\mathrm{R}_{\Perf}: \rmSH^\otimes \to \rmSHNC^\otimes$$ compatible with pullbacks. See also \cite{1711.03061} \footnote{Notice however that it is not known if the necessary localization and proper base change properties hold for $\rmSHNC^\otimes$  and therefore it is not known if it admits the six operations. In any case this won't be necessary in this paper.}.
\end{cons}

\medskip

Another important example is that of $\BU$-modules. For each $X\in \bsch$ there exists a commutative algebra object $\BU_X\in \CAlg(\rmSH_X)$ representing a relative version of homotopy invariant algebraic $\mathrm{K}$-theory. This commutative algebra structure can also be obtained from a relative version of the results in \cite{MR3281141} which the reader can consult in \cite[Chapter 9]{robalo-thesis}. For this, it is crucial that these relative versions $\BU_{-}$ are compatible under pullbacks (see \cite[3.8]{cisinski-descentpar}). This allows us to construct an $\infty$-functor
$$\Mod_{\BU}(\rmSH)^\otimes:\bsch^\mathrm{op}\to \CAlg(\Prlstb) \, : (X/S) \longmapsto \Mod_{\BU_X}(\rmSH_X)$$
together with a natural transformation
$$ - \otimes \BU: \rmSH^\otimes \to \Mod_{\BU}(\rmSH)^\otimes$$
which for each $X\in \bsch$ admits a conservative right adjoint $\Mod_{\BU_X}(\rmSH_X)\to \rmSH_X$ that forgets the module structure. 
As explained in \cite[13.3.3]{cisinski-tcmm} (see also the discussion in \cite[pg 260, 9.4.38, 9.4.39]{robalo-thesis}), the conservativity of the forgetful functor and the fact it commutes with the functorialities $(-)_*$, $(-)^*$ and $(-)_\sharp$, and verifies the projections formulas, are enough to deduce the conditions endowing $\Mod_{\BU}(\rmSH)^\otimes$ with a system of six operations (see Prop. \ref{prop-Ayoub-Cisinski}), to make the natural transformation $-\otimes \BU$ compatible with the operations in the sense of Prop.\ref{prop-compsixoperations}, and to make the forgetful functor $\Mod_{\BU}\to \rmSH$ compatible with all the operations (meaning that the natural transformations at the end of \ref{prop-compsixoperations} are natural isomorphisms; see \cite[Section 7.2]{cisinski-tcmm}).

\begin{rem}
\label{rem-smoothbottperiodicitysharpimage}
Notice that the algebraic Bott isomorphism of  \ref{rem-algebraicbott} forces  the functorialities $(-)_\sharp$ and $(-)_!$ to be the same for smooth maps (see equation (\ref{eq-smoothsharpisshriek}) in Appendix A).\\
\end{rem}

To conclude this preliminary section, we must also remark that if $\mathrm{R}^\otimes :\rmSH^\otimes \to \mathrm{D}^\otimes$ is a motivic realization, being monoidal, it preserves algebra-objects and thus sends $\BU$ to an algebra object $\mathrm{R}(\BU)$. Therefore, it produces a new realization
$$\mathrm{R}_{\Mod}^\otimes: \Mod_{\BU}(\rmSH)^\otimes \to \Mod_{\mathrm{R}(\BU)}(\mathrm{D})^\otimes$$
Throughout the next sections we will analyse several realizations of dg-categories, all obtained by pre-composition with $\mathcal{M}_S^{\vee, \otimes}$
$$\xymatrix{\dgcat_S^{\mathrm{idem, \otimes}} \ar[r]^-{\mathcal{M}_S^{\vee, \otimes}} & \Mod_{\BU_S}(\rmSH_S)^\otimes \ar[r]^-{\mathrm{R}_{\Mod,S}^\otimes} & \Mod_{\mathrm{R}(\BU_S)}(\mathrm{D}_S)^\otimes}$$

\begin{rem}
By Corollary \ref{cor-Mpreservesexactsequences}, every realization of dg-categories sends exact sequences to exact sequences.
\end{rem}

\begin{ex}
\label{ex-topologicalKtheory}
When the base ring is $\mathbb{C}$, we have a \emph{Betti realization}
$\rmSH_\mathbb{C}^\otimes\to \Sp^\otimes$. In \cite[Section 4.6]{topk} it is shown that the realization of $\BU_\mathbb{C}$ is  the spectrum representing topological $K$-theory $\BU^{\mathrm{top}}$. The composite realization $$\dgcat_S^{\mathrm{idem}} \to \Mod_{\BU_S}(\rmSH_S)\to \Mod_{\BU^{\mathrm{top}}}(\Sp)$$ recovers what in \cite{topk} is called the \emph{topological K-theory} of dg-categories. 
\end{ex}

\subsection{The $\BU$-motives of $\Perf$, $\Coh$ and $\Sing$.}
\label{section-motivesPerfCohgeneralities}
 
\subsubsection{Motive of $\Perf$}
\label{subsubsection-eq-XschemeS}

\begin{context}
\label{context5555}
Throughout this section $S:=\Spec\, A$ is an affine scheme and \begin{equation}
\label{eq-XschemeS}
\xymatrix{
X\ar[d]^p\\
S
}
\end{equation}
is a quasi-compact quasi-separated $S$-scheme.
\end{context}

The dg-category $\Perf(X)$ is an object in $\dgcat_S^\otimes$ and via the construction explained in Section \ref{Section-motivedgcategoryI} it produces a $\BU_S$-module $\mathcal{M}_S^\vee(\Perf(X))$. At the same time, $\BU_X$ is an object in $\rmSH_X$ and, as in the previous section, we can consider its direct image $p_*(\BU_X)\in \Mod_{\BU_S}(\rmSH_S)$.

\begin{prop}
\label{prop-descriptionmotivedgcategoryviapushforward}
Assume $p:X\to S$ as in the Context \ref{context5555}. Then the two objects $\mathcal{M}_S^\vee(\Perf(X))$ and $p_*(\BU_X)$ are canonically equivalent as $\BU_S$-modules.
\begin{proof}
The first ingredient is the fact that by \cite[Theorem 1.8]{MR3281141} and its extension to general basis in \cite[Cor. 9.3.4]{robalo-thesis}, we have canonical equivalences
$$p_*(\BU_X)\simeq p_*(\mathcal{M}_X(1^\mathrm{nc}_X))$$
By formal adjunction reasons, $\mathcal{M}$ and $(-)_*$ are compatible, so that $p_*(\mathcal{M}_X(1^\mathrm{nc}_X))$ is canonically equivalent to $\mathcal{M}_S(p_*(1^\mathrm{nc}_X))$, where now $p_*$ denotes the direct image functoriality in $\rmSHNC$. 
By definition of $\mathcal{M}_S^\vee$ (\ref{equation-definition-Msdual}), we are reduced to show that $p_*(1^\mathrm{nc}_X)$ is equivalent to the object in $\rmSHNC_S$ given by $\mu_S(\Perf(X))$ (where $\mu_S$ is defined in diagram (\ref{eq-dualityfunctor}) ). 
Unwinding the adjunctions, the first corresponds to the $\s$-functor sending an  $S$-dg-category of finite type $T$ to the homotopy $\mathrm{K}$-theory spectrum $\mathrm{KH}_X(p^*(T))$ where $p^\ast(T)$ is the pullback of $T$ seen as object of $\rmSHNC_S$. The second corresponds to the $\infty$-functor sending a dg-category of finite type $T$ to the spectrum $\mathrm{KH}_S(T\otimes_S\Perf(X))$.\\
The case where $X=\Spec \, B$ is an affine scheme over $S=\Spec \, A$, the equivalence between the two follows from the argument in the proof of \cite[Prop. 10.1.4]{robalo-thesis}\footnote{Notice that the arguments in \cite[Prop. 10.1.4]{robalo-thesis} are written for $p$ a closed immersion but in fact work in the general map between affines. } which exhibit a natural equivalence between the underlying Waldhausen's S-constructions via the projection formula
$$
\mathrm{KH}_S(T\otimes_S\Perf(X))\simeq \mathrm{KH}_X(p^*(T))
$$

We now deduce the general case from the affine case using the Zariski descent property for $X\mapsto \rmSHNC_X$ of \cite[9.21]{robalo-thesis} and Zariski descent for homotopy K-theory.  Indeed, the constructions
$$
X=\Spec(B)\mapsto p_*(1^\mathrm{nc}_B) \text{   and  } X=\Spec(B)\mapsto \mu_S(\Perf(B))
$$
\noindent are Zariski sheaves on the category of affine schemes over $S$ with values in $\rmSHNC_S$ and by the argument above we just constructed a natural isomorphism between them. Now any Zariski sheaf on $\mathrm{Sch}/S$ is the right Kan extension of its restriction to affines $\mathrm{affSch}/S$. Therefore, for non-affine $X$'s the result follows by Kan extension using the Zariski descent property for $\rmSHNC$ and for homotopy K-theory of dg-categories, implying that
$$
X\mapsto p_*(1^\mathrm{nc}_X) \text{   and  } X\mapsto \mu_S(\Perf(X))
$$
\noindent are Zariski sheaves.
\end{proof}
\end{prop}

\begin{rem}
As a consequence of the six operations for $\BU$-modules, if $p$ is a smooth map, then $p_*\BU_X$ is also equivalent to $\BU_S^X:= \mathbf{R}\mathrm{Hom}_{\rmSH_S}(p_{\sharp} 1_X, \BU_S)$ (by projection formula).
\end{rem}

\begin{rem}
\label{rem-algebrastructuremotiveofperf}
For $X$ as in Proposition \ref{prop-descriptionmotivedgcategoryviapushforward}, $\Perf(X)$ carries a symmetric monoidal structure given by the tensor product of perfect complexes. This can be understood as a commutative algebra object $\Perf(X)^\otimes \in \CAlg(\dgcat_S^{\mathrm{idem}})$. As $\mathcal{M}_S^\vee$ is lax monoidal, $\mathcal{M}_S^\vee(\Perf(X))$ is an object in $\CAlg(\Mod_{\BU_S}(\rmSH_S))$.
\end{rem}

\vspace{0.5cm}

\subsubsection{Motive of $\Coh$ and K-theory with support}
\label{subsubsection-contextopenclosed3}

\begin{context}
\label{context4444}
We now extend Context \ref{context5555} by considering 
\begin{equation}\label{eq-contextopenclosed3}
\xymatrix{ U:=X-Z\, \ar@{^{(}->}[r]^-{j}& X \ar[d]^-{p}&  \ar@{_{(}->}[l]_-{i} \,  Z \\
&S&}
\end{equation}
\noindent with $X$ a \emph{regular scheme}, quasi-compact quasi-separated, $i$ a closed immersion and $j$ its open complementary. $S$ is again any affine scheme.
\end{context}

\medskip

 It follows also from Prop. \ref{prop-descriptionmotivedgcategoryviapushforward} that $(p\circ j)_* \BU_U$ is equivalent to $\mathcal{M}_S^\vee(\Perf(U))$ 
where $\Perf(U)$ is seen as an $S$-dg-category via the composition $p\circ j$. In the same way we have that $(p\circ i)_* \BU_Z\simeq \mathcal{M}_S^\vee(\Perf(Z))$. Moreover,  pullback along $j$ produces a map of $\BU_S$-modules $\mathcal{M}_S^\vee(j^*): \mathcal{M}_S^\vee(\Perf(X))\to \mathcal{M}_S^\vee(\Perf(U))$ which via the equivalence of Prop. \ref{prop-descriptionmotivedgcategoryviapushforward} is identified with the map induced by the unit of the adjunction
\begin{equation}\label{eq:BUofXandBUofU}
p_*(\BU_X) \to p_* j_* j^* \BU_X
\end{equation}
This morphism fits into an exact sequence in $\Mod_{\BU_S}\rmSH_S$
\begin{equation}
\label{eq-localizationexactsequence}
p_* i_* i^{!} \BU_X \to p_* \BU_X \to p_* j_* j^* \BU_X
\end{equation}
\noindent consequence of the localization property of  \cite{voevodsky-morel} (see \ref{prop-Ayoub-Cisinski} and \ref{prop-AdeelAyoubCisinski} in Appendix A for references of this localization property and the six operations for $\rmSH$).

\vspace{0.5cm}
As $X$ is assumed to be regular, and $U$ is open, $U$ is also regular and we have 
$$\Perf(X)=\Coh(X) \text{    and    } \Perf(U)=\Coh(U)$$
and by the Prop. \ref{prop-descriptionmotivedgcategoryviapushforward},
$$p_* \BU_X \simeq \mathcal{M}_S^\vee(\Perf(X))\simeq \mathcal{M}_S^\vee(\Coh(X)) $$
\medskip
$$p_* j_* j^* \BU_X \simeq \mathcal{M}_S^\vee(\Perf(U))\simeq \mathcal{M}_S^\vee(\Coh(U)) $$
\medskip
In this case one can write (\ref{eq-localizationexactsequence}) as
\begin{equation}
\label{eq-localizationexactsequence222}
\xymatrix{p_* i_* i^{!} \BU_X \ar[r]& p_* \BU_X \ar[r] &  p_* j_* j^* \BU_X\\
& \ar[u]^\sim  \mathcal{M}_S^\vee(\Coh(X)) \ar[r]^{\mathcal{M}_S^\vee(j^\ast)} &  \ar[u]^\sim \mathcal{M}_S^\vee(\Coh(U)) }
\end{equation}
Finally, regarding the dg-category $\Coh(Z)$ of bounded coherent complexes on $Z$ as a $S$-dg-category via the composition $p\circ i$, the composition of dg-functors
$$
\xymatrix{\Coh(Z)\ar[r]^{i_\ast}&\Coh(X)\ar[r]^{j^\ast}& \Coh(U)}
$$
\noindent is the zero functor. In this case we find  a canonical factorization in $\BU_S$-modules
\begin{equation}
\label{eq-localizationexactsequence2223}
\xymatrix{p_* i_* i^{!} \BU_X \ar[r]& p_* \BU_X \ar[r] &  p_* j_* j^* \BU_X\\
\ar@{-->}[u]  \mathcal{M}_S^\vee(\Coh(Z)) \ar[r] & \ar[u]^\sim  \mathcal{M}_S^\vee(\Coh(X)) \ar[r] &  \ar[u]^\sim \mathcal{M}_S^\vee(\Coh(U)) }
\end{equation}

\begin{prop}
\label{prop:BU-motiveofclosedisCoh}
Assume the conditions and notations as in the Context \ref{context4444}. Then the canonical map of $\BU_S$-modules
\begin{equation}
\label{eq-canonicalmapsupprtlasttime}
\xymatrix{\mathcal{M}_S^\vee(\Coh(Z))\ar@{-->}[r]&p_* i_* i^{!} \BU_X}
\end{equation}
is an equivalence
\begin{proof}
Following the Remark \ref{rem-compactgeneratorsBUmodules}, the collection of objects of the form $\Sigma^\infty_+(Y)\otimes \BU_S$ with $Y$ smooth over $S$, forms a family of compact generators for $\Mod_{\BU_S}(\rmSH_S)$. Therefore, to show that (\ref{eq-canonicalmapsupprtlasttime}) is an equivalence in $\BU_S$-modules,  it is enough to show that the composition map
$$
\Map_{\BU_S}(\Sigma^\infty_+(Y)\otimes \BU_S, \mathcal{M}_S^\vee(\Coh(Z)))\to \Map_{\BU_S}(\Sigma^\infty_+(Y)\otimes \BU_S, p_* i_* i^{!} \BU_X)
$$
\noindent is an equivalence for every $Y$ smooth over $S$. By definition 
of $\mathcal{M}_S^\vee(\Coh(X))$ and $\mathcal{M}_S^\vee(\Coh(U))$, we have an identification of mapping spectra
\begin{equation}\label{eq-humannaturejackson}
\Map_{\BU_S}(\Sigma^\infty_+(Y)\otimes \BU_S, \mathcal{M}_S^\vee(\Coh(X)))\simeq \mathrm{KH}(\Perf(Y)\otimes_S \Coh(X))
\end{equation}
\begin{equation}
\label{eq-humannaturejackson2}
\Map_{\BU_S}(\Sigma^\infty_+(Y)\otimes \BU_S, \mathcal{M}_S^\vee(\Coh(U)))\simeq \mathrm{KH}(\Perf(Y)\otimes_S \Coh(U))
\end{equation} 
Let now $X$ be an $S$-scheme of finite type and let $Y$ be smooth over $S$. We have an equivalence of $S$-dg-categories
\begin{equation}\label{eq-cohperf=coh}
\Coh(X)\otimes_S \Perf(Y) \simeq \Coh(X\times_S Y)
\end{equation}
This is \cite[Prop. B.4.1]{1101.5834} together with the fact that as $Y$ is smooth over $S$ and $S$ is assumed to be regular, $Y$ is regular and therefore $\Perf(Y)=\Coh(Y)$\personal{Here in fact we need to assume S is excellent}. The equivalence (\ref{eq-cohperf=coh}) holds for $X$ and also for both $U$ and $Z$ \footnote{Notice that this works without any hypothesis on the regularity of $Z$.}. Using (\ref{eq-cohperf=coh}), (\ref{eq-humannaturejackson}) is equivalent to $\mathrm{KH}(\Coh(X\times_S Y))$ which is equivalent to the $\mathrm{G}$-theory spectrum of $X\times_S Y$ by $\mathbb{A}^1$-invariance of G-theory. Mutatis-mutandis for $U$ and (\ref{eq-humannaturejackson2}).
Therefore, the map \ref{eq:BUofXandBUofU} can be identified with the $\mathrm{G}$-theory pullback along $j$
$$\mathrm{G}(X\times_S - )\to \mathrm{G}(U\times_S -)$$
whose fiber is well-known from Quillen's localization theorem for $\mathrm{G}$-theory \cite[\S 7 Prop 3.2]{MR0338129} to be the homotopy invariant $\mathrm{K}$-theory of the dg-category of bounded coherent sheaves in $Z$,  $\Coh(Z\times_S Y)$ which again by the lemma is equivalent to  $\Perf(Y)\otimes_S \Coh(Z)$ from which the propositions follows.
\end{proof}
\end{prop}

\begin{rem}
\label{rem-Ktheorywithsupport}
Combining the result of the Prop. \ref{prop:BU-motiveofclosedisCoh} with the discussion in \cite[Section 13.4.1]{cisinski-tcmm} one finds that $\mathcal{M}_S^\vee(\Coh(Z))$ can also be described as $\mathrm{K}$-theory with support in $Z$.
\end{rem}

\vspace{0.5cm}

\begin{cor}
\label{newcorreferee2} Assume the Context \ref{context4444}. Then we have a cofiber-fiber sequence of $\BU_S$-modules
\begin{equation}
\label{eq-purity}
p_* i_*\BU_Z\to p_* i_* i^! \BU_X\to \mathcal{M}_S^\vee(\Sing(Z))
\end{equation}
\begin{proof}
Following \ref{cor-Mpreservesexactsequences}, the exact sequence of $S$-dg-categories
$$\Perf(Z) \to\Coh(Z) \to \Sing(Z)$$
creates a cofiber-fiber sequence of $\BU_S$-modules
$$\mathcal{M}_S^\vee(\Perf(Z)) \to \mathcal{M}_S^\vee(\Coh(Z)) \to \mathcal{M}_S^\vee(\Sing(Z))$$
which, thanks to Proposition \ref{prop-descriptionmotivedgcategoryviapushforward} and Proposition \ref{prop:BU-motiveofclosedisCoh} (applied to $p\circ i$), can now be identified with the cofiber-fiber sequence (\ref{eq-purity}).
\end{proof}
\end{cor}

\begin{rem}
\label{rem-algebrastructuremotiveofcohandperf}
Following the Remark \ref{rem-algebrastructuremotiveofperf},  
$$\Perf(Z)\in  \CAlg(\dgcat_S^{\mathrm{idem}})$$
\noindent and therefore 
$$\mathcal{M}_S^\vee(\Perf(Z))\in \CAlg(\Mod_{\BU_S}(\rmSH_S)).$$
Now, since the tensor product of coherent by perfect is coherent, the inclusion $\Perf(Z)\subseteq \Coh(Z)$ makes $\Coh(Z)$ an object in $\Mod_{\Perf(Z)^\otimes}(\dgcat_S^{\mathrm{idem},\otimes})$. In this case $\mathcal{M}_S^\vee(\Coh(Z))$ defines an object in $\Mod_{\mathcal{M}_S^\vee(\Perf(Z))}(\Mod_{\BU_S}(\rmSH_S))$. Moreover, as the inclusion $\Perf(Z)\subseteq \Coh(Z)$ is a map of $\Perf(Z)$-modules, the induced map 
\begin{equation}
\label{fundamentalclassKtheory}
u:p_* i_*\BU_Z\simeq \mathcal{M}_S^{\vee} (\Perf(Z)) \to \mathcal{M}_S^{\vee} (\Coh(Z))\simeq p_* i_* i^! \BU_X
\end{equation}
\noindent is defined in $\Mod_{\mathcal{M}_S^\vee(\Perf(Z))}(\Mod_{\BU_S}(\rmSH_S))$. By adjunction, this is the same as a map $1_S\to \mathcal{M}_S^{\vee} (\Coh(Z))$ in $\rmSH_S$. Whenever $X$ is regular, under the equivalence of the Prop. \ref{prop:BU-motiveofclosedisCoh}, this map corresponds to an element $u$ in the Grothendieck group of $\mathrm{K}$-theory with support $\mathrm{K}_Z(X)$ corresponding to $i_\ast(\mathcal{O}_Z)$. In particular, when $Z$ itself is regular, this element identifies with $\lambda_{-1}$ of the conormal bundle of $Z$ in $X$.  See  \cite[13.4.1, 13.5.4, 13.5.5]{cisinski-tcmm}. We will again discuss this element $u$ in the Remark \ref{remark-fundamentalclasslci}. 
\end{rem}

\vspace{0.5cm}

\begin{rem}
In particular, when $Z$ is itself regular we recover the \emph{purity isomorphism} in algebraic K-theory of \cite[13.6.3]{cisinski-tcmm} or \cite[7.14]{MR3205601}:
\begin{equation}
\label{16ago1554purityalgebraicKtheory}
\xymatrix{
p_*i_*\BU_Z\ar[r]^-{u}_-{\sim}& p_*i_*i^! \BU_X}
\end{equation}
More generally, the motive $\mathcal{M}_S^\vee(\Sing(Z))$ measures the obstruction to purity.
\end{rem}

\vspace{0.5cm}

Combining (\ref{eq-localizationexactsequence}) and (\ref{eq-purity}) we get two cofiber-fiber sequences
\begin{equation}
\label{eq-twocofibersequencesmotivicSIng}
\xymatrix{
p_*i_* \BU_Z \ar[r]^u& p_*i_*i^! \BU_X \ar[d] \ar[r]& \mathcal{M}_S^\vee(\Sing(Z))\\
&p_*\BU_X\ar[d]&\\
&p_*j_* j^* \BU_X&
}
\end{equation}

\subsubsection{Motive of $\Sing$}
\label{subsubsection-contextderivedzerolocus}

\begin{context}
\label{newcontextreferee1}
We now consider $X$ as in Context \ref{context4444}, together with a function $f:X\to \mathbb{A}^1_S$. In this case we get derived fiber products over $S$
\begin{equation}
\label{eq-derivedzerolocusfiberproduct}
\xymatrix{
 \ar @{} [dr] | \lrcorner U \ar@{^{(}->}[r]^j \ar[d] &X\ar[d]^f& \ar[l]_{\mathfrak{i}} \ar[d] X_{0} \ar @{} [dl] | \llcorner  \\
\mathbb{G}_{m,S}\ar@{^{(}->}[r]& \mathbb{A}^1_S&\ar@{_{(}->}[l]_{i_0}S
}
\end{equation}
\noindent where $i_0$ is the zero section, map $\mathfrak{i}$ is an $lci$ closed immersion and $j$ is its open complementary. The classical truncation of this diagram brings us to the setting of diagram (\ref{eq-contextopenclosed3}) with $Z:=t(X_0)$ the classical underived zero locus of $f$. We denote by $i$ the composition
$$
\xymatrix{Z\ar[r]^{t}& X_0\ar[r]^{\mathfrak{i}} &X}
$$
\noindent with $t$ the classical truncation.
\end{context}

\vspace{0.5cm}

\begin{cor}
\label{newcorreferee1}
Consider the notation of Context \ref{newcontextreferee1}. Suppose that the right square in diagram (\ref{eq-derivedzerolocusfiberproduct}) is Tor-independent (i.e. $t(X_0)\simeq X_0)$. Then there is a fiber sequence of $\BU_S$-modules
$$\xymatrix{
p_*i_* i^\ast \BU_X \ar[r]^u & p_*i_*i^! \BU_X  \ar[r]& \mathcal{M}_S^\vee(\Sing(X_0))
}$$
\begin{proof}
Apply Corollary \ref{newcorreferee2}.
\end{proof}
\end{cor}

The next result explains why the hypothesis of tor-independence is not necessary: the $\BU$-motive $\mathcal{M}_S^\vee(\Coh(-))$ is in fact invariant under derived thickenings.

\begin{prop}
\label{prop-invariantunderderivedthickenings}
Let $\tilde{Z}$ be a derived scheme over $S$ with classical underlying scheme $Z$ and canonical closed immersion $Z\hookrightarrow \tilde{Z}$. Then the push forward along the inclusion 
$$\mathcal{M}_S^\vee(\Coh(Z))\to \mathcal{M}_S^\vee(\Coh(\tilde{Z})) $$
is an equivalence of $\BU_S$-modules.

\begin{proof}
Analyzing the definitions, and using the Remark \ref{rem-compactgeneratorsBUmodules} as in the proof of the Prop. \ref{prop:BU-motiveofclosedisCoh},  we are reduced to show that for any smooth scheme  $Y\to S$, the induced map of K-theory spectra
$$\mathrm{KH}(\Perf(Y)\otimes_S \Coh(Z))\to \mathrm{KH}(\Perf(Y)\otimes_S \Coh(\tilde{Z}))$$
is an equivalence. But again, thanks to \cite[Prop. B.4.1]{1101.5834} we have the formula (\ref{eq-cohperf=coh}) so that it is enough to show that the map
$$\mathrm{KH}(\Coh(Y\times_S Z))\to \mathrm{KH}(\Coh(Y\times_S\tilde{Z}))$$
is an equivalence. Here we mean the derived fiber product, which as $Y$ is flat over $S$, equals the usual fiber product.
But now this equivalence follows from the theorem of the heart: for a any derived scheme $V$ with truncation $t(V)$,  $\Coh(t(V))$ and $\Coh(V)$ both carry $t$-structures with the same heart \cite[2.3.20]{lurie-DAGVIII}, so that by \cite{1212.5232}, their K-theory spectra are equivalent via pushforward.\\

\end{proof}
\end{prop}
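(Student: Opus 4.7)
The plan is to unwind the definition of $\mathcal{M}_S^\vee$ and reduce the statement to a comparison of $\mathrm{KH}$-spectra, which can then be handled by Barwick's theorem of the heart. Recall that $\mathcal{M}_S^\vee(T)$ corepresents the motivic presheaf sending a smooth $S$-scheme $Y$ to $\mathrm{KH}(\Perf(Y)\otimes_S T)$. Thus, since $\BU_S$-modules in $\rmSH_S$ are detected at the level of such representing presheaves on smooth $S$-schemes, it suffices to prove that for every smooth $Y\to S$ the natural map
\[
\mathrm{KH}(\Perf(Y)\otimes_S \Coh(Z))\longrightarrow \mathrm{KH}(\Perf(Y)\otimes_S \Coh(\tilde Z))
\]
is an equivalence of spectra.

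The first reduction is to rewrite the left-hand side via the tensor-product formula (\ref{eq-cohperf=coh}) of \cite[Prop.~B.4.1]{1101.5834}, namely $\Coh(W)\otimes_S\Perf(Y)\simeq \Coh(W\times_S Y)$, applied to $W=Z$ and $W=\tilde Z$. Note this applies regardless of regularity of $W$, since $Y$ is smooth (hence flat) over the regular base $S$; in particular $Y\times_S Z$ and $Y\times_S \tilde Z$ make sense as (possibly derived) schemes whose classical truncations agree, since derived base change along the flat map $Y\to S$ does not alter the truncation of $\tilde Z$. Thus we are reduced to showing that the pushforward
\[
\mathrm{KH}(\Coh(Y\times_S Z))\longrightarrow \mathrm{KH}(\Coh(Y\times_S \tilde Z))
\]
induced by the closed immersion $Y\times_S Z\hookrightarrow Y\times_S\tilde Z$ is an equivalence.

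The second and final step is to invoke Barwick's theorem of the heart \cite{1212.5232}: for a derived scheme $V$ with classical truncation $t(V)$, the stable $\infty$-categories $\Coh(V)$ and $\Coh(t(V))$ each carry a bounded $t$-structure, and the pushforward along the closed immersion $t(V)\hookrightarrow V$ is $t$-exact and induces an equivalence on hearts (both being the abelian category of coherent sheaves on the underlying classical scheme $t(V)$); see \cite[2.3.20]{lurie-DAGVIII}. Applying this to $V=Y\times_S\tilde Z$ yields an equivalence of $K$-theory spectra, and since $\mathrm{KH}$ coincides with $K$ on these noetherian bounded coherent categories (or alternatively, since $\mathrm{KH}$ is a localization of $K$ and both sides satisfy the same vanishing under the map), one obtains the desired equivalence. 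The main subtlety to verify carefully is the first reduction: that the formula $\Coh(W)\otimes_S\Perf(Y)\simeq \Coh(W\times_S Y)$ remains valid when $W$ is derived (hence not covered by the original statement of \cite[Prop.~B.4.1]{1101.5834}), which is the only place where the derived structure of $\tilde Z$ interacts nontrivially with the argument.
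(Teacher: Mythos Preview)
Your proof is correct and follows essentially the same approach as the paper: reduce via the description of $\mathcal{M}_S^\vee$ to a statement about $\mathrm{KH}$ on smooth test schemes, apply the tensor formula \eqref{eq-cohperf=coh} to pass to $\Coh$ of the fiber products, and conclude by Barwick's theorem of the heart together with the identification of hearts from \cite[2.3.20]{lurie-DAGVIII}. You are in fact slightly more explicit than the paper in flagging the two points that deserve care---that \eqref{eq-cohperf=coh} must be applied with $W$ derived, and that $\mathrm{KH}$ agrees with $K$ on $\Coh$ (i.e., $G$-theory is already $\mathbb{A}^1$-invariant)---both of which the paper treats only implicitly.
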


\vspace{0.5cm}
We will now use the invariance under derived thickenings to give a formula for the motive of $\Sing$. In the setting of the diagram (\ref{eq-derivedzerolocusfiberproduct}), for the derived scheme $X_0$, the exact sequence of $S$-dg-categories
$$\Perf(X_0) \to\Coh(X_0) \to \Sing(X_0)$$
creates, by Cor. \ref{cor-Mpreservesexactsequences}, a cofiber-fiber sequence of $\BU_S$-modules
\begin{equation}
\label{eq-cofibersequenceSingderivedmotivic}
\mathcal{M}_S^\vee(\Perf(X_0)) \to \mathcal{M}_S^\vee(\Coh(X_0)) \to \mathcal{M}_S^\vee(\Sing(X_0))
\end{equation}
where, thanks to Prop. \ref{prop-invariantunderderivedthickenings} and Prop. \ref{prop:BU-motiveofclosedisCoh}, the middle term is canonically identified with $p_*i_*i^! \BU_X$ via the commutative diagram
\begin{equation}\label{spotorno}
\xymatrix{
p_*i_*i^! \BU_X\ar[rr]\ar[d]^{\sim}_{\ref{prop:BU-motiveofclosedisCoh}}&& p_*\BU_X\ar[d]^{\sim}_{\ref{prop-descriptionmotivedgcategoryviapushforward}}\\
\mathcal{M}_S^\vee(\Coh(t(X_0)))\ar[r]^{t_*, \sim}_{\ref{prop-invariantunderderivedthickenings}}& \mathcal{M}_S^\vee(\Coh(X_0))\ar[r]^{\mathcal{M}_S^\vee(\mathfrak{i}_\ast)}& \mathcal{M}_S^\vee(\Coh(X))
}
\end{equation}
\noindent where $\mathfrak{i}_\ast$ is the pushforward of bounded coherent sheaves, along the derived closed immersion $\mathfrak{i}:X_0\to X$ and the lower horizontal composition does  identify with pushforward along $i:Z\to X$.\footnote{To prove this one could also use the six operations and nil-invariance theorem for motives over derived schemes as developed in \cite{1610.06871}. However, this is not strictly necessary for our discussion.}

\vspace{0.5cm}

Using the fact that the closed immersion $\mathfrak{i}:X_0\to X$ is lci, we know that the push-forward $\mathfrak{i}_\ast$ preserves perfect complexes \cite{1210.2827}, and thus provides a map $ \mathcal{M}_S^{\vee}(\Perf(X_0))\to \mathcal{M}_S^{\vee}(\Perf(X)) \simeq p_* \BU_X$ that we will still denote as $\mathcal{M}_S^\vee(\mathfrak{i}_\ast)$. By projection formula this is a map of $p_* \BU_X$-modules, and moreover  using the identifications in (\ref{spotorno}), it fits into a commutative 2-simplex

\begin{equation}
\label{eq-twocofibersequencesmotivicSIng2}
\xymatrix{
\mathcal{M}_S^{\vee}(\Perf(X_0)) \ar[dr]_{\mathcal{M}_S^\vee(\mathfrak{i}_\ast)} \ar[r]^u & p_*i_*i^! \BU_X \ar[d] \\
&p_*\BU_X\\
}
\end{equation}

Combining the exact sequence (\ref{eq-cofibersequenceSingderivedmotivic}), the localization sequence for the $j^*$-pullback (\ref{eq-localizationexactsequence}) and the localization sequence for the $i^*$-pullback
\begin{equation}
p_*j_\sharp \BU_U\to p_*\BU_X \to p_*i_*\BU_Z
\end{equation}
\noindent one obtains
\begin{lem}
\label{17ago1323}
Set $Z=\mathrm{t}(X_0)$. The considerations above lead to a commutative diagram of $\BU_S$-modules
\begin{equation}
\label{eq-exactsequenceMF}
\xymatrix{
\mathcal{M}_S^{\vee}(\Perf(X_0))\ar[dr]_{\mathcal{M}_S^\vee(\mathfrak{i}_\ast)}\ar[r]^u & p_*i_*i^! \BU_X \ar[d]^-{(\ref{spotorno})} \ar[r]& \mathcal{M}_S^\vee(\Sing(X_0))\\
p_*j_\sharp \BU_U \ar[dr]_{ h=:} \ar[r]&p_*\BU_X\ar[d]^{j^*-pullback}\ar[r]^{i^*-pullback}& p_*i_*\BU_Z\\
&p_*j_* j^* \BU_X& 
}
\end{equation}
\end{lem}

In the next lemma, we will denote by $\mathcal{M}_S^\vee(\mathfrak{i}^\ast): p_* \BU_X \simeq \mathcal{M}_S^{\vee}(\Perf(X)) \to \mathcal{M}_S^{\vee}(\Perf(X_0))$ the morphism of $\BU_S$-modules $\mathcal{M}_S^{\vee}(\mathfrak{i}^*: \Perf(X) \to \Perf (X_0))$.
\begin{lem}
\label{lemma-null}
The composition $\mathcal{M}_S^\vee(\mathfrak{i}^\ast\mathfrak{i}_\ast) : \mathcal{M}_S^{\vee}(\Perf(X_0)) \to \mathcal{M}_S^{\vee}(\Perf(X_0))$ is null-homotopic in $\BU_S$-modules.  In particular, we have a commutative triangle
\begin{equation}
\label{eq-exactsequenceMFker}
\xymatrix{
\mathcal{M}_S^{\vee}(\Perf(X_0))\ar[dr]_{\mathcal{M}_S^\vee(t^\ast\mathfrak{i}^\ast\mathfrak{i}_\ast)\sim 0}\ar[r]^u & p_*i_*i^! \BU_X \ar[d] \\
& p_*i_*\BU_Z\\
}
\end{equation}
\begin{proof}
Recall that in the context  of the diagram (\ref{eq-derivedzerolocusfiberproduct}) we have the cartesian cube (\ref{eq-geometricinterpretationlaxaction}), and that, by Remark \ref{rem-symmonoidalstructurerelativeperfectSzero}, we have 
$$\mathfrak{i}^*\mathfrak{i}_*\simeq v_*(pr)^*\simeq  K(A,0)\boxplus -$$
\noindent and
$$Id_{X_0}\simeq A\boxplus - $$
Finally, using the cofiber-sequence (\ref{eq-resolutionK(A,0)}) we get a cofiber sequence of dg-functors
\begin{equation}
\label{eq-resolutionK(A,0)2}
\xymatrix{
Id_{X_0}\ar[r]^0\ar[d]& Id_{X_0}\ar[d]\\
0\ar[r]&\mathfrak{i}^*\mathfrak{i}_*
}
\end{equation}
which shows that $\mathfrak{i}^*\mathfrak{i}_*$ induces the zero map in the K-theory of $X_0$. Finally, to conclude that the induced map $\mathcal{M}_S^\vee(\mathfrak{i}^\ast\mathfrak{i}_\ast)$  in $\BU_S$-modules is zero, we argue that the map in non-commutative motives $\mu_S(\Perf(X_0))\to \mu_S(\Perf(X_0))$ is zero, before applying $\mathcal{M}_S$. Indeed,  this follows by applying the same argument to the homotopy $K$-theory of $T\otimes \Perf(X)$ for any dg-category $T$ over $S$.\\

\end{proof}
\end{lem}

\medskip

\begin{prop}
\label{17ago1324}
The commutative diagram (\ref{eq-exactsequenceMFker}) produces a cofiber-fiber sequence of $\BU_S$-modules
\begin{equation}
\label{eq-exactsequenceMF2334}
 \mathcal{M}_S^\vee(\Sing(X_0))\to\mathcal{M}_S^{\vee}(\Perf(X_0))[1]\oplus p_*i_*\BU_Z\to \mathrm{cofib}(h: p_*j_\sharp \BU_U\to p_*j_* \BU_U)
\end{equation}
\begin{proof}
Apply the octahedral property (\cite[Thm 1.1.2.15 (TR4)]{lurie-ha})  to (\ref{eq-exactsequenceMFker}).
\end{proof}
\end{prop}

\medskip

\begin{rem}
\label{remark-fundamentalclasslci22}
\label{remark-fundamentalclasslci}
Assume that $Z=t_0(X_0)=X_0$ \footnote{as will be the case considered later in section \ref{section-comparisonvanishingcyclesandMF}}.  Then the Lemma \ref{lemma-null} guarantees the existence of a 2-cell providing a factorization as a map of $p_* i_* \BU_Z$-modules
\begin{equation}
\label{eq-exactsequence1classcycleSGA1}
\xymatrix{& \textrm{cofib}(p_*j_\sharp \BU_U\to p_*j_* \BU_U)[-1]\ar[d]^{\partial}\\
\ar[dr]_{\mathcal{M}_S^\vee(t^\ast\mathfrak{i}^\ast\mathfrak{i}_\ast)\sim 0} p_* i_* \BU_Z\ar@{-->}[ur]^-{\theta^{(X,Z)}_\mathrm{K}} \ar[r]^u &p_*i_*i^!\BU_X\ar[d]\\
& p_\ast i_\ast \BU_Z
}
\end{equation}
\noindent where $\partial$ is the boundary map of the vertical cofiber sequence.
Using Bott periodicity (Section \ref{rem-algebraicbott}), the map $\theta^{(X,Z)}_\mathrm{K}$ can also be interpreted as a map of $p_* \BU_X$-modules
\begin{equation}
\label{eq-exactsequence2classcycleSGA1}
\xymatrix{p_* i_* \BU_Z(-1)[-2]\simeq p_* i_* \BU_Z\ar[r]^-{\theta^{(X,Z)}_\mathrm{K}}& \textrm{cofib}(p_*j_\sharp \BU_U\to p_*j_* \BU_U)[-1]}
\end{equation}
This is the same as a map of $p_* \BU_X$-modules
\begin{equation}
\label{eq-exactsequence233classcycleSGA1}
\xymatrix{p_* i_* \BU_Z \ar[r]^-{\theta^{(X,Z)}_\mathrm{K}}& \textrm{cofib}(p_*j_\sharp \BU_U\to p_*j_* \BU_U)(1)[1]}
\end{equation}
In particular, following the Remark \ref{rem-algebrastructuremotiveofcohandperf}, when $(X,Z)$ is a regular pair, the commutativity of the diagram (\ref{eq-exactsequence1classcycleSGA1}) says that  $\partial \circ \theta^{(X,Z)}_\mathrm{K}$ is $\lambda_{-1}$ of the conormal bundle of $Z$ in $X$.. We will see in section \ref{remark-fundamentalclasslci22ladic} that $\theta^{(X,Z)}_\mathrm{K}$ is a K-theoretic version of the cycle class defined in \cite[Cycle \S 2.1]{MR0463174} associated to the closed pair $(Z, X)$. 
\end{rem}

\vspace{1cm}

\subsection{The $\BU$-motives of 2-periodic complexes}
\label{section-BU2complexes}

\begin{context}
In this section $S=\Spec\, A$ with $A$ a regular ring.
\end{context}

As a first application of the cofiber-fiber sequence (\ref{eq-exactsequenceMF2334}) we compute the motivic $\BU$-module of the dg-category of 2-periodic complexes. 

\medskip
The symmetric monoidal functor (\ref{19ago1149}) yields a map of commutative algebra objects in $\BU_S$-modules

\begin{equation}
\label{19ago1159}
\mathcal{M}_S^\vee(\Coh(A[\epsilon])^{\boxplus})\to\mathcal{M}_S^\vee(\Sing(S,0)^{\otimes})
\end{equation}

\noindent for the convolution structure on the l.h.s induced by the group structure on $S\times_{\mathbb{A}^1_S}S\simeq \Spec A[\epsilon]$. The unit of of this group is given by the truncation map $t: S\to S\times_{\mathbb{A}^1_S}S$. In particular, the unit of the commutative algebra $\mathcal{M}_S^\vee(\Coh(A[\epsilon])^{\boxplus})$ is given by the pushfoward map 
$t_\ast:  \BU_S\simeq \mathcal{M}_S^\vee(\Coh(S))\to \mathcal{M}_S^\vee(\Coh(A[\epsilon])^{\boxplus})$. Being the unit, $t_\ast$ is a map of algebras, and by the Prop. \ref{prop-invariantunderderivedthickenings}, is an equivalence. This tells us that the convolution product becomes the standard product on $\BU_S$. Combining these observations we deduce that (\ref{19ago1159}) can be written as a map of algebras
\begin{equation}
\label{19ago1200}
\BU_S\to\mathcal{M}_S^\vee(\Sing(S,0)^{\otimes})
\end{equation}

The next proposition describes the underlying object of the commutative $\BU_S$-algebra $\mathcal{M}_S^\vee(\Sing(S,0)^{\otimes})$:

\begin{prop}
\label{prop-BUmotive2periodic}
There is a canonical equivalence of $\BU_S$-modules
\begin{equation}
\mathcal{M}_S^\vee(\Sing(S,0))\simeq \BU_S\oplus \BU_S[1]
\end{equation}
\end{prop}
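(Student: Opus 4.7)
The plan is to apply the cofiber--fiber sequence~(\ref{eq-exactsequenceMF2334}) to the trivial LG-pair $(X,f)=(S,0)$ and then identify $\mathcal{M}_S^\vee(\Perf(X_0))$ separately.

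For the first step, with $X=S$ and $f=0$ one has $p=\mathrm{id}_S$, while the classical vanishing locus is $Z=t(X_0)=S$ and the open complement $U=X\setminus Z=\emptyset$. Hence both $p_*j_\sharp\BU_U$ and $p_*j_*j^*\BU_X$ vanish, so $\mathrm{cofib}(h)=0$. Sequence~(\ref{eq-exactsequenceMF2334}) then specialises to an equivalence of $\BU_S$-modules
\[
\mathcal{M}_S^\vee(\Sing(S,0))\;\simeq\;\mathcal{M}_S^\vee(\Perf(X_0))[1]\oplus p_*i_*\BU_Z\;=\;\mathcal{M}_S^\vee(\Perf(X_0))[1]\oplus\BU_S.
\]

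For the second step, the derived scheme $X_0=\Spec(A[\epsilon])$ (with $|\epsilon|=-1$, $\epsilon^2=0$) is a quasi-smooth derived thickening of its classical truncation $t(X_0)=S$. By Proposition~\ref{prop-technicalintermediate}, $\Perf(X_0)\simeq\Perf(A[u])^{u\text{-tors}}$, the thick subcategory of $\Perf(A[u])$ ($|u|=2$) compactly generated by the trivial module $A$. A devissage-by-powers-of-$u$ argument — every such perfect torsion object has a finite $u$-adic filtration whose associated graded pieces are $A[u]/u=A$-modules with trivial $u$-action — identifies the functor $Y\mapsto\mathrm{KH}(\Perf(Y)\otimes_A\Perf(X_0))$ on smooth $S$-schemes with $Y\mapsto\mathrm{KH}(\Perf(Y))$. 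Equivalently, one argues via a $\Perf$-analogue of Proposition~\ref{prop-invariantunderderivedthickenings}, obtained by deformation to the normal cone, that pullback along the truncation $t:S\hookrightarrow X_0$ induces an equivalence $\mathcal{M}_S^\vee(\Perf(X_0))\simeq\mathcal{M}_S^\vee(\Perf(S))$. The right-hand side equals $\BU_S$ by Proposition~\ref{prop-descriptionmotivedgcategoryviapushforward}.

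Substituting $\mathcal{M}_S^\vee(\Perf(X_0))\simeq\BU_S$ into the first equivalence yields the desired
\[
\mathcal{M}_S^\vee(\Sing(S,0))\;\simeq\;\BU_S\oplus\BU_S[1].
\]
The main obstacle is the second step: lifting the classical devissage / normal cone invariance argument to the motivic, relative, $\BU_S$-module setting. Once that is in hand the remainder is a direct specialisation of the general cofiber sequence already established via Lemma~\ref{lemma-null}.
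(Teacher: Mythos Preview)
Your first step is exactly the paper's: specialise the cofiber--fiber sequence~(\ref{eq-exactsequenceMF2334}) to $(X,f)=(S,0)$, where $U=\emptyset$ and $Z=S$, to obtain $\mathcal{M}_S^\vee(\Sing(S,0))\simeq \mathcal{M}_S^\vee(\Perf(A[\epsilon]))[1]\oplus\BU_S$.

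For the second step the paper takes a much simpler route than either of your sketches, and it dissolves what you flag as ``the main obstacle.'' The key observation (Remark~\ref{lemma-gradedalgebrasarecontractibletozerolevel}) is that $A[\epsilon]$ is a \emph{graded} $A$-algebra, and for any non-positively graded $A$-algebra $R=\bigoplus_{i\le 0}R_i$ the inclusion $R_0\hookrightarrow R$ and the projection $R\twoheadrightarrow R_0$ are $\mathbb{A}^1$-homotopy inverse as maps of one-object $A$-dg-categories: the grading supplies an explicit homotopy $R\to R\otimes_A A[t]$, $r\mapsto r\otimes t^{-|r|}$, which evaluates to $\mathrm{id}_R$ at $t=1$ and to $R\twoheadrightarrow R_0\hookrightarrow R$ at $t=0$. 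Since $\mathcal{M}_S^\vee$ is built from $\mathrm{KH}(\Perf(Y)\otimes_A-)$, an $\mathbb{A}^1$-invariant functor of dg-categories, this yields $\mathcal{M}_S^\vee(\Perf(A[\epsilon]))\simeq\mathcal{M}_S^\vee(\Perf(A))\simeq\BU_S$ immediately, with no d\'evissage and no deformation to the normal cone. Your approach~(b) is correct in spirit---the $\Perf$-analogue of Proposition~\ref{prop-invariantunderderivedthickenings} does hold---but the grading trick is both shorter and self-contained; approach~(a) would need substantially more work to upgrade the filtration statement to a natural equivalence of $\BU_S$-modules rather than an equality in $K_0$.
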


\medskip

As explained in Prop. \ref{prop-laxstructureSingviabasechange}, $\Sing(S,0)$ is equivalent to $\Perf(A[u, u^{-1}])$ and obtained as the cofiber sequence in $\dgcat_A^{\mathrm{idem}}$

\begin{equation}
\Perf(A[\epsilon])\subseteq \Coh(A[\epsilon]) \to \Sing(S,0)
\end{equation}

In this case, as $U$ is empty and $S$ is regular (so that $\Sing(S,0) \simeq \Sing (S_0)$ where $S_0$ is the derived pullback of $0_S: S \to \mathbb{A}_S^1$ along itself), the cofiber-fiber sequence of (\ref{eq-exactsequenceMF2334}) gives an equivalence

\begin{equation}
\mathcal{M}_S^\vee(\Sing(S,0))\simeq \mathcal{M}_S^\vee(\Sing(S_0)) \simeq \mathcal{M}_S^\vee(\Perf(A[\epsilon]))[1]\oplus \BU_S 
\end{equation}

\noindent and we are left to show that $\mathcal{M}_S^\vee(\Perf(A[\epsilon]))$ is equivalent to $\BU_S$. But this follows from Prop. \ref{prop-descriptionmotivedgcategoryviapushforward}, and the following remark applied to the graded algebra $A[\epsilon]$.

\begin{rem}
\label{lemma-gradedalgebrasarecontractibletozerolevel}
Let $R=\bigoplus_{i\leq 0} R_i$ be a graded algebra over $A$ concentrated in non-positive degrees. Then the canonical inclusion and projection
\begin{equation}\label{eq-homotopygraded1}
q: R_0\to R \,\,\,\,\,\,\,\,\, \,\,\,\,\,\,\,\,\,  \,\,\,\,\,\,\,\,\, pr: R\to R_0
\end{equation}
seen as maps of $A$-dg-categories with single objects, are $\mathbb{A}^1_A$-homotopy inverse. Indeed,  the composition $R_0\to R\to R_0$ is the identity. For the other composition notice that by definition the grading in $R$ is the data of a map of $A$-modules
$$
R\to R\otimes_A A[t]
$$
sending an element $r\in R$ of degree $i\leq 0$ to $r\otimes t^{-i}$. This map provides the required $\mathbb{A}^1_A$-homotopy between the composition $q\circ pr$ and the identity via the respective evaluations at $0$ and $1$.
\end{rem}

\vspace{0.5cm}

\subsection{Rational Coefficients and Tate-2-periodicity}\label{subsection-rationalcoefficientsrealizations} Throughout this subsection we work under the Context \ref{context2222}. $\rmSH^\otimes$ carries a canonical action of the $\infty$-category of spectra $\mathrm{Sp}^\otimes$ seen as a constant system of monoidal categories indexed by $\bsch$. More precisely, since for any $X$, $\rmSH_X^{\otimes}$ is a symmetric monoidal $\infty$-category and $\rmSH_X$ is stable and presentable, there exists a unique (up to a contractible space of choices) natural transformation
\begin{equation}
\label{17ago1250}
a: \Sp^\otimes \to \rmSH^\otimes
\end{equation}
of $\infty$-functors $\bsch^{\mathrm{op}}\to \CAlg(\Prlstb)$: this follows from the universal property of the smash product symmetric monoidal structure on spectra \cite[Cor. 4.8.2.19]{lurie-ha}. This provides for each $\Lambda\in \CAlg(\Sp)$, a family of commutative algebra objects $\Lambda_X\in \CAlg(\rmSH_X)$ indexed by $X\in \bsch$ and stable under pullbacks. A perhaps more concrete, though less structured way of understanding the natural transformation $a: \Sp^\otimes \to \rmSH^\otimes$ is as follows. If $\Lambda\in \CAlg(\Sp)$ and $X \in \bsch$, then we can identify $a(\Lambda)$ with the constant object $\Lambda_{X}^s$ in the stable homotopy category $\rmSH_{X}^{s}$ of schemes where only the simplicial suspension $S_s^1$ has been inverted (this is because we already have bonding maps $S_s^1 \wedge \Lambda_n \to \Lambda_{n+1}$, as $\Lambda \in \Sp$). Now, $\rmSH_X$ can be constructed by further inverting the Tate suspension in $\rmSH_{X}^{s}$, and we define  $\Lambda_X:= a(X)(\Lambda)$ as the image of $\Lambda_X^s$ via the canonical functor $\rmSH_{X}^{s} \to \rmSH_{X}$.\\
Note that, in particular, via the natural transformation $a$, $\rmSH$ is tensored over $\Sp$ (see  \cite[Rmk. 4.8.2.20]{lurie-ha}), and by the same discussion as for $\BU$-modules above, we thus have a system of categories $\Mod_{\Lambda}(\rmSH)^\otimes$ together with a realization map 
\begin{equation}
\label{eq-universallinearrealization}
-\otimes \Lambda: \rmSH^\otimes \to \Mod_\Lambda(\rmSH)^\otimes.
\end{equation}

\vspace{0.5cm}

\begin{rem}\label{rem-changecoef}
For each $X\in \bsch$, the category $\Mod_{\Lambda_X}(\rmSH_X)$ can be identified with the tensor product in $\Prlstb$ 
$$\rmSH_X\otimes_{\Sp} \Mod_\Lambda(\Sp)$$
This is follows from \cite[Thm. 4.8.4.6 and Section 4.5.1]{lurie-ha}. 
\end{rem}

\vspace{0.5cm}

\begin{rem}
The natural transformation of (\ref{eq-universallinearrealization}) is the universal $\Lambda$-linear realization. Indeed, recall that, by definition, $\Lambda$-linear stable $\infty$-categories are objects in $\Mod_{\Mod_{\Lambda}(\Sp)^\otimes}(\Prlstb)$. In particular, if $\mathrm{R}:\rmSH^\otimes \to \mathrm{D}^\otimes$ is a realization where $\mathrm{D}^\otimes$ takes values in $\Lambda$-linear categories, the universal property of base change \cite[4.5.3.1]{lurie-ha} tells us that $\mathrm{R}$ factors in a unique way by a $\Lambda$-linear realization $\mathrm{R}:\rmSH^\otimes \otimes_{\Sp} \Mod_\Lambda(\Sp)^\otimes \to \mathrm{D}^\otimes$.
\end{rem}
\vspace{0.3cm}
Let $\Lambda=\mathrm{H}\mathbb{Q}$ be the Eilenberg-Maclane spectrum representing rational singular cohomology \footnote{This is equivalent to the rational sphere spectrum. }. It has the structure of algebra-object in $\CAlg(\Sp)$ given by the cup product in cohomology. This is an idempotent ring-object, in the sense that the multiplication map $\mathrm{H}\mathbb{Q}\otimes \mathrm{H}\mathbb{Q}\simeq \mathrm{H}\mathbb{Q}$ is an equivalence (i.e. the localization at $\mathrm{H}\mathbb{Q}$ is smashing). Therefore the universal $\mathbb{Q}$-linear realization
\begin{equation}
\label{eq-rationalizationfunctor}
-\otimes \mathbb{Q}:= -\otimes \mathrm{H}\mathbb{Q} :  \rmSH^\otimes \to \Mod_{\mathrm{H}\mathbb{Q}}(\rmSH)^\otimes
\end{equation}
\noindent  identifies $\Mod_{\mathrm{H}\mathbb{Q}}(\rmSH)$ with the full subcategory $\rmSH_{\mathbb{Q}}$ of $\rmSH$ spanned by non torsion objects. 

\vspace{0.3cm}

\begin{prop}
Assume the Context \ref{context2222} (in particular $S$ is of finite Krull dimension). Then rationalization (\ref{eq-rationalizationfunctor}) is strongly compatible with all the six operations in the sense that the natural transformations (\ref{eq-transferlowerstartuppershriek}) are natural isomorphisms.
\begin{proof}
Follows from the arguments in the proof of \cite[A.14]{MR3205601}.
\end{proof}
\end{prop}

\vspace{0.3cm}

Following the discussion in  Section \ref{section-sixopBUmod}, rationalization carries over to $\BU$-modules
\begin{equation}\label{eq-rationalrealizationBU}
-\otimes \mathbb{Q}: \Mod_{\BU}(\rmSH)^\otimes \to \Mod_{\BU_\mathbb{Q}}(\rmSH_{\mathbb{Q}})^\otimes\simeq \Mod_{\BU_\mathbb{Q}}(\rmSH)^\otimes
\end{equation}

\noindent where $\BU_\mathbb{Q}:= \BU_S\otimes \mathrm{H}\mathbb{Q}$ and the last equivalence follows from \cite[3.4.1.9]{lurie-ha}. 

\vspace{0.3cm}

Thanks to the strong compatibility with the six operations, all the constructions performed in the previous sections at the level of $\BU_S$-modules can now be repeated after rationalization, without changing the results. This follows because the realization at the level of modules is determined by the underlying realization via the forgetful functors. 

\vspace{0.3cm}

The main reason why we are interested in passing to rational coefficients is the following result:

\begin{prop}\label{prop-rationalKtheoryspectrum}
Let $X$ be any scheme of finite Krull-dimension. Then the morphism $u:1_X(1)[2]\to \BU_X\otimes \mathbb{Q}$ of (\ref{eq-uinverseofBott}) induces an equivalence of commutative algebra objects

\begin{equation}
\label{eq-BUrationalis2periodic}
\xymatrix{\mathrm{M}\mathbb{B}_X(\beta):= \mathrm{Free}(\mathrm{M}\mathbb{B}_X(1)[2])[\nu^{-1}]\ar[r]^-{\sim}& \BU_{X,\mathbb{Q}}:= \BU_X\otimes \mathbb{Q} }
\end{equation}

\noindent where $\mathrm{M}\mathbb{B}_X$ is the commutative algebra-object representing Beilinson's motivic cohomology of \cite[Def. 14.1.2]{cisinski-tcmm}\footnote{The structure of commutative algebra object of $\mathrm{M}\mathbb{B}$ in the symmetric monoidal $\infty$-category $\rmSH$ follows from the equivalences \cite[(5.3.35.2)]{cisinski-tcmm}, the definition \cite[15.2.1]{cisinski-tcmm} and \cite[14.2.9]{cisinski-tcmm}. The combination of these results characterizes $\mathrm{M}\mathbb{B}$-modules as a monoidal reflexive localization of $\rmSH_{\mathbb{Q}}$, so that as explained in \cite[14.2.2]{cisinski-tcmm}, $\mathrm{M}\mathbb{B}$ is the image of the monoidal unit in $\rmSH_{\mathbb{Q}}$ under a monoidal localization functor and a lax monoidal inclusion, so, it acquires a natural structure of commutative algebra object.}. In particular, if $X$ is a geometrically unibranch excellent scheme, then we can replace  Beilinson's motivic cohomology by the spectrum $\mathrm{M}\mathbb{Q}_X$ representing rational motivic cohomology, via the equivalence of commutative algebra objects 

\begin{equation}
\label{eq-cherncharacter}
\mathrm{M}\mathbb{B}\to \mathrm{M}\mathbb{Q}_X
\end{equation}

\noindent induced by the Chern character.
\begin{proof}
This is already proven in  \cite[14.2.17, 16.1.7]{cisinski-tcmm} using the results of Riou in \cite{riou2010algebraic} on the $\gamma$-filtration. The only remaining issue is to construct this as an equivalence of $E_\infty$-algebras in motives. But this follows just by describing the image of $\nu$ and showing it is invertible. But again this is verified at the classical level. 

The fact that the map (\ref{eq-cherncharacter}) corresponds to the Chern character is explained in \cite[Def 6.2.3.9 and Rem 6.2.3.10]{riou2010algebraic}.
\end{proof}
\end{prop}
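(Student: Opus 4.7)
The plan is to bootstrap this from the classical decomposition of rational $\mathrm{K}$-theory as a sum of Tate twists of Beilinson motivic cohomology, due to Riou and reformulated by Cisinski--Déglise (\cite[14.2.17, 16.1.7]{cisinski-tcmm}), and then promote the resulting equivalence of underlying motivic spectra to an equivalence of $\mathrm{E}_\infty$-algebras using only the universal properties of free algebras and localisation. Since the forgetful functor $\CAlg(\rmSH_X) \to \rmSH_X$ is conservative on equivalences, it will suffice to exhibit the map at the level of $\mathrm{E}_\infty$-algebras and then check that its underlying morphism in $\rmSH_X$ is an equivalence.

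First, I would construct the comparison map. The class $u : 1_X(1)[2] \to \BU_X$ of Remark \ref{rem-algebraicbott}, when rationalised and lifted along the orientation $1_X \to \mathrm{M}\mathbb{B}_X$, produces a map $\mathrm{M}\mathbb{B}_X(1)[2] \to \BU_{X,\mathbb{Q}}$ of $\mathrm{M}\mathbb{B}_X$-modules. By the universal property of the free commutative $\mathrm{M}\mathbb{B}_X$-algebra functor, this extends uniquely to an $\mathrm{E}_\infty$-algebra map $\mathrm{Free}(\mathrm{M}\mathbb{B}_X(1)[2]) \to \BU_{X,\mathbb{Q}}$. Its image of $u$ is, by definition, the Bott element $\beta$, which is invertible in $\BU_{X,\mathbb{Q}}$ (this is precisely the content of (\ref{eq-bottperiodicity})). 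The universal property of localisation at $u$ in $\CAlg(\Mod_{\mathrm{M}\mathbb{B}_X}(\rmSH_X))$ then yields the desired comparison map
\[
\varphi : \mathrm{M}\mathbb{B}_X(\beta) := \mathrm{Free}(\mathrm{M}\mathbb{B}_X(1)[2])[u^{-1}] \longrightarrow \BU_{X,\mathbb{Q}}.
\]

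Next, I would verify that $\varphi$ is an equivalence on underlying objects of $\rmSH_X$. Rationally, the symmetric group acts trivially on $\mathrm{M}\mathbb{B}_X(1)[2]^{\otimes n}$ and consequently $\mathrm{Sym}^n_{\mathrm{M}\mathbb{B}_X}(\mathrm{M}\mathbb{B}_X(1)[2]) \simeq \mathrm{M}\mathbb{B}_X(n)[2n]$. Inverting $u$ therefore gives a direct sum decomposition
\[
\mathrm{M}\mathbb{B}_X(\beta) \simeq \bigoplus_{n \in \mathbb{Z}} \mathrm{M}\mathbb{B}_X(n)[2n].
\]
This matches precisely the Cisinski--Déglise--Riou decomposition of $\BU_{X,\mathbb{Q}}$ via the $\gamma$-filtration, with the factor $\mathrm{M}\mathbb{B}_X(n)[2n]$ corresponding to the $n$-th Adams eigenspace. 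The comparison $\varphi$ sends the generator of the $n=1$ summand to the Bott element, and matches Adams eigenspaces on both sides by construction, so it coincides with the equivalence of \cite[14.2.17, 16.1.7]{cisinski-tcmm} on underlying motivic spectra. Conservativity of the forgetful functor from $\CAlg$ then promotes $\varphi$ to an equivalence in $\CAlg(\rmSH_X)$.

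The main potential obstacle is the $\mathrm{E}_\infty$-upgrade: one might worry that $\varphi$ differs from the Cisinski--Déglise equivalence by higher coherence data, but this is avoided because $\varphi$ is determined freely by a single map out of a single Tate twist, and the only check is the identification of the image of the generator with $\beta$, which is purely classical. Finally, for the last assertion, when $X$ is geometrically unibranch and excellent, the Chern character induces an equivalence $\mathrm{M}\mathbb{B}_X \simeq \mathrm{M}\mathbb{Q}_X$ of commutative algebra objects by \cite[14.2.9, 16.1.7]{cisinski-tcmm}, so substituting this into the construction above replaces $\mathrm{M}\mathbb{B}_X(\beta)$ by the analogous $\mathrm{M}\mathbb{Q}_X(\beta)$ and yields the stated refinement.
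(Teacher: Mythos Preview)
Your proposal is correct and follows essentially the same approach as the paper: the equivalence of underlying motivic spectra is taken from \cite[14.2.17, 16.1.7]{cisinski-tcmm} via Riou's $\gamma$-filtration, and the $\mathrm{E}_\infty$-upgrade is obtained by constructing the map via the universal property of the free algebra and of localisation at $u$, reducing everything to the classical verification that the image of $u$ is the invertible Bott element. Your write-up simply fills in the details of what the paper sketches in a couple of sentences.
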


\medskip

\begin{rem}
In practice the hypothesis that the schemes are excellent and geometrically unibranch will be verified in the cases that interests us, namely, when $S$ is a complete Henselian trait and $X$ is a regular scheme of finite type over $S$. This follows because complete local rings are excellent \cite[Scholie 7.8.3 page 214]{MR0199181} and $S$ being a discrete valuation ring it is regular, so it is normal (in fact in dimension 1 the two are equivalent) and therefore by a direct checking of the definitions, geometrically unibranch. See \cite[Thm 8.3.30]{cisinski-tcmm}. Moreover, following \cite[Prop. 7.8.6 page 217]{MR0199181} if $X\to S$ is a scheme of finite type over $S$ with $S$ excellent then $X$ is excellent. Again if  $X$ is regular, it is normal \cite[Cor 3 , IV-39]{MR0201468} and therefore it is geometrically unibranch.
\end{rem}

\medskip

Following from  \cite[14.1.6]{cisinski-tcmm} one obtains for any $X$ a canonical isomorphism 
$$\BU_{X, \mathbb{Q}}\otimes \mathrm{M}\mathbb{B}_X \simeq \BU_{X, \mathbb{Q}}$$
and  the realization (\ref{eq-rationalrealizationBU}) is equivalent to

\begin{equation}\label{eq-rationalrealizationBU2}
\Mod_{\BU}(\rmSH) \to \Mod_{\BU_{\mathbb{Q}}}(\Mod_{\mathrm{M}\mathbb{B}}(\rmSH))
\end{equation}

\noindent which is therefore strongly compatible with all the six operations.

\subsection{$\ell$-adic realization} \label{subsection-adicreal}
In this section we discuss the $\ell$-adic realization of $\BU$-modules and dg-categories.

\begin{context}
\label{assumption-excellentdim1}
Throughout this section we assume that $S$ is an excellent scheme of dimension less or equal than one and we denote by $\bsch$ the category of schemes of finite type over $S$. Notice that by  \cite[Prop. 7.8.6 page 217]{MR0199181} such schemes are also excellent. We fix $\ell$ a prime invertible in $S$.
\end{context}

We describe below, working over schemes under the context \ref{assumption-excellentdim1}, the construction of a monoidal realization functor
\begin{equation}\label{eq-ladic1}
\xymatrix{
\mathrm{R}^\ell: \Mod_{\mathrm{M}\mathbb{B}}(\rmSH)^\otimes\ar[r] & \mathrm{Sh}_{\mathbb{Q}_\ell}(-)^\otimes
}
\end{equation}
\noindent where for each $X\in \bsch$,  $\mathrm{Sh}_{\mathbb{Q}_\ell}(X)^{\otimes}$ denotes the symmetric monoidal $\infty$-category of \emph{Ind-constructible $\mathbb{Q}_\ell$-adic sheaves on $X$}. Let us first explain what is the definition of $\mathrm{Sh}_{\mathbb{Q}_\ell}(X)$ and the reason for the context \ref{assumption-excellentdim1}. 
We will need to consider for each $n\geq 0$, $\mathrm{Sh}(X_{ét}, \mathbb{Z}/\ell^n)$ the $\infty$-category  of étale sheaves with $\mathbb{Z}/\ell^n$-coefficients. We will denote by $\mathrm{Sh}^{c}(X_{ét}, \mathbb{Z}/\ell^n)$ the full subcategory of étale sheaves with $\mathbb{Z}/\ell^n$-coefficients spanned by constructible sheaves as in \cite[4.2.5]{lurie-gaitsgory-tamagawa} or in \cite[page 598]{cisinski2016etale}. We have the following crucial result:

\begin{prop}
\label{prop-GabberSGA}
Let $S$ be a base scheme in the context of \ref{assumption-excellentdim1}. Then for any scheme $X$ of finite type over $S$, the étale topos of $X$ is of finite cohomological dimension and the étale cohomological dimension of its points are uniformly bounded. In this case  $\mathrm{Sh}^{c}(X_{ét}, \mathbb{Z}/\ell^n)$  is compactly generated and its compact objects are exactly the constructible sheaves.
\begin{proof}
The first statement follows as in \cite[Prop. 1.1.5, Remark 1.1.6]{cisinski2016etale}. The second statement now follows exactly as in \cite[4.2.2]{lurie-gaitsgory-tamagawa} replacing the use of \cite[Lemma 4.1.13]{lurie-gaitsgory-tamagawa} by the first statement.
\end{proof}
\end{prop}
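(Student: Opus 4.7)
The plan is to reduce both assertions to arguments in the literature, supplying the needed hypotheses from Assumption \ref{assumption-excellentdim1}. For the first statement, I would run the proof of \cite[Prop. 1.1.5]{cisinski2016etale}, whose two main inputs are (i) that $X$ has finite Krull dimension and (ii) that the residue fields of $X$ have uniformly bounded $\ell$-cohomological dimension. Input (i) is immediate since $X/S$ is of finite type and $\dim S \leq 1$. For (ii), the excellence of $S$ together with $\dim S \leq 1$ and $\ell$ invertible on $S$ force its residue fields to have $\ell$-cohomological dimension at most two; the residue fields of $X$ are finitely generated field extensions of these, and the standard transcendence inequality for $\mathrm{cd}_\ell$ yields a uniform bound. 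Packaging these two inputs via affine Artin vanishing produces the uniform bound on the $\ell$-cohomological dimension of the étale topos of $X$, as well as the pointwise bound at geometric stalks, as recorded in \cite[Rem. 1.1.6]{cisinski2016etale}.

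For the second statement, I would run the proof of \cite[Prop. 4.2.2]{lurie-gaitsgory-tamagawa} modifying only its input \cite[Lemma 4.1.13]{lurie-gaitsgory-tamagawa}, replacing it by the first half of the present proposition. The strategy in loc.\,cit. is to exhibit, for every étale morphism $f : U \to X$ of finite type, the sheaf $f_!\, \underline{\mathbb{Z}/\ell^n}$ as a compact object of $\mathrm{Sh}(X_{\acute{e}t}, \mathbb{Z}/\ell^n)$; since such sheaves generate under filtered colimits, this already proves compact generation. Compactness in turn reduces, by adjunction, to $\mathrm{R}\Gamma(U,-)$ commuting with filtered colimits on $\mathrm{Sh}(U_{\acute{e}t}, \mathbb{Z}/\ell^n)$. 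Since $U$ is itself of finite type over $S$, the first half of the proposition applies to $U$ and yields a uniform bound on its $\ell$-cohomological dimension; this bound, together with exactness of filtered colimits in the abelian heart, gives the required commutation. The identification of compact with constructible objects is then formal: constructibles arise from the generators $f_!\,\underline{\mathbb{Z}/\ell^n}$ under finite colimits, shifts and retracts, hence are compact; conversely, any compact object is a retract of such a finite colimit, hence constructible.

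The only technical obstacle is the passage from a pointwise bound on $\ell$-cohomological dimensions of residue fields to bounded cohomological amplitude of $\mathrm{R}\Gamma(U,-)$ uniformly for all étale $U \to X$ of finite type. This is precisely where one must combine input (i) on finite Krull dimension with the uniform residue-field bound from (ii), via the standard SGA~4 computation of étale cohomology through Galois cohomology at geometric points combined with Artin-Gabber affine vanishing. Modulo this input, which is exactly what the first half of the proposition supplies, the Lurie-Gaitsgory argument carries over verbatim and the result follows.
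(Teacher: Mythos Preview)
Your proposal is correct and follows exactly the same approach as the paper's proof: both parts are reduced to the cited references (\cite[Prop.~1.1.5 and Rem.~1.1.6]{cisinski2016etale} for the first, \cite[4.2.2]{lurie-gaitsgory-tamagawa} with \cite[Lemma~4.1.13]{lurie-gaitsgory-tamagawa} replaced by the first part for the second), and you have simply unpacked the content of those references in more detail than the paper does. One small caution: your claim that the residue fields of $S$ have $\ell$-cohomological dimension at most two is not literally guaranteed by Assumption~\ref{assumption-excellentdim1} alone (e.g.\ $S=\Spec\mathbb{R}$ with $\ell=2$), so it is safer to defer entirely to \cite[Rem.~1.1.6]{cisinski2016etale} for the precise hypotheses under which the bound holds, as the paper does.
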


\medskip

In this case we consider the limit of $\infty$-categories 
\begin{equation}
\label{eq-ladicc}
\mathrm{Sh}^{c}_{\ell}(X):= \mathrm{lim}_{n\geq 0} \mathrm{Sh}^{c}(X_{ét}, \mathbb{Z}/\ell^n)
\end{equation}
and it follows that because of Prop. \ref{prop-GabberSGA} and the same arguments as in  \cite[4.3.17]{lurie-gaitsgory-tamagawa} that $\mathrm{Sh}^{c}_{\ell}(X)$ is an $\infty$-categorical enhancement of the derived category of constructible $\ell$-adic sheaves on $X$, in the sense that its homotopy category is equivalent (as a triangulated category) to the so-called \emph{constructible derived $\ell$-adic category} $D_c(X, \mathbb{Z}_{\ell}) $ of \cite{MR1106899} and \cite{MR751966}.\\
As a second step, we define the $\infty$-category of $\ell$-adic sheaves on $X$ as the Ind-completion of  $\mathrm{Sh}^{c}_{\ell}(X)$ (\cite[4.3.26]{lurie-gaitsgory-tamagawa})  
\begin{equation}
\label{eq-ladicconstructible}
\mathrm{Sh}_{\ell}(X):= \mathrm{Ind}(\mathrm{Sh}_{\ell}^c(X))
\end{equation}
\noindent Therefore one should think of $\mathrm{Sh}_{\ell}(X)$ as the \emph{derived $\infty$-category of Ind-constructible $\ell$-adic sheaves} on $X$. Finally, we define $\mathrm{Sh}_{\mathbb{Q}_\ell}(X):=  \mathrm{Sh}_{\ell}(X)\otimes_{\mathbb{Z}_{\ell}}\mathbb{Q_{\ell}}$. Note that $\mathrm{Sh}_{\mathbb{Q}_\ell}(X)$ can also be identified as the full subcategory of $\mathrm{Sh}_{\ell}(X)$ spanned by those objects $F$ such that the natural morphism $F\to F[\ell^{-1}]$ is an equivalence. Also note that the limit (\ref{eq-ladicconstructible}) can be taken inside the theory of symmetric monoidal small idempotent-complete stable and $\mathbb{Z}_{(\ell)}$-linear $\infty$-categories. Therefore, by working inside $\CAlg(\Mod_{\Mod_{\mathbb{Z}_{(\ell)}}}(\Prl))$, and taking first Ind-completion, and then applying $- \otimes_{\mathbb{Z}_{\ell}}\mathbb{Q_{\ell}}$ (or, equivalently, inverting $\ell)$), we get a symmetric monoidal small idempotent-complete stable and $\Ql$-linear structure on $\mathrm{Sh}_{\mathbb{Q}_\ell}(X)$. We will denote this monoidal structure by $\mathrm{Sh}_{\mathbb{Q}_\ell}(X)^{\otimes}$ and its tensor unit by $\mathbb{Q}_{\ell, X}$.

\medskip

\begin{rem}
\label{rem-finiteetaledimension}
The discussion in \cite[Section 4]{lurie-gaitsgory-tamagawa} is written for quasi-projective schemes over a field as it requires the étale topos of $X$ to be of finite cohomological dimension \cite[Lemma 4.1.13]{lurie-gaitsgory-tamagawa}. In our case this follows from the assumptions in \ref{assumption-excellentdim1} and Prop. \ref{prop-GabberSGA}.
\end{rem}

\medskip

\begin{rem}
The construction of an $\infty$-functor $X\mapsto \mathrm{Sh}_{\mathbb{Q}_\ell}(X)^{\otimes}$ can be obtained using the arguments of \cite[Chapter 9]{robalo-thesis}.
\end{rem}

\medskip

The monoidal realization (\ref{eq-ladic1}) could be obtained using the universal property of $\rmSH$ proved in \cite{MR3281141}. However, we will need to show that it is strongly compatible with all the six operations and with the classical notion of Tate twists. For this purpose, we describe an alternative construction of (\ref{eq-ladic1}) using results of \cite{cisinski2016etale}. Once (\ref{eq-ladic1}) is available and strongly compatible with all operations, we can then pass to $\BU_\mathbb{Q}$-modules and deduce that the composition
\begin{equation}\label{eq-ladic2}
\xymatrix{
\Mod_{\BU}(\rmSH)\ar[r]^-{(\ref{eq-rationalrealizationBU2})}& \Mod_{\BU_\mathbb{Q}}(\Mod_{\mathrm{M}\mathbb{B}}(\rmSH))\ar[r]^-{(\ref{eq-ladic1})} & \Mod_{R^\ell(\BU_\mathbb{Q})}(\mathrm{Sh}_{\mathbb{Q}_\ell}(-))
}
\end{equation}
\noindent is again compatible with all the six operations and twists.
\vspace{0.5cm}
 Let us now review the construction of  (\ref{eq-ladic1}). This can be done as in \cite{cisinski2016etale} using the theory of h-motives. Recall from \cite[Def 3.1.2]{MR1403354} that the $\mathrm{h}$-topology on Noetherian schemes is the topology whose covers are the universal topological epimorphisms. It is the minimal topology generated by open coverings and proper surjective maps (see for the case of excellent schemes \cite[Def 3.1.2]{MR1403354}). In \cite{cisinski2016etale} the authors constructed for any noetherian scheme $X$ and any ring $R$ a theory of $h$-motives, $\mathrm{DM}_\mathrm{h}(X, R)$. See \cite[Section 5.1]{cisinski2016etale}. The constructions in loc.cit can be formulated in the language of higher categories, using the arguments and steps of \cite{MR3281141} and an $\infty$-functor
$$\mathrm{DM}_\mathrm{h}(-, R)^{\otimes}:\bsch^{\mathrm{op}}\to \CAlg(\Prl)$$
\noindent can be provided as in \cite[Chapter 9]{robalo-thesis}. Following \cite[5.6.2]{cisinski2016etale} and Prop. \ref{prop-Ayoub-Cisinski} this $\infty$-functor satisfies all the formalism of the six operations over Noetherian schemes of finite Krull dimension.
\medskip
We now recall how to relate $h$-motives both to the r.h.s and l.h.s of (\ref{eq-ladic1}). Let $R$ be the localization of $\mathbb{Z}$ at the prime $\ell$. 
\vspace{0.3cm}
To understand the r.h.s of (\ref{eq-ladic1}) we use a form of rigidity theorem given by \cite[Thm 5.5.3 and Thm 4.5.2]{cisinski2016etale}: for any Noetherien scheme $X$, $\ell$ invertible in $\mathcal{O}_X$ and for each $n\geq0$, we have a monoidal equivalence 
\begin{equation}\label{eq-rigidityGabber}
\mathrm{DM}_\mathrm{h}( X, R/\ell^n)\simeq \mathrm{Sh}(X_{ét}, R/\ell^n)
\end{equation}
\noindent with the last being the standard $\infty$-category of  $\ell^n$-torsion étale sheaves on $X$. This equivalence is compatible with all the six operations over Noetherian schemes of finite Krull dimension. Following \cite[Thm 6.3.11]{cisinski2016etale} for all Noetherian schemes of finite dimension, (\ref{eq-rigidityGabber}) restricts to an equivalence
\begin{equation}\label{eq-rigidityGabberconstructible}
\mathrm{DM}_{\mathrm{h}, lc}( X, R/\ell^n)\simeq \mathrm{Sh}^c((X)_{ét}, R/\ell^n)
\end{equation}
\noindent where on the l.h.s we have the full subcategory of locally constructible objects of \cite[Def. 6.3.1]{cisinski2016etale}. Thanks to the uniformization results of Gabber  (see \cite[6.3.15]{cisinski2016etale} for the l.h.s and \cite[Exposé 0 Thm 1]{MR3329769} for the r.h.s)  the constructions $X\mapsto \mathrm{DM}_{\mathrm{h}, lc}( X, R/\ell^n)$ are stable under the six operations when restricted to quasi-excellent noetherian schemes of finite dimension. 

\medskip

\begin{rem}\label{tousualtwist}
Via (\ref{eq-rigidityGabber})  motivic Tate twists are sent to the usual $\ell$-adic twists given by the roots of unity. This is a consequence of the Kummer exact sequence as explained in \cite[Section 3.2]{cisinski2016etale}.
\end{rem}

\medskip

As a result, the equivalences (\ref{eq-rigidityGabberconstructible}) assemble to an equivalence of $\infty$-functors, strongly compatible with all the six operations and twists
\begin{equation}\label{eq-rigidityetaleh}
\mathrm{DM}_{\mathrm{h}, lc}( -, R/\ell^n)\simeq \mathrm{Sh}^c((-)_{ét}, R/\ell^n)
\end{equation}
\noindent whenever $\ell$ is invertible in $S$.

\medskip

\begin{rem}\label{whyfincohdim}
Under the assumptions in \ref{assumption-excellentdim1} and because of Prop. \ref{prop-GabberSGA} and \cite[Prop 6.3.10]{cisinski2016etale}, the notion of locally constructible objects in h-motives coincides with the notion of constructible of \cite[5.1.3]{cisinski2016etale} which also coincides with the notion of compact object \cite[Thm 5.2.4]{cisinski2016etale}.
\end{rem}

\medskip

As a conclusion to this discussion, (\ref{eq-rigidityetaleh}) provides an equivalence
\begin{equation}
\label{eq-ladicsheavesindfinal}
\mathrm{lim}_{n\geq 0}\mathrm{DM}_{\mathrm{h}, lc}( -, R/\ell^n)\simeq \mathrm{lim}_{n\geq 0}\, \mathrm{Sh}^c((-)_{ét}, R/\ell^n)
\end{equation}
\medskip

\vspace{0.5cm}

Now, the l.h.s of (\ref{eq-ladic1}) is related to the theory of h-motives via the combination of \cite[Thm 5.2.2]{cisinski2016etale} and \cite[14.2.9]{cisinski-tcmm}: when $R=\mathbb{Q}$ we have an equivalence of $\infty$-functors defined on Noetherian schemes of finite Krull dimension
\begin{equation}
\label{thm-cisinskiDMBeil}
\mathrm{DM}_\mathrm{h}(-,\mathbb{Q})\simeq \Mod_{\mathrm{M}\mathbb{B}}(\rmSH)
\end{equation}
\noindent strongly compatible with the six operations.  We recall that for each scheme $X$ the $\infty$-category $\rmSH_X$ is compactly generated and so is $\Mod_{\mathrm{M}\mathbb{B}_X}(\rmSH_X)$. See \cite[Section 4.4. and Prop.3.8.3]{robalo-thesis}. Thanks to \cite[Prop. 6.3.3 and Thm 5.2.4]{cisinski2016etale}, the equivalence (\ref{thm-cisinskiDMBeil}) identifies the compact objects of $\Mod_{\mathrm{M}\mathbb{B}_X}(\rmSH_X)$ with the subcategory of locally constructible objects $\mathrm{DM}_{\mathrm{h},lc}(-,\mathbb{Q})$ as defined in \cite[5.1.3]{cisinski2016etale}. By \cite[6.2.14]{cisinski2016etale} it is stable under all the six operations.\\

\medskip

Having these caracterizations of both the r.h.s and l.h.s of (\ref{eq-ladic1}), in order to achieve the construction of the natural transformation (\ref{eq-ladic1}), we need to exhibit a natural transformation of $\infty$-functors with values in small stable idempotent complete $R[\ell^{-1}]\simeq \mathbb{Q}$-linear $\infty$-categories
$$
\mathrm{DM}_{\mathrm{h},c}(-,\mathbb{Q})\simeq \mathrm{DM}_{\mathrm{h},c}(-,R)\otimes_R R[\ell^{-1}]\to (\mathrm{lim}_{n\geq 0}\mathrm{DM}_{\mathrm{h}, lc}( -, R/\ell^n))\otimes_R R[\ell^{-1}]
$$

\medskip

For that purpose we use the results of \cite{cisinski2016etale} that explain the $\ell$-adic realization functor (\ref{eq-ladic1}) as an $\ell$-adic completion of h-motives. The system of base changes along the maps of rings $R\to R/\ell^n$ produces natural transformations
$\mathrm{DM}_\mathrm{h}(-,R)\to \mathrm{DM}_{\mathrm{h}}( -, R/\ell^n)$ and by the standard procedure one can  construct the data of a cone over the diagram indexed by $n\geq0$ and obtain a natural transformation between the  $\infty$-functors with values in presentable stable R-linear $\infty$-categories
\begin{equation}
\mathrm{DM}_\mathrm{h}(-,R)\to \mathrm{lim}_{n\geq 0}\mathrm{DM}_{\mathrm{h}}( -, R/\ell^n)
\end{equation}
It follows from the same arguments as in \cite[4.3.9]{lurie-gaitsgory-tamagawa} that this homotopy limit identifies with the construction $\mathrm{DM}_{\mathrm{h}}(-, \widehat{R}_{\ell})$ of \cite[Def 7.2.1]{cisinski2016etale} and from \cite[Thm 7.2.11]{cisinski2016etale} that it commutes with all the six operations over Noetherian schemes of finite Krull dimension. Moreover, by \cite[7.2.16]{cisinski2016etale}, it restricts to a natural transformation between locally constructible objects
\begin{equation}
\label{eq-ladiccompletionconstructible}
\mathrm{DM}_{\mathrm{h}, lc}(-,R)\to \mathrm{lim}_{n\geq 0}\mathrm{DM}_{\mathrm{h}, lc}( -, R/\ell^n)
\end{equation}
\noindent again compatible with all operations. Following the discussion in \cite[Section 7.2.18, Prop. 7.2.19, Thm 7.2.21]{cisinski2016etale}, using the Prop. \ref{prop-GabberSGA}, one can mimic the arguments of \cite[4.3.17]{lurie-gaitsgory-tamagawa} to deduce that the homotopy category of the r.h.s recovers the classical derived category of constructible $\ell$-adic sheaves of \cite{MR751966} and \cite{MR1106899}.
\vspace{0.5cm}

Finally, the realization $\mathrm{R}^\ell$ of (\ref{eq-ladic1}) is defined via the composition 

\begin{equation}
\label{eq-ladicfirstrealizationDMrational}
\xymatrix{
 \Mod_{\mathrm{M}\mathbb{B}}(\rmSH) \ar[r]^-{\sim}_-{(\ref{thm-cisinskiDMBeil})}& \mathrm{Ind}(\mathrm{DM}_{\mathrm{h}, lc}(-,R)\otimes_{R}\mathbb{Q})\ar[r]^-{(\ref{eq-ladiccompletionconstructible})}& \mathrm{Ind}(\mathrm{lim}_{n\geq 0}\mathrm{DM}_{\mathrm{h}, lc}( -, R/\ell^n)\otimes_{R}\mathbb{Q})\ar[d]^-{\sim}_-{(\ref{eq-ladicsheavesindfinal})}\\
 && \mathrm{Sh}_{\mathbb{Q}_\ell}(-)
 }
\end{equation}

\noindent and by the preceding discussion it is strongly compatible with all the six operations and Tate twists.

\medskip

\begin{rem}
As the $\ell$-adic monoidal realization functor $\mathrm{R}^\ell: \Mod_{\mathrm{M}\mathbb{B}}(\rmSH)^\otimes\to \mathrm{Sh}_{\mathbb{Q}_\ell}(-)^\otimes$ is monoidal, for any scheme $X$ we have $\mathrm{R}^\ell(\mathrm{M}\mathbb{B}_X)\simeq \mathbb{Q}_{\ell,X}$, the monoidal unit of $\mathbb{Q}_\ell$-adic sheaves over $X$. As it is a left adjoint and commutes with Tate twists,  Prop. \ref{prop-rationalKtheoryspectrum} implies that that 
\begin{equation}
\label{eq-ladicrealizationBUQ}
\mathrm{R}^\ell(\BU_{X, \mathbb{Q}})\simeq \mathrm{Free}(\mathbb{Q}_{\ell,X}(1)[2])[\nu^{-1}]\simeq \bigoplus_{n\in \mathbb{Z}}\mathbb{Q}_{\ell,X}(n)[2n]=: \mathbb{Q}_{\ell, X}(\beta)
\end{equation}
By Remark \ref{tousualtwist}, $\mathbb{Q}_{\ell, X}(i)$ are the usual $\ell$-adic Tate twists given by the roots of unity. In particular, the extension (\ref{eq-ladic2}) of $\mathrm{R}^\ell$ to $\BU$-modules  takes values in Tate-twisted $2$-periodic objects inside $\mathrm{Sh}_{\mathbb{Q}_{\ell}}(X)$, i.e. objects $E$ together with an equivalence $E \simeq E(1)[2]$.
\end{rem}

\vspace{0.5cm}

\begin{notation} Throughout the rest of this paper we will write
\begin{equation}
\label{eq-ladicrealizationBUmodules}
\mathrm{R}^\ell: \Mod_{\BU}(\rmSH)\to \Mod_{\mathbb{Q}_{\ell}(\beta)}(\mathrm{Sh}_{\mathbb{Q}_\ell}(-))
\end{equation}
to denote the natural transformation obtained via the composition (\ref{eq-ladic2}). As already observed, it is strongly compatible with all six operations and Tate twists.
\end{notation}

\medskip

\begin{rem} Note that if $p:X \to S$ is a smooth finite type morphism of schemes, and we denote by $[X]:= p_{\sharp}(1_X) \in \rmSH_S$ its motive over $S$, then $$\mathrm{Map}_{\mathrm{Sh}_{\mathbb{Q}_\ell}(S)}(\mathrm{R}^{\ell}([X] \otimes \BU_S ), \mathbb{Q}_{\ell, S}) \simeq \mathbf{H}^{\bullet}_{\ell}(X, \mathbb{Q}_{\ell}) \otimes \mathbb{Q}_{\ell}(\beta),$$ where $\mathbf{H}^{\bullet}_{\ell}(X, \mathbb{Q}_{\ell})\simeq \mathrm{Map}_{\mathrm{Sh}_{\mathbb{Q}_\ell}(X)}(\mathbb{Q}_{\ell, X}, \mathbb{Q}_{\ell, X})$ denotes the $\ell$-adic cohomology of $X$. In other words, the $\mathbb{Q}_{\ell, S}$-dual of $\mathrm{R}^{\ell}([X] \otimes \BU_S)$ is a Tate 2-periodized version of the $\ell$-adic cohomology of $X$. Note that, instead, if we denote by $\mathrm{R}^{' \ell}: \Mod_{\mathrm{M}\mathbb{B}}(\rmSH)^\otimes \to  \mathrm{Sh}_{\mathbb{Q}_\ell}(-)^\otimes$ the realization functor (\ref{eq-ladic1}), we have $$\mathrm{Map}_{\mathrm{Sh}_{\mathbb{Q}_\ell}(S)}(\mathrm{R}^{' \ell}([X] \otimes \mathrm{M}\mathbb{B}_S ), \mathbb{Q}_{\ell, S}) \simeq \mathbf{H}^{\bullet}_{\ell}(X, \mathbb{Q}_{\ell}).$$
In the same situation, we have $$\mathrm{R}^{\ell}(\mathcal{M}_S^\vee(\Perf(X))) \simeq p_*(\mathbb{Q}_{\ell , X}(\beta)).$$ This follows from Prop. \ref{prop-descriptionmotivedgcategoryviapushforward}, projection formula, and the fact that $p_*$ commutes with $\ell$-adic realization. In other words, the $\ell$-adic realization of rationalized $\mathcal{M}_S^\vee(\Perf(X))$ is equivalent to the Tate 2-periodized version of the $\ell$-adic cohomology of $X$ relative to $S$.

\end{rem}

\medskip

\section{Vanishing Cycles and Singularity Categories}\label{section-maintheo}
In this section we prove our main Theorem \ref{theorem-maincomparisonvanishingmf} establishing the link between the results of the previous section and the theory of vanishing cycles. Namely, it says that the motive of the dg-category of singularities is a model for the $\ell$-adic cohomology of the 2-periodized sheaf of vanishing cycles. Before coming to the precise statement, we will first recall the theory of nearby and vanishing cycles in the motivic setting. 

\medskip

\subsection{Nearby and Vanishing Cycles}
In this section we recall the formalism of nearby and vanishing cycles in the $\ell$-adic setting as presented in \cite{MR0354656}. More recently, a motivic formulation was developed by Ayoub in \cite{ayoub2}. There are several technical steps required to express the formalism of \cite{ayoub2} in a higher categorical setting. We provide these details in Appendix A of this paper.

\vspace{0.5cm}

\begin{context}
Throughout this section we fix a diagram of schemes
\begin{equation}
\label{trait}
\xymatrix{\eta \ar[r]^{j_\eta} &S&  \ar[l]_{i_{\sigma}}\sigma}
\end{equation}
\noindent with $S$ an excellent henselian trait, namely, the spectrum of an excellent henselian discrete valuation ring $A$,  with uniformizer $\pi$, generic point $\eta=\Spec(K)$ and closed point $\sigma=\mathrm{Spec}(A/m)$ with $m=(\pi)$ the maximal ideal and $k:=A/m$ is a perfect field of characteristic $p\geq 0$. The pair $(\eta, \sigma)$ forms a closed-open complement pair (and the maps are, respectively, an open and a closed immersion). In practice we will take $S$ to be the spectrum of a complete discrete valuation ring. Roughly speaking, the scheme $S$ plays the role of a formal disk, $\sigma$ of the center of the disk and $\eta$ of the punctured disk (this is quite precise in the equicharacteristic zero case). We will say that a henselian trait is \emph{strictly local} if $k$ is algebraically closed. 
\end{context}

\begin{rem}
\label{rem-choice-uniformizer}
The choice of a uniformizer $\pi\in A$ defines a map $\pi:S\to \mathbf{A}^1_S$. In this case we have two cartesian diagrams
\begin{equation}
\label{choice-uniformizer}
\xymatrix{
\ar @{} [dr] | \lrcorner \eta \ar[d] \ar[r]^{j_\eta}& S\ar[d]^\pi&\sigma\ar[l]_{i_\sigma}\ar[d] \ar @{} [dl] | \llcorner \\
\mathbb{G}_{m,S}\ar@{^{(}->}[r]_{j_0}& \mathbb{A}^1_S&\ar@{_{(}->}[l]^{i_0}S
}
\end{equation}
\noindent which allow us to reduce ourselves to working over an affine line, even in mixed characteristic. Notice that both diagrams are in fact derived fiber products. Indeed, for the left diagram this is immediate because the inclusion of $\mathbb{G}_{m,S}$ is an open immersion, while for the diagram on the right we find that the derived tensor product is given by the spectrum of the commutative differential graded algebra 
\begin{equation}
\xymatrix{
0\ar[r]& \underbracket{A}_{-1} \ar[r]^{\pi \cdot} & \underbracket{A}_{0} \ar[r]&0
}
\end{equation}
which is, in fact, quasi-isomorphic to $A/\pi$, since $\pi$ is a non-zero divisor.

\end{rem}

\vspace{0.5cm}

\begin{notation}
In what follows, we fix:
\begin{itemize}
\item a separable closure $\bar{k}$ of $k$ (inside a fixed algebraic closure), and denote by $\bar{S}:=S_{(\bar{\sigma})}$ the strict localization of $S$ at the corresponding geometric point $\bar{\sigma}= \Spec(\bar{k}$) (which is localized at $\sigma$): in other words $\bar{S}$ the spectrum of the strict henselization of $A$ along $k \hookrightarrow \bar{k}$. Note that $\bar{S}$ is now a strictly henselian trait, with closed point $\bar{\sigma}$ and fraction field $K^\mathrm{unr}$ a maximal unramified extension of $K$ (see \cite[Chapitre II,  \S 2 Prop 3 and Ex.4]{MR0150130}). We set $\eta^{\mathrm{unr}}=\Spec(K^\mathrm{unr})$ and denote as $j_{\eta^{\mathrm{unr}}}:\eta^{\mathrm{unr}}\to \bar{S}$ the corresponding open immersion. 
\item a separable closure $\bar{K}$ of $K^{\mathrm{unr}}$ (inside a fixed algebraic closure), and put $\bar{\eta}:=\Spec\, \bar{K}$ and $j_{\bar{\eta}}: \bar{\eta}\to \bar{S}$.

\item $\eta^t=\Spec(K^t)$ a maximal tamely ramified extension of $K$ inside $\bar{K}$. 

\end{itemize}

All this information fits in a commutative diagram
\begin{equation}
\label{eq-diagramintegralclosures}
\xymatrix{
\ar[d]\bar{\eta}\ar@/_2pc/[ddd]_{u_\eta}\ar[ddr]^{j_{\bar{\eta}}}&&\\
\eta^t\ar[d]\ar[dr]&&\\
\eta^{\mathrm{unr}}\ar[d]\ar@{^{(}->}[r]_{j_{\eta^{\mathrm{unr}}}}& \bar{S} \ar[d]^u & \ar@{_{(}->}[l]_{i_{\bar{\sigma}}} \bar{\sigma}\ar[d]^{u_{\sigma}}\\
\eta \ar@{^{(}->}[r]_{j_\eta} & S & \ar@{_{(}->}[l]^{i_\sigma} \sigma
}
\end{equation}

\noindent which the reader should keep in mind throughout this section. \\ 
\end{notation}

\begin{rem}
We also recall the existence of an exact sequences of groups (see \cite[Chap. III \S 5 Thm 2, Thm 3, Cor I]{MR0150130})

\begin{equation}
\label{defofinertia}
1\to \mathrm{I}:=\Gal(\bar{\eta}/\eta^\mathrm{unr})\to \Gal(\bar{\eta}/\eta) \to \Gal(\bar{\sigma}/\sigma)\to 1
\end{equation}

\noindent where $\mathrm{I}$ is the inertia group, which fits in an exact sequence (see \cite{MR0150130})

\begin{equation}
\label{eq-exactsequencegaloisgroups}
1\to \Gal(\bar{\eta}/\eta^t) \to \mathrm{I} \to \mathrm{I}_t:=\Gal(\eta^t/\eta^\mathrm{unr})\to 1
\end{equation}
where $\mathrm{I}_t$ is the \emph{tame inertia group}, isomorphic to $\mathrm{lim}_{(n,p)=1} \, \mu_n$ where $\mu_n$ is the group of $n^{th}$-roots of unit in $K^{\mathrm{unr}}$. 
\end{rem}

\personal{
\begin{equation}
\label{eq-diagramintegralclosures}
\xymatrix{
\ar[d]\bar{\eta}\ar[d]_{u_\eta}\ar[dr]^{j_{\bar{\eta}}}& &\\
\eta \ar[r]_{j_\eta} & S & \ar[l]^{i_\sigma} \sigma
}
\end{equation}
$$Sh(S)= lax lim (i^*j_*:Sh(\eta) \to Sh(\sigma))$$
In other words an object over S is the same thing as an object $A_\sigma$ over $\sigma$, an object $A_\eta$ over $\eta$ and a morphism $A_\sigma \to i^*j_* A_\eta$ in $Sh(\sigma)$. If there is étale descent, then this is the same as pairs $\bar{A}_\sigma\in Sh(\bar{\sigma})^{Gal(\bar{\sigma}/\sigma)}$,  $\bar{A}_\eta\in Sh(\bar{\eta})^{Gal(\bar{\eta}/\eta)}$ and a $Gal(\bar{\eta})$-equivariant morphism $\bar{A}_\sigma\to \bar{A}_\eta$ where $Gal(\bar{\eta}/\eta)$ acts on $\bar{A}_\sigma$ via the morphism of groups $Gal(\eta)\to Gal(\sigma)$. This is the same as a $Gal(\sigma)$-equivariant map $\bar{A}_\sigma\to (\bar{A}_\eta )^I$.
\vspace{0.5cm}
In fact it seems that S itself is the lax gluing of $\eta$ and $\sigma$ along the specialization map $\eta \to \sigma$. $S$ is a correspondence $\eta\to \sigma$. Then for any scheme $X_\sigma$ over $\sigma$ one can define $X_\sigma \times_\sigma$ S whose sheaves over are pairs of sheaves on $X_\sigma$, one with an action of $Gal(\sigma)$, another with $Gal(\eta)$ and an equivariant map between the two.
\vspace{0.5cm}
What we really want is to define a category 
$$Gluing( Sh(X_{\bar{\sigma}})^{Gal(\eta)}, Sh(X_{\bar{\sigma}})^{Gal(\sigma)})$$
and a functor 
$$NearbyCycles: Sh(X)\to Gluing( Sh(X_{\bar{\sigma}})^{Gal(\eta)}, Sh(X_{\bar{\sigma}})^{Gal(\sigma)})$$
}

\vspace{0.5cm}

Consider a map $p:X\to S$. We recall  the definition of the nearby and vanishing cycles in the $\ell$-adic setting. Consider the commutative diagram:

\begin{equation}
\label{eq-diagramintegralclosures2}
\xymatrix{
&\ar[dd]^(0.3){p_{\bar{\eta}}}\ar[dl]_{v_\eta}X_{\bar{\eta}}\ar[rr]^{\bar{j}}&&\ar[dd]^(0.3){\bar{p}}\bar{X}\ar[dl]^v&&\ar[dd]^(0.3){p_{\bar{\sigma}}} \ar[ll]_{\bar{i}} X_{\bar{\sigma}}\ar[dl]^{v_{\sigma}}\\
\ar[dd]^(0.3){p_\eta}X_\eta\ar[rr]_(0.3){j}&&X\ar[dd]^(0.3){p}&&\ar[ll]^(0.3){i}X_\sigma\ar[dd]^(0.3){p_\sigma}&\\
&\bar{\eta}\ar[dl]_{u_\eta}\ar[rr]^(0.3){j_{\bar{\eta}}}&& \bar{S} \ar[dl]^u && \ar[ll]_(0.3){i_{\bar{\sigma}}} \bar{\sigma}\ar[dl]^{u_{\sigma}}\\
\eta \ar[rr]_{j_\eta} && S && \ar[ll]^{i_\sigma} \sigma
}
\end{equation}
where the right base square, the two front-faces, the two back-faces, and the central transverse square are all cartesian (the maps $v_{\sigma}$ and $v_{\eta}$ are then uniquely induced).\\
\begin{defn}
The object of \emph{nearby cycles} associated to $p$ and $E\in \mathrm{Sh}_{\mathbb{Q}_\ell}(X)$ is given by 
$$\Psi_p(E):=\bar{i}^*\bar{j}_* E_{\bar{\eta}}\in \mathrm{Sh}_{\mathbb{Q}_\ell}(X_{\bar{\sigma}})^{\Gal(\bar{\eta}/\eta)}$$ 
\noindent where $E_{\bar{\eta}} :=v_{\eta}^*j^* E\simeq \bar{j}^*v^*E$ and $\mathrm{Sh}_{\mathbb{Q}_\ell}(X_{\bar{\sigma}})^{\Gal(\bar{\eta}/\eta)}$ is the $\s$-category  of objects in $\mathrm{Sh}_{\mathbb{Q}_\ell}(X_{\bar{\sigma}})$ equipped with an equivariant structure with respect to the \emph{continuous} action of $\Gal(\bar{\eta}/\eta)$ on $X_{\bar{\sigma}}$ via the canonical  map $\Gal(\bar{\eta}/\eta) \to \Gal(\bar{\sigma}/\sigma)$. \personal{For $\Psi_p(E)$ this action is induced via $v_\eta$, by transfer of structure (``transport de structure'', in french). Moreover, this  $\Gal(\bar{\eta}/\eta)$-action on $\Psi_p(E)$ is compatible with the $\Gal(\bar{\sigma}/\sigma)$-action on $X_{\bar{\sigma}}$ via the canonical map $\Gal(\bar{\eta}/\eta) \to \Gal(\bar{\sigma}/\sigma)$}. 
\end{defn}
When the map $p$ is uniquely determined by our context, we will often write $\Psi(E)$ for $\Psi_p(E)$.

\begin{rem}
We will not give here the details for a precise definition of the \emph{continuous} Galois-equivariant $\infty$-category $\mathrm{Sh}_{\mathbb{Q}_\ell}(X_{\bar{\sigma}})^{\Gal(\bar{\eta}/\eta)}$ of the previous paragraph. A precise construction  can be obtained  using the $\infty$-categorical analogue of the Deligne topos described in \cite[Exp. XIII]{MR0354657}. Indeed the étale topos of $S$ can be described as a lax limit (in the sense of $\infty$-categories) of the diagram given by the specialization map between the étale topos of the generic point and the étale topos of the closed point. See \cite[A.8]{lurie-ha}.
\end{rem}

\begin{rem}
\label{remark-inertiainvariantscanbetaken}
Notice that by definition, the inertia group $I$ acts trivially on $X_{\bar{\sigma}}$. In this case, every object of $\mathrm{Sh}_{\mathbb{Q}_\ell}(X_{\bar{\sigma}})^{\Gal(\bar{\eta}/\eta)}$ can be seen as equipped with a continuous action of $I$ together a compatible $\Gal(\bar{\sigma}/\sigma)$-equivariant structure. More precisely, the equivalence of stacks 
$$
X_{\bar{\sigma}}/\Gal(\bar{\eta}/\eta)\simeq (X_{\bar{\sigma}}/I)/\Gal(\bar{\sigma}/\sigma)
$$
\noindent establishes an equivalence
$$
\mathrm{Sh}_{\mathbb{Q}_\ell}(X_{\bar{\sigma}})^{\Gal(\bar{\eta}/\eta)}\simeq  \mathrm{Rep}^{\mathrm{cont}}(I, \mathrm{Sh}_{\mathbb{Q}_\ell}(X_{\bar{\sigma}}))^{\Gal(\bar{\sigma}/\sigma)}
$$
Moreover, one can check that taking I-invariants renders the commutativity of the diagram
$$
\xymatrix{\mathrm{Sh}_{\mathbb{Q}_\ell}(X_{\bar{\sigma}})^{\Gal(\bar{\eta}/\eta)}\ar[rd]_{q_\ast} \ar[rr]^{\sim}&& \mathrm{Rep}^{\mathrm{cont}}(I, \mathrm{Sh}_{\mathbb{Q}_\ell}(X_{\bar{\sigma}}))^{\Gal(\bar{\sigma}/\sigma)}\ar[dl]^{(-)^{\mathrm{h}I}}\\
&\mathrm{Sh}_{\mathbb{Q}_\ell}(X_{\bar{\sigma}})^{\Gal(\bar{\sigma}/\sigma)}&}
$$
\noindent with $q_\ast$ being the pushforward along $q:X_{\bar{\sigma}}/\Gal(\bar{\eta}/\eta)\to X_{\bar{\sigma}}/\Gal(\bar{\sigma}/\sigma)$
\end{rem}

\medskip
\vspace{0.5cm}

Set $E_{\bar{\sigma}}:=v_{\sigma}^*i^* E$. There is a canonical adjunction morphism
\begin{equation}
\label{equation-definitionvanishing}
sp:E_{\bar{\sigma}}\to \Psi_p(E)
\end{equation}
that is compatible with the action of $\Gal(\bar{\eta}/\eta)$. On the source, this action comes via (\ref{defofinertia}). In other words, 
(\ref{equation-definitionvanishing})
is a morphism in the $\infty$-category $\mathrm{Sh}_{\mathbb{Q}_{\ell}}(X_{\bar{\sigma}})^{\mathrm{Gal}(\bar{\eta}/\eta)}$ of $\ell$-adic sheaves on $X_{\bar{\sigma}}$ endowed with a $\mathrm{Gal}(\bar{\eta}/\eta)$-equivariant structure. 
\medskip

\begin{defn}
\label{17ago1329} Given $E\in \mathrm{Sh}_{\mathbb{Q}_\ell}(X)$, the object of \emph{vanishing cycles} $\mathcal{V}_p (E)$ is defined as the cofiber
$$
E_{\bar{\sigma}}\to \Psi_p(E)\to \mathcal{V}_{p}(E)
$$

\noindent in the $\infty$-category $\mathrm{Sh}_{\mathbb{Q}_{\ell}}(X_{\bar{\sigma}})^{\mathrm{Gal}(\bar{\eta}/\eta)}$. To shorten notations, we will write $\mathcal{V}_{p}:= \mathcal{V}_{p}(\mathbb{Q}_{\ell, X})$. 
\end{defn}

Again, when the map $p$ is uniquely determined by our context, we will often write $\mathcal{V}(E)$ for $\mathcal{V}_p(E)$\footnote{A more standard notation for $\mathcal{V}_p(E)$ is $\Phi_{p}(E)$. }.

\vspace{1cm}

\subsection{$\ell$-adic Realization of Singularities Categories }
\label{subsubsection-ladicrealizationofMF}

\begin{context}
\label{newlabelreferee144}
Throughout this section we fix an excellent henselian trait $S=\Spec A$ with uniformizer $\pi$. We also fix $p:X\to S$ a proper flat scheme over $S$ with $X$ regular. We consider the LG-pair $(X, f)$ where $f$ is defined as the composite 
\begin{equation}
\label{eq-Xwithuniformizer}
\xymatrix{
f:=(X\ar[r]^-p & S \ar[r]^-{\pi} & \mathbb{A}^1_S ),
}
\end{equation}
$\pi$ being our fixed uniformizer. We will simply denote denote this LG-pair by $(X,\pi)$. Also, consider the following commutative diagram with pullback squares

\begin{equation}
\label{eq-Xwithuniformizervanishingcycles}
\xymatrix{
 \ar @{} [dr] | \lrcorner  X_\eta \ar@{^{(}->}[r]^j \ar[d]^{p_\eta} &X\ar[d]^p& \ar[l]_i \ar[d]^{p_\sigma} X_{\sigma}=X_0 \ar @{} [dl] | \llcorner  \\
\ar @{} [dr] | \lrcorner  \eta \ar[d] \ar@{^{(}->}[r]_{j_\eta}& S\ar[d]^\pi&\sigma\ar[l]^{i_\sigma}\ar[d] \ar @{} [dl] | \llcorner \\
\mathbb{G}_{m,S}\ar@{^{(}->}[r]_{j_0} \ar[dr]_q& \mathbb{A}^1_S\ar[d]&\ar@{_{(}->}[l]^{i_0}S \ar@{=}[dl]\\
&S&
}
\end{equation}
which by Remark \ref{rem-choice-uniformizer} and under the hypothesis that $p$ is flat, are also derived fiber products. In particular, the canonical inclusion $t(X_0)\to X_0$ is an equivalence.
\end{context}

\vspace{0.5cm}

Corollary \ref{newcorreferee1} and Lemma \ref{lemma-null} provide cofiber-fiber sequences of $\BU_S$-modules 
\begin{equation}
\mathcal{M}_S^\vee(\Sing(X, \pi))\to \mathcal{M}_S^{\vee}(\Perf(X_0))[1]\oplus p_*i_*i^*\BU_X\to  \textrm{cofib}(p_*j_\sharp \BU_{X_\eta}\to p_*j_* \BU_{X_\eta})
\end{equation}
\begin{equation}\xymatrix{
p_*i_* i^\ast \BU_X \ar[r]^u & p_*i_*i^! \BU_X  \ar[r]& \mathcal{M}_S^\vee(\Sing(X_0))
}\end{equation}
\noindent where we have $\mathcal{M}_S^{\vee}(\Perf(X_0)))\simeq p_*i_*\BU_{X_0}$.

\vspace{0.5cm}

\begin{prop}
\label{prop-exactsequenceMFoverSlocalring}
In the notations of diagram (\ref{eq-Xwithuniformizervanishingcycles}), the canonical map
$$
\mathcal{M}_S^\vee(\Sing(X, \pi))\to (i_\sigma)_* (i_\sigma)^*(\mathcal{M}_S^\vee(\Sing(X, \pi)))
$$
\noindent is an equivalence of $\BU_S$-modules. Furthermore, there is a cofiber-fiber sequence of $\BU_{\sigma}$-modules
\begin{equation}
\label{eq-cofibersequencethatdeterminsSingoverresiduefield}
(i_\sigma)^*(\mathcal{M}_S^\vee(\Sing(X,\pi)))\to (p_\sigma)_*(\BU_{X_0}[1]\oplus\BU_{X_0})\to (p_\sigma)_*i^*j_*\BU_{X_\eta}
\end{equation}
\begin{proof}
The localization property for $\rmSH$ (see Proposition \ref{prop-AdeelAyoubCisinski} and Proposition \ref{prop-Ayoub-Cisinski}) gives us a cofiber-fiber sequence of $\BU_S$-modules
\begin{equation}
\label{eq-exactsequenceMFoverSlocalring}
(j_\eta)_\sharp\circ (j_\eta)^*(\mathcal{M}_S^\vee(\Sing(X, \pi)))\to \mathcal{M}_S^\vee(\Sing(X, \pi))\to (i_\sigma)_*\circ (i_\sigma)^*(\mathcal{M}_S^\vee(\Sing(X, \pi)))
\end{equation}
We show that the first term in the cofiber-sequence (\ref{eq-exactsequenceMFoverSlocalring}) is a zero object.  In this case the last map in (\ref{eq-exactsequenceMFoverSlocalring}) is an equivalence of $\BU_S$-modules and the motivic $\BU_S$-module of $\mathcal{M}_S^\vee(\Sing(X, \pi))$ is completely determined by its restriction to the residue field. More precisely, it is determined by a cofiber-fiber sequence of $\BU_k$-modules (\ref{eq-cofibersequencethatdeterminsSingoverresiduefield}).
To show that the first term is zero, as $j_\sharp$ is fully faithful\footnote{Follows from smooth base change for $j$ (see Prop. \ref{prop-AdeelAyoubCisinski} and \ref{prop-Ayoub-Cisinski}).}, it is enough to apply $j^*$ to the first row of the diagram (\ref{eq-exactsequenceMF}) and check it is sent to zero. Indeed, under the hypothesis that $p$ is proper, proper base change (see Prop. \ref{prop-AdeelAyoubCisinski} and \ref{prop-Ayoub-Cisinski}) gives us a natural equivalence $(j_\eta)^*p_*\simeq (p_\eta)_*j^*$. But again, the localization property tells us $j^*\circ i_*\simeq 0$, so that the first two terms in the first row of $(\ref{eq-exactsequenceMF})$ become zero. So does the cofiber $(j_\eta)^*(\mathcal{M}_S^\vee(\Sing(X_0)))$. \\
To describe $(i_{\sigma})^*(\mathcal{M}_S^\vee(\Sing(X_0)))$ we apply $(i_\sigma)^*$ to the whole diagram (\ref{eq-exactsequenceMF}). Again because of proper base change, we have a natural equivalence $(i_\sigma)^*p_*\simeq (p_\sigma)_*i^*$. Moreover, as the counit is an equivalence $(i_\sigma)^*\circ (i_\sigma)_*\simeq Id$ \footnote{See \cite[Rmk 9.4.19]{robalo-thesis}} and because 
$i_\sigma^\ast p_\ast j_\sharp \BU_{X_\eta}\simeq (p_\sigma)_\ast i^*j_\sharp\BU_{X_\eta}$ with $i^*j_\sharp$ being always zero, we recover
\begin{equation}
\label{eq-exactsequenceMFoverresiduefield}
\xymatrix{
(p_\sigma)_*\BU_{X_0}\ar[dr]_{0}\ar[r]& (p_\sigma)_*i^! \BU_X \ar[d] \ar[r]& i^*(\mathcal{M}_S^\vee(\Sing(X_0)))\ar[dd]\\
&(p_\sigma)_*\BU_{X_0}\ar[d] \ar[dr]&\\
(i_\sigma)^* (j_\eta)_* (p_\eta)_* \BU_{X_\eta}\ar@{=}[r]&(p_\sigma)_*i^*j_* \BU_{X_\eta}& \ar[l] (p_\sigma)_*\BU_{X_0}[1]\oplus (p_\sigma)_*\BU_{X_0}
}
\end{equation}

\end{proof}
\end{prop}

\begin{rem}
It follows from the localization sequence and the same arguments used in the proof of Prop. \ref{prop-exactsequenceMFoverSlocalring} that the adjunction map
$$
[\textrm{cofib}(p_*j_\sharp \BU_{X_\eta}\to p_*j_* \BU_{X_\eta})]\to (i_\sigma)_\ast i_\sigma^\ast [\textrm{cofib}(p_*j_\sharp \BU_{X_\eta}\to p_*j_* \BU_{X_\eta}]\simeq (i_\sigma)_\ast i_\sigma^\ast p_*j_* \BU_{X_\eta}
$$
is an equivalence.
\end{rem}

\vspace{1cm}
Finally, we study the image of $\mathcal{M}_S^\vee(\Sing(X, \pi))$ under the $\mathbb{Q}_\ell$-adic realization functor $\mathrm{R}_{\ell}$  of (\ref{eq-ladicrealizationBUmodules}).
\medskip

\begin{cor}
\label{newcorreferee3}
Consider the same notations as in Proposition \ref{prop-exactsequenceMFoverSlocalring}. Then:
\begin{enumerate}
\item The canonical map
$$
\mathrm{R}^\ell(\mathcal{M}_S^\vee(\Sing(X, \pi)))\to (i_\sigma)_*\circ (i_\sigma)^*(\mathrm{R}^\ell(\mathcal{M}_S^\vee(\Sing(X, \pi))))
$$
\noindent is an equivalence of $\mathrm{R}^\ell(\BU_S)$-modules. 
\medskip
\item There is a cofiber-fiber sequence of $\mathrm{R}^\ell(\BU_{\sigma})$-modules 
\medskip
\begin{equation}
\label{eq-cofibersequencethatdeterminsSingoverresiduefieldladic}
(i_\sigma)^*\mathrm{R}^\ell(\mathcal{M}_S^\vee(\Sing(X,\pi)))\to (p_\sigma)_*(\mathrm{R}^\ell(\BU_{X_0})[1]\oplus\mathrm{R}^\ell(\BU_{X_0}))\to (p_\sigma)_*i^*j_*\mathrm{R}^\ell(\BU_{X_\eta})
\end{equation}
\medskip
\end{enumerate}
\begin{proof}
 By construction $\mathrm{R}^\ell(\mathcal{M}_S^\vee(\Sing(X, \pi)))$ carries the structure of a Tate-twisted-2-periodic object in $\mathrm{Sh}_{\mathbb{Q}_\ell}(S)$. Moreover, given the fact that $p$ is proper, the combination of proper base change, of the strong compatibility between the six operations and  of Prop. \ref{prop-exactsequenceMFoverSlocalring}, implies that the $\mathbb{Q}_\ell$-adic sheaf $\mathrm{R}^{\ell}_S(\mathcal{M}_S^\vee(\Sing(X, \pi)))$ is again determined by its restriction to the residue field via a cofiber-fiber sequence of $\mathrm{R}^{\ell}_{\sigma}(\BU_{\sigma})$-modules. 
\end{proof}
\end{cor}

\medskip

\begin{rem}
\label{17ago1221}
 The combination of Prop. \ref{remark-fundamentalclasslci} with proper base change for $p$ and the fact $i^*j_\sharp=0$, tells us that the restriction of the 2-cell  (\ref{eq-exactsequence1classcycleSGA1}) to $\sigma$, provides a commutative diagram of $(p_\sigma)_\ast \BU_Z$-modules
\begin{equation}
\label{eq-exactsequenceclasscyclethatmattersabitlessbefore}
\xymatrix{ & i_\sigma^\ast  (j_\eta)_\ast (p_\eta)_* \BU_{X_\eta}[-1]\ar[d]\\
 (p_\sigma)_\ast \BU_Z \ar@{-->}[ur]^{i_\sigma^\ast\theta_{\mathrm{K}}^{(X,Z)}} \ar[r] & (p_\sigma)_\ast i^! \BU_{X_\eta}
 }
\end{equation}
Transferring this diagram along the $\ell$-adic realization, we obtain a factorization \begin{equation}
\label{eq-exactsequenceclasscyclethatmattersabitless}
 \xymatrix{& i_\sigma^\ast  (j_\eta)_\ast (p_\eta)_* \mathrm{R}^\ell(\BU_{X_\eta})[-1]\ar[d]\\
(p_\sigma)_\ast \mathrm{R}^\ell(\BU_Z)\ar@{-->}[ur]^{i_\sigma^\ast \mathrm{R}^\ell(\theta_{\mathrm{K}}^{(X,Z)})}\ar[r] & (p_\sigma)_\ast i^! \mathrm{R}^\ell(\BU_{X_\eta})}
\end{equation}
as maps of $(p_\sigma)_\ast \mathrm{R}^\ell \BU_Z$-modules.
\end{rem}

\vspace{0.5cm}

\begin{rem}
Notice that, since  $\mathrm{R}^\ell_{X_0}(\BU_{X_0})\simeq \mathbb{Q}_{\ell, X_0}(\beta)$, one has:

$$(p_\sigma)_\ast(\mathrm{R}^\ell(\BU_{X_0}))\simeq (p_\sigma)_\ast p_\sigma^\ast (\mathbb{Q}_{\ell, \sigma}(\beta))\simeq (p_\sigma)_\ast( \mathbb{Q}_{\ell, X_0})\otimes \mathbb{Q}_{\ell, \sigma}(\beta)\simeq ((p_\sigma)_\ast(\mathbb{Q}_{\ell, X_0}))(\beta)$$

\noindent where the second equivalence follows from the projection formula (as $p$ is proper), and the last equivalence follows from the definitions and from the fact that the tensor product commutes with colimits separately in each variable.

\end{rem}

\vspace{1cm}

\subsection{The action of the punctured disk $\eta$}
\label{remark-fundamentalclasslci22ladic}

Throughout this section assume the Context \ref{newlabelreferee144}. In this section we introduce a key player - the algebra structure on the cohomology of the punctured disk $\eta$. Our final goal is describe the $\ell$-adic realization of the motive of $\Sing$ computed in the previous section in terms of an action of this algebra.

\medskip
\begin{defn}
\label{definition-cohomologypunctureddisk}
The object $\mathbb{H}_{\mathbb{Q}_\ell}(\eta):= i_\sigma^\ast (j_\eta)_\ast \mathbb{Q}_{\ell,\eta}$ in  $\mathrm{Sh}_{\mathbb{Q}_\ell}(\sigma)$ is called the \emph{cohomology of the punctured disk $\eta$}.
\end{defn}

\medskip
The first observation is that $\mathbb{H}_{\mathbb{Q}_\ell}(\eta)$ carries a canonical algebra structure and its underlying object can be described in concrete terms using absolute purity for $(S,\sigma)$:

\medskip
\begin{lem}
\label{newlabelreferee141}
The object $\mathbb{H}_{\mathbb{Q}_\ell}(\eta)$ carries a canonical structure of commutative algebra object in $\mathrm{Sh}_{\mathbb{Q}_{\ell}}(\sigma)$. Furthermore, at the level of the underlying objects, we have an equivalence
$$\mathbb{H}_{\mathbb{Q}_\ell}(\eta) \simeq \mathbb{Q}_{\ell,\sigma}  \bigoplus \mathbb{Q}_{\ell,\sigma}(-1)[-1]$$ 
\begin{proof}
The algebra structure follows from the fact $i_\sigma^\ast (j_\eta)_\ast$ is lax monoidal. The computation requires two steps. The first is motivic. The second uses purity for $\ell$-adic sheaves.

\begin{enumerate}[Step 1)]
\item We claim that, given the diagram,
\begin{equation}
\xymatrix{
\mathbb{G}_{m, S}\ar[r]^{j_0}\ar[dr]_{q}& \mathbb{A}^1_S\ar[d]^{h} &\ar[l]_{i_0} S\ar[dl]^{Id}\\
&S&}
\end{equation}
\noindent we have 
\begin{equation}
\label{eq-novaboladeberlim}
(i_0)^*(j_0)_* 1_{\mathbb{G}_{m, S}}\simeq q_\ast 1_{\mathbb{G}m_S}\simeq 1_S\bigoplus 1_S(-1)[-1]
\end{equation}
\medskip
Here these operations hold for $\rmSH_S$ but also for any realization compatible with the six operations (like the $\ell$-adic one with $\mathbb{Q}_\ell$ coefficients). 
\medskip
The second equivalence in (\ref{eq-novaboladeberlim}) follows the fact that \begin{equation}\label{17ago1030}
q_\sharp 1_{\mathbb{G}_{m, S}}= 1_S\bigoplus 1_S(1)[1]
\end{equation}
Let us first show (\ref{17ago1030}): by definition of the Tate motive we obtain $1_S(1)[1]$ as the cofiber of the map in motives over $S$ of $e:S\to q_\sharp 1_{\mathbb{G}_{m, S}}$ given by the unit of the multiplicative group structure. The map $e$ admits a retract induced by the projection $q:\mathbb{G}_{m, S}\to S$. The formula (\ref{17ago1030}) follows then from the fact we are working in a stable setting.
\medskip

We can use this to establish the second equivalence in (\ref{eq-novaboladeberlim}): by adjunction and the projection formula for $q_\sharp$, we get that for any $E\in \rmSH_S$
\begin{align}
\label{17ago1347}
\Map_{\rmSH_S}( E, q_\ast q^\ast 1_S)\simeq \Map_{\rmSH_S}( q_\sharp q^\ast E, 1_S)\simeq\\
\simeq \Map_{\rmSH_S}( q_\sharp1_S\otimes E, 1_S)\simeq\Map_{\rmSH_S}( E\oplus E(1)[1], 1_S)\simeq\\
\simeq \Map_{\rmSH_S}( E, 1_S)\times \Map_{\rmSH_S}(  E(1)[1], 1_S)\simeq \\
\Map_{\rmSH_S}( E, 1_S)\times \Map_{\rmSH_S}(  E, 1_S(-1)[-1])\simeq \\
\simeq  \Map_{\rmSH_S}( E, 1_S\oplus 1_S(-1)[-1])
\end{align}
\medskip
\noindent showing that $q_\ast 1_{\mathbb{G}_{m, S}}\simeq 1_S\bigoplus 1_S(-1)[-1]$. In particular, the unit of the adjunction
$$
1_S\to q_\ast1_S\simeq 1_S\bigoplus 1_S(-1)[-1]
$$
identifies with the inclusion of the first factor in the direct sum. 
\medskip

It remains to show the first equivalence in the formula (\ref{eq-novaboladeberlim}). For that purpose it will be enough to show that 
\begin{equation}
\label{16ago1001}
(i_0)^*(j_0)_* q^\ast \simeq q_\ast q^\ast.
\end{equation}
For that, we notice that the projection $p_\sharp p^\ast \to \mathrm{Id}$ admits a section given by the $1:S\to \mathbb{G}_{m, S}$. Now we recall that $q=h\circ j_0$ so that (\ref{16ago1001}) is equivalent to
\begin{equation}
\label{new-refereeformula11}
(i_0)^*(j_0)_* j_0^\ast h^\ast \simeq h_\ast (j_0)_\ast j_0^\ast h^\ast
\end{equation}
From the localization property we have an exact sequence
\begin{equation}
\label{16ag01002}
\xymatrix{
h_\ast (i_0)_\ast i_0^! h^\ast\ar[r]& h_\ast \mathrm{Id} \, h^\ast \ar[r]& h_\ast (j_0)_\ast j_0^\ast h^\ast
}
\end{equation}
Because of $\mathbb{A}^1$-invariance we have $h_\ast \mathrm{Id} \, h^\ast \simeq \mathrm{Id}$. Moreover, as $h_\ast (i_0)_\ast = \mathrm{Id}_S$,  (\ref{16ag01002}) is equivalent to
\begin{equation}
\label{16ago1003}
\xymatrix{
\ar[d]^{\simeq}h_\ast (i_0)_\ast i_0^! h^\ast\ar[r]& \ar[d]^{\simeq} h_\ast \mathrm{Id} \, h^\ast \ar[r]&\ar[d]^{\simeq} h_\ast (j_0)_\ast j_0^\ast h^\ast\\
 i_0^! h^\ast\ar[r]& \mathrm{Id}_S\ar[r]& h_\ast (j_0)_\ast j_0^\ast h^\ast
}
\end{equation}
Finally, because $(i_0)_*$ is fully faithful and because $i_0^\ast h^\ast=\mathrm{Id}_S$ we have a commutative diagram
\begin{equation}
\label{16ago1004}
\xymatrix{
\ar[d]^{\simeq} h_\ast (i_0)_\ast i_0^! h^\ast\ar[r]& \ar[d]^{\simeq}h_\ast \mathrm{Id} \, h^\ast \ar[r]&\ar[d]^{\simeq} h_\ast (j_0)_\ast j_0^\ast h^\ast\\
 i_0^! h^\ast\ar[r] \ar[d]^{\sim}&\ar[d]^{\sim} \mathrm{Id}_S\ar[r]& h_\ast (j_0)_\ast j_0^\ast h^\ast\\
 i_0^\ast (i_0)_\ast i_0^! h^\ast\ar[r]& i_0^\ast h^\ast&
}
\end{equation}
\noindent so that, from the localization sequence 
\begin{equation}
\label{16ago1007}
\xymatrix{
 i_0^\ast \,\, [\,\,(i_0)_\ast i_0^! h^\ast\ar[r]&  h^\ast\ar[r]& (j_0)_\ast j_0^\ast h^\ast \,\,]
}
\end{equation}
\noindent we deduce (\ref{new-refereeformula11}).
Combining (\ref{16ago1004}) and (\ref{16ago1007}) we obtain a cofiber-fiber sequence
\begin{equation}
\label{16ago1013}
\xymatrix{
 i_0^! 1_{\mathbb{A}^1_S}\ar[r] & 1_S\ar[r]& q_\ast 1_S\simeq 1_S\oplus 1_S(-1)[-2]
}
\end{equation}
\noindent where the last map is the inclusion of the first factor. In particular, we find
\begin{equation}\xymatrix{1_S(-1)[-2]\ar[r]^-{\sim}& i_0^! 1_{\mathbb{A}^1_S}}\end{equation}
\medskip
\item Now we transfer this discussion along the $\mathbb{Q}_\ell$-realization: Absolute purity for $\ell$-adic sheaves, namely, the result of \cite[7.4]{MR3205601} using the fact that the hypothesis \cite[7.3]{MR3205601} holds (as proved in \cite[XVI 3.5.1]{MR3329769}), says that the exchange map

$$
\mathrm{Ex}:t^\ast i_0^!\to i_\sigma^! \pi^\ast
$$

\noindent associated to the pullback diagram

\begin{equation}
\xymatrix{
\sigma \ar[r]^{i_\sigma} \ar[d]^t& S\ar[d]^\pi\\
S\ar[r]^{i_0}& \mathbb{A}^1_S}
\end{equation}

\noindent with $\pi$ the uniformizer, is an equivalence. Therefore we obtain an equivalence of sequences

\begin{equation}
\label{16ago1015}
\xymatrix{
i_\sigma^! \pi^* \mathbb{Q}_{\ell, \mathbb{A}^1_S} \ar[r] & i_\sigma^* \pi^*\mathbb{Q}_{\ell,\mathbb{A}^1_S}  \ar[r]& i_\sigma^\ast (j_\eta)_\ast \mathbb{Q}_{\ell,\eta} = \mathbb{H}_{\mathbb{Q}_\ell}(\eta)\\
t^* i_0^! 1_{\mathbb{A}^1_S}\ar[r] \ar[u]^{\sim}_{\mathrm{Ex}}& \ar[u]^{\sim} t^* i_0^*1_{\mathbb{A}^1_S}\ar[r]& t^*( i_0)^\ast (j_0)_\ast \mathbb{Q}_{\ell, \mathbb{G}m_S}  \ar[u]
}
\end{equation}

\noindent so that the last terms on the right are also equivalent, namely, by (\ref{16ago1013}) we find
\begin{equation}
\mathbb{H}_{\mathbb{Q}_\ell}(\eta)\simeq \mathbb{Q}_{\ell,\sigma} \oplus \mathbb{Q}_{\ell,\sigma} (-1)[-1]
\end{equation}

\noindent and the top sequence in (\ref{16ago1015}) reads as

$$
i_\sigma^! \mathbb{Q}_{\ell,S} \to \mathbb{Q}_{\ell,\sigma}  \to \mathbb{Q}_{\ell,\sigma} \oplus \mathbb{Q}_{\ell,\sigma} (-1)[-1]
$$

\noindent where the last map has a splitting. The exchange map gives the purity isomorphism for $(\sigma,S,\eta)$

\begin{equation}
\label{eq-purityisofor}
\xymatrix{\mathbb{Q}_{\ell,\sigma} (-1)[-2]\ar[r]^{\sim}& i_\sigma^! \mathbb{Q}_{\ell,S}}
\end{equation}
\end{enumerate}
\end{proof}
\end{lem}

\medskip

\begin{rem}
\label{16ago1716}
Using the description of $\mathrm{R}^\ell(\BU_\sigma)$ as an infinite direct sum of Tate twists (\ref{eq-ladicrealizationBUQ}) we can consider the map
\begin{equation}
\label{16ago1557}
\xymatrix{\mathbb{Q}_{\ell,\sigma} (-1)[-2] \ar[r]^-{\mathrm{inc}}& \mathrm{R}^\ell(\BU_\sigma)\ar[r]^u& i_\sigma^! \mathrm{R}^\ell(\BU_S)\ar[r]^-{\mathrm{proj}_0}& i_\sigma^! \mathbb{Q}_{\ell,S} }
\end{equation}
\noindent where:
\begin{itemize}
\item  the first map is the canonical inclusion. Under the equivalence (\ref{eq-ladicrealizationBUQ}) this inclusion can also be described as the Bott element, $\beta$ (\ref{eq-bottelement}).
\item  the last map is the projection at level $0$.
\item the middle map is the purity morphism for algebraic K-theory (\ref{16ago1554purityalgebraicKtheory}). 
\end{itemize}
See \cite[14.4.1]{cisinski-tcmm}. We claim that the purity map (\ref{eq-purityisofor}) coincides with the composition (\ref{16ago1557}). \textcolor{black}{ This follows because of the compatibility of the exchange map under the six operations, before and after passing to $\mathrm{R}^\ell(\BU)$-modules. }
\medskip
In particular, by the universal property of base-changing to $\mathrm{R}^\ell(\BU_\sigma)$, (\ref{16ago1557}) provides a $\mathrm{R}^\ell(\BU_\sigma)$-linear commutative square 
\begin{equation}
\label{16ago2152}
\xymatrix{
 \mathrm{R}^\ell(\BU_\sigma) \ar[rr]^-{u}_-{\sim} && i_\sigma^! \mathrm{R}^\ell(\BU_S)\\
\ar[u]^{-\ast \beta}_-{\sim}\mathrm{R}^\ell(\BU_\sigma)\otimes \mathbb{Q}_{\ell,\sigma} (-1)[-2] \ar[rr]^-{Id\otimes (\ref{eq-purityisofor})}_-{\sim} && \ar[u]^-{\mathrm{can}}_-{\sim} \mathrm{R}^\ell(\BU_\sigma)\otimes  i_\sigma^! \mathbb{Q}_{\ell,S}
}
\end{equation}
\noindent which under (\ref{eq-ladicrealizationBUQ}) says that $u$ is the multiplication by $\beta^{-1}$.
\end{rem}

\medskip

Let us now extract some consequences of the Lemma.

\medskip

\begin{rem}
\label{newrefereelabel141713}
In light of the lemma, $\mathbb{H}_{\mathbb{Q}_\ell}(\eta)$ can be understood as the cohomology of a circle but with a generator in Tate degree $(-1,-1)$. Multiplication by this generator, defined by the composition
\begin{equation}
\label{korokseed46464}
\xymatrix{\mathbb{Q}_{\ell, \sigma}(-1)[-1]\otimes \mathbb{H}_{\mathbb{Q}_\ell}(\eta)\ar[rr]^-{incl\otimes Id}& &  \mathbb{H}_{\mathbb{Q}_\ell}(\eta)\otimes \mathbb{H}_{\mathbb{Q}_\ell}(\eta) \ar[rr]&&\mathbb{H}_{\mathbb{Q}_\ell}(\eta)}
\end{equation}
\noindent provides a map of $\mathbb{H}_{\mathbb{Q}_\ell}(\eta)$-modules
\begin{equation}
\label{korokseed5757575}
\mathbb{H}_{\mathbb{Q}_\ell}(\eta)\to \mathbb{H}_{\mathbb{Q}_\ell}(\eta)(1)[1]
\end{equation}
Rewinding base change along $\mathbb{Q}_{\ell, \sigma}\to \mathbb{H}_{\mathbb{Q}_\ell}(\eta)$, this map corresponds to a map of $\mathbb{Q}_{\ell, \sigma}$-modules
\begin{equation}
\label{16ago1353}
\mathbb{Q}_{\ell, \sigma}\to \mathbb{H}_{\mathbb{Q}_\ell}(\eta)(1)[1]
\end{equation}
\noindent recovering the element of Tate degree (-1,-1).
\end{rem}

\medskip

\begin{prop}
\label{propclassdelignefundamentalcycle}(\cite[Cycle \S 2.1]{MR0463174} or \cite[Exp. XVI Def. 2.3.1, Prop. 2.3.4]{MR3329769} and \cite[\S 1, 1.1.2, 1.1.5]{fujiwara2000proof})
There exists an $\ell$-adic class 
\begin{equation}
\label{korokseed6757575040}
[\theta_\ell^{(S,\sigma)}: \mathbb{Q}_{\ell, \eta}\to \mathbb{Q}_{\ell, \eta}(1)[1]]\in \mathrm{H}^1(\eta, \mathbb{Q}_{\ell}(1))
\end{equation} such that:
\begin{enumerate}
\item its image under $i_\sigma^\ast (j_\eta)_\ast$ is the map (\ref{korokseed5757575});
\item  its image under the boundary map of the open-closed pair with support $(\sigma, S, \eta)$
\begin{equation}
\label{korokseed66764242470}
\mathrm{H}^1(\eta, \mathbb{Q}_{\ell}(1))\to \mathrm{H}^2_\sigma(S, \mathbb{Q}_{\ell}(1))
\end{equation}
\noindent is the first Chern class of the conormal bundle of $\sigma$ in $S$.
\end{enumerate}
\end{prop}

\vspace{1cm}

\begin{cons}
\label{newlabelrefereediaquasenofim}
The algebra structure on the punctured disk can be transferred along pullbacks. Let $p:X\to S$ be as in the Context \ref{newlabelreferee144} and set
\begin{equation}
\label{newref1467748339}
\mathbb{H}_{\mathbb{Q}_\ell}(X_\eta):= (p_\sigma)_\ast i^\ast j_\ast  \mathbb{Q}_{\ell, X_\eta}\simeq i_\sigma^\ast (j_\eta)_\ast (p_\eta)_\ast \mathbb{Q}_{\ell, X_\eta}
\end{equation}
The unit of the adjunction $\mathbb{Q}_{\ell, \eta}\to (p_\eta)_\ast \mathbb{Q}_{\ell, X_\eta}$, being a map of algebras, produces under the lax-monoidal functor $i_\sigma^\ast (j_\eta)_\ast$, a map of commutative algebra objects
\begin{equation}
\label{newref146774833943}
m_p: \mathbb{H}_{\mathbb{Q}_\ell}(\eta)\to \mathbb{H}_{\mathbb{Q}_\ell}(X_\eta)
\end{equation}
\medskip
In particular, this guarantees the commutativity of the diagram "multiplication by $\theta_\ell^{(S,\sigma)}$"
\begin{equation}
\label{newrefereelabel1776459}
\xymatrix{\mathbb{H}_{\mathbb{Q}_\ell}(\eta)\ar[rr]^{-\ast \theta_\ell^{(S,\sigma)}} \ar[d]^{m_p}&&\mathbb{H}_{\mathbb{Q}_\ell}(\eta)(1)[1] \ar[d]^{m_p}\\
\mathbb{H}_{\mathbb{Q}_\ell}(X_\eta)\ar[rr]^{-\ast m_p(\theta_\ell^{(S,\sigma)})}&& \mathbb{H}_{\mathbb{Q}_\ell}(X_\eta)(1)[1]}
\end{equation}
\noindent and implies that the bottom map can also be obtained by pulling $\theta_\ell^{(S,\sigma)}$ back to $X_\eta$: 
$$
\xymatrix{ -\ast m_p(\theta_\ell^{(S,\sigma)}) = (p_\sigma)_\ast i^*j_\ast p_\eta^\ast \big[ \,\,  \mathbb{Q}_{\ell, X_\eta}\ar[rr]^-{\theta_\ell^{(S,\sigma)}}&& \mathbb{Q}_{\ell, X_\eta}(1)[1] \,\,\big] }
$$
Notice at the same time that $\mathbb{H}_{\mathbb{Q}_\ell}(X_\eta)$ is also a $(p_\sigma)_\ast \mathbb{Q}_{\ell, X_\sigma}$-algebra via a map
\begin{equation}
\label{newrefereelabel146459}
(p_\sigma)_\ast \mathbb{Q}_{\ell, X_\sigma}\to \mathbb{H}_{\mathbb{Q}_\ell}(X_\eta)
\end{equation}
\noindent obtained from the map of algebras $\mathbb{Q}_{\ell, X}\to j_\ast j^\ast \mathbb{Q}_{\ell, X}$. As coproducts of commutative algebras are given by tensor products \cite[3.2.4.7]{lurie-ha}, the combined actions of 
$(p_\sigma)_\ast \mathbb{Q}_{\ell, X_\sigma}$ and $\mathbb{H}_{\mathbb{Q}_\ell}(\eta)$ are encoded by a single map of commutative algebras
\begin{equation}
\label{newref146774jajjcaclnl833943}
\mathbb{H}_{\mathbb{Q}_\ell}(\eta)\otimes_{\mathbb{Q}_{\ell,\sigma}} (p_\sigma)_\ast \mathbb{Q}_{\ell, X_\sigma} \to \mathbb{H}_{\mathbb{Q}_\ell}(X_\eta)
\end{equation}
\end{cons}

\vspace{1cm}

Let us now transfer this discussion to $\mathrm{R}^\ell(\BU)$-modules.

\begin{defn}
The \emph{$\mathrm{R}^\ell(\BU)$-valued cohomology of the punctured disk $\eta$} is the object $\mathbb{H}^{\mathrm{R}^\ell(\BU)}_{\mathbb{Q}_\ell}(\eta):= i_\sigma^\ast (j_\eta)_\ast \mathrm{R}^\ell(\BU_\eta)$ in  $\Mod_{\mathrm{R}^\ell(\BU_\sigma)}\mathrm{Sh}_{\mathbb{Q}_\ell}(\sigma)$.
\end{defn}

\medskip
\begin{cons}
\label{17ago1220}
The standard lax monoidal argument tells us that $\mathbb{H}^{\mathrm{R}^\ell(\BU)}_{\mathbb{Q}_\ell}(\eta)$ is a commutative algebra in  $\Mod_{\mathrm{R}^\ell(\BU_\sigma)}\mathrm{Sh}_{\mathbb{Q}_\ell}(\sigma)$. The map of algebras 
$$
\mathbb{Q}_{\ell, X_\eta}\to \mathrm{R}^\ell(\BU_\eta)
$$
\noindent induces a map of $\mathbb{Q}_{\ell,\sigma}$-algebras
\begin{equation}
\label{18ago2217}
 \mathbb{H}_{\mathbb{Q}_\ell}(\eta)\to \mathbb{H}^{\mathrm{R}^\ell(\BU)}_{\mathbb{Q}_\ell}(\eta)
\end{equation}
Furthermore, the fact that $\mathbb{H}^{\mathrm{R}^\ell(\BU)}_{\mathbb{Q}_\ell}(\eta)$ is a $\mathrm{R}^\ell(\BU_\sigma)$-algebra, tells us that under base-change (\ref{18ago2217}) produces a map of $\mathrm{R}^\ell(\BU_\sigma)$-algebras
\begin{equation}
\label{18ago2217}
 \mathbb{H}_{\mathbb{Q}_\ell}(\eta)\otimes_{\mathbb{Q}_{\ell,\sigma}} \mathrm{R}^\ell(\BU_\sigma) \to \mathbb{H}^{\mathrm{R}^\ell(\BU)}_{\mathbb{Q}_\ell}(\eta)
\end{equation}
\noindent which we can show to be an equivalence: indeed, it is enough to check that it is an equivalence between the underlying objects and here we have
$$
\mathbb{H}_{\mathbb{Q}_\ell}(\eta)\otimes_{\mathbb{Q}_{\ell,\sigma}} \mathrm{R}^\ell(\BU_\sigma)\simeq  i_\sigma^\ast (j_\eta)_\ast (\mathbb{Q}_{\ell, \eta}) \otimes_{\mathbb{Q}_{\ell,\sigma}} \bigoplus_{i\in \mathbb{Z}}\mathbb{Q}_{\ell}(i)[2i]\simeq \bigoplus_{i\in \mathbb{Z}}  i_\sigma^\ast (j_\eta)_\ast (\mathbb{Q}_{\ell, \eta}) (i)[2i]\simeq 
$$

$$
\simeq \bigoplus_{i\in \mathbb{Z}}  i_\sigma^\ast (j_\eta)_\ast (\mathbb{Q}_{\ell, \eta}(i)[2i]) \simeq i_\sigma^\ast (j_\eta)_\ast (\bigoplus_{i\in \mathbb{Z}}   (\mathbb{Q}_{\ell, \eta}(i)[2i])\simeq \mathbb{H}^{\mathrm{R}^\ell(\BU)}_{\mathbb{Q}_\ell}(\eta)
$$

\medskip
\noindent where we used the fact that both $^*$-pullbacks and $_*$-pushforwards preserve arbitrary colimits and Tate-twists \footnote{This will be explained in the proof of Prop. \ref{remark-vanishingcyclesperiodicequaltensorusual} below.}

\medskip

Finally, the map of algebras (\ref{18ago2217}) renders the commutavity of the diagram
"multiplication by $\theta_\ell^{(S,\sigma)}$"
\begin{equation}
\label{newrefereelabel1776459}
\xymatrix{\mathbb{H}_{\mathbb{Q}_\ell}(\eta)\ar[rr]^{-\ast \theta_\ell^{(S,\sigma)}} \ar[d]&&\mathbb{H}_{\mathbb{Q}_\ell}(\eta)(1)[1] \ar[d]\\
\mathbb{H}^{\mathrm{R}^\ell(\BU)}_{\mathbb{Q}_\ell}(\eta)\ar[rr]^{-\ast \theta_\ell^{(S,\sigma)}}&& \mathbb{H}^{\mathrm{R}^\ell(\BU)}_{\mathbb{Q}_\ell}(\eta)(1)[1]}
\end{equation}
\noindent and implies that the bottom map can be obtained by tensoring with $\theta_\ell^{(S,\sigma)}$ 
$$
\xymatrix{i_\sigma^\ast (j_\eta)_\ast \big[ \,\,\mathrm{R}^\ell(\BU_\eta)\ar[rr]^{Id\otimes \theta_\ell^{(S,\sigma) }}&& \mathrm{R}^\ell(\BU_\eta)(1)[1] \,\,\big] }
$$
\end{cons}

\vspace{1cm}
\begin{cons}
\label{17ago1219}
Under the same hypothesis as in \ref{newlabelrefereediaquasenofim}, the algebra structure on the $\mathrm{R}^\ell(\BU)$-valued punctured disk can be transferred along maps $p:X\to S$.  Set
$$
\mathbb{H}^{\mathrm{R}^\ell(\BU)}_{\mathbb{Q}_\ell}(X_\eta):= (p_\sigma)_\ast i^\ast j_\ast \mathrm{R}^\ell(\BU_\eta)\simeq i_\sigma^\ast (j_\eta)_\ast (p_\eta)_\ast \mathrm{R}^\ell(\BU_\eta)
$$
Repeating the same arguments as above, we find a map of algebra objects
$$
\mathbb{H}^{\mathrm{R}^\ell(\BU)}_{\mathbb{Q}_\ell}(\eta)\to\mathbb{H}^{\mathrm{R}^\ell(\BU)}_{\mathbb{Q}_\ell}(X_\eta)
$$
\noindent which, composed with $
 \mathbb{H}_{\mathbb{Q}_\ell}(\eta)\to \mathbb{H}^{\mathrm{R}^\ell(\BU)}_{\mathbb{Q}_\ell}(\eta)
$, renders the compatibility with "multiplication by $\theta_\ell^{(S,\sigma)}$"
\begin{equation}
\label{newrefereelabel1776459hhaf9}
\xymatrix{\mathbb{H}_{\mathbb{Q}_\ell}(\eta)\ar[rr]^{-\ast \theta_\ell^{(S,\sigma)}} \ar[d]^{m_p}&&\mathbb{H}_{\mathbb{Q}_\ell}(\eta)(1)[1] \ar[d]^{m_p}\\
\mathbb{H}^{\mathrm{R}^\ell(\BU)}_{\mathbb{Q}_\ell}(X_\eta)\ar[rr]^{-\ast m_p(\theta_\ell^{(S,\sigma)})}&& \mathbb{H}^{\mathrm{R}^\ell(\BU)}_{\mathbb{Q}_\ell}(X_\eta)}
\end{equation}
\noindent and implies that the bottom map can also be obtained as
$$
\xymatrix{i_\sigma^\ast (j_\eta)_\ast p_\eta^\ast \big[ \,\,\mathrm{R}^\ell(\BU_\eta)\ar[rr]^{Id\otimes \theta_\ell^{(S,\sigma) }}&& \mathrm{R}^\ell(\BU_\eta)(1)[1] \,\,\big] }
$$
\noindent 
At the same time, as in \ref{newlabelrefereediaquasenofim},  $\mathbb{H}^{\mathrm{R}^\ell(\BU)}_{\mathbb{Q}_\ell}(X_\eta)$ is also a $(p_\sigma)_\ast \mathrm{R}^\ell(\BU_\sigma)$-algebra through a map of $\BU_\sigma$-algebras
\begin{equation}
\label{17ago2131}
(p_\sigma)_\ast \mathrm{R}^\ell(\BU_\sigma)\to \mathbb{H}^{\mathrm{R}^\ell(\BU)}_{\mathbb{Q}_\ell}(X_\eta)
\end{equation}
 Using \cite[3.2.4.7]{lurie-ha}, the combined actions can be assembled as a map of $\mathbb{Q}_{\ell, \sigma}$-algebras
\begin{equation}
\label{newref146774jajfunnotfunjcaclnl833943}
\xymatrix{\mathbb{H}_{\mathbb{Q}_\ell}(\eta)\otimes_{\mathbb{Q}_{\ell,\sigma}} (p_\sigma)_\ast \mathrm{R}^\ell(\BU_\sigma) \ar[r]^-{\mathrm{can}}&\mathbb{H}^{\mathrm{R}^\ell(\BU)}_{\mathbb{Q}_\ell}(X_\eta)}
\end{equation}
\noindent which can equivalently be written as a map of $\BU_\sigma$-algebras
\begin{equation}
\label{newref146774jajfunnotfunjcaclnl833943}
\xymatrix{[\,\BU_\sigma\otimes_{\mathbb{Q}_{\ell,\sigma}}\mathbb{H}_{\mathbb{Q}_\ell}(\eta)\,]\,\otimes_{\BU_\sigma} (p_\sigma)_\ast \mathrm{R}^\ell(\BU_\sigma) \ar[r]&\mathbb{H}^{\mathrm{R}^\ell(\BU)}_{\mathbb{Q}_\ell}(X_\eta)}
\end{equation}
To conclude, let us remark one can re-write $-\ast m_p( \theta_\ell^{(S,\sigma)})$ as map of $(p_\sigma)_\ast \mathrm{R}^\ell(\BU_{X_\sigma})$-modules 
\begin{equation}
\label{newlabelfinalidentificationclassesbasesigmacas23e}
\,\,(p_\sigma)_\ast \mathrm{R}^\ell(\BU_{X_\sigma})\to \mathbb{H}_{\mathbb{Q}_\ell}(X_\eta)(1)[1]
\end{equation}
\end{cons}

\medskip

\begin{rem}
In the case where $p:X\to S$ is smooth and proper, smooth base-change for $ p$ gives $j_\ast p_\eta^\ast \simeq p^\ast (j_0)_\ast$ and tells us that the map $\mathrm{can}$ of  (\ref{newref146774jajfunnotfunjcaclnl833943}) is an equivalence of algebras.
\end{rem}

\medskip

Finally, we can achieve the main goal of this section and explain the link between the action of the punctured disk encoded by the map of algebras (\ref{newref146774jajfunnotfunjcaclnl833943}) and the $\ell$-adic realization of $\Sing$.

\vspace{1cm}

\begin{prop}
\label{16ago2244}
Forgetting the commutative algebra structures to modules, the fiber sequence
\begin{equation}
\label{newref146774jajfunnotfunnotfunagainjcaclnl833943}
\xymatrix{\mathrm{fib}(\mathrm{can})\ar[r]&\mathbb{H}_{\mathbb{Q}_\ell}(\eta)\otimes_{\mathbb{Q}_{\ell,\sigma}} (p_\sigma)_\ast \mathrm{R}^\ell(\BU_{X_\sigma}) \ar[r]^-{\mathrm{can}}&\mathbb{H}^{\mathrm{R}^\ell(\BU)}_{\mathbb{Q}_\ell}(X_\eta)}
\end{equation}
is equivalent to the fiber sequence (\ref{eq-cofibersequencethatdeterminsSingoverresiduefieldladic}). In particular
$$
(i_\sigma)^*\mathrm{R}^\ell(\mathcal{M}_S^\vee(\Sing(X,\pi)))\simeq  \mathrm{fib}(\mathrm{can})
$$
\personal{Notice that as 0 is not a map of algebras, this limit is of course not a limit diagram in algebras, only in modules.}
\begin{proof}
Proper base change for perfect complexes gives us a diagram of $S$-dg-categories
$$
\xymatrix{
\Perf(X_\sigma)\ar[r]^-{i_*}&\Perf(X)\ar[r]^-{i^*}& \Perf(X_\sigma)\\
\Perf(\sigma) \ar[u]^{(p_\sigma)^\ast}\ar[r]^-{(i_\sigma)_*}&\ar[u]^{p^\ast} \Perf(S)\ar[r]^-{ i_\sigma^*}& \Perf(\sigma) \ar[u]^{(p_\sigma)^\ast}
}
$$
\noindent where:

\begin{itemize}
\item from the projection formula for $i$, the maps in the top line are $\Perf(X_\sigma)^\otimes$-linear;
\item   from the projection formula for $i_\sigma$ the maps in the bottom line are $\Perf(\sigma)^\otimes$-linear;
\item $(p_\sigma)^\ast$ is symmetric monoidal.
\end{itemize}

\medskip

Transferring this diagram to motives and using the identifications of Sections \ref{subsubsection-eq-XschemeS} and \ref{subsubsection-contextopenclosed3}, we find a commutative diagram of $\BU_\sigma$-motives

\begin{equation}
\label{16ago1754}
\resizebox{1. \hsize}{!}{
\xymatrix{
(p_\sigma)_\ast \BU_{X_\sigma} \simeq i_\sigma^\ast \mathcal{M}_S^\vee (\Perf(X_\sigma)) \ar[drrr]_(.35){i_\sigma^\ast \mathcal{M}_S^\vee (i^\ast i_\ast)\sim 0}\ar[rr]^-{u^{(X,X_\sigma)}}&& i_\sigma^\ast \mathcal{M}_S^\vee (\Coh(X_\sigma))\simeq (p_\sigma)_\ast i_\sigma^!\BU_S \ar[dr]&\\   
&&&(p_\sigma)_\ast \BU_\sigma \simeq i_\sigma^\ast \mathcal{M}_S^\vee (\Perf(X_\sigma)) \\ 
(p_\sigma)_\ast \BU_{X_\sigma}\simeq i_\sigma^\ast \mathcal{M}_S^\vee (\Perf(\sigma))  \ar[drrr]_-{i_\sigma^\ast \mathcal{M}_S^\vee (i_\sigma^\ast (i_\sigma)_\ast)\sim 0}\ar[rr]^-{u^{(S,\sigma)}}_-{\sim}\ar[uu]^-{i_\sigma^\ast \mathcal{M}_S^\vee (p_\sigma^\ast)}&& i_\sigma^! \BU_S\simeq i_\sigma^\ast \mathcal{M}_S^\vee (\Coh(\sigma))  \ar[dr]^0 \ar[uu]^-{i_\sigma^\ast \mathcal{M}_S^\vee (p_\sigma^\ast)}&\\ 
&&&  \BU_\sigma\simeq i_\sigma^\ast \mathcal{M}_S^\vee (\Perf(\sigma)) \ar[uu]^-{i_\sigma^\ast \mathcal{M}_S^\vee (p_\sigma^\ast)}\\ 
}
}
\end{equation}

\noindent where we have:
\begin{itemize}
\item the top face is the $(p_\sigma)_\ast \BU_{X_\sigma}$-linear commutative diagram (\ref{eq-exactsequenceMFker}).
\item the bottom face is the $\BU_\sigma$-linear version of the diagram (\ref{eq-exactsequenceMFker}) for $p=Id:S\to S$.
\item the maps $i_\sigma^\ast \mathcal{M}_S^\vee (p_\sigma^\ast):\BU_\sigma\to (p_\sigma)_\ast \BU_{X_\sigma}$ coincide with the natural unit of the adjunction. On the extreme left and right of the diagram, these are maps of algebra-objects. The middle one is a map of modules.
\end{itemize}

\medskip

In this case, by the universal property of base change along $\BU_\sigma\to (p_\sigma)_\ast \BU_{X_\sigma}$ the diagram (\ref{16ago1754}) is equivalent to $(p_\sigma)_\ast \BU_{X_\sigma}$-linear commutative diagram

\begin{equation}
\label{16ago1757}
\resizebox{1. \hsize}{!}{
\xymatrix{
&& (p_\sigma)_\ast i^!\BU_X \ar[dddr]&\\   
&&& \\ 
\ar @/_2.0pc/  [drrr] |{0\sim \mathrm{Id}\otimes i_\sigma^\ast \mathcal{M}_S^\vee (i_\sigma^\ast (i_\sigma)_\ast) \sim i_\sigma^\ast \mathcal{M}_S^\vee (i^\ast (i)_\ast)}  (p_\sigma)_\ast \BU_{X_\sigma}\simeq (p_\sigma)_\ast \BU_{X_\sigma}\otimes_{\BU_\sigma} \BU_\sigma  \ar[rr]^-{Id\otimes u^{(S,\sigma)}}_-{\sim}\ar[uurr]^-{u^{(S,\sigma)}}&& (p_\sigma)_\ast \BU_{X_\sigma}\otimes_{\BU_\sigma} i_\sigma^!\BU_S \ar[dr]^{Id\otimes 0} \ar@{-->}[uu]&\\ 
&&&  (p_\sigma)_\ast \BU_{X_\sigma}\simeq (p_\sigma)_\ast \BU_{X_\sigma}\otimes_{\BU_\sigma} \BU_\sigma   \\ 
}
}
\end{equation}

\medskip

Finally, we transfer this diagram along the $\ell$-adic realization and explain how to conclude the proof. First notice the information of the fiber sequence (\ref{eq-cofibersequencethatdeterminsSingoverresiduefieldladic}) is already present in the diagram (\ref{16ago1757}) by passing to cofibers. Indeed, passing to cofibers in the top face, we get 

\begin{equation}
\label{17ago1052}
\xymatrix{\mathrm{Cofib}\,\,[\,  i_\sigma^\ast \mathcal{M}_S^\vee (i^\ast (i)_\ast)\,]\ar[r]^-{(\ref{eq-cofibersequencethatdeterminsSingoverresiduefieldladic})}& \mathbb{H}^{\mathrm{R}^\ell(\BU)}_{\mathbb{Q}_\ell}(X_\eta)}
\end{equation}

Now, the commutativity of (\ref{16ago1757}) together with the commutativity of

\begin{equation}
\label{16ago1900}
\resizebox{1. \hsize}{!}{
\xymatrix{
\ar[d]^-{Id\otimes 0}(p_\sigma)_\ast \mathrm{R}^\ell(\BU_{X_\sigma})\otimes_{\mathrm{R}^\ell(\BU_\sigma)} i_\sigma^!\mathrm{R}^{\ell}(\BU_S) \ar[r]^-{\sim}  &
 (p_\sigma)_\ast \mathrm{R}^\ell(\BU_{X_\sigma})\otimes_{\mathrm{R}^\ell(\BU_\sigma)} (\mathrm{R}^\ell(\BU_\sigma)\otimes_{\mathbb{Q}_{\ell, \sigma}} i_\sigma^!\mathbb{Q}_{\ell, S} )\ar[r]^-{\sim} &
  (p_\sigma)_\ast \mathrm{R}^\ell(\BU_{X_\sigma})\otimes_{\mathbb{Q}_{\ell, \sigma}} i_\sigma^!\mathbb{Q}_{\ell, S} \ar[d]^-{Id\otimes 0}\\
\mathrm{R}^\ell(\BU_{X_\sigma})\otimes_{\mathrm{R}^\ell(\BU_\sigma)} \mathrm{R}^{\ell}(\BU_\sigma) \ar[r]^-{\sim}  &
(p_\sigma)_\ast \mathrm{R}^\ell(\BU_{X_\sigma})\otimes_{\mathrm{R}^\ell(\BU_\sigma)} (\mathrm{R}^\ell(\BU_\sigma)\otimes_{\mathbb{Q}_{\ell, \sigma}} \mathbb{Q}_{\ell, \sigma} )\ar[r]^-{\sim} &
 (p_\sigma)_\ast \mathrm{R}^\ell(\BU_{X_\sigma})\otimes_{\mathbb{Q}_{\ell, \sigma}} \mathbb{Q}_{\ell, \sigma}
}
}
\end{equation}

\noindent combined with the commutativity of (\ref{16ago2152}), tells us that after taking cofibers in (\ref{16ago1757}) we obtain a new commutative diagram establishing the identification of cofiber-sequences

\begin{equation}
\label{16ago1908}
\resizebox{1. \hsize}{!}{
\xymatrix{
\mathrm{Cofib}\,\,[\,  i_\sigma^\ast \mathcal{M}_S^\vee (i^\ast (i)_\ast)\,]\ar[dr]_{(\ref{eq-cofibersequencethatdeterminsSingoverresiduefieldladic})} \ar[r]^-{\sim} & (p_\sigma)_\ast \mathrm{R}^\ell(\BU_{X_\sigma})\otimes_{\mathbb{Q}_{\ell, \sigma}} \mathrm{Cofib}\,\,[\,i_\sigma^!\mathbb{Q}_{\ell, S}\to \mathbb{Q}_{\ell, \sigma}\,]\,  \ar[r]^-{\sim} \ar[d] & (p_\sigma)_\ast \mathrm{R}^\ell(\BU_{X_\sigma})\otimes_{\mathbb{Q}_{\ell, \sigma}}\mathbb{H}_{\mathbb{Q}_\ell}(\eta)  \ar[dl]^{}\\ 
&\mathbb{H}^{\mathrm{R}^\ell(\BU)}_{\mathbb{Q}_\ell}(X_\eta)&
}
}
\end{equation}

Finally, we argue that the right-diagonal map in (\ref{16ago1908})  is the map $\mathrm{can}$ appearing in (\ref{newref146774jajfunnotfunnotfunagainjcaclnl833943}). Indeed, $\mathrm{can}$ is constructed as a coproduct of commutative-algebra maps and therefore determined by its restriction to $(p_\sigma)_\ast \mathrm{R}^\ell(\BU_{X_\sigma})$ and to $\mathbb{H}_{\mathbb{Q}_\ell}(\eta)$. Some diagram chasing shows that the restrictions of right-diagonal map in (\ref{16ago1908}) coincide with the ones of $\mathrm{can}$.
\end{proof}
\end{prop}

\medskip

\begin{rem}
\label{17ago1218}
A consequence of the Corollary \ref{16ago2244} obtained through some diagram chasing is that the classes $-\ast m_p( \theta_\ell^{(S,\sigma)})$ of (\ref{newlabelfinalidentificationclassesbasesigmacas23e})
 and $i_\sigma^\ast \mathrm{R}^\ell(\theta_{\mathrm{K}}^{(X,Z)})$ of 
(\ref{eq-exactsequenceclasscyclethatmattersabitless}) coincide.
\end{rem}

\vspace{1cm}

We conclude this section relating the algebra $\mathbb{H}_{\mathbb{Q}_\ell}(\eta)\otimes_{\mathbb{Q}_{\ell,\sigma}} \mathrm{R}^\ell(\BU_\sigma)$ to the algebra  $i_\sigma^\ast \mathrm{R}^\ell(\mathcal{M}_S^\vee(\Sing(S,0))$ of (\ref{19ago1200}).

\medskip

\begin{prop}
\label{20ago2141}
There is a canonical equivalence of commutative $\mathrm{R}^\ell(\BU_\sigma)$-algebra objects
\begin{equation}
\label{20ago2120}
\mathbb{H}_{\mathbb{Q}_\ell}(\eta)\otimes_{\mathbb{Q}_{\ell,\sigma}} \mathrm{R}^\ell(\BU_\sigma)\simeq i_\sigma^\ast \mathrm{R}^\ell(\mathcal{M}_S^\vee(\Sing(S,0))
\end{equation}
\begin{proof}
As a $\mathrm{R}^\ell(\BU_\sigma)$-module, we know that the l.h.s is equivalent to $\mathrm{R}^\ell(\BU_\sigma)\oplus \mathrm{R}^\ell(\BU_\sigma)[1]$ (Prop. \ref{prop-BUmotive2periodic}) and the r.h.s is equivalent to $\mathrm{R}^\ell(\BU_\sigma)\oplus \mathrm{R}^\ell(\BU_\sigma)(-1)[-1]$ (Prop. \ref{newlabelreferee141} ). Bott periodicity imposes $\mathrm{R}^\ell(\BU_\sigma)(-1)[-1]\simeq \mathrm{R}^\ell(\BU_\sigma)[1]$ and the inclusion of this factor on both the left and right terms, gives under the universal property of the free symmetric $\mathrm{R}^\ell(\BU_\sigma)$-algebra, maps of commutative algebra objects

\begin{equation}
\label{20ago2138}
\xymatrix{
\mathbb{H}_{\mathbb{Q}_\ell}(\eta)\otimes_{\mathbb{Q}_{\ell,\sigma}} \mathrm{R}^\ell(\BU_\sigma)& \ar[l] \mathrm{Sym}_{\mathrm{R}^\ell(\BU_\sigma)}(\mathrm{R}^\ell(\BU_\sigma)[1])\ar[r]& i_\sigma^\ast \mathrm{R}^\ell(\mathcal{M}_S^\vee(\Sing(S,0))
}
\end{equation}

Finally, as we are working with $\mathbb{Q}_\ell$-coefficients and the cohomology of the symmetric groups are zero in characteristic zero, we find that as $\mathrm{R}^\ell(\BU_\sigma)$-modules we have

$$
 \mathrm{Sym}_{\mathrm{R}^\ell(\BU_\sigma)}(\mathrm{R}^\ell(\BU_\sigma)[1])\simeq  \mathrm{R}^\ell(\BU_\sigma)\oplus \mathrm{R}^\ell(\BU_\sigma)[1]
$$

\noindent showing that both maps in the diagram (\ref{20ago2138}) are equivalences.
\end{proof}
\end{prop}

\vspace{1cm}
\subsection{$\ell$-adic inertia-invariant vanishing cycles }
\label{subsubsection-descriptionofmotiveofvanishingcycles}
In this section we investigate more closely the sequence defining vanishing cycles in Def. \ref{17ago1329}, with the final goal of relating this sequence to the one characterizing the motive of the singularity category. \medskip

\vspace{0.5cm}
Consider the Context \ref{newlabelreferee144}. Let $\mathcal{V}_p(\beta) := \mathcal{V}_{p}(E)$ where $E:=\mathrm{R}^{\ell}_X(\BU_{X})\simeq \mathbb{Q}_{\ell}(\beta)_X$, and recall (Definition \ref{17ago1329}) our convention $\mathcal{V}_{p}:=\mathcal{V}_{p}(\mathbb{Q}_{\ell, X})$ As a first observation, we prove the following

\begin{prop}
\label{remark-vanishingcyclesperiodicequaltensorusual}
We have a canonical equivalence 
$$
\mathcal{V}_{p}(\beta)\simeq \mathcal{V}_{p}\otimes \mathbb{Q}_{\ell}(\beta).
$$
\begin{proof}
Given $E$ a $\mathbb{Q}_\ell$-adic sheaf on $X$,  we claim first to have a canonical equivalence between $\mathcal{V}_{p}(E(1))$ and $\mathcal{V}_{p}(E)\otimes \mathbb{Q}_{\ell}(1)$. To check this we can look at the cofiber sequence defining vanishing cycles

\begin{equation}
\label{equation-cofibersequencetwisted}
v_\sigma^* i_*(E(1))\to \bar{i}^*\bar{j}_* \bar{j}^* v^* (E(1))\to \mathcal{V}_{p}(E(1))
\end{equation}

\noindent and notice that pullbacks commute with Tate-twists (by definition) and that one has canonical equivalences

\begin{equation}
\label{equation-commutetwistspushforward}
\bar{j}_* \bar{j}^* (F(1))\simeq  (\bar{j}_* \bar{j}^* F)(1)
\end{equation}

\noindent for any $F$ over $\bar{X}$. This equivalence can be deduced by looking at the mapping spaces from a third $\ell$-adic sheaf to both sides of (\ref{equation-commutetwistspushforward}) and using the adjunction $(j^*, j_*)$ together with the fact that $j^*$ is monoidal, and that the Tate twist is an invertible object, stable under base change. The equivalence $i_*(E(1))\simeq i_*(E)(1)$ follows by the same argument. In this case, the cofiber sequence (\ref{equation-cofibersequencetwisted}) is equivalent to

\begin{equation}
\label{equation-cofibersequencetwisted2}
(v_\sigma^* i_*(E))(1)\to (\bar{i}^*\bar{j}_* \bar{j}^* v^* E)(1)\to \mathcal{V}_{p}(E)(1)
\end{equation}
Finally to deduce the equivalence $\mathcal{V}_{p}(\beta)\simeq \mathcal{V}_{p}\otimes \mathbb{Q}_{\ell}(\beta)$ one uses the equivalence 
$\mathbb{Q}_{\ell}(\beta)\simeq \bigoplus_{i\in \mathbb{Z}}\mathbb{Q}_{\ell}(i)[2i]$ together with the fact that both $^*$-pullbacks and $_*$-pushforwards preserve arbitrary colimits (see the discussion in \cite[Example 9.4.6]{robalo-thesis} for pushfowards, which is the only non obvious verification to be made).
\end{proof}
\end{prop}

\vspace{1cm}

Let us now proceed to investigate the sequence defining vanishing cycles for $E:=\mathrm{R}^{\ell}_X(\BU_{X})\simeq \mathbb{Q}_{\ell}(\beta)_X$, associated to diagram (\ref{eq-Xwithuniformizervanishingcycles}). By definition it lives in the equivariant derived $\infty$-category $\mathrm{Sh}_{\mathbb{Q}_{\ell}}(\bar{\sigma})^{ \mathrm{Gal}(\bar{\eta}/\eta)}$, and after shifting, it may be written as
\medskip

\begin{equation}
\label{eq-equivariantexactsequencevanishingsequences}
\xymatrix{(p_{\bar{\sigma}})_*\mathcal{V}_p(\beta)[-1]\ar[r]& (p_{\bar{\sigma}})_*\mathrm{R}^{\ell}_{X_{\bar{\sigma}}}(\BU_{X_{\bar{\sigma}}})\ar[r]^{\mathrm{sp}}& (p_{\bar{\sigma}})_*\bar{i}^*\bar{j}_*\mathrm{R}^{\ell}(\BU_{X_{\bar{\eta}}})}
\end{equation}
\medskip

In particular, by standard lax monoidal considerations, the map $\mathrm{sp}$ is in fact a map of commutative algebra objects in the equivariant category. 

Now, thanks to Remark \ref{remark-inertiainvariantscanbetaken}, it makes sense to take homotopy fixed points under the action of the inertia group $\mathrm{I}\subseteq \Gal(\bar{\eta}/\eta)$. Consider the adjunction $(\mathrm{Triv},  (-)^{\mathrm{hI}})$, 

\begin{equation}
\label{17ago1448}
\xymatrix{
\mathrm{Sh}_{\mathbb{Q}_\ell}(\sigma)^{\mathrm{I}}\ar@<-.5ex>[r]_{(-)^{\mathrm{hI}}}& \ar@<-.5ex>[l]_-{\mathrm{Triv}}\mathrm{Sh}_{\mathbb{Q}_\ell}(\sigma)
}
\end{equation}

We have that
\begin{prop}
\label{17ago1451}
The construction of homotopy fixed points $(-)^{\mathrm{hI}}$ is lax monoidal. In particular $\mathrm{sp}^{\mathrm{hI}}$ is a map of commutative algebra objects.
\begin{proof}
Indeed, $(-)^{\mathrm{hI}}$ is right adjoint to the trivial representation functor, which is monoidal.
\end{proof}
\end{prop}

Passing to inertia invariants, we obtain a cofiber-fiber sequence in $\mathrm{Sh}_{\mathbb{Q}_{\ell}}(\bar{\sigma})^{ \mathrm{Gal}(\bar{\sigma}/\sigma)}$:

\begin{equation}
\label{eq-equivariantexactsequencevanishingsequences2}
\xymatrix{((p_{\bar{\sigma}})_*\mathcal{V}_p(\beta)[-1])^{\mathrm{h}\mathrm{I}}\ar[r]& ((p_{\bar{\sigma}})_*\mathrm{R}^{\ell}_{X_{\bar{\sigma}}}(\BU_{X_{\bar{\sigma}}}))^{\mathrm{h}\mathrm{I}}\ar[r]^{(\mathrm{sp})^{\mathrm{hI}}}& ((p_{\bar{\sigma}})_*\bar{i}^*\bar{j}_*\mathrm{R}^{\ell}_{X_{\bar{\eta}}}(\BU_{X_{\bar{\eta}}}))^{\mathrm{h}\mathrm{I}}}
\end{equation}

\vspace{1cm}

We will now give an explicit description of the middle and last term of the cofiber-fiber sequence (\ref{eq-equivariantexactsequencevanishingsequences2})  (see Corollary \ref{newcorreferee4} below) and, in the process, we establish in Lemma \ref{newlabelreferee142} a relation between the cohomology of the punctured disk (Definition \ref{definition-cohomologypunctureddisk}) and inertia invariants vanishing cycles.
\begin{context}
\label{context-Sstrictlylocal}
From now on, for simplicity, we assume that $S$ is strictly local, so that $\bar{\sigma}=\sigma$. In particular, $\mathrm{I}=\mathrm{Gal}(\bar{\eta}/\eta)$ in this Context.
\end{context}
\medskip

We start with a description of the last term in (\ref{eq-equivariantexactsequencevanishingsequences2}) using Galois descent for the sheaf of $\infty$-categories $\mathrm{Sh}_{\mathbb{Q}_\ell}(-)$:
\medskip
\begin{prop}
\label{prop-lasterm}
The canonical map of commutative algebra-objects
\begin{equation}
\label{15ago1}
\mathbb{H}^{\mathrm{R}^\ell(\BU)}_{\mathbb{Q}_\ell}(X_\eta):=(p_{\sigma})_*i^*j_*\mathrm{R}^{\ell}_{X_{{\eta}}}(\BU_{X_{\eta}})\to ((p_{\sigma})_*i^*\bar{j}_*\mathrm{R}^{\ell}_{X_{\bar{\eta}}}(\BU_{X_{\bar{\eta}}}))^{\mathrm{h}\mathrm{I}}
\end{equation}
\noindent is an equivalence.
\begin{proof}
The result follows by adjunction and Galois descent. Let $v_\eta: X_{\bar{\eta}}\to X_\eta$ be the canonical map. Then by Galois theory we know that étale sheaves on $X_\eta$ are equivalent to étale sheaves on $X_{\bar{\eta}}$ equivariant with respect to the continuous action of the Galois group $\mathrm{Gal}(\bar{\eta}/\eta)$. This equivalence is given by 
$$v_\eta^\ast:\mathrm{Sh}_{\mathbb{Q}_\ell}(X_\eta)\to \mathrm{Sh}_{\mathbb{Q}_\ell}(X_{\bar{\eta}})^{\mathrm{Gal}(\bar{\eta}/\eta)} $$
\noindent with inverse given by taking fixed points of the global sections
$$(v_\eta)_*(-)^{\mathrm{hGal}(\bar{\eta}/\eta)}: \mathrm{Sh}_{\mathbb{Q}_\ell}(X_{\bar{\eta}})^{\mathrm{Gal}(\bar{\eta}/\eta)}\to \mathrm{Sh}_{\mathbb{Q}_\ell}(X_\eta)$$
In particular, the unit of this equivalence induces
$$\xymatrix{\mathrm{Id}_{X_{\eta}}\ar[r]^-{\sim}& ((v_\eta)_\ast v_\eta^*(-))^{\mathrm{hGal}(\bar{\eta}/\eta)}}$$
Using the commutativity of (\ref{eq-diagramintegralclosures2}) together with the proper base change formula and the commutativity of pushfowards with homotopy fixed points (taking homotopy fixed points is a pushfoward), we find  
$$
i^*j_\ast\simeq i^*j_\ast \mathrm{Id}_{X_{\eta}}\simeq  i^*j_\ast ((v_\eta)_\ast v_\eta^*(-))^{\mathrm{hGal}(\bar{\eta}/\eta)}\simeq   i^*(j_\ast (v_\eta)_\ast v_\eta^*(-))^{\mathrm{hGal}(\bar{\eta}/\eta)}\simeq 
$$
$$
\simeq  i^*(v_\ast \bar{j}_\ast v_\eta^*(-))^{\mathrm{hGal}(\bar{\eta}/\eta)}\simeq i^* v_\ast( \bar{j}_\ast v_\eta^*(-))^{\mathrm{hGal}(\bar{\eta}/\eta)}\simeq  (v_{\sigma})_\ast \bar{i}^\ast ( \bar{j}_\ast v_\eta^*(-))^{\mathrm{hGal}(\bar{\eta}/\eta)}
$$

In particular, in the strictly local case we find $X_{\bar{\sigma}}=X_\sigma$ and $v_\sigma$ is the identity. Thus the last term in the chain of equivalences becomes  $i^\ast ( \bar{j}_\ast v_\eta^*(-))^{\mathrm{hGal}(\bar{\eta}/\eta)}$. To conclude, it remains to explain the formula
$$i^\ast ( \bar{j}_\ast v_\eta^*(-))^{\mathrm{hGal}(\bar{\eta}/\eta)}\simeq (i^\ast  \bar{j}_\ast v_\eta^*(-))^{\mathrm{hGal}(\bar{\eta}/\eta)}$$
This follows because the continuity of the action of the profinite group $\mathrm{Gal}(\bar{\eta}/\eta)$ tells us that

$$(v_\eta)_\ast v_\eta^\ast(-)^{\mathrm{h}\mathrm{Gal}(\bar{\eta}/\eta)}\simeq \mathrm{colim}_i \,(v_{\eta_i})_\ast v_{\eta_i}^\ast(-)^{\mathrm{h}\mathrm{Gal}(\eta_i/\eta)}$$

\noindent where the colimit runs through all finite Galois extensions. In the same way we can also write 

$$ (\bar{j}_\ast v_\eta^*(-))^{\mathrm{hGal}(\bar{\eta}/\eta)}\simeq  \mathrm{colim}_i \, (\bar{j}_\ast v_{\eta_i}^*(-))^{\mathrm{hGal}(\eta_i/\eta)}$$

Finally, on each term of this colimit, the group $\mathrm{Gal}(\eta_i/\eta)$ is finite, so taking invariants is a finite limit. We conclude using the fact that $i^\ast$ commutes with all colimits and with finite limits.

To finish the proof of the proposition, apply this natural equivalence to $\mathrm{R}^{\ell}_{X_{{\eta}}}(\BU_{X_{\eta}})\in \mathrm{Sh}_{\mathbb{Q}_\ell}(X_\eta)$ and composing it with $(p_\sigma)_\ast$. See  also \cite[Exp XIII pag 7]{MR0354657}.
\vspace{0.5cm}
\end{proof}
\end{prop}

\medskip

Let us now discuss the middle term of (\ref{eq-equivariantexactsequencevanishingsequences2}). 

\medskip

\begin{prop} 
\label{prop-takinginvariants}
There is a canonical equivalence of commutative algebra objects
\begin{equation}
\label{15ago2}
((p_{\sigma})_*\mathrm{R}^{\ell}_{X_{\sigma}}(\BU_{X_{\sigma}}))^{\mathrm{h}\mathrm{I}}\simeq (p_{\sigma})_*\mathrm{R}^{\ell}_{X_{\sigma}}(\BU_{X_{\sigma}})\otimes (\mathbb{Q}_{\ell, \sigma})^{\mathrm{h}\mathrm{I}}
\end{equation}
\medskip
In particular, at the level of underlying objects, we recover
\begin{equation}
\label{15ago3}
((p_{\sigma})_*\mathrm{R}^{\ell}_{X_{\sigma}}(\BU_{X_{\sigma}}))^{\mathrm{h}\mathrm{I}}\simeq (p_{\sigma})_*\mathrm{R}^{\ell}_{X_{\sigma}}(\BU_{X_{\sigma}})\oplus (p_{\sigma})_*\mathrm{R}^{\ell}_{X_{\sigma}}(\BU_{X_{\sigma}})(-1)[-1]
\end{equation}
\medskip

\begin{proof}
By construction of vanishing cycles, the action of $\mathrm{I}$ on $(p_{\sigma})_*\mathrm{R}^{\ell}_{X_{\sigma}}(\BU_{X_{\sigma}})$ is the trivial action.  The projection formula for the projection $\mathrm{BI}\to \sigma$  (see \cite{1211.5948}) gives us the equivalence (\ref{15ago2}). The formula (\ref{15ago3}) then follows from the Lemma \ref{newlabelreferee141} and the key Lemma \ref{newlabelreferee142} below.\end{proof} \end{prop}

\vspace{0.5cm}

Finally, as a consequence of the Prop. \ref{prop-takinginvariants} and \ref{prop-lasterm}, we have 
\begin{cor}
\label{newcorreferee4}
Assume \ref{context-Sstrictlylocal}. The cofiber-fiber sequence (\ref{eq-equivariantexactsequencevanishingsequences2}) is equivalent to a cofiber-fiber sequence 
\begin{equation}
\label{eq-maincofiberfiberseq3}
\xymatrix{
\ar[d]((p_{\sigma})_*\mathcal{V}_p(\beta)[-1])^{\mathrm{h}I}\ar[r]&   (p_{\sigma})_*\mathrm{R}^{\ell}_{X_{\sigma}}(\BU_{X_{\sigma}})\otimes (\mathbb{Q}_{\ell, \sigma})^{\mathrm{h}\mathrm{I}}\ar[d]^{(sp)^{\mathrm{hI}}}\\
0\ar[r]&\mathbb{H}^{\mathrm{R}^\ell(\BU)}_{\mathbb{Q}_\ell}(X_\eta)
}
\end{equation}
\noindent where $\mathrm{sp}^{\mathrm{hI}}$ is a map of commutative algebra objects
\end{cor}

\vspace{1cm}

The next lemma is crucial not only in the proof of Proposition \ref{prop-takinginvariants} above but is also the key result behind our main Theorem \ref {theorem-maincomparisonvanishingmf} below. The lemma explains how the algebra structure on the cohomology of the punctured disk is related to inertia invariants. It is essentailly an algebraic version of the fact that the cohomology of the topological circle $\mathrm{S}^1= \mathrm{B} \mathbb{Z}$ with its cup product is quasi-isomorphic to the commutative differential graded algebra (cdga) of homotopy fixed points of the trivial $\mathbb{Z}$-representation. See Remark \ref{16agotopologicalanalogycircle} for more details on this analogy.

\begin{lem}
\label{newlabelreferee142}
Assume Context \ref{context-Sstrictlylocal}. The object $(\mathbb{Q}_{\ell, \bar{\sigma}})^{\mathrm{hI}}$ carries a canonical structure of commutative algebra object in $\mathrm{Sh}_{\mathbb{Q}_{\ell}}(\sigma)$. Furthermore, we have a canonical equivalence of commutative algebra objects
\begin{equation}
\label{newlabelreferee143}
(\mathbb{Q}_{\ell, \sigma})^{\mathrm{hI}}\simeq \mathbb{H}_{\mathbb{Q}_\ell}(\eta)
\end{equation} 
\noindent where on the r.h.s we have the algebra structure of the Lemma \ref{newlabelreferee141}. In particular, the algebra structure on $(\mathbb{Q}_{\ell, \sigma})^{\mathrm{hI}}$ is obtained by transferring the canonical algebra structure on $\mathbb{Q}_{\ell, \eta}$ via the lax monoidal functor $i_{\sigma}^\ast (j_{\eta})_\ast$ and at the level of the underlying objects we have 
\begin{equation}
\label{eq-invariantstrivial}
(\mathbb{Q}_{\ell, \sigma})^{\mathrm{h}\mathrm{I}}\simeq \mathbb{Q}_{\ell, \sigma}\oplus \mathbb{Q}_{\ell, \sigma}(-1)[-1]
\end{equation}
\begin{proof}
In Context \ref{context-Sstrictlylocal}, we have $\bar{\sigma}=\sigma$. The fact that taking homotopy fixed points is a lax monoidal functor (Prop. \ref{17ago1451}) guarantees that $(\mathbb{Q}_{\ell, \sigma})^{\mathrm{hI}}$ is an algebra object. It remains to show the equivalence of algebras (\ref{newlabelreferee143}).
As $S$ is smooth over itself via the identity map, the specialization map defining vanishing cycles relative to $S$

\begin{equation}
\label{eq-vanishingcyclestrivialoverthetrait}
\mathbb{Q}_{\ell,\sigma} \to i_\sigma^* (j_{\bar{\eta}})_*\mathbb{Q}_{\ell,\bar{\eta}} 
\end{equation}

\noindent is an equivalence in $\mathrm{Sh}_{\mathbb{Q}_\ell}(\sigma)^{\mathrm{I}}$. Note that in the l.h.s. of (\ref{eq-invariantstrivial}), the inertia $I$ acts trivially on $\mathbb{Q}_{\ell, \sigma}$. Note also that as we work under the assumption that $S$ is strictly local, we have $\mathrm{I}=\mathrm{Gal}(\bar{\eta}, \eta)$.
 For tame nearby cycles this follows by the explicit computation of the r.h.s of  \cite[Formula (102)]{MR3205601} for torsion coefficients. For the total vanishing cycles functor this follows by passing to the colimit over all wild extensions. \footnote{One can also give a simpler argument for the equivalence (\ref{eq-vanishingcyclestrivialoverthetrait}) in terms of étale cohomology groups. Indeed, one can give an explicit description of the étale sheaves $(j_{\bar{\eta}})_*\mathbb{Z}/n\mathbb{Z}$ by noticing that its cohomology groups are the étale sheafification of the presheaf of abelian groups sending an étale map  $V\to S$ to $H^i(V\times_S \bar{\eta}, \mathbb{Z}/n\mathbb{Z})$. As $\bar{\eta}$ is separably closed, $V\times_S \bar{\eta}$ is a disjoint union of copies of $\bar{\eta}$ so that its étale cohomology groups vanish for $i\geq 1$. For $i=0$ we get $\mathbb{Z}/n\mathbb{Z}$ as the set of (underived) global sections of its assocated constant sheaf. }

The specialization map being equivariant, implies that after passing to $\mathrm{I}$-invariants we still get an equivalence

\begin{equation}
\label{korokseed1}
\xymatrix{\mathbb{Q}_{\ell,\sigma}^\mathrm{hI} \ar[r]^-{\sim} &(i_\sigma^* (j_{\bar{\eta}})_*\mathbb{Q}_{\ell,\bar{\eta}} )^\mathrm{hI}}
\end{equation}

Finally, by the same argument in the proof of the Prop. \ref{prop-lasterm} we find 

\begin{equation}
\label{korokseed2}
\xymatrix{(i_\sigma^* (j_{\bar{\eta}})_*\mathbb{Q}_{\ell,\bar{\eta}} )^\mathrm{hI}&\ar[l]_-{\sim} i_\sigma^* (j_\eta)_*\mathbb{Q}_{\ell,\eta}=:\mathbb{H}_{\mathbb{Q}_\ell}(\eta)}
\end{equation}


\end{proof}
\end{lem}

\begin{rem}
\label{17ago1617}
As seen in Remark (\ref{newrefereelabel141713}),  multiplication by elements in Tate degree $(1,1)$
can  be presented as a map of $(\mathbb{Q}_{\ell, \sigma})^{\mathrm{hI}}$-modules
\begin{equation}
\label{korokseed5}
(\mathbb{Q}_{\ell, \sigma})^{\mathrm{hI}}\to (\mathbb{Q}_{\ell, \sigma})^{\mathrm{hI}}(1)[1]
\end{equation}
Moreover, being a map of $(\mathbb{Q}_{\ell, \sigma})^{\mathrm{hI}}$-modules, (\ref{korokseed5}) is determined, via base-change, by a map of $\mathbb{Q}_{\ell, \sigma}$-modules
\begin{equation}
\mathbb{Q}_{\ell, \sigma}\to (\mathbb{Q}_{\ell, \sigma})^{\mathrm{hI}}(1)[1]
\end{equation}
\noindent which under (\ref{newlabelreferee143}), corresponds (\ref{16ago1353}).
\end{rem}

\vspace{1cm}

The following result describes how the class $\theta_\ell^{(S,\sigma)}$ transfers along the equivalence (\ref{newlabelreferee143}).

\begin{prop}
\label{remark-invariantslaxmonoidalfundamentalclass}
\hfill
\begin{enumerate}
\item  There exists an $\ell$-adic class 
\begin{equation}
\label{korokseed6757575040}
[\theta_I^{(S,\sigma)}: \mathbb{Q}_{\ell, \eta}\to \mathbb{Q}_{\ell, \eta}(1)[1]]\in \mathrm{H}^1(\eta, \mathbb{Q}_{\ell}(1))
\end{equation} such that its image under $i_\sigma^\ast (j_\eta)_\ast$ is the map (\ref{korokseed5}).
\medskip
\item The classes $\theta_I^{(S,\sigma)}$ and $\theta_\ell^{(S,\sigma)}$ coincide.
\end{enumerate}

\begin{proof}
(i) is the description in  \cite[1.2]{MR666636},  \cite[\S 3.6]{MR1293970}. (ii) is the computation of \cite[Exp XVI Lemmas 3.4.6, 3.4.7 and 3.4.8]{MR3329769}.
\end{proof}
\end{prop}

\medskip

\begin{rem}
\label{17ago1216}
It follows from the lax-monoidality that for every object $\Psi\in \mathrm{Sh}_{\mathbb{Q}_\ell}(\sigma)^{\mathrm{I}}$, the object $\Psi^\mathrm{hI}$ carries a canonical structure of $(\mathbb{Q}_{\ell, \sigma})^{\mathrm{hI}}$-module 
$$(\mathbb{Q}_{\ell, \sigma})^{\mathrm{hI}}\otimes \Psi^\mathrm{hI}\to  \Psi^\mathrm{hI}$$
\noindent and by the arguments above, is acted by the class $\theta_\ell^{(S,\sigma)}$
$$\xymatrix{\Psi^\mathrm{hI}\ar[rr]^-{-\ast \theta_\ell^{(S,\sigma)}}&&  \Psi^\mathrm{hI}(1)[1]}$$
It is a key idea used in \cite[1.4]{MR666636} that in the unipotent case, $\Psi$ with its $\mathrm{I}$-action can be completely recovered from $\Psi^\mathrm{hI}$ together with the information of the action of $\theta_\ell^{(S,\sigma)}$.  See Remark \ref{16agotopologicalanalogycircle} below for a topological analogy of this.
 \end{rem}

\begin{rem}
\label{16agotopologicalanalogycircle}
The equivalence of algebras in the Lemma \ref{newlabelreferee142} is an algebraic version of a more familiar topological situation where the role of $\eta$ is played by the topological circle $\mathrm{S}^1$ 
and the choice of a closure $\bar{\eta}\to \eta$ is replaced by the choice of a universal cover of $\mathrm{S}^1$. Indeed, the description of the circle $\mathrm{S}^1\simeq \mathrm{B}\mathbb{Z}$ presents the choice of a point $\ast\to \mathbb{S}^1$ as the choice of a universal cover and characterizes $\mathrm{S}^1$ as the homotopy orbits for the trivial action of $\mathbb{Z}$ on the trivial space $\ast$
$$\mathrm{S}^1\simeq \mathrm{B}\mathbb{Z}=\ast/\mathbb{Z}\simeq \mathrm{colim}_{\mathrm{B}\mathbb{Z}} \,\,\ast $$
As a consequence, after passing to singular cochains one finds 
\begin{equation}
\label{analogytopological16ago1432}
C^\ast(\mathrm{S}^1, \mathbb{C})\simeq C^\ast(\mathrm{colim}_{\mathrm{B}\mathbb{Z}}\,\, \ast , \mathbb{C})\simeq \mathrm{lim}_{\mathrm{B}\mathbb{Z}}\, C^\ast( \ast , \mathbb{C})\simeq  C^\ast( \ast , \mathbb{C})^{\mathrm{h}\mathbb{Z}}\simeq \mathbb{C}^{\mathrm{h}\mathbb{Z}}
\end{equation}
\noindent as an equivalence in $\Mod_{\mathbb{C}}\Sp$. By formality for the circle we have $C^\ast(\mathrm{S}^1, \mathbb{C})\simeq \mathrm{H}^\ast (\mathrm{S}^1,\mathbb{C})$. The resulting formula
\begin{equation}
\label{17ago1110}
\mathrm{H}^\ast (\mathrm{S}^1,\mathbb{C})\simeq \mathbb{C}^{\mathrm{h}\mathbb{Z}}
\end{equation}
\noindent is analogous to (\ref{newlabelreferee143}).  The class $\theta$ is analogous to the topological generator $\epsilon$ of degree -1.  Notice that we can recover the trivial representation $\mathbb{C}$ via the Koszul-Tate resolution of $\mathbb{C}$ as a $\mathrm{H}^\ast (\mathrm{S}^1,\mathbb{C})$-module, namely
$$
\mathbb{C}\simeq \bigoplus_{i\in \mathbb{Z}} \mathrm{H}^\ast (\mathrm{S}^1,\mathbb{C})[-i]
$$
In the case of a $\mathbb{Z}$-representation $M$ with unipotent action, one can use the Eilenberg-Moore spectral sequence  \cite[1.1.10]{lurie-rational} combined with Koszul-Tate resolution to reconstruct $M$ from the pair $M^{\mathrm{h}\mathbb{Z}}$ equipped with its canonical $\mathrm{H}^\ast (\mathrm{S}^1,\mathbb{C})$-action.
\end{rem}

\vspace{1cm}

\subsection{Comparison between Vanishing Cycles and the singularity category}
\label{section-comparisonvanishingcyclesandMF}
We are now ready to state and prove our main theorem. 

\begin{thm}
\label{theorem-maincomparisonvanishingmf}
Let $p:X\to S$ with $X$ regular, $p$ a proper flat morphism over a strictly local excellent henselian trait $S=\Spec A$. Set 
\begin{equation}
\mathbb{H}^{\mathrm{R}^\ell(\BU)}_{\mathbb{Q}_\ell}(X_\sigma):=(p_{\sigma})_*\mathrm{R}^{\ell}(\BU_{X_{\sigma}}).\end{equation}
Then, the equivalence of $\mathbb{Q}_{\ell,\sigma}$-algebras (\ref{newlabelreferee143}) induces an homotopy  between the two maps of commutative $\mathbb{Q}_{\ell,\sigma}$-algebra objects (\ref{newref146774jajfunnotfunjcaclnl833943}) and (\ref{17ago1451}), namely:

\begin{equation}
\xymatrix{
\mathrm{can}\,\,, \mathrm{sp}^{\mathrm{hI}}\,\,:\mathbb{H}_{\mathbb{Q}_\ell}(\eta)\otimes_{\mathbb{Q}_{\ell,\sigma}} \mathbb{H}^{\mathrm{R}^\ell(\BU)}_{\mathbb{Q}_\ell}(X_\sigma) \ar[r]&\mathbb{H}^{\mathrm{R}^\ell(\BU)}_{\mathbb{Q}_\ell}(X_\eta)}
\end{equation}
\noindent 
\medskip
In particular, the cofiber-fiber sequences (\ref{newref146774jajfunnotfunnotfunagainjcaclnl833943}) and (\ref{eq-maincofiberfiberseq3}) are equivalent, and we deduce an equivalence 
\begin{equation}\label{one}
(i_\sigma)^*\mathrm{R}^{\ell}_S((\mathcal{M}_S^\vee(\Sing(X,\pi\circ p)))\simeq ((p_{\sigma})_*\mathcal{V}_p(\beta)[-1])^{\mathrm{hI}}
\end{equation}
\noindent of $\mathbb{Q}_\ell$-adic sheaves over $\sigma$ which is compatible with the action of $i_\sigma^\ast \mathcal{M}_S^\vee(\Sing(S,0))$ on the l.h.s and the action of $(\mathbb{Q}_{\ell, \bar{\sigma}})^{\mathrm{hI}}$ on the r.h.s. 
\end{thm}

\vspace{1cm}

Before addressing the proof of this theorem, let us collect some remarks.

\medskip

\begin{rem}
Thanks to the Proposition \ref{prop-exactsequenceMFoverSlocalring}, the equivalence (\ref{one}) can be formulated as an equivalence of $\mathbb{Q}_\ell$-adic sheaves over $S$
\begin{equation}\label{thisone}
\mathrm{R}^{\ell}_S((\mathcal{M}_S^\vee(\Sing(X,\pi\circ p)))\simeq (i_{\sigma})_*((p_{\sigma})_*\mathcal{V}_p(\beta)[-1])^{\mathrm{hI}}.
\end{equation}
\end{rem}

\medskip

\begin{rem} Note that if $p:X\to S$ is a proper morphism, and $X$ regular, $p$  then $\Sing(X,\pi\circ p) \simeq \Sing(X_0) $, where $X_0$ is the derived zero locus of $\pi \circ p$. If, moreover, $p$ is flat, then $X_0 \simeq t(X_0)$ (i.e. the derived zero locus coincides with the scheme theoretic zero-locus). Therefore, the equivalences (\ref{one}) and (\ref{thisone}) can be equivalently re-written as
\begin{equation}\label{one'}
(i_\sigma)^*\mathrm{R}^{\ell}_S((\mathcal{M}_S^\vee(\Sing(X_0)))\simeq ((p_{\sigma})_*\mathcal{V}_p(\beta)[-1])^{\mathrm{hI}},
\end{equation}
\noindent respectively,
\begin{equation}\label{thisone'}
\mathrm{R}^{\ell}_S((\mathcal{M}_S^\vee(\Sing(X_0)))\simeq (i_{\sigma})_*((p_{\sigma})_*\mathcal{V}_p(\beta)[-1])^{\mathrm{hI}}.
\end{equation}

\end{rem}

\medskip

\begin{rem} Notice that, since the action of $I$ on $X_{\sigma}$ is trivial and taking homotopy invariants can be represented as a limit, both $(i_{\sigma})_*$ and $(p_{\sigma})_*$ commute with taking $\mathrm{hI}$-invariants, being both right adjoints\footnote{This can also be deduced from the monadic argument producing the equivalence $\mathrm{Sh}_{\mathbb{Q}_\ell}(\sigma)^\mathrm{I}\simeq \Mod_{\mathbb{Q}_{\ell, \sigma}[\mathrm{I}]}(\mathrm{Sh}_{\mathbb{Q}_\ell}(\sigma))$ with $\mathbb{Q}_{\ell, \sigma}[\mathrm{I}]$ the internal group-ring of $\mathrm{I}$}. In particular, we have $$((p_{\sigma})_*\mathcal{V}_p(\beta)[-1])^{\mathrm{hI}} \simeq (p_{\sigma})_*((\mathcal{V}_p(\beta)[-1])^{\mathrm{hI}})$$ and $$(i_{\sigma})_*((p_{\sigma})_*\mathcal{V}_p(\beta)[-1])^{\mathrm{hI}} \simeq (i_{\sigma})_*(p_{\sigma})_*(\mathcal{V}_p(\beta)[-1])^{\mathrm{hI}}).$$ 
\end{rem}

\medskip

\begin{proof}[Proof of Theorem \ref{theorem-maincomparisonvanishingmf}]
We start with by comparing the two maps of commutative algebras $\mathrm{sp}^\mathrm{hI}$ and $\mathrm{can}$. For this purpose one has to provide commutativity for  the following diagram of commutative algebra-objects:

$$
\resizebox{1. \hsize}{!}{
\xymatrix{
(i_\sigma^* (j_{\bar{\eta}})_*\mathbb{Q}_{\ell,\bar{\eta}} )^\mathrm{hI}\otimes (p_{\sigma})_*\mathrm{R}^{\ell}_{X_{\sigma}}(\BU_{X_{\sigma}})&
 (\mathbb{Q}_{\ell, \sigma})^{\mathrm{h}\mathrm{I}}\otimes (p_{\sigma})_*\mathrm{R}^{\ell}_{X_{\sigma}}(\BU_{X_{\sigma}}) \ar[l]^-{(\ref{korokseed1})}_-{\sim}&
  \ar[l]^-{(\ref{15ago2})}_-{\sim} ((p_{\sigma})_*\mathrm{R}^{\ell}_{X_{\sigma}}(\BU_{X_{\sigma}}))^{\mathrm{h}\mathrm{I}} \ar[r]^-{(\mathrm{sp})^{\mathrm{hI}}}&
   ((p_{\sigma})_*i^*\bar{j}_*\mathrm{R}^{\ell}_{X_{\bar{\eta}}}(\BU_{X_{\bar{\eta}}}))^{\mathrm{h}\mathrm{I}}\\
 \mathbb{H}_{\mathbb{Q}_\ell}(\eta)\otimes (p_{\sigma})_*\mathrm{R}^{\ell}_{X_{\sigma}}(\BU_{X_{\sigma}})\ar[u]^{\sim}_{(\ref{korokseed2})\otimes Id}\ar[rrr]^-{\mathrm{can}}&&& \ar[u]^{\sim}_{(\ref{15ago1})} 
\mathbb{H}^{\mathrm{R}^\ell(\BU)}_{\mathbb{Q}_\ell}(X_\eta)
}
}
$$

\medskip

As coproducts of commutative algebra objects are given by tensor products \cite[3.2.4.7]{lurie-ha}, providing a 2-cell witnessing the commutativity of the diagram above, it is equivalent to providing 2-cells witnessing the commutativity of the following two diagrams of commutative algebras

\medskip

\noindent (A):
$$
\resizebox{1. \hsize}{!}{
\xymatrix{
\ar[r]^-{Id\otimes 1}(i_\sigma^* (j_{\bar{\eta}})_*\mathbb{Q}_{\ell,\bar{\eta}} )^\mathrm{hI}&(i_\sigma^* (j_{\bar{\eta}})_*\mathbb{Q}_{\ell,\bar{\eta}} )^\mathrm{hI}\otimes (p_{\sigma})_*\mathrm{R}^{\ell}_{X_{\sigma}}(\BU_{X_{\sigma}}) \ar[r]^-{(\mathrm{sp})^{\mathrm{hI}}}&
   ((p_{\sigma})_*i^*\bar{j}_*\mathrm{R}^{\ell}_{X_{\bar{\eta}}}(\BU_{X_{\bar{\eta}}}))^{\mathrm{h}\mathrm{I}}\\
\mathbb{H}_{\mathbb{Q}_\ell}(\eta) \ar[u]^{\sim}_{(\ref{korokseed2})} \ar[r]^-{Id\otimes 1}& \mathbb{H}_{\mathbb{Q}_\ell}(\eta)\otimes (p_{\sigma})_*\mathrm{R}^{\ell}_{X_{\sigma}}(\BU_{X_{\sigma}})\ar[r]^-{\mathrm{can}}& \ar[u]^{\sim}_{(\ref{15ago1})} 
\mathbb{H}^{\mathrm{R}^\ell(\BU)}_{\mathbb{Q}_\ell}(X_\eta)
}
}
$$
\medskip

\noindent (B): 

$$
\resizebox{1. \hsize}{!}{
\xymatrix{
\ar[r]^-{1\otimes Id} (p_{\sigma})_*\mathrm{R}^{\ell}_{X_{\sigma}}(\BU_{X_{\sigma}})&(i_\sigma^* (j_{\bar{\eta}})_*\mathbb{Q}_{\ell,\bar{\eta}} )^\mathrm{hI}\otimes (p_{\sigma})_*\mathrm{R}^{\ell}_{X_{\sigma}}(\BU_{X_{\sigma}}) \ar[r]^-{(\mathrm{sp})^{\mathrm{hI}}}&
   ((p_{\sigma})_*i^*\bar{j}_*\mathrm{R}^{\ell}_{X_{\bar{\eta}}}(\BU_{X_{\bar{\eta}}}))^{\mathrm{h}\mathrm{I}}\\
(p_{\sigma})_*\mathrm{R}^{\ell}_{X_{\sigma}}(\BU_{X_{\sigma}}) \ar@{=}[u]\ar[r]^-{1\otimes Id}& \mathbb{H}_{\mathbb{Q}_\ell}(\eta)\otimes (p_{\sigma})_*\mathrm{R}^{\ell}_{X_{\sigma}}(\BU_{X_{\sigma}})\ar[r]^-{\mathrm{can}}& \ar[u]^{\sim}_{(\ref{15ago1})} 
\mathbb{H}^{\mathrm{R}^\ell(\BU)}_{\mathbb{Q}_\ell}(X_\eta)
}
}
$$

\medskip

Concerning (A), the 2-cell is established by the naturally of the diagram in algebras

$$
\xymatrix{
\ar[r] (i_\sigma^* (j_{\bar{\eta}})_*\mathbb{Q}_{\ell,\bar{\eta}} )^\mathrm{hI}&
   ((p_{\sigma})_*i^*\bar{j}_*\mathrm{R}^{\ell}_{X_{\bar{\eta}}}(\BU_{X_{\bar{\eta}}}))^{\mathrm{h}\mathrm{I}}\\
 i_\sigma^* (j_\eta)_*\mathbb{Q}_{\ell,\eta}\ar[u]^{\sim}_{(\ref{korokseed2})} \ar[r]& \ar[u]^{\sim}_{(\ref{15ago1})} 
i_\sigma^\ast (j_\eta)_\ast (p_\eta)_\ast \mathrm{R}^\ell(\BU_\eta)
}
$$
\medskip

For (B), it comes from the adjunction $(\ref{17ago1448})$, as in fact, by definition, the specialization map is of the form

$$
\xymatrix{\mathrm{Triv}((p_{\sigma})_*\mathrm{R}^{\ell}_{X_{\sigma}}(\BU_{X_{\sigma}}))\ar[r]^{\mathrm{sp}}   & (p_{\sigma})_*i^*\bar{j}_*\mathrm{R}^{\ell}_{X_{\bar{\eta}}}(\BU_{X_{\bar{\eta}}})}
$$

\noindent and the adjunction $(\mathrm{Triv},  (-)^{\mathrm{hI}})$ guarantees a commutative cell in algebras

$$
\xymatrix{
(p_{\sigma})_*\mathrm{R}^{\ell}_{X_{\sigma}}(\BU_{X_{\sigma}})\ar[r]\ar[d]&  (\mathrm{Triv}((p_{\sigma})_*\mathrm{R}^{\ell}_{X_{\sigma}}(\BU_{X_{\sigma}})))^{\mathrm{hI}}\ar[d]^{\mathrm{sp}^{\mathrm{hI}}}\\
i_\sigma^\ast (j_\eta)_\ast (p_\eta)_\ast \mathrm{R}^\ell(\BU_\eta)  \ar[r]^-{\sim}_{(\ref{15ago1})} & [\,(p_{\sigma})_*i^*\bar{j}_*\mathrm{R}^{\ell}_{X_{\bar{\eta}}}(\BU_{X_{\bar{\eta}}})\,]^{\mathrm{hI}}
}
$$

This concludes the construction of an homotopy between the two maps of commutative algebras $\mathrm{sp}^\mathrm{hI}$ and $\mathrm{can}$.  To  prove the remaining statement in the theorem, we need to explain the formula (\ref{one}). Now that we have the identification of the two maps, the formula follows by passing to the fibers of the underlying map of modules and the Proposition \ref{16ago2244}. Finally to justify why (\ref{one}) is compatible with the canonical action of $\mathrm{R}_S^{\ell}(\mathcal{M}_S^\vee(\Sing(S,0)))\simeq \mathrm{R}_S^{\ell}(B\mathbb{U}_S)\oplus \mathrm{R}_S^{\ell}(B\mathbb{U}_S)[1]$ on the l.h.s, and the action of $\mathbb{Q}_\ell(\beta)\oplus \mathbb{Q}_\ell(\beta)(-1)[-1]$ on the r.h.s, we start by noticing that by the universal property of base change, the maps $\mathrm{sp}^\mathrm{hI}$ and $\mathrm{can}$ may also be written as maps of $\BU_\sigma$-algebras

\begin{equation}
\label{21ago0921}
\xymatrix{
\mathrm{can}\,\,, \mathrm{sp}^{\mathrm{hI}}\,\,: [\,\mathrm{R}^\ell(\BU_\sigma)\otimes_{\mathbb{Q}_{\ell, \sigma}}\mathbb{H}_{\mathbb{Q}_\ell}(\eta)\,]\otimes_{\mathrm{R}^\ell(\BU_\sigma)} \mathbb{H}^{\mathrm{R}^\ell(\BU)}_{\mathbb{Q}_\ell}(X_\sigma) \ar[r]&\mathbb{H}^{\mathrm{R}^\ell(\BU)}_{\mathbb{Q}_\ell}(X_\eta)}
\end{equation}

Now we use the following general fact: the fiber $K$ of a morphism of commutative algebras $A\to B$ has a canonical structure of $A$-module and this is functorial under equivalences of algebras $A\simeq C$. 
In our case, we apply this to the map of algebras (\ref{21ago0921}) and the equivalence of algebras (\ref{20ago2120}).

\end{proof}

\begin{cor}\label{rem-downtoearchformulacomparisonMFvanishing} Under the hypotheses and notations of Theorem \ref{theorem-maincomparisonvanishingmf}, we have an equivalence of \'etale $\ell$-adic hyper-cohomologies (ie, derived global sections) $$\mathbb{H}_{\'et}(S, \mathrm{R}^\ell_S \mathcal{M}_S^\vee(\Sing(X,\pi\circ p)))\simeq \mathbb{H}_{\'et}(X_{\sigma}, \mathcal{V}_{p}(\beta)[-1])^{\mathrm{hI}}$$ in the $\infty$-category of $\mathbb{Q}_{\ell}$-dg-modules.\footnote{Note that in the literature the r.h.s of is often denoted as $\mathbb{R}\Gamma(\mathrm{I},\mathbb{H}_{\'et}(X_{\sigma}, \mathcal{V}_{p}(\beta)[-1]))$.} \end{cor}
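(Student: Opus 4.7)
The plan is to simply apply the $\mathbb{Q}_{\ell}$-adic hypercohomology functor $\mathbb{H}_{\Ql}(S,-)$ to both sides of the equivalence (\ref{thisone}) of Theorem \ref{theorem-maincomparisonvanishingmf} and then unravel the right-hand side using standard properties of push-forwards and of homotopy fixed points.

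First I would apply $\mathbb{H}_{\Ql}(S,-)$ to (\ref{thisone}), producing
\[
\mathbb{H}_{\Ql}\!\left(S,\,\mathrm{R}^{\ell}_S\mathcal{M}_S^\vee(\Sing(X,\pi\circ p))\right)\simeq \mathbb{H}_{\Ql}\!\left(S,\,(i_{\sigma})_*\bigl((p_{\sigma})_*\mathcal{V}_p(\beta)[-1]\bigr)^{\mathrm{hI}}\right).
\]
To rewrite the right-hand side, I would use twice the fact that global sections compose with direct images: for any morphism $f:Y'\to Y$ of schemes in our class and any $F\in\mathrm{Sh}_{\Ql}(Y')$ one has a canonical equivalence $\mathbb{H}_{\Ql}(Y,f_*F)\simeq \mathbb{H}_{\Ql}(Y',F)$. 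Applied successively to $i_{\sigma}:\sigma\to S$ and $p_{\sigma}:X_k\to \sigma$, this gives the identifications
\[
\mathbb{H}_{\Ql}\!\left(S,\,(i_{\sigma})_*(p_{\sigma})_*(-)\right)\simeq \mathbb{H}_{\Ql}\!\left(\sigma,\,(p_{\sigma})_*(-)\right)\simeq \mathbb{H}_{\Ql}(X_k,-).
\]

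The only remaining point is to commute $(-)^{\mathrm{hI}}$ past the various push-forwards and past $\mathbb{H}_{\Ql}$. This is formal: as observed in the paragraph following Theorem \ref{theorem-maincomparisonvanishingmf}, the functor $\phi_*=(-)^{\mathrm{hI}}$ is right adjoint to the trivial-representation functor and therefore preserves all limits; since $(i_{\sigma})_*$, $(p_{\sigma})_*$ and $\mathbb{H}_{\Ql}(X_k,-)$ are also right adjoints (to $i_{\sigma}^*$, $p_{\sigma}^*$ and the constant-sheaf functor, respectively), they commute with the $\mathrm{I}$-action and with each other. Combining these commutations with the two identifications above yields
\[
\mathbb{H}_{\Ql}\!\left(S,\,(i_{\sigma})_*\bigl((p_{\sigma})_*\mathcal{V}_p(\beta)[-1]\bigr)^{\mathrm{hI}}\right)\simeq \mathbb{H}_{\Ql}(X_k,\mathcal{V}_p(\beta)[-1])^{\mathrm{hI}},
\]
which is the desired equivalence.

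There is no real obstacle here, as Theorem \ref{theorem-maincomparisonvanishingmf} already carries all the geometric content; the corollary is a purely formal consequence obtained by passing to global sections. The only small care required is to check that the identification $\mathbb{H}_{\Ql}(Y,f_*F)\simeq\mathbb{H}_{\Ql}(Y',F)$ is valid in the $\infty$-categorical incarnation of $\mathrm{Sh}_{\Ql}$ used in Section \ref{subsection-adicreal}, which follows because the construction of $\mathbb{H}_{\Ql}$ recalled there is compatible with the six-functor formalism, so that $f_*$ on the sheaf side corresponds to composition with $\Gamma$ on the module side.
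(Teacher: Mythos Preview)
Your proposal is correct and follows essentially the same route as the paper: apply $\mathbb{H}_{\Ql}(S,-)$ to the equivalence (\ref{thisone}), identify $\mathbb{H}_{\Ql}(S,(i_\sigma)_*(p_\sigma)_*(-))$ with $\mathbb{H}_{\Ql}(X_k,-)$, and commute $(-)^{\mathrm{hI}}$ with hypercohomology because both are limits. The paper's proof is terser but uses exactly the same ingredients.
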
 \begin{proof} The statement follows by applying the hypercohomology functor $\mathbb{H}_{\'et}(S,-)$ to the equivalence (\ref{thisone}), and using in the r.h.s. that $$\mathbb{H}_{\'et} (X_{\sigma}, -) \simeq \mathbb{H}_{\'et} (S, (i_{\sigma})_*(p_{\sigma})_*(-)).$$ Also observe that, since  taking homotopy invariants can be represented as a limit, it commutes with taking hyper-cohomology, so that  we have $$\mathbb{H}_{\'et}(X_{\sigma}, \mathcal{V}_{p}(\beta)[-1])^{\mathrm{hI}} \simeq \mathbb{H}_{\'et}(X_{\sigma}, (\mathcal{V}_{p}(\beta)[-1])^{\mathrm{hI}}).$$
\end{proof}

  \medskip

\begin{cor}\label{nonstrictlylocalbutexcellent}
In the situation of Theorem \ref{theorem-maincomparisonvanishingmf}, let us suppose that $S=\Spec \, A$ is an excellent henselian trait (so that its residue field is not necessarily separably closed). Let us fix a separable closure $\bar{k}$ of $k$, and let $\bar{S}=\mathrm{Spec}\, A^{sh}$ be the corresponding strict henselization of $S$. Then, in the notations of diagram (\ref{eq-diagramintegralclosures2}), we have an equivalence in $\mathrm{Sh}_{\Ql}(S)^{\mathrm{Gal}(\bar{\sigma}|\sigma)}$
\begin{equation}\label{thisotherone-equiv}
\mathrm{R}^{\ell}_S((\mathcal{M}_S^\vee(\Sing(X_{\bar{\sigma}})))\simeq u_*(i_{\bar{\sigma}})_*(p_{\bar{\sigma}})_*(\mathcal{V}_p(\beta)[-1])^{\mathrm{hI}}).
\end{equation}
 \end{cor}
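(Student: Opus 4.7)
The plan is to reduce to the strictly local case via base change along the strict henselization map $u: \bar{S}\to S$. Denote by $p_{\bar{S}}: X_{\bar{S}}:=X\times_S \bar{S}\to \bar{S}$ the base change of $p$. I would first verify that the hypotheses of Theorem \ref{theorem-maincomparisonvanishingmf} hold for $p_{\bar{S}}$: properness and flatness are stable under base change, $\bar{S}$ is a strictly local excellent henselian trait by construction, and $X_{\bar{S}}$ remains regular because $\bar{S}\to S$ is a cofiltered limit of étale $S$-schemes (using excellence of $A$) so that $\bar{S}\to S$ is a regular morphism, and regularity is preserved under regular base change. Moreover, the uniformizer $\pi$ of $A$ remains a uniformizer of the strict henselization, and pulls back to a non-zero divisor on $X_{\bar{S}}$, so that the derived zero locus of $\pi$ on $X_{\bar{S}}$ coincides with the classical closed fiber $X_{\bar{\sigma}}$, giving $\Sing(X_{\bar{S}},\pi)\simeq\Sing(X_{\bar{\sigma}})$.

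Once these verifications are in place, Theorem \ref{theorem-maincomparisonvanishingmf} applied to $p_{\bar{S}}$ yields
\begin{equation*}
\mathrm{R}^{\ell}_{\bar{S}}(\mathcal{M}_{\bar{S}}^\vee(\Sing(X_{\bar{S}},\pi)))\simeq (i_{\bar{\sigma}})_*((p_{\bar{\sigma}})_*\mathcal{V}_{p_{\bar{S}}}(\beta)[-1])^{\mathrm{hI}}
\end{equation*}
in $\mathrm{Sh}_{\Ql}(\bar{S})$. Then I would apply $u_*$ to both sides. On the right-hand side one gets directly the desired expression after remarking that the vanishing cycles of $p$ and $p_{\bar{S}}$ coincide on the common geometric special fiber $X_{\bar{\sigma}}$, since vanishing cycles are defined using precisely the strict henselization (cf.\ diagram (\ref{eq-diagramintegralclosures2}), where $\bar{X}$ in both cases is the same $X\times_S\bar{S}$).

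For the left-hand side the key step is the identification
\begin{equation*}
u_*\mathrm{R}^{\ell}_{\bar{S}}(\mathcal{M}_{\bar{S}}^\vee(\Sing(X_{\bar{S}},\pi)))\simeq \mathrm{R}^{\ell}_S(\mathcal{M}_S^\vee(\Sing(X_{\bar{\sigma}}))).
\end{equation*}
The compatibility of $\mathrm{R}^{\ell}$ with $u_*$ is part of the strong compatibility with the six operations established in Section \ref{subsection-adicreal}. The compatibility of $\mathcal{M}^\vee$ with restriction of scalars, namely $u_*\mathcal{M}_{\bar{S}}^\vee(T)\simeq \mathcal{M}_S^\vee(T_{|S})$ for any small idempotent-complete $\bar{S}$-dg-category $T$, is checked by evaluating on a smooth affine $S$-scheme $Y$: by construction both sides compute the homotopy-invariant K-theory spectrum $\mathrm{KH}(\Perf(Y_{\bar{S}})\otimes_{\bar{S}} T)\simeq \mathrm{KH}(\Perf(Y)\otimes_S T_{|S})$, the last equivalence using the formula $\Perf(Y_{\bar{S}})\simeq \Perf(Y)\otimes_S\Perf(\bar{S})$ and the $\bar{S}$-linear structure on $T$. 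Finally, the $\mathrm{Gal}(\bar{\sigma}|\sigma)$-equivariance on both sides is the one coming, by transport of structure, from the natural Galois action on $\bar{S}$ over $S$ and on all the associated base changes.

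The main obstacle I expect is the control of the compatibility $u_*\mathcal{M}_{\bar{S}}^\vee\simeq \mathcal{M}_S^\vee(-_{|S})$, together with the continuity of all functors involved under the pro-étale morphism $u$: although $u$ is not of finite presentation, one uses that $\bar{S}$ is a cofiltered limit of étale $S$-schemes, and that both $\mathrm{KH}$, the six operations on $\rmSH$, and the $\ell$-adic realization have the appropriate continuity properties along cofiltered limits of affine schemes with affine étale transition maps. A careful bookkeeping of this continuity, together with the equivariance data coming from $\mathrm{Gal}(\bar{\sigma}|\sigma)$, is the technical heart of the proof.
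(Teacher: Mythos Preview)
Your proposal is correct and follows essentially the same route as the paper: apply Theorem \ref{theorem-maincomparisonvanishingmf} over the strict henselization $\bar{S}$, then push forward along $u_*$, identifying the left-hand side via the compatibility $u_*\circ(\mathrm{R}^{\ell}_{\bar{S}}\circ\mathcal{M}_{\bar{S}}^\vee)\simeq(\mathrm{R}^{\ell}_S\circ\mathcal{M}_S^\vee)\circ u_*$ and the right-hand side via the tautological identification of vanishing cycles for $p$ and $p_{\bar{S}}$. The paper packages the left-hand compatibility as a commutative square of realizations obtained from the lax monoidality of $u_*$, whereas you verify it by evaluating on smooth affines; your explicit flagging of the continuity issue along the pro-étale morphism $u$ (which is not of finite type) is a point the paper leaves implicit.
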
  
\begin{proof} We borrow our notations from diagram (\ref{eq-diagramintegralclosures2}). First of all, observe that since $S$ is an excellent henselian trait, the \'etale topos of $\ell$-torsion sheaves of any $S$-scheme of finite type is of finite cohomological dimension (Remark \ref{rem-finiteetaledimension}), therefore we still have an $\ell$-adic realization functor $$\mathrm{R}^{nc}_{\ell,S} := \mathrm{R}^\ell_S \circ \mathcal{M}_S^\vee : \dgcat^{\mathrm{idem}}_S \to \mathrm{Sh}_{\Ql}(S).$$  Analogously, we will write  $$\mathrm{R}^{nc}_{\ell,\bar{S}} := \mathrm{R}^\ell_{\bar{S}} \circ \mathcal{M}_{\bar{S}}^\vee : \dgcat^{\mathrm{idem}}_{\bar{S}} \to \mathrm{Sh}_{\Ql}(\bar{S}).$$  Notice that $\mathfrak{m}_A A^{sh}=\mathfrak{m}_{A^{sh}}$, so that any uniformizer $\pi$ of $A$ gives a uniformizer of $A^{sh}$ (i.e. its image via $A \to A^{sh}$ is a uniformizer in $A^{sh}$). Since $u: \bar{S} \to S$ is formally \'etale and  the ramification locus of $u: \bar{S} \to S$ is disjoint from the singularity locus of $p: X \to S$, in the base change $\bar{X}/\bar{S}$ of $X/S$ along $u$, $\bar{X}$ is still regular (since $X$ is).  

We now argue for the commutativity of the diagram
\begin{equation}
\label{16ago1045}\xymatrix{\dgcat^{\mathrm{idem}}_{\bar{S}} \ar[d]_-{u_\ast:=\mathrm{Restr}_u} \ar[r]^-{\mathrm{R}^{nc}_{\ell,\bar{S}}} & \mathrm{Mod}_{\mathrm{R}^{nc}_{\ell,\bar{S}}(A^{sh})}(\mathrm{Sh}_{\Ql}(\bar{S})) \ar[d]^-{u_*} \\ 
\dgcat^{\mathrm{idem}}_S \ar[r]_-{\mathrm{R}^{nc}_{\ell,S}} &  \mathrm{Mod}_{\mathrm{R}^{nc}_{\ell,S}(A)}(\mathrm{Sh}_{\Ql}(S))
 } \end{equation}
 
 Indeed, using the definitions, this amounts to check first that for any $\bar{S}$-dg-category $T$ there is a natural equivalence of functors
  to spectra
  
$$
u_\ast  \mathcal{M}_{\bar{S}}^\vee (T) \simeq \mathcal{M}_{S}^\vee (\mathrm{Restr}_u T)
$$

\medskip
The first is the presheaf on $S$-dg-categories sending 

$$T'\mapsto \mathrm{KH}_{A^{\mathrm{sh}}}(T\otimes_{A^{\mathrm{sh}}}(T'\otimes_{A} A^{\mathrm{sh}})$$

The second is defined by 

$$T'\mapsto \mathrm{KH}_{A}(\mathrm{Restr}_u T\otimes_{A} T')$$

The two agree by the projection formula for K-theory. Finally, the commutativity of (\ref{16ago1045}) follows from this computation together with the fact that the $\ell$-adic realization (\ref{eq-ladicrealizationBUmodules}) is strongly compatible with the six operations  and  $u_*: \mathrm{Sh}_{\Ql}(\bar{S}) \to \mathrm{Sh}_{\Ql}(S)$ is lax monoidal.

 We can now conclude the proof of the proposition: as already observed for $\mathrm{R}^{nc}_{\ell,\bar{S}}(A^{sh}) \simeq \mathbb{Q}_{\ell, \bar{S}}(\beta)$, we have $\mathrm{R}^{nc}_{\ell,S}(A) \simeq \mathbb{Q}_{\ell, S}(\beta)$. 
 Since $\bar{X}$ is regular, we have $u_*(\Sing(\bar{X}, \pi \circ \bar{p})) \simeq \Sing (X_{\bar{\sigma}})$, and recall that, by definition of vanishing cycles for $p$ and the fact that $u^*(\mathbb{Q}_{\ell, S}(\beta))\simeq \mathbb{Q}_{\ell, \bar{S}}(\beta)$, we have $\mathcal{V}_p(\mathbb{Q}_{\ell, S}(\beta)) \simeq \mathcal{V}_{\bar{p}}(\mathbb{Q}_{\ell, \bar{S}}(\beta))$ inside $\mathrm{Sh}_{\Ql}(X_{\bar{\sigma}})^{\mathrm{Gal}(\bar{\eta} | \eta)}$. Now, the above commutative diagram of $\ell$-adic realizations combined with Theorem  \ref{theorem-maincomparisonvanishingmf}, yields an equivalence in $\mathrm{Sh}_{\Ql}(S)^{\mathrm{Gal}(\bar{\sigma}|\sigma)}$ 
 \begin{equation}
 \label{almost}
 \mathrm{R}^{nc}_{\ell,S}(\Sing (X_{\bar{\sigma}})) \simeq u_* (i_{\bar{\sigma}})_*(p_{\bar{\sigma}})_*(\mathcal{V}_p(\beta)[-1])^{\mathrm{hI}}).
  \end{equation} 
  \end{proof}

\begin{rem}
\label{rem-chern-character}
The equivalence of the Theorem \ref{theorem-maincomparisonvanishingmf} also provides a \emph{Chern character} map from the $\mathrm{K}$-theory of matrix factorizations to $\ell$-adic 2-periodic inertia invariant vanishing cohomology. Indeed, using the functoriality of the $\ell$-adic realization (\ref{eq-ladicrealizationBUmodules}) one obtains a map

\begin{equation}
\label{eq-chern-character}
\resizebox{1. \hsize}{!}{
\xymatrix{
\mathrm{KH}(\MF(X,\pi\circ p))\simeq \Map_{\Mod_{\BU_S}}(\BU_S, \mathcal{M}^{\vee}_S(\MF(X,\pi\circ p))\ar[r]^-{i_\sigma^\ast}&\Map_{\Mod_{\BU_\sigma}}(\BU_\sigma, i_\sigma^\ast\mathcal{M}^{\vee}_S(\MF(X,\pi\circ p))\ar[d]^{(\ref{eq-ladicrealizationBUmodules})+ (\ref{one})}\\
\mathbf{H}_{\'et}(X_{k}, \mathcal{V}_{p}(\beta)[-1])^{\mathrm{hI}} &\Map_{\Mod_{\mathbb{Q}_{\ell,\sigma}(\beta)}}(\mathbb{Q}_{\ell,\sigma}(\beta), ((p_{\sigma})_*\mathcal{V}_p(\beta)[-1])^{\mathrm{hI}}) \ar[l]^{\sim}
}
}
\end{equation}
\end{rem}

\vspace{0.5cm}

\begin{rem}
\label{rem-crucialregularsingularitycategory}
Notice that in the proof of the Theorem \ref{theorem-maincomparisonvanishingmf}, the hypothesis that $X$ is regular is crucial. Indeed, if $X$ is not regular, the relative derived category of singularities $\Sing(X,\pi)$ is not equivalent to the absolute one $\Sing(X_0)$ as explained in the Remark \ref{remark-differentdefinitionsrelativesingularities}. In the non-regular case we would need to provide a proof for all the statements in Section \ref{section-motivesPerfCohgeneralities} replacing $\Coh(X_0)$ by $\Coh(X_0)_{\Perf(X)}$.
\end{rem}

\appendix
\section{The formalism of six Operations in the Motivic Setting}
\label{Section:formalismsixoperations}
\vspace{0.5cm}
The $\infty$-functor $\rmSH^\otimes$ carries a system of extra functorialities known as the six operations. This means:
\begin{enumerate} 
\label{listsixoperations}
\item For every smooth morphisms $f:X\to Y$ of base schemes, the assignment $f^*: \rmSH_Y\to \rmSH_X$ has a left adjoint $f_\sharp$ which is a map of $\rmSH_Y$-modules with respect to the natural map induced from the unit of the adjunctions $(f_\sharp, f^*)$
$$f_\sharp(- \otimes f^*(-))\to f_\sharp(-)\otimes -$$
where $\rmSH_X$ is seen as a $\rmSH_Y$-module via the monoidal functoriality $f^*$. Moreover, $(-)_\sharp$ should verify smooth base-change.
\item The existence of a second functoriality for the assignment $X\mapsto \rmSH_X$ encoded by an $\infty$-functor
$$\rmSH_!:\bsch^{\mathrm{sep, ft}}\to \Prlstb$$
defined in $\bsch^{\mathrm{sep, ft}}$ - the subcategory of all $S$-schemes together with  separated morphisms of finite type between them.
\item The existence of natural transformation $(-)_*\to (-)_!$  defined in $\bsch^{\mathrm{sep, ft}}$  which is an equivalence for proper maps.\footnote{Here $(-)_*$ denotes the right adjoints of the functoriality $\rmSH^\otimes$};
\item (Projection Formula) The functoriality $\rmSH_!$ has a module structure over the functoriality $\rmSH^\otimes$. More precisely, for any map $f:X\to Y$ separated of finite type, we ask for $f_!: \rmSH_X\to \rmSH_Y$ to be a map of $\rmSH_Y$-modules as in (1). Here $\rmSH_X$ is seen as a $\rmSH_Y$-module via the monoidal functoriality $f^*$.
\item For any \emph{cartesian square} of schemes
 \begin{equation}
 \xymatrix{
 Y'\ar[r]^{p'} \ar[d]^{f'}& X'\ar[d]^f\\
 Y\ar[r]^p& X
 }
 \end{equation}
 \noindent with $f$ \emph{separated of finite type}, we ask for natural equivalences of $\infty$-functors 
\begin{equation}
 p^*\circ f_! \simeq (f')_!\circ (p')^{*}
 \end{equation}
 \noindent and
\begin{equation}
 f^!\circ p_*\simeq (p')_*\circ (f')^!
\end{equation}
    \item For any smooth morphism of relative dimension $d$, $f:Y\to X$  the adjunctions $(f_!, f^!)$ and $(f_\sharp, f^*)$ are related by a natural equivalence 
\begin{equation}
\label{eq-smoothsharpisshriek1}
f_{\sharp}\simeq f_!(-\otimes \mathrm{Thom}_Y(N_{Y/Y\times_X Y}))
\end{equation}
\noindent where $\mathrm{Thom}_Y(N_{Y/Y\times_X Y})$ is the motivic thom spectrum of the normal bundle $N_{Y/Y\times_X Y}$.
\medskip
 In the presence of an \emph{orientation data} (see \cite[2.4.c]{cisinski-tcmm}) we have $\mathrm{Thom}_Y(N_{Y/Y\times_X Y})\simeq (\mathbb{P}_Y^{1}, \infty)^{\otimes^d} $ so that
\begin{equation}
\label{eq-smoothsharpisshriek}
f_{\sharp}\simeq f_!(-\otimes (\mathbb{P}_Y^{1}, \infty)^{\otimes^d})
\end{equation}
In particular, whenever $f$ is an open immersion we have
$$
f_\sharp\simeq f_!
$$
\end{enumerate}
\medskip
\medskip
Thanks to the main results of \cite{ayoub1, cisinski-tcmm}  (see Prop. \ref{prop-Ayoub-Cisinski}), all these operations and coherences can be constructed from the initial $\s$-functor $\rmSH^\otimes$. In the setting of higher categories this can be done using the theory of correspondences developed in  \cite[Chapter 7]{MR3701352}. Alternatively, one can also use results of \cite{1211.5948, 1211.5294} as explained in \cite[Section 9.4]{robalo-thesis}. In this appendix we give a brief survey of the construction based on the techniques of \cite[Chapter 7]{MR3701352} but we do not look at the necessary Beck-Chevalley conditions. These have now been carefully treated in \cite{adeelthesis}, in a more general setting where base schemes are allowed to be derived.\\

\noindent Fix a base Noetherian scheme $S$ and fix $\bsch$ a full subcategory of the category of all Noetherian schemes as in \cite[2.0]{cisinski-tcmm}.  Suppose we are given an  $\infty$-functor  $\rmF:\bsch^{\mathrm{op}}\to \CAlg(\Prlstb)$. From this it is possible to extract a new functor $\mathrm{Arr}(\bsch)^{\mathrm{op}}\to \Mod(\Prlstb)$ where $\Mod(\Prlstb)$ is the $\infty$-category of pairs $(\mathrm{C}, \mathrm{M})$ with $\mathrm{C}$ a symmetric monoidal stable presentable $\infty$-category and $\mathrm{M}$ a stable presentable category endowed with a structure of $\mathrm{C}$-module. The new functor sends $f:Y\to X$ to the pair $(\rmF(X), \rmF(Y))$ with $\rmF(Y)$ seen as a module via $\rmF(f)$. We will also denote by $\rmF$. The reader can consult \cite[Section 9.4.1.2]{robalo-thesis} for a precise description of this assignment.\\
 The $\infty$-category $\Mod(\Prlstb)$ is a non-full subcategory of $\Mod(\inftyCatbig)$ which is the maximal $(\infty,1)$-category of the $(\infty,2)$-category $\Mod(\inftyCatbig)^{\mathrm{2-cat}}$ where we also include natural transformations of functors. In reality, the initial data we are interested in, is the new $\infty$-functor
$$\rmF:\mathrm{Arr}(\bsch)^{\mathrm{op}}\to  \Mod(\inftyCatbig)^{\mathrm{2-cat}}$$
\noindent and the six operations will express the coherences between $\rmF$ and the following five distinct classes of maps in $\bsch$:
\begin{itemize}
\item  $spft:= \text{ separated morphisms of finite type}$
\item $all:= \text{ all morphisms}$
\item $isom:= \text{ isomorphisms}$
\item $proper:= \text{ proper morphisms}$
\item $smooth:= \text{ smooth morphisms}$
\item  $open:= \text{ open morphisms}$
\end{itemize}
These classes verify some standard stability assumptions - see  \cite[Chapter 7, Sect 1.1.1]{MR3701352}. We will use the same notations for the classes of maps in $\mathrm{Arr}(\bsch)$ given by natural transformations where the maps belong to the respective classes.
\medskip
We now explain the conditions and their consequences. The reader should consult \cite[Chapter 7]{MR3701352} for the notations. The first two conditions are:\\
\begin{enumerate}[(i)]
\item $\rmF$ satisfies the \emph{right Beck-Chevalley} condition with respect to the inclusion $vert:=smooth\subseteq horiz:=all$ \cite[Def. 3.1.5 Chapter 7]{MR3701352}. By the universal property of correspondences \cite[Theorem 3.2.2-b) Chapter 7]{MR3701352} $\rmF$ extends in a unique way to an $(\infty,2)$-functor
$$
\rmF^{smooth}_{smooth, all}:\Corr(\mathrm{Arr}(\bsch))^{smooth}_{smooth, all}\to \Mod(\inftyCatbig)^{\mathrm{2-cat}}
$$
\noindent whose restriction along the inclusion 
$$((\bsch)_{horiz})^{\mathrm{op}}\subseteq \mathrm{Arr}(\bsch)^{\mathrm{op}}\subseteq \Corr(\mathrm{Arr}(\bsch))^{smooth}_{smooth, all}$$
\noindent  recovers $\rmF$.\\
Using the fact that $(\Corr(\mathrm{Arr}(\bsch))^{smooth}_{smooth, all})^\mathrm{1-op}=\Corr(\mathrm{Arr}(\bsch))^{smooth}_{all, smooth}$, passing to the 1-opposite we obtain a new $(\infty,2)$-functor
$$((\rmF)^{smooth}_{smooth, all})^{\mathrm{1-op}}:\Corr(\mathrm{Arr}(\bsch))^{smooth}_{all, smooth}\to (\Mod(\inftyCatbig)^{\mathrm{2-cat}})^{\mathrm{1-op}}$$ 
whose restriction along 
$$(\bsch)_{all}\subseteq \Corr(\mathrm{Arr}(\bsch))^{smooth}_{all, smooth}$$
\noindent recovers $\rmF^{\mathrm{op}}$.\\
\item $\rmF^\mathrm{op}:\mathrm{Arr}(\bsch)\to (\Mod(\inftyCatbig)^{\mathrm{2-cat}})^{\mathrm{1-op}}$ satisfies the \emph{left Beck-Chevalley} with respect to the inclusion $horiz:=proper\subseteq vert:=all$ \cite[Chapter 7, Def. 3.1.2]{MR3701352}. By the universal property of correspondences \cite[Theorem 3.2.2-a) Chapter 7]{MR3701352} $\rmF^{\mathrm{op}}$ extends in a unique way to an $(\infty,2)$-functor
$$
(\rmF^{\mathrm{op}})^{proper}_{all, proper}:\Corr(\mathrm{Arr}(\bsch))^{proper}_{all, proper}\to (\Mod(\inftyCatbig)^{\mathrm{2-cat}})^{\mathrm{1-op}}
$$
\noindent whose restriction along the inclusion 
$$((\bsch)_{vert})=\mathrm{Arr}(\bsch) \subseteq \Corr(\mathrm{Arr}(\bsch))^{proper}_{all, proper}$$
\noindent recovers $\rmF^{\mathrm{op}}$. \\
\end{enumerate}
We remark that these Beck-Chevalley conditions need to be verified at the level of modules. \\
\vspace{0.5cm}
We consider now the restriction $((\rmF)^{smooth}_{smooth, all})^{\mathrm{1-op}}$ along the inclusion
$$\Corr(\mathrm{Arr}(\bsch))^{isom}_{all, open}\subseteq \Corr(\mathrm{Arr}(\bsch))^{smooth}_{all, smooth}$$ 
\noindent and observe that we have built a commutative diagram of $(\infty,2)$-functors
$$
\xymatrix{
&\Corr(\mathrm{Arr}(\bsch))^{proper}_{all, proper}\ar[dr]&\\
\mathrm{Arr}(\bsch)\ar@{_{(}->}[dr]\ar@{^{(}->}[ur]&&(\Mod(\inftyCatbig)^{\mathrm{2-cat}})^{\mathrm{1-op}}\\
&\Corr(\mathrm{Arr}(\bsch))^{isom}_{all, open}\ar[ur]&\\
}
$$
The formalism of six operations is constructed by gluing these two functors, merging open immersions and proper maps. More precisely, it follows from Nagata's compactification that any morphism in $spft$ can be written as a morphism in $open$ composed with a morphism in $proper$. In this sense what we would like is to produce a new $(\infty,2)$-functor completing the commutativity of the diagram
$$
\resizebox{1. \hsize}{!}{
\xymatrix{
&\Corr(\mathrm{Arr}(\bsch))^{proper}_{all, proper}\ar[dr]\ar@/^/[drr]&&\\
\mathrm{Arr}(\bsch)\ar@{_{(}->}[dr]\ar@{^{(}->}[ur]&&\Corr(\mathrm{Arr}(\bsch))^{proper}_{all, spft}\ar@{-->}[r]&(\Mod(\inftyCatbig)^{\mathrm{2-cat}})^{\mathrm{1-op}}\\
&\Corr(\mathrm{Arr}(\bsch))^{isom}_{all, open}\ar[ur]\ar@/_/[urr]&&\\
}
}
$$
This is solved by the theorem \cite[Thm 5.2.4 Chapter 7]{MR3701352} which gives necessary and sufficient conditions for the existence and uniqueness of the dotted map.  These are the following:
\begin{enumerate}[(i)]
\addtocounter{enumi}{2}
\item \cite[5.1.2 Chapter 7]{MR3701352}: the class $open\cap proper$ consists of embeddings of connected components (therefore monomorphisms).
\item \cite[5.1.4 Chapter 7]{MR3701352}: For any map $f:X\to Y$ separated of finite type, the space $\mathrm{Fact}(f) $ of factorizations of $f$ as an open immersion followed by a proper map, is contractible. This is a consequence of Nagata's compactification as explained in \cite[Prop 2.1.6 Chapter 5]{MR3701352}.
\item \cite[5.2.2 Chapter 7]{MR3701352} It is the well-known \emph{support property} of Deligne.
\end{enumerate}
\personal{Gaitsgory works to give an equivalence of 2-cats}
\begin{defn}
We say that an $(\infty,1)$-functor $\rmF:\bsch^{\mathrm{op}}\to \CAlg(\Prlstb)$ \emph{has the six operations} if it verifies the conditions (1) to (5) above. We denote by $\rmF^{proper}_{all, spft}$ its unique extension
$$
\Corr(\mathrm{Arr}(\bsch))^{proper}_{spft, all}\to \Mod(\inftyCatbig)^{\mathrm{2-cat}}
$$
\end{defn}
\medskip
Let $\rmF$ verify the six operations. We will use the following notations:\\
\begin{itemize}
\item $F_\sharp:= \rmF^{smooth}_{smooth,all}|_{\mathrm{Arr}(\bsch)_{smooth}}:\mathrm{Arr}(\bsch)_{smooth} \to \Mod(\inftyCatbig)^{\mathrm{2-cat}}$\\
\item $F_*:=((\rmF^{\mathrm{op}})^{proper}_{all,proper}|_{\mathrm{Arr}(\bsch)_{proper}^{\mathrm{op}}}))^{\mathrm{op}}:\mathrm{Arr}(\bsch)_{proper} \to \Mod(\inftyCatbig)^{\mathrm{2-cat}}$\\
\item $F_!:=\rmF^{proper}_{all, spft}|_{\mathrm{Arr}(\bsch)_{spft}}:\mathrm{Arr}(\bsch)_{spft} \to \Mod(\inftyCatbig)^{\mathrm{2-cat}}$\\
\end{itemize}
\medskip
The following result of Ayoub and Cisinski-Deglise  gives sufficient conditions for a given $\rmF$ to have the six operations:
\begin{prop}[Ayoub and Cisinski-Deglise]
\label{prop-Ayoub-Cisinski}
Let $\rmF:\bsch^\mathrm{op}\to \CAlg(\Prlstb)$ be an $\infty$-functor satisfying the following conditions:
\begin{enumerate}[a)]
\item $\rmF$ satisfies (i);
\item for each proper map $f:X\to Y$, $\rmF_*(f)$ has a right adjoint;
\item (\emph{Localization}) For every closed immersion $i:Z\hookrightarrow X$ of base schemes with open complementary $U:=X-Z\hookrightarrow X$ the commutative diagram
$$
\xymatrix{
\rmF(Z)\ar[d]\ar[r]^{\rmF_*(i)}& \rmF(X)\ar[d]^{\rmF(f)}\\
0\ar[r]& \rmF(U)
}
$$
\noindent is a pullback in $\Prlstb$. In particular $\rmF_*(i)$ is fully faithful \footnote{See for instance \cite[9.4.20]{robalo-thesis}.}.
\item (\emph{Homotopy Invariance}) For any base scheme $X$, the map $\rmF(\pi): \rmF(X)\to \rmF(\mathbb{A}^1_X)$ is fully faithful. Here $\pi:\mathbb{A}^1_X \to X$ is the canonical projection.
\item (\emph{Stability}) For any base scheme $X$, the composition $\rmF_\sharp(\pi)\circ \rmF_*(s):\rmF(X)\to \rmF(X)$ maps the tensor unit to a $\otimes$-invertible object (the Tate motive). \footnote{$s:X\to \mathbb{A}^1_X$ being the zero section.}
\end{enumerate}
Then $\rmF$ satisfies all the conditions (1)-(5).
\end{prop}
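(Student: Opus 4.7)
The plan is to deduce conditions (1)--(5) from the five hypotheses (a)--(e) by bootstrapping from the smooth Beck--Chevalley data already contained in (a), gluing it with a proper pushforward formalism via Nagata compactification, and controlling the coherence using the framework of correspondences recalled in Appendix \ref{Section:formalismsixoperations}. First I would upgrade hypothesis (a) to the full $(\infty,2)$-functorial extension $\rmF^{smooth}_{smooth, all}$ on $\Corr(\mathrm{Arr}(\bsch))^{smooth}_{smooth, all}$, which produces the left adjoints $f_\sharp$ for smooth $f$ together with the smooth base change natural isomorphisms $g^\ast f_\sharp \simeq f'_\sharp (g')^\ast$ required by condition (1). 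Symmetrically, hypothesis (b) provides, for each proper $f$, a right adjoint to $\rmF_\ast(f)$, which I will denote $f^!$ on proper maps; the second Beck--Chevalley condition (proper base change) needed to extend to $\Corr(\mathrm{Arr}(\bsch))^{proper}_{all, proper}$ must then be proved, not assumed, and this is where localization (c) enters.

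To establish proper base change, I would factor it through the standard dévissage: closed immersions and smooth surjective maps generate proper maps in the sense that any proper $f$ can be Zariski-locally reduced to a closed immersion composed with a projection $\mathbb{P}^n \times Y \to Y$. For a closed immersion $i$, the localization square (c), together with the recollement $(i_\ast, i^\ast, i^!)$ and $(j_\sharp, j^\ast, j_\ast)$ for the open complement $j:U \hookrightarrow X$, yields all the base change identities by chasing the pullback of the recollement. For $\mathbb{P}^n$-bundles, one combines homotopy invariance (d) and stability (e): a standard induction on $n$ shows that $(\mathbb{P}^n_X, \infty)$ is $\otimes$-invertible, from which the pushforward along $\mathbb{P}^n_X \to X$ is identified with $f_\sharp$ up to a Tate twist, bringing us back to the smooth case (a). This identification is precisely condition (6), i.e.\ the relative purity isomorphism $f_\sharp \simeq f_!(-\otimes (\mathbb{P}^1_Y,\infty)^{\otimes d})$.

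Once both $(-)_\ast$ on proper maps and $(-)_\sharp$ on smooth maps have been installed with their Beck--Chevalley compatibilities, the natural transformation $(-)_\sharp \to (-)_!$ and the definition of $f_! := p_\ast \circ j_\sharp$ for a Nagata factorization $f = p \circ j$ with $p$ proper and $j$ an open immersion must be shown independent of the factorization. This is the content of the support property, and its verification reduces, via standard diagram chases using proper and smooth base change already established, to showing that for a proper map $p$ and an open immersion $j$ in a cartesian square with compatible Nagata data, the canonical exchange $p_\ast j_\sharp \simeq j'_\sharp p'_\ast$ holds when the square is of a very specific form (the ``support square''). Applying Gaitsgory--Rozenblyum's gluing theorem \cite[Thm.\ 5.2.4 Part V.1]{gaitsgory-nick} then assembles $(-)_!$ into an $(\infty,2)$-functor on $\Corr(\mathrm{Arr}(\bsch))^{proper}_{all, spft}$ and automatically delivers both conditions (2), (3), and (5). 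Projection formula (4) is then a consequence of the $\rmF(Y)$-linearity of $j_\sharp$ for open $j$ (which is part of the smooth $(\infty,2)$-functor) and of $p_\ast$ for proper $p$ (deduced from the projection formula for $p^\ast$ by adjunction, using properness and the dévissage above).

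The main obstacle, and the step that occupies the bulk of the work in \cite{ayoub1, cisinski-tcmm}, is the verification of the higher coherences required by Beck--Chevalley at the $(\infty,2)$-categorical level: the pointwise isomorphisms of base change must be promoted to coherent systems of isomorphisms indexed by all composable chains of cartesian squares. Classically this was handled by verifying a finite list of compatibilities between triangles and squares, but in the $\infty$-categorical setting one needs the full correspondence formalism, and here I would invoke \cite{adeelthesis}, where the Beck--Chevalley conditions for exactly this kind of motivic datum are carried out rigorously, allowing one to apply \cite[Part V.1, Thm.\ 3.2.2 and Thm.\ 5.2.4]{gaitsgory-nick} to conclude.
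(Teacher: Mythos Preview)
The paper does not actually give its own proof of this proposition: it is stated as a result attributed to Ayoub and Cisinski--D\'eglise (hence the title of the proposition), and no argument is supplied. Your sketch is a reasonable outline of the strategy found in those references --- d\'evissage of proper base change through closed immersions (via localization) and projective bundles (via homotopy invariance and stability), followed by the support property and Nagata gluing, with the $\infty$-categorical coherences handled by the correspondence formalism of Gaitsgory--Rozenblyum and the Beck--Chevalley verifications of \cite{adeelthesis}. So there is nothing to compare against in the paper itself; your proposal is essentially the standard route, and is consistent with what the cited works do.
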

\vspace{0.5cm}

\personal{
\begin{rem}
\label{rem-gluinglaxlimitlocalization}
The localization assumption of the Prop. \ref{prop-Ayoub-Cisinski} is actually equivalent to for the canonical map
$$
\rmF(X)\simeq \mathrm{laxlim}(i^*j_*: \rmF(U)\to \rmF(Z))
$$
\noindent to be an equivalence.
\end{rem}
}

\begin{prop}
\label{prop-AdeelAyoubCisinski}
The $\infty$-functor $\rmSH^\otimes$ verifies all the conditions of the Prop \ref{prop-Ayoub-Cisinski}.
\begin{proof}
The localization property was proved by Morel-Voevodsky in \cite[Thm 2.21 pag. 114]{voevodsky-morel} in the unstable setting. The property in the stable setting as formulated above follows as a consequence. The other conditions follow from the results of Cisinski-Deglise in \cite{cisinski-tcmm} and Ayoub \cite{ayoub1, ayoub2}.  The fact it satisfies the necessary Beck-Chevalley conditions in the correct $\infty$-sense is proved in \cite{adeelthesis}. See also the survey in \cite[Sections 9.3 and  9.4.1]{robalo-thesis} for an overview and more precise references.
\end{proof}
\end{prop}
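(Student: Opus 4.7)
The plan is to verify each of the five hypotheses (a)--(e) of Prop \ref{prop-Ayoub-Cisinski} for $\rmF = \rmSH^{\otimes}$ by citing the classical results on $\rmSH$, and then address separately the more delicate issue of upgrading these classical statements to their genuine $\infty$-categorical counterparts (in particular the $\infty$-categorical Beck--Chevalley condition (1) at the level of \emph{modules}).

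First, I would dispose of conditions (d) and (e), which are essentially built into the construction of $\rmSH_S^{\otimes}$ recalled in Section \ref{Section:motives}: homotopy invariance is a defining relation (so $\pi^{*}$ is not merely fully faithful but the right adjoint of an equivalence on the image), and stability amounts to the $\otimes$-invertibility of the Tate object $(\mathbb{P}^1_X,\infty)$, which is also a defining property. Condition (b), the existence of a right adjoint to $p_{*}$ for a proper map $p$, follows from the adjoint functor theorem once one knows that $p_{*}$ preserves colimits on the presentable stable $\infty$-category $\rmSH_X$; for proper $p$ this is Ayoub's theorem \cite{ayoub1} (see also \cite[Thm.~2.4.50]{cisinski-tcmm}), and from the $\infty$-categorical perspective it is a consequence of the fact that $p_{*} = p_{!}$ for $p$ proper and that $p_{!}$ always has a right adjoint.

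Next, I would treat condition (c), the localization property. The classical statement that for a closed immersion $i\colon Z\hookrightarrow X$ with open complement $j\colon U\hookrightarrow X$ one has a recollement $(j_{\sharp},j^{*},j_{*})$, $(i_{*},i^{*},i^{!})$ with $j^{*}i_{*}\simeq 0$ and the gluing triangles, is the content of Morel--Voevodsky \cite[Thm.~2.21]{voevodsky-morel}. Translating this recollement into the Cartesian square in $\Prlstb$ of condition (c) is a formal manipulation: a recollement of stable presentable $\infty$-categories is equivalent to the datum of such a pullback square together with the fact that $i_{*}$ is fully faithful with essential image the kernel of $j^{*}$, which is precisely what the localization theorem provides.

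It remains to verify condition (a), i.e.\ the smooth (right) Beck--Chevalley condition of \cite[Def.~3.1.5, Part V.1]{gaitsgory-nick} for $\rmSH^{\otimes}$, viewed as a functor to $\Mod(\inftyCatbig)^{\mathrm{2-cat}}$. This is the real obstacle. Classically, for a Cartesian square
\[
\xymatrix{Y'\ar[r]^{g'}\ar[d]_{f'} & Y\ar[d]^{f}\\ X'\ar[r]_{g} & X}
\]
with $f$ smooth, the Beck--Chevalley transformation $f'_{\sharp}(g')^{*}\Rightarrow g^{*}f_{\sharp}$ is an isomorphism by smooth base change in $\rmSH$, proved in \cite{ayoub1} and \cite[Prop.~2.4.53]{cisinski-tcmm}. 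The non-trivial point is to promote this from the homotopy 2-category to a coherent Beck--Chevalley condition in the $(\infty,2)$-categorical sense of \cite{gaitsgory-nick}, and moreover to do so at the level of modules so that the projection formula for $f_{\sharp}$ is encoded as well. The plan is to invoke the work of Khan \cite{adeelthesis}, where the full six-functor formalism on $\rmSH$ (including the required $\infty$-categorical Beck--Chevalley conditions) is established; alternatively one can cite the framework of \cite{1211.5948, 1211.5294} as surveyed in \cite[Sections~9.3 and 9.4.1]{robalo-thesis}. With (a)--(e) in hand, the proposition follows from Prop \ref{prop-Ayoub-Cisinski}. The expected main difficulty is entirely in this last step of coherence: the classical 1-categorical statements are well known, but organizing them into a functor out of the correspondence $(\infty,2)$-category in the sense of Gaitsgory--Rozenblyum requires the systematic higher-categorical treatment of \cite{adeelthesis}.
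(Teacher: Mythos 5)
Your proposal is correct and follows essentially the same route as the paper: conditions (b)--(e) via Ayoub and Cisinski--Déglise together with the defining properties of $\rmSH$, localization via Morel--Voevodsky, and the genuinely $\infty$-categorical Beck--Chevalley coherence (at the level of modules) via Khan's thesis, as surveyed in \cite[Sections 9.3 and 9.4.1]{robalo-thesis}. The only caveat is that your parenthetical justification of (b) through ``$p_*\simeq p_!$ and $p_!$ has a right adjoint'' is circular in this setting (the $!$-functorialities are the output of Prop \ref{prop-Ayoub-Cisinski}, not an input), but your primary argument via colimit-preservation of $p_*$ and the adjoint functor theorem is the correct one.
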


To conclude this Appendix we discuss the compatibility of the six operations under natural transformations. We have the following result due to Ayoub (for the projective case) and Cisinski-Deglise (for the generalization to proper morphisms)
\begin{prop}(See \cite[Theorem 3.4]{MR2602027} and \cite{cisinski-tcmm})
\label{prop-compsixoperations}
Let $\phi:\rmF\to \rmG$ be a natural transformation of $\infty$-functors $\bsch^\mathrm{op}\to \CAlg(\Prlstb)$ such that
\begin{enumerate}
\item both $\rmF$ and $\rmG$ satisfy the hypothesis of the Proposition \ref{prop-Ayoub-Cisinski}.
\item if $f:X\to Y$ is a smooth map in $\bsch$, then the diagram 
\begin{equation}
\xymatrix{
\rmF(X)\ar[r]^{\phi_X}& \rmG(X)\\
\rmF(Y) \ar[r]^{\phi_Y}\ar[u]_{\rmF(f)} &  \ar[u]_{\rmG(f)} \rmG(Y)
}
\end{equation}
\noindent is left-adjointable
\begin{equation}
\xymatrix{
\rmF(X)\ar[r]^{\phi_X}\ar[d]^{\rmF_{\sharp}(f)}&\ar[d]^{\rmG_{\sharp}(f)} \rmG(X)\\
\rmF(Y) \ar[r]^{\phi_Y}&\rmG(Y)
}
\end{equation}
\end{enumerate}
Then, the natural transformation induced from the adjunctions
\begin{equation}
\label{eq-transferlowershriek}
\rmG_!\circ \phi\to \phi\circ \rmF_! 
\end{equation}
\noindent is an equivalence. Moreover, the natural transformations
\begin{equation}
\label{eq-transferlowerstartuppershriek}
\phi\circ \rmF_*\to \rmG_*\circ \phi \, \, \, \, \, \, \, \, \, \, \, \,  \phi\circ \rmF^!\to \rmG^!\circ \phi
\end{equation}
\noindent are given by equivalences whenever $f$ is proper, respectively, smooth.
\end{prop}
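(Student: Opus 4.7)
\textbf{Proof plan for Proposition~\ref{prop-compsixoperations}.}

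The strategy is standard in the Ayoub / Cisinski--Déglise style: reduce everything to the three building blocks of the six-functor formalism, namely smooth projections (handled by hypothesis), closed immersions (handled by localization), and projective bundle projections (handled by the stability axiom). I will first treat the exchange transformation for $f_{*}$ when $f$ is proper, and then deduce the statement for $f_{!}$ via Nagata, and the statement for $f^{!}$ when $f$ is smooth via relative purity.

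\emph{Step 1: compatibility with $\bar{f}_{*}$ for proper $\bar{f}$.} One first shows the exchange map $\phi\circ \rmF_{*}(\bar{f})\to \rmG_{*}(\bar{f})\circ \phi$ is an equivalence for every proper $\bar{f}:X\to Y$. By Chow's lemma and Noetherian induction on the dimension of $X$ one reduces to the case of projective morphisms: given any proper $\bar{f}$, a Chow blow-up produces a projective $g:X'\to X$ which is an isomorphism over a dense open $U\subseteq X$; writing $Z:=X\setminus U$, the localization axiom (c) of Prop.~\ref{prop-Ayoub-Cisinski} gives a recollement that reduces compatibility for $\bar{f}_{*}$ to compatibility for $(\bar{f}g)_{*}$ (projective) and for $(\bar{f}|_{Z})_{*}$ (with $\dim Z<\dim X$). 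Every projective morphism factors as a closed immersion followed by the projection $\pi:\mathbb{P}^{n}_{Y}\to Y$, so it suffices to handle these two cases separately.

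\emph{Step 2: closed immersions and $\mathbb{P}^{n}$-bundles.} For a closed immersion $i:Z\hookrightarrow X$ with open complement $j:U\hookrightarrow X$, the localization property provides a cofibre sequence $j_{\sharp}j^{*}\to \mathrm{Id}\to i_{*}i^{*}$ on $\rmF(X)$, and similarly on $\rmG(X)$. Since $\phi$ is a monoidal natural transformation, it commutes with $j^{*}$, and by the second hypothesis it commutes with $j_{\sharp}$ (as $j$ is smooth); since $\phi$ also preserves cofibre sequences, we deduce $\phi\circ i_{*}i^{*}\simeq i_{*}i^{*}\circ \phi$. Using that $i^{*}i_{*}\simeq \mathrm{Id}$ (fully faithfulness of $i_{*}$, itself a consequence of localization), this upgrades to $\phi\circ i_{*}\simeq i_{*}\circ \phi$. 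For the projection $\pi:\mathbb{P}^{n}_{Y}\to Y$, the stability axiom (e) and the relation \eqref{eq-smoothsharpisshriek} identify $\pi_{*}$ with $\pi_{\sharp}$ up to tensoring with an invertible Tate twist; since $\phi$ is monoidal it preserves Tate twists, and since $\pi$ is smooth the hypothesis gives compatibility of $\phi$ with $\pi_{\sharp}$, hence with $\pi_{*}$.

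\emph{Step 3: deducing $f_{!}$ and $f^{!}$.} For an arbitrary $f:X\to Y$ separated of finite type, Nagata compactification factors $f=\bar{f}\circ j$ with $\bar{f}$ proper and $j$ an open immersion; then $f_{!}\simeq \bar{f}_{*}\circ j_{\sharp}$ using $j_{!}\simeq j_{\sharp}$ for open immersions and $\bar{f}_{!}\simeq \bar{f}_{*}$ for proper maps, so the exchange equivalence \eqref{eq-transferlowershriek} follows by composing the compatibility for $j_{\sharp}$ (given) and for $\bar{f}_{*}$ (Step~2). For smooth $f:X\to Y$ of relative dimension $d$, relative purity gives a canonical equivalence $f^{!}\simeq f^{*}(d)[2d]$, and since $\phi$ is monoidal and commutes with $f^{*}$ (being a natural transformation on $\bsch^{\op}$) and with the Tate twist, the exchange map $\phi\circ \rmF^{!}\to \rmG^{!}\circ \phi$ is an equivalence.

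The main obstacle is Step~2 in its $\infty$-categorical form: one must ensure that the localization cofibre sequence $j_{\sharp}j^{*}\to \mathrm{Id}\to i_{*}i^{*}$ and the projective bundle decomposition of $\pi_{*}$ are genuinely natural in the input motive (not just pointwise), and that Chow's lemma plus Noetherian induction can be executed coherently in the $(\infty,2)$-categorical setting of correspondences used to encode the six operations. These verifications are carried out in~\cite{ayoub1, cisinski-tcmm} at the level of triangulated categories, and their $\infty$-categorical refinement is treated in~\cite{adeelthesis}.
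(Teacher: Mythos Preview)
The paper does not supply its own proof for this proposition; it simply refers the reader to Ayoub \cite[Theorem~3.4]{MR2602027} and Cisinski--Déglise \cite{cisinski-tcmm}. Your outline reproduces the standard argument from those references: dévissage via Chow's lemma and localization to reduce the proper case to (closed immersion)$\circ$($\mathbb{P}^n$-projection), handling closed immersions via the localization cofibre sequence $j_\sharp j^*\to\mathrm{Id}\to i_*i^*$, handling $\mathbb{P}^n$-projections via relative purity and the hypothesis on $f_\sharp$, and then assembling $f_!$ by Nagata and $f^!$ for smooth $f$ by the purity equivalence $f^!\simeq f^*(d)[2d]$. So your approach is the one the paper implicitly invokes.

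One point worth tightening in Step~2: when you use \eqref{eq-smoothsharpisshriek} to trade $\pi_*$ for $\pi_\sharp$ (up to Tate twist) for the smooth proper projection $\pi:\mathbb{P}^n_Y\to Y$, you are using not only that $\pi_!\simeq\pi_*$ in both $\rmF$ and $\rmG$, but that $\phi$ intertwines the purity equivalence $\pi_\sharp\simeq\pi_!\bigl(-\otimes(\mathbb{P}^1,\infty)^{\otimes n}\bigr)$ itself. This holds because that equivalence is built from units/counits of $(f_\sharp,f^*)$ and the localization sequence (the Thom class is constructed from these ingredients), all of which $\phi$ has already been shown to respect, together with the fact that $\phi$ is symmetric monoidal and hence preserves the Tate object. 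You correctly flag this sort of coherence as the ``main obstacle'' in your final paragraph; making it explicit here would close the gap.
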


\bibliographystyle{alpha}
\bibliography{biblio}

\end{document}